\documentclass[12pt]{amsart}
\textwidth=15.5cm \oddsidemargin=0cm
\evensidemargin=0cm
\usepackage[utf8]{inputenc}
\usepackage{amsmath}
\usepackage{amsfonts}
\usepackage{amssymb}
\usepackage{amsthm}
\usepackage{IEEEtrantools}
 \usepackage{bbold}
\usepackage{mathrsfs}

\newcommand{\Cc}{\mathbb{C}}
\newcommand{\Nn}{\mathbb{N}}
\newcommand{\Zz}{\mathbb{Z}}
\newcommand{\Rr}{\mathbb{R}}

\newcommand{\Med}{\mathrm{Med}}
\newcommand{\Aut}{\mathrm{Aut}}

\newcommand{\Induce}{\mathrm{Ind}}

\newcommand{\tp}{\mathrm{tp}}
\newcommand{\acl}{\mathrm{acl}}
\newcommand{\bdd}{\mathrm{bdd}}
\newcommand{\dcl}{\mathrm{dcl}}
\newcommand{\e}{\epsilon}

\newcommand{\I}{\mathbb{1}}

\def\Ind#1#2{#1\setbox0=\hbox{$#1x$}\kern\wd0\hbox to 0pt{\hss$#1\mid$\hss}
\lower.9\ht0\hbox to 0pt{\hss$#1\smile$\hss}\kern\wd0}

\def\ind{\mathop{\mathpalette\Ind{}}}

\def\notind#1#2{#1\setbox0=\hbox{$#1x$}\kern\wd0
\hbox to 0pt{\mathchardef\nn=12854\hss$#1\nn$\kern1.4\wd0\hss}
\hbox to 0pt{\hss$#1\mid$\hss}\lower.9\ht0 \hbox to 0pt{\hss$#1\smile$\hss}\kern\wd0}

\begin{document}

\newtheorem{lemma}{Lemma}[section]
\newtheorem{theorem}[lemma]{Theorem}
\newtheorem{definition}[lemma]{Definition}
\newtheorem{example}[lemma]{Example}
\newtheorem{corollary}[lemma]{Corollary}
\newtheorem{proposition}[lemma]{Proposition}
\newtheorem{claim}{Claim}[lemma]

\title{Piecewise Interpretable Hilbert Spaces}
\author{Alexis Chevalier, Ehud Hrushovski}
\date{}

\begin{abstract} 
We study Hilbert spaces $H$ interpreted, in an appropriate sense, in a first-order theory.   Under a new finiteness hypothesis that we call {\em scatteredness}   we  prove  that $H$  is a direct sum of {\em asymptotically free} components, where short-range interactions are controlled by algebraic closure and long-range interactions vanish.
Examples include $L^2$-spaces relative to Macpherson-Steinhorn definable measures; $L^2$ spaces relative to the Haar measure of the absolute Galois groups;    irreducible unitary representations   of $p$-adic Lie groups;  and unitary representations of the automorphism group of an $\omega$-categorical theory.   In the last case, our main result specialises to a theorem of Tsankov.    
New methods are required, making essential use of local stability theory in continuous logic.
\end{abstract}

\maketitle

\tableofcontents

In this paper, we define piecewise interpretable Hilbert spaces and study some of their properties. These are Hilbert spaces which arise as   direct limits of imaginary sorts of a first-order theory $T$.  While we employ continuous logic,
the case where $T$ itself is a discrete first order logic theory is already of  interest, and the introduction may be read with such theories in mind.    We will use tools of stability theory to study the structure of such Hilbert spaces, obtaining information about the underlying theory.  The  stability emanates from the Hilbert space inner product formulas themselves, so no  stability assumptions on the theory $T$ are required.

Definable measures, which themselves play a central role in recent model-theoretic literature, provide one rich source of
examples.    If $\mu$ is such a measure, the Hilbert space $L^2(\mu)$ is piecewise interpretable in $T$.  The stability-theoretic viewpoint already gives useful information here; notably it was used previously  to prove an independence, or 3-amalgamation theorem for definable measures (see \cite{HrushovskiApproxEq}; a more basic version was the main engine of  \cite{Hrushovski2012}, Theorem 2.22).

 A different class of piecewise interpretable Hilbert spaces arises via the absolute Galois group $G$ of a field $K$, with $T=Th(K, K^{alg})$. To put it roughly, a classical result of \cite{Cherlin1980} shows that  $G$ can be obtained in $T$ as an inverse limit of a  $\bigvee$-definable family of finite groups. Here no   definable measure need be present,  but the  Hilbert space $L^2(G,{\rm Haar})$ is piecewise interpretable in $T$. The conjugation invariant part $L^2(G^n, \mathrm{Haar})^G$ of $L^2(G^n, \mathrm{Haar}$) is also piecewise interpretable in $Th(K)$. This opens the way to a common treatment of some  analogies between    fields and measurable structures, such as the independence theorem; the striking similarity was previously unexplained.  
 
 Any  piecewise interpretable Hilbert space gives rise to a   functor from the category of models of a theory $T$ to the category of Hilbert spaces.  In particular, we obtain a homomorphism from the automorphism group $G$ of any model to the unitary group of a Hilbert space, i.e. a unitary reprsentation of $G$.  Conversely, a basic lemma of   \cite{Tsankov2012} implies in the $\omega$-categorical case that   all unitary representations of the automorphism group of the countable model arise in this way, giving a   third and very interesting connection to a deep field.

Our initial  inspiration was  the classification theorem for unitary representations of automorphism groups of
$\omega$-categorical structures from \cite{Tsankov2012}.    Changing the viewpoint to that
of piecewise interpretable Hilbert spaces, it becomes natural to  ask whether they   admit a structure theorem under more general hypotheses than $\omega$-categoricity; 
and if so, what form the statement would take.   The answers we found were, respectively, the notions of {\em scatteredness} and  {\em asymptotic freedom}.

 Both scatteredness and asymptotic freedom concern a complete type $p$ within a piecewise  interpretable Hilbert space $H$.
{\em Scatteredness} is defined in \ref{definition: scattered}; we mention for now a special case (referred to as `strict interpretability' in \ref{definition: strictly interpretable Hilbert space}):    whenever
 there are only finitely many values achieved by the  inner product  between elements of $p$, $p$ is scattered.    This subclass already includes a rich class of examples: these include the $\omega$-categorical case of \cite{Tsankov2012}, the Hilbert spaces arising from
definable measures over pseudo-finite fields (\cite{Chatzidakis1992}) or over measurable classes
(\cite{Macpherson2008}), and the Galois-theoretic examples mentioned above.

$p$ is {\em asymptotically free} if for any $a \in p$, and for any $b \in p$ that is not algebraic over $a$, the elements $a,b$ are orthogonal as  vectors in the Hilbert space.     
   In general, asymptotic freedom means that the Hilbert space structure is defined using only information within the algebraic closure (bounded closure in the continuous logic case) of an element of $p$.  
In fact, the interpretation of the subspace generated by $p$ factors through a   disintegrated, strongly minimal reduct of $T$ with $p$ as its universe (see \cite{spinoff} for a discussion).

 We prove a number of structure theorems that analyse scattered representations in terms of asymptotically free ones. Our core result is  Theorem \ref{theorem: summary of general scattered  interpretable hilbert space}: 
 
 \medskip
 
\begin{changemargin}{0.7cm}{0.7cm}
\emph{Let $\mathcal{H}$ be a piecewise interpretable Hilbert space in $T$. Let $\mathcal{H}_p$ be a piecewise $\bigwedge$-interpretable subspace of $\mathcal{H}$ generated by a scattered type-definable set $p$. Then $\mathcal{H}_p$ is the orthogonal sum of  piecewise $\bigwedge$-interpretable subspaces $(\mathcal{H}_\alpha)_{\alpha < \kappa}$ such that for all $\alpha< \kappa$, $\mathcal{H}_\alpha$ is generated by an asymptotically free complete  type.}
\end{changemargin}

%

 \medskip
 
  \noindent In particular, Theorem \ref{theorem: summary of general scattered  interpretable hilbert space} fully recovers Tsankov's structure theorem in the classical logic $\omega$-categorical case (see Section \ref{subsection: omega-categorical structures}). 
 
Even with the above mentioned  notions at hand however, the proof is not a direct generalisation from the $\omega$-categorical case.  It proceeds instead via a local stability analysis.   The interaction of the stable but highly non-discrete Hilbert space formulas, with the type provided by the underlying theory $T$ turns out to imply not only  rank properties but also a certain local modularity of forking that forms the key to the later analysis. A general discussion of these rank properties can be found in \cite{spinoff}.

Using a theorem of Howe and Moore, we  show that for any algebraic group $G$ over $\mathbb{Q}$, any irreducible representation of $G(\mathbb{Q}_p)$ or $G(\Rr)$ can essentially be obtained as a piecewise interpretable Hilbert space generated by an asymptotically free type 
 (see Section \ref{subsection: application to scattered representations} for details).    As the connection is made directly to  the conclusion of our theorem,  we do not obtain  any new  implications for the irreducible representations of these classical  groups;   but this does show that asymptotic freedom includes both settings for the unitary representation theory of  oligomorphic groups as well as algebraic groups.

A significant chapter of abstract model theory concerns interpretable groups,  usually referred to as definable groups.   First studied for their own sake, especially in the stable context, the study of groups interpreted in a theory $T$ was found to return significant structural information on $T$.  Our treatment of Hilbert spaces as objects of the definable world
is partly inspired by this analogy. We hope they may prove to illuminate other aspects of the theory,   and we view our results as an indication of the possibility of such a development.

 At the final stages of writing up this paper we became aware of Ibarluc\'ia's beautiful paper \cite{Ibarlucia2021}.   
 He employs the same philosophy  of using stability theoretic ideas to study interpretable representations.      
   Ibarluc\'ia  heads towards  a proof of Property (T) for the automorphism group of a continuous logic $\omega$-categorical structure
   by a route intentionally avoiding structure theorems, which are for us the main goal.  In terms of methods, he is able to use stability theory over a model, whereas for us treating canonical bases that are far from models is of the essence.  

The setup and main notions encountered in this paper were arrived at  by joint work, while the first author was a DPhil student under the supervision of the second.  The proof  of the main structure theorem relating scatteredness to asymptotic freedom, including the idea of invoking Von Neumann's lemma to prove one-basedness, are due to the first author in their entirety.

The first  author wishes to thank his PhD mate Arturo Rodriguez Fanlo for numerous helpful discussions.

%
%

\medskip

\begin{center}
\emph{Structure of the Paper}
\end{center}

\medskip

In Section \ref{section: exposition of interpretable Hilbert spaces}, we define and prove some general model theoretic facts about  piecewise interpretable Hilbert spaces. We give examples of piecewise interpretable Hilbert spaces in Section \ref{subsection: examples of interpretable Hilbert spaces}.

In Section \ref{section on structure} we begin a study of the fine structure of piecewise interpretable Hilbert spaces under the assumption of scatterdness, defined in Definition \ref{definition: scattered}. Asymptotic freedom is defined in Definition \ref{definition: asymptotically free}.
 Our main structure theorem is Theorem \ref{theorem: summary of general scattered  interpretable hilbert space}. 
 In Section \ref{subsection: strictly interpretable}, we study the case where $T$ is a classical logic theory and $\mathcal{H}$ is determined by classical logic formulas, in which case we say $\mathcal{H}$ is strictly interpretable. We will   extend Theorem \ref{theorem: summary of general scattered  interpretable hilbert space}  to this context in Corollary \ref{corollary: structure theorem for NFCP theories}.
In Section \ref{subsection: examples of decompositions} we give some concrete examples of the decomposition promised by Theorem \ref{theorem: summary of general scattered  interpretable hilbert space}. 

In Section \ref{section: L^2 spaces}, we consider two rich sources of piecewise interpretable Hilbert spaces: absolute Galois groups and measurable theories. In Section \ref{subsection: galois groups}, we recall a classical result of \cite{Cherlin1980} and give explicit decomposition theorems for the associated $L^2$-spaces, thus strengthening the results of Section \ref{section on structure}.

In Section \ref{subsection: definable measures}, we study some properties of the $L^2$-spaces associated to definable measures. We prove a strong germ property for pseudofinite fields and $\omega$-categorical measurable structures and we show a strong analogy between the probability algebras associated to the Galois groups of Section \ref{subsection: galois groups} and $\omega$-categorical measurable structures: both of these are  inverse limits of $2$-regular inverse systems of finite probability spaces, as defined in Definition \ref{definition: inverse system of finite probability spaces}.



In Section \ref{section: unitary representations} we study  piecewise interpretable Hilbert spaces from the point of view of  the theory of unitary group representations and we establish connections to classical theorems of Howe and Moore. In Section \ref{subsection: representations of automorphism groups} we study the connection between piecewise interpretable Hilbert spaces and irreducibility of the associated unitary representations. In Section \ref{subsection: application to scattered representations}, we study the representations associated to asymptotically free complete types, we prove a Mackey-style irreducibility criterion for these representations and  we show thanks to the theorem of Howe-Moore that these interpretable Hilbert spaces  capture the unitary representations of $p$-adic Lie groups. In Section \ref{subsection: omega-categorical structures} we discuss representations of automorphism groups of $\omega$-categorical structures.


\medskip

This paper uses continuous logic for metric spaces but we will not be reliant on any advanced results of continuous logic. In the first Appendix, we have included an exposition of continuous logic in the style of \cite{HensonIovino2002} or \cite{Gomez2021} and we list there the basic notions which we will use in this paper. The readers who are familiar with \cite{ChangKeisler1966}, \cite{HensonIovino2002} or \cite{BenYaacov2008} can safely ignore this appendix. Outside of the basic notions presented in the appendix, we will develop all the theory we need in this paper  as we go along.

 

%

In the second appendix, we include some additional discussion of the direct limit construction which underlies piecewise interpretable Hilbert spaces. This discussion is very technical in nature but not especially challenging, so it is relegated to an appendix.

\medskip


\section{Introduction to Piecewise Interpretable Hilbert Spaces}\label{section: exposition of interpretable Hilbert spaces}

\emph{In this section, we give a general exposition of piecewise interpretable Hilbert spaces. We use the GNS construction to show that piecewise interpretable Hilbert spaces are defined at the level of the theory $T$. In Section \ref{subsection: examples of interpretable Hilbert spaces} we give some basic examples of piecewise interpretable Hilbert spaces and in Section \ref{subsection: prolonged interpretation} we give an alternative treatment of piecewise interpretable Hilbert spaces which allows us to deduce an important proposition about forking independence in Section \ref{subsection: forking independence}. }

\subsection{Basic definitions}\label{subsection: definition of interpretable Hilbert spaces}

In this section, $M$ is a continuous logic structure in an arbitrary language. In this paper, \emph{definable} always means \emph{$\emptyset$-definable.}
\begin{definition}\label{definition: piecewise interpretable hilbert space}
A piecewise interpretable Hilbert space  $H(M)$ in $M$ is a  direct limit of imaginary sorts $(M_j)_{j\in J}$ of $M$ such that $H(M)$ is a Hilbert space  and the inner product  is definable between all the pieces $M_j$. More explicitly, for all $i,j\in J$, writing $h_i, h_j$ for the direct limit maps from $M_i$ and $M_j$ to $H(M)$, the map $ M_i \times M_j \to \Rr, (x, y) \mapsto \langle h_i x, h_j y\rangle$ is definable. 
\end{definition}

In the above definition, $J$ is a directed partial order. When discussing direct limits of sorts $(M_j)_{j \in J}$ of $M$, we always assume that for any $i \leq j$ in $J$, the transition maps $M_i \to M_j$   are definable. We do not require them to be injective. We say that each sort $M_i$ is a \emph{piece} of $H(M)$ and for any $i, j \in J$, we say that the definable map $f_{ij} : M_i \times M_j \to \Rr$ given by $f_{ij}(x, y) = \langle h_i x, h_j y\rangle$ is an \emph{inner product map.}

Definition \ref{definition: piecewise interpretable hilbert space} does not explicitly  require that the sum and scalar multiplication operations on $H(M)$ be definable between the pieces $(M_j)_J$. However, we show in Lemma \ref{lemma: all operations on interpretable Hilbert space are definable} that the Hilbert space operations are always definable   in an appropriate sense.

\medskip

Suppose $H(M)$ is piecewise interpretable in $M$ and suppose that $H(M)$ is the direct limit of the sorts $(M_j)_{j \in J}$. For $i, j \in J$ write  $f_{ij} : M_i \times M_j \to \Rr$ for the inner product maps. Let $T$ be a continuous logic theory, not necessarily complete, such that $M \models T $, the transition maps $M_i \to M_j$ and the inner product maps $f_{ij}$ are  definable in $T$. Suppose also that $T$ proves that the maps $f_{ij}$ factor through the transition maps of the partial order $(M_j)_J$ and  that for all $j  \in J$, $m\geq 1$ and  $\lambda_1, \ldots, \lambda_m\in \Rr$, 
\[
T \vdash \forall x_1 \ldots \forall x_m \sum_{n, p\leq m}\lambda_n \lambda_p f_{jj}(x_n, x_p) \geq 0
\]
where the variables $x_j$  range in the sort of the piece $M_i$. For every $N \models T$, write $N_j$ for the sort in $N$ corresponding to $M_j$. Since all data is definable, the direct limit $(N_j)_{j \in J}$ with the same inner product maps is a piecewise interpretable Hilbert space $H(N)$ in $N$.

Therefore, $H(M)$ gives rise to a \emph{piecewise interpretable Hilbert space $\mathcal{H}$ in $T$}. We will be careful to distinguish a piecewise interpretable $\mathcal{H}$ in $T$ and its interpretation $H(M)$ in a model $M$ of $T$. Many properties of $H(M)$ transfer to $\mathcal{H}$ and vice versa. 
\medskip
 
A useful way of defining piecewise interpretable Hilbert spaces is given by the GNS theorem (named after Gelfand, Naimark and Segal, see Appendix C in \cite{Bekka2008}), which we recall below.

\begin{definition}
Let $X$ be a set. A function $f: X \times X \to \Rr$ is said to be positive-semidefinite if $f$ is symmetric and for all $n \geq 1$, for all $x_1, \ldots, x_n\in X$ and for all $\lambda_1, \ldots, \lambda_n \in \Rr$, we have $\sum_{i,j} \lambda_i \lambda_j f(x_i, x_j) \geq 0$.
 \end{definition}

\begin{theorem}[GNS Theorem]\label{theorem: classical GNS theorem}
Let $X$ be a set and let $f : X \times X \to \Rr$ be positive-semidefinite. Then  there is a Hilbert space $H$ and a map $F : X \to H$ such that $F(X)$ has dense span in $H$ and for all $x, y\in X$, $\langle F(x), F(y) \rangle = f(x, y)$.

$F$ and $H$ are unique in the sense that if $F'$ and $H'$ satisfy the same statement, then there is a surjective unitary map $G : H \to H'$ such that $F' = G \circ F$. 

\end{theorem}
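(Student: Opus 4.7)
The plan is to run the standard ``algebraic pre-Hilbert plus completion'' construction. First I would form the free real vector space $V$ on the set $X$, with basis $(\delta_x)_{x \in X}$, and define a symmetric bilinear form $B$ on $V$ by $B(\delta_x, \delta_y) = f(x,y)$ extended bilinearly. The positive-semidefiniteness of $f$ is precisely the statement that $B(v,v) \geq 0$ for every $v \in V$, so $B$ is a positive-semidefinite symmetric bilinear form on $V$.

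Second, I would establish Cauchy--Schwarz for $B$, namely $B(u,v)^2 \leq B(u,u) B(v,v)$, by the usual discriminant argument applied to the non-negative quadratic $t \mapsto B(u - tv, u - tv)$ (with a separate check when $B(v,v) = 0$). From this it follows that the set $N = \{v \in V : B(v,v) = 0\}$ coincides with the radical $\{v \in V : B(v,w) = 0 \text{ for all } w \in V\}$, hence is a linear subspace of $V$. Consequently $B$ descends to a genuine positive-definite inner product $\langle \cdot, \cdot \rangle$ on the quotient $V/N$.

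Third, I would take $H$ to be the Hilbert space completion of the pre-Hilbert space $(V/N, \langle \cdot, \cdot \rangle)$, and define $F : X \to H$ by $F(x) = [\delta_x]$ (composed with the canonical embedding of $V/N$ into its completion). By construction, $\langle F(x), F(y)\rangle = B(\delta_x, \delta_y) = f(x,y)$, and the image of $F$ spans $V/N$, which is dense in $H$.

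Finally, for uniqueness, given another pair $(F', H')$ with the same properties I would define a map $G_0$ on the linear span of $F(X)$ inside $H$ by $G_0\bigl(\sum_i \lambda_i F(x_i)\bigr) = \sum_i \lambda_i F'(x_i)$. Both well-definedness and isometry follow at once from the identity $\bigl\|\sum_i \lambda_i F(x_i)\bigr\|^2 = \sum_{i,j} \lambda_i \lambda_j f(x_i, x_j) = \bigl\|\sum_i \lambda_i F'(x_i)\bigr\|^2$, so $G_0$ extends uniquely by continuity to an isometry $G : H \to H'$; its image is dense and closed, hence all of $H'$, so $G$ is the desired surjective unitary with $F' = G \circ F$. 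The only step with any real content is the Cauchy--Schwarz lemma used to see that $N$ is a subspace; everything else is bookkeeping.
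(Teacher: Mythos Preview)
Your argument is correct and is the standard GNS construction. The paper does not actually prove this theorem: it is stated with a reference to Appendix~C of \cite{Bekka2008} and then immediately used as input to the next proposition, so there is nothing in the paper to compare your proof against beyond noting that you have supplied the classical argument that the authors chose to cite rather than reproduce.
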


We will adapt the GNS theorem to our context:

\begin{proposition}\label{proposition: equivalent definition of piecewise interpretable hilbert space}
Suppose  we are given a Hilbert space $H$,  a collection of imaginary sorts $(M_i)_{i \in I}$ of $M$ and functions $F_i : M_i \to H$ for all $i\in I$ such that 
 for all $i,j \in I$, the map $M_i \times M_j \to \Rr, (x, y)\mapsto \langle F_i x, F_j y\rangle$ is definable.  

Then there is a piecewise interpretable Hilbert  space $H(M)$ in $M$ such that 
\begin{enumerate}
\item the  sorts $(M_i)_{I}$ are pieces of $H(M)$
\item writing $h_i : M_i \to H(M)$ for the direct limit maps,    $\bigcup h_i(M_i)$ has dense span in $H(M)$
\item  for any $i, j \in I$   and $x \in M_i$, $y\in M_j$, we have  $\langle h_i x, h_j y\rangle_{H(M)} = \langle F_ix , F_j  y\rangle_H$.
\end{enumerate}
\end{proposition}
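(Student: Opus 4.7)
The approach is to first apply the classical GNS theorem (Theorem \ref{theorem: classical GNS theorem}) to construct $H(M)$ as an abstract Hilbert space, then to package the resulting data into a direct system of imaginary sorts satisfying Definition \ref{definition: piecewise interpretable hilbert space}.

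For the GNS step, set $X := \bigsqcup_{i \in I} M_i$ and define $f : X \times X \to \Rr$ by $f(x,y) := \langle F_i x, F_j y\rangle_H$ for $x \in M_i$, $y \in M_j$. The map $\bar F : X \to H$ sending $x \in M_i$ to $F_i x$ satisfies $f(x,y) = \langle \bar F(x), \bar F(y)\rangle_H$, so $f$ is positive-semidefinite. Theorem \ref{theorem: classical GNS theorem} then yields a Hilbert space $H(M)$ together with maps $h_i : M_i \to H(M)$ whose images jointly have dense span and which satisfy $\langle h_i x, h_j y\rangle_{H(M)} = \langle F_i x, F_j y\rangle_H$; this already secures conditions (2) and (3). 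By the uniqueness clause of GNS, $H(M)$ is canonically unitarily isomorphic to the closed span of $\bigcup_i F_i(M_i)$ inside $H$, under an isomorphism identifying each $h_i$ with $F_i$.

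To present $H(M)$ as a piecewise interpretable Hilbert space, I take the index set $J$ to be the collection of finite subsets of $I$, directed by inclusion. For $S \in J$, the piece is $M_S := \bigsqcup_{i \in S} M_i$, an imaginary sort of $M$; the transition map $M_S \to M_{S'}$ for $S \subseteq S'$ is the natural inclusion, which is definable; and the direct limit map $h_S : M_S \to H(M)$ is defined piecewise by $h_S|_{M_i} := h_i$, so that $h_{S'} \circ t_{SS'} = h_S$ by construction. Each original sort $M_i$ appears as the piece $M_{\{i\}}$, giving condition (1). Definability of the inner product between any two pieces $M_S$ and $M_{S'}$ reduces, by cases on which summand contains each argument, to the definability of $\langle F_i x, F_j y\rangle$ on each $M_i \times M_j$, which holds by hypothesis, and a finite piecewise combination of definable maps is definable.

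The main subtlety to verify is the structural fact that a finite disjoint union of imaginary sorts is itself an imaginary sort and that piecewise-defined functions on such a disjoint union remain definable. In continuous logic this is a routine (if slightly tedious) construction, and is the kind of bookkeeping the authors defer to their Appendix B. Granted this, the proposition is a direct unpacking of GNS combined with a bilinearity argument, with no further model-theoretic input required.
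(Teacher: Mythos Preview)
Your argument has a genuine gap at the packaging step. Definition \ref{definition: piecewise interpretable hilbert space} requires the direct limit of the system $(M_j)_{j \in J}$ to \emph{be} the Hilbert space $H(M)$, not merely to embed in it with dense span. In your system $J = \{\text{finite subsets of } I\}$ with pieces $M_S = \bigsqcup_{i \in S} M_i$ and inclusion transition maps, the direct limit is (after identifications via the $h_i$) precisely the set $\bigcup_{i \in I} h_i(M_i)$. This set has dense span in $H(M)$ but is not a Hilbert space: it is not closed under addition, scalar multiplication, or metric limits. So the object you produce does not satisfy Definition \ref{definition: piecewise interpretable hilbert space}, and condition (2) of the proposition, which speaks of $H(M)$ as an ambient Hilbert space, becomes vacuous.

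The paper's construction is designed precisely to fix this. It builds imaginary sorts $M^\eta_{(i_n)}$ from countable products of the $M_{i_k}$ together with bounded scalar intervals $[-\eta(n),\eta(n)]$, then quotients by a definable pseudometric arising from a forced-Cauchy limit of finite linear combinations $\sum \lambda_k F x_k$. An element of $M^\eta_{(i_n)}$ thus represents an arbitrary vector in $H(M)$ obtainable as a limit of linear combinations with growth rate bounded by $\eta$. Letting $\eta$ and $(i_n)$ range over the directed set $J$ ensures that every vector of $H(M)$ lies in some piece, so the direct limit is all of $H(M)$. This is the substantive work in the proof; once one sees that the direct limit must itself be the Hilbert space, something of this shape is essentially forced.
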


\begin{proof}

 To make notation lighter, we write  $F$ for  all functions $F_i$, $i\in I$. It will always be clear from the variable which map $F_i$ we are using. By passing to a closed subspace of $H$, we can assume without loss of generality that $\bigcup F(M_i)$ has dense span in $H$. Let $x\in H$. There is an increasing function $\eta : \Nn \to \Nn$ and a uniformly Cauchy sequence  $(\sum_{i = \eta(n)}^{\eta(n+1)-1} \lambda^n_i Fx^n_i)$ which converges to $x$ such that $x^n_i\in \bigcup_j M_j$. We can assume that  $\eta(n)$ is large enough so that $|\lambda_i^n |\leq \eta(n)$ for all $i \leq n$. We will decompose $H$ according to the growth rate of $\eta$.

Fix $\eta : \Nn \to \Nn$ strictly increasing and fix an arbitrary countable sequence $(i_n)$ in $I$. We define an imaginary sort $M^\eta_{(i_n)}$ as follows.  $M_{(i_n)}^\eta$ will be the metric completion of the  countable Cartesian product   $\prod_{n \geq 0} \big(\prod_{k = \eta(n)}^{\eta(n+1) -1} [-\eta(n), \eta(n)] \times M_{i_k} \big)$ under the metric $\delta$  defined below. Take $(\overline{x}_n) \in M_{(i_n)}^\eta$ and write $\overline{x}_n = (\lambda_{\eta(n)}, x_{\eta(n)}, \ldots, \lambda_{\eta(n+1)-1}, x_{\eta(n+1) - 1})$.  
For all $n$, define 
\[
F(\overline{x}_n) = \sum_{i = \eta(n)}^{\eta(n+1) - 1} \lambda_i Fx_i.
\]
 We define inductively maps $g_k : M_{(i_n)}^\eta \to H$. $g_0((\overline{x}_n))$ is just $F(\overline{x}_0)$. Given $g_k$, define $g_{k+1}((\overline{x}_n)) = F(\overline{x}_{k+1})$ if $\|g_k((\overline{x}_n)) - F(\overline{x}_{k+1})\| \leq 2^{-n}$. Otherwise, define\footnote{\label{footnote: compare with BY definition} This is analogous to Definition 3.6 in \cite{BenYaacov2010}} 
\[
g_{k+1}((\overline{x}_n)) = g_k((\overline{x}_n)) + \frac{2^{-n}}{\|F(\overline{x}_{k+1}) - g_k((\overline{x}_n))\|}(F(\overline{x}_{k+1}) - g_k((\overline{x}_n)))
\]
Then $(g_k((\overline{x}_n)))_k$ is a uniformly Cauchy sequence in $H$. It is straightforward to check that for all $k$, the map $((\overline{x}_n), ( \overline{y}_n)) \mapsto \langle g_k((\overline{x}_n)), g_k((\overline{y}_n))\rangle$ is definable. We obtain a definable map 
\[
\beta: ((\overline{x}_n), ( \overline{y}_n) )\mapsto  \lim_k  \langle g_k((\overline{x}_n)), g_k((\overline{y}_n))\rangle.
\]
 $\beta$ is positive-semidefinite and hence $\beta$ induces a pseudo-metric $\delta$. We quotient $M^\eta_{(i_n)}$ by this pseudo-metric and take the metric completion, so that $M_{(i_n)}^\eta$ is identified with a subset of $H$.

We now define the direct limit structure. Fix an arbitrary  ordering $\leq_I$ of $I$. Let $J$ be the set of pairs $(\eta, (i_n))$ such that $\eta : \Nn \to \Nn$ is strictly increasing and $(i_n)$ is a sequence in $I$ such that for all $n$, $(i_{\eta(n)}, \ldots, i_{\eta(n+1)-1})$ is increasing with respect to the order $\leq_I$. We could have chosen such an $(i_n)$ when we constructed $M_{(i_n)}^\eta$ above.   We define a partial ordering on $J$ as follows: we say that $(\eta, (i_n)) \leq (\mu, (j_n))$ if and only if for all $n$, $\eta(n+1) - \eta(n) \leq \mu(n+1) - \mu(n)$ and $(i_{\eta(n)}, \ldots, i_{\eta(n+1)-1})$ is a subtuple of $(j_{\mu(n)}, \ldots, j_{\mu(n+1)-1})$. We have definable maps $M^\eta_{(i_n)} \to M^\mu_{(j_n)}$ for $(\eta, (i_n)) \leq (\mu, (j_n))$ by taking the obvious inclusions and by using the scalar $0$ to pad the image of $M_{(i_n)}^\eta$ in $M_{(j_n)}^\mu$. $H(M)$ is defined as the direct limit of the sorts $M_{(i_n)}^\eta$.  
\end{proof}

In Appendix \ref{appendix: more theory}, we define the notion of embedding and isomorphism of piecewise interpretable Hilbert spaces and we show that the space $H(M)$ constructed in Proposition \ref{proposition: equivalent definition of piecewise interpretable hilbert space} is unique up to isomorphism.

In this paper, we are only interested in discussing piecewise interpretable Hilbert spaces in $M$ up to isomorphism. Therefore, in order to fix a piecewise interpretable Hilbert space $H(M)$, it will be enough to specify a pair $(H, (h_i)_{i\in I})$ where $H$ is a  Hilbert space  and the maps $h_i : M_i \to H$ are  as in Proposition \ref{proposition: equivalent definition of piecewise interpretable hilbert space} and the span of $\bigcup h_i(M_i)$ is dense in $H$.

 Equivalently, a piecewise interpretable Hilbert space $H(M)$ can   be described up to isomorphism by fixing   sorts $(M_i)_I$ of $M$ and taking for every pair $i,j \in I$ a definable map $f_{ij} : M_i \times M_{j} \to \Rr$ such that the concatenation of all maps $(f_{ij})_I$ is positive-semidefinite on $(\bigcup_I M_i)^2$. These various presentations   correspond to the equivalence of categories discussed in Lemma \ref{lemma: category theoretic reformulation of GNS}.

\medskip

If $H(M)$ is piecewise interpretable in $M$ and is the direct limit of the imaginary sorts $(M_j)_{j \in J}$, we do not require that the direct limit maps $h_j : M_j \to H(M)$ be injective.  This is  because we show in Lemma \ref{lemma: transition maps can be assumed isometric} that $H(M)$ is always isomorphic to a piecewise interpretable $H'(M)$ with isometric  direct limit maps. In this paper we will often move to imaginary sorts so that the direct limit maps can be often be assumed to be isometries. We will always indicate when we move to imaginary sorts.

When the direct limit maps are isometries, we identify the pieces $M_j$ with subsets of $H(M)$.  By a \emph{type-definable subset $p$ of $H(M)$}  we mean a type-definable subset of some piece $M_j$ of $H(M)$ with an isometric direct limit map $M_j \to H(M)$. We stress that a type-definable subset $p$ of $H(M)$ is contained in a single piece of $H(M)$. This will allow us to quantify over the piece containing $p$ and to use compactness arguments.

A type-definable subset of $H(M)$ is not to be confused with a piecewise $\bigwedge$-interpretable   subspace of $H(M)$, which we define as follows:

\begin{definition}\label{definition: generalised piecewise interpretable}
A   piecewise $\bigwedge$-interpretable subspace $V(M)$ of $H(M)$  is a subspace   $V(M)$ such that if $H(M)$ is the direct limit of the sorts $(M_j)_{j \in J}$ with direct limit maps $h_j: M_j \to H(M)$, then for all $j \in J$ the set $h_j^{-1}(V(M))$ is type-definable in $M_j$.

If $p$ is a type-definable subset of some piece $M_j$, we write $H_p(M)$ for the 
  piecewise $\bigwedge$-interpretable subspace of $H(M)$ consisting of the closed span of the set   $h_j(p)$ in $H(M)$. 

If $\mathcal{H}$ is the piecewise interpretable Hilbert space in $T$ corresponding to $H(M)$, we write $\mathcal{H}_p$ for the piecewise $\bigwedge$-interpretable Hilbert space in $T$ corresponding to $H_p(M)$.
\end{definition}
 
In this paper, we always assume that pieces of a piecewise interpretable Hilbert space $H(M)$ are imaginary sorts of $M$. As remarked above, this allows us to quantify over the pieces of $H(M)$. However, as we will see in Section \ref{subsection: omega-categorical structures}, it is often natural to consider piecewise interpretable Hilbert spaces with pieces which are distance-definable sets. We recall   some  notions from \cite{BenYaacov2008} and we introduce a general construction which shows that there is no loss of generality in only considering piecewise interpretable Hilbert spaces whose pieces are sorts of $M$.

\begin{definition}[\cite{BenYaacov2008}   9.16] \label{definition: distance-definable}
Let $T$ be a complete continuous logic theory. Let $M$ be an $\omega_1$-saturated model of $T$.

Let $r$ be a type-definable set. We say that $r$ is distance-definable if the function $d(x, r) = \inf\{d(x, y) \mid y \models r\}$ is definable in $M$, where $x$ is in the sort of $r$.
\end{definition}

%
Distance-definability is the continuous logic equivalent of `definability' in classical logic (as distinguished from `type-definability'). By 9.18 in \cite{BenYaacov2008}, distance-definability of $p$ is not model-dependent and if $r \neq \emptyset$ in $M$, then $r \neq \emptyset$ in any model of $T$.

 \begin{definition}\label{definition: expansion by a distance-definable set}
 Let $T$ be an arbitrary complete continuous logic theory. Let $r$ be a non-empty distance-definable set contained in a sort $S$ of $T$. 
 
 Define an expansion $T^{r}$ of $T$ as follows: we add a sort $X_r$ and a map $f_r : X_r \to S$. $T^r$ extends $T$ and says that $f_r$ is an isometry with dense image in $r$, i.e. for every $0 < \e $, if $d(x, r) < \e$ then there is $z \in X_r$ with $d(x, f_r(z)) \leq \e$. 
\end{definition}

For every $M \models T$, we write $M^{r}$ for the extension of $M$ to a model of $T^{r}$. Since non-empty distance-definable sets are realised in every model, this extension exists. This extension is clearly unique. By Lemma \ref{lemma: stable embedded conditions}, $T$ is stably embedded in $T^r$. If one wishes to work with a piecewise interpretable Hilbert space $H(M)$ such that $r$ is a piece of $H(M)$, we can move to the theory $T^r$ and work with the sort $X_r$ instead. This shows that there is no loss of generality in assuming that piecewise interpretable Hilbert spaces always have entire sorts as pieces.

\medskip

\noindent\emph{\textbf{Convention:} In this paper,  we will only consider `piecewise interpretable Hilbert spaces' and `piecewise $\bigwedge$-interpretable subspaces', so we will now refer to them simply as `interpretable'  or `$\bigwedge$-interpretable'.
}

\subsection{Examples}\label{subsection: examples of interpretable Hilbert spaces}

 \emph{We give some elementary examples which illustrate a variety of interpretable Hilbert spaces.   See Sections \ref{section: L^2 spaces} and \ref{section: unitary representations} for rich sources of examples which are of wider relevance to model theory and representation theory.} 

\medskip

 1. In classical logic, let $T^\infty$ be the  theory of an infinite set. Writing $S$ for the main sort of $T$, we define the inner product map $f : S^2 \to \Rr$ by $f(x, x) = 1$  and $f(x,y) = 0$ if $x\neq y$. This gives an interpretable Hilbert space   $\mathcal{H}$ such that for any $M \models T^{\infty}$, $M$ is an orthonormal set  in $H(M)$ with dense span. 

Define also the inner product maps $g(x, x) = 2$ and $g(x, y) = 1$ if $x \neq y$. Define also $h(x, x) = 4$ and $h(x, y) = 3$ if $x\neq y$. These also give interpretable Hilbert spaces $\mathcal{H}'$ and $\mathcal{H}''$ respectively. Observe that $\mathcal{H}'$ and $\mathcal{H}''$ are isomorphic, but they are not isomorphic to $\mathcal{H}$. One way of proving this is to note that for any $M \models T$, $H'(M)$ and $H''(M)$ have an invariant vector under the action of $\Aut(M)$, but this is not true of $H(M)$. This will be discussed further in Section \ref{section: unitary representations}.

\medskip

2. Let $T = Th(\Zz, \leq)$. Define $f(x, x) = 2$, $f(x, y) = 1$ if $x,y$ are consecutive, and $f(x, y) = 0$ otherwise. $f$ is positive definite and defines an interpretable Hilbert space.

For a more complicated example with the same flavour, let $V = \ell^2(\Zz)$ and for $n \in \Zz$ let $v_n = (2^{-|k+n|})_{k \in \Zz}$. The sequence $(v_n)_{\Zz}$ generates $\ell^2(\Zz)$.  Define the map $h : \Zz \to V, n \mapsto v_n$.  Then for $x,y \in \Zz$, $\langle hx, hy\rangle$ depends only on the distance between $x$ and $y$, so the inner product is definable on $\Zz$ in $Th(\Zz, \leq)$. Then  we are in the situation of Proposition \ref{proposition: equivalent definition of piecewise interpretable hilbert space} so $h$ induces an interpretation of $\ell^2(\Zz)$ in $(\Zz, \leq)$.

For yet another example, let $S$ be an  arc of the circle $S^1$. $\Zz$ acts on $L^2_\Cc(S)$ via $f \mapsto z^nf$. Let $V$ be the subspace of $L^2_\Cc(S)$ generated by the orbit of $1$ under $\Zz$. Then Proposition \ref{proposition: equivalent definition of piecewise interpretable hilbert space} shows that $V$ is interpretable in $\Zz$.

\medskip


\medskip

3. Given two interpretable Hilbert spaces $\mathcal{H}$ and $\mathcal{H'}$, we can form their sum $\mathcal{H + H'}$ as follows. Say $\mathcal{H}$ and $\mathcal{H}'$ are the direct limits of $(S_i)_{i \in I}$, $(S_{j})_{j \in J}$  with direct limit maps $h_i : S_i \to \mathcal{H}$ and $h_j : S_j \to \mathcal{H}'$ respectively. For any $M \models T$, let $H$ be the orthogonal sum of $H(M)$ and $H'(M)$. Then the system of maps $h_i : M_i \to H$ and $h_j : M_j \to H$ is as in Proposition \ref{proposition: equivalent definition of piecewise interpretable hilbert space} and hence they define an interpretable Hilbert space $\mathcal{H}+ \mathcal{H}'$ in $T$. It is clear that for all $N \models T$, $(H + H')(N)$ is the orthogonal sum of $H(N)$ and $H'(N)$.

We can also define the orthogonal sum of infinitely many interpretable Hilbert spaces in the same way. In Section \ref{section on structure}, we will see that it is sometimes possible to recognise an interpretable Hilbert space in $T$ as the orthogonal sum of a family of interpretable Hilbert spaces with interesting properties.

\medskip

4. Suppose that $\mathcal{H}$ and $\mathcal{H}'$ are interpretable in $T$. Then their tensor $\mathcal{H} \otimes \mathcal{H}'$ is interpretable in $T$. For any $M \models T$, write $H(M)$ and $H'(M)$ as the direct limits of $(M_j)_{j \in J}$ and $(M_{j'})_{j' \in J'}$ respectively. For any $j \in J$ and $j' \in J'$, define the map $F_{j, j'} : M_j \times M_j' \to H(M) \otimes H'(M)$ by $F_{j, j'}(x, y) = h_jx \otimes h'_{j'}y$. The image of the maps $(F_{j, j'})$ have dense span in $H(M) \otimes H'(M)$ and for any $(j_1, j_1')$ and $(j_2, j_2')$, 
\[
\langle F_{j_1, j_1'} (x_1, y_1), F_{j_2, j_2'}(x_2, y_2)\rangle = \langle h_{j_1}x_1, h_{j_2}x_2\rangle_{H(M)} \langle h_{j_1'} x_1', h_{j_2'}x_2'\rangle_{H'(M)}.
\]
 This is a definable map $M_{j_1} \times M_{j_1'} \times M_{j_2} \times M_{j_2'} \to \Rr$ so Proposition \ref{proposition: equivalent definition of piecewise interpretable hilbert space} applies.


\medskip

\subsection{Prolonging piecewise interpretable Hilbert spaces}\label{subsection: prolonged interpretation}

 Let $\mathcal{H}$ be an interpretable Hilbert space in $T$. In this section, we fix a family of pieces $(S_i)_{ i \in I}$ of $\mathcal{H}$ with direct limit maps $h_i$ and inner product maps $f_{ij}$ ($i, j \in I$) such that $\bigcup h_i(S_i)$ has dense span in $\mathcal{H}$. $I$ is not necessarily a directed partial order. 
 
 We have seen that in practice $\mathcal{H}$ is often characterised by such a collection of pieces.
  In contrast, the full direct limit structure of $\mathcal{H}$ can be complicated to describe. Therefore, we present here a construction   which allows us to work with interpretable Hilbert spaces in a setting which is closer to the classical presentation of Hilbert spaces in continuous logic. 
 
  Under this construction, the balls $B(0, n)$ in $H(M)$ become subsets of Hilbert space balls which we add to the theory $T$ as new sorts. These subsets are not definable, but this construction helps simplify the discussion of interpretable Hilbert spaces and easily yields the important results about model-theoretic independence in interpretable Hilbert spaces in Section \ref{subsection: forking independence}.

Suppose that $T$ is a theory  in the language $\mathcal{L}$.  We define an extension $T^\mathcal{H}$ of the theory $T$ in a language $\mathcal{L}^\mathcal{H}$ as follows.
We add to $\mathcal{L}$ all the sorts and functions which are used in the   presentation of  Hilbert spaces in continuous logic, as in Appendix \ref{appendix: Hilbert spaces in continuous logic}, and $T^\mathcal{H}$ says that these new sorts form an infinite dimensional Hilbert space.  For each $i \in I$, we also add to our language a function symbol $h_i$ from $S_i$ to one of the Hilbert space balls with radius greater than $\sup_x \sqrt{f_{ii}(x, x)}$. For $i,j \in I$, $T^{\mathcal{H}}$ contains the additional axioms
\[
\forall x \in S_i, \forall y \in S_j, f_{ij}(x, y) = \langle h_i(x), h_j (y)\rangle
\]
where $\langle ., . \rangle$ is the inner product on the ball which $h_i$ maps into.
We also  add axioms saying that the orthogonal complement of $\bigcup h_i(M_i)$ is infinite dimensional. We will refer to the maps $(h_i)$ as the interpretation maps in $T^{\mathcal{H}}$. 


Any  model of $T^{\mathcal{H}}$ has the  form $(M, H )$ where $M\models T$ and where $H$ is an infinite dimensional Hilbert space containing $H(M)$ as a subspace in the obvious way. It follows that any model $(M, H)$ of $T^{\mathcal{H}}$ is uniquely determined by its $T$-part and the dimension of the orthogonal complement of $H(M)$ in $H$. One deduces that $T^{\mathcal{H}}$ is a complete theory by applying standard saturation arguments.
 We now show that  moving from $M \models T$ to the $\mathcal{L}^{\mathcal{H}}$-structure $(M, H)$ does not add any extra structure to $M$. 
 \begin{proposition}\label{proposition: T is stably embedded}
$T$ is stably embedded in $T^{\mathcal{H}}$. 

If $\mathcal{H}$ is the direct limit of the   sorts $(S_j)_{j \in J}$, if the sorts $S_j$ are real sorts of $T$ and if the direct limit map on each $S_j$ is injective, then 
 for any $(M, H) \models T^{\mathcal{H}}$ and $C \subseteq (M, H)$, if $f$ is a $C$-definable function between sorts of $T$ in $\mathcal{L}^{\mathcal{H}}$, then $f$ is $\dcl_{\mathcal{L}^{\mathcal{H}}}(C) \cap M$-definable in $\mathcal{L}$.

Hence, without assuming that $\mathcal{H}$ is the direct limit of real sorts of $T$,  $T$ is fully embedded in $T^{\mathcal{H}}$ in the sense that for any $(M, H) \models T^{\mathcal{H}}$ and  any function $f$ between sorts of $T$ definable in $\mathcal{L}^{\mathcal{H}}$ over $(M,H)$ is definable in $\mathcal{L}$ over $M$.

\end{proposition}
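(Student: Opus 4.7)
The plan is to establish stable embeddedness first, and then to derive both full-embeddedness statements from it. The main technical ingredient is the following \emph{key lemma}: for every vector $v \in H$ and every piece $S_i$, the map $S_i \to \Rr$ given by $x \mapsto \langle h_i(x), v\rangle$ is an $\mathcal{L}$-definable predicate over $M$. To prove it I would decompose $v = v_0 + v^\perp$ with $v_0 \in H(M)$ and $v^\perp \in H(M)^\perp$, so that $\langle h_i(x), v\rangle = \langle h_i(x), v_0\rangle$. Since $v_0$ lies in the closed span of $\bigcup_j h_j(S_j(M))$, there is a sequence of finite sums $w_n = \sum_k \lambda_k^n h_{j_k^n}(m_k^n)$ with $m_k^n \in M$ converging to $v_0$ in norm. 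By Cauchy--Schwarz, $|\langle h_i(x), v\rangle - \langle h_i(x), w_n\rangle| \leq \sqrt{f_{ii}(x,x)} \cdot \|v_0 - w_n\|$; since $f_{ii}(x,x)$ is uniformly bounded on $S_i$, the expression $\langle h_i(x), w_n\rangle = \sum_k \lambda_k^n f_{i,j_k^n}(x, m_k^n)$ converges uniformly in $x$ to $\langle h_i(x), v\rangle$. Each approximant is $\mathcal{L}$-definable over $M$, and so is the uniform limit.

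For stable embeddedness I would pass to a sufficiently saturated $(M,H)$ and argue via automorphism extension. An $\mathcal{L}$-automorphism $\sigma$ of $M$ induces a unitary $U_\sigma$ on $H(M)$ by $h_i(a) \mapsto h_i(\sigma(a))$; this is well defined and isometric because $\sigma$ preserves the $\mathcal{L}$-definable inner product maps $f_{ij}$. Extending by the identity on $H(M)^\perp$ produces an $\mathcal{L}^{\mathcal{H}}$-automorphism of $(M,H)$. Furthermore, if $v \in H$ and $\sigma$ fixes the $\mathcal{L}$-canonical parameter $\bar{c}_v$ of the predicate $x \mapsto \langle h_i(x), v\rangle$ (canonical parameters exist because the Hilbert-space inner product is stable), then $U_\sigma$ fixes $v_0$ and hence the extended automorphism fixes $v$. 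Given any $\mathcal{L}^{\mathcal{H}}$-definable predicate $\phi(x, \bar{b}, \bar{v})$ on $T$-sorts with parameters in $(M,H)$, letting $C_\phi$ be $\bar{b}$ together with the canonical parameters $\bar{c}_{v_k}$, the predicate is invariant under every $\mathcal{L}$-automorphism of $M$ fixing $C_\phi$ (via the lift). By saturation of $M$, $\phi(x, \bar{b}, \bar{v})$ is therefore $\mathcal{L}$-definable over $C_\phi \subseteq M$, which is the content of stable embeddedness.

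For the second statement, the real-sorts and injectivity hypotheses make each $h_j$ an $\mathcal{L}^{\mathcal{H}}$-definable injection, so the canonical parameter $\bar{c}_v$ is itself $\mathcal{L}^{\mathcal{H}}$-definable from $v$ alone. Applying the analysis above to the graph of a $C$-definable function $f$ between $T$-sorts produces an $\mathcal{L}$-definition over parameters in $\dcl_{\mathcal{L}^{\mathcal{H}}}(C) \cap M$. The third statement is a direct weakening of the first: stable embeddedness already provides $\mathcal{L}$-definability over parameters somewhere in $M$, which is all that is required. The main obstacle is the tight parameter control in the real-sorts case --- verifying that $\bar{c}_v$ lands in $\dcl_{\mathcal{L}^{\mathcal{H}}}(C)$ and not merely in some uncontrolled imaginary sort of $M$. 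The real-sorts and injectivity hypotheses are exactly what rule out this pathology, by letting us identify elements of the pieces with their images under the interpretation maps.
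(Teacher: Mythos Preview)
Your argument for stable embeddedness via automorphism extension is exactly the paper's: lift an $\mathcal{L}$-automorphism of $M$ to $H(M)$ by GNS and extend by the identity on $H(M)^\perp$, then invoke the automorphism-lifting characterisation of stable embeddedness. Your explicit ``key lemma'' that $x \mapsto \langle h_i(x), v\rangle$ is $\mathcal{L}$-definable over $M$ is a nice formulation of something the paper uses only implicitly (via the observation that $\mathcal{L}$- and $\mathcal{L}^{\mathcal{H}}$-types over $M$-parameters coincide).

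For the parameter-control statement the routes diverge. The paper works directly with the parameter tuple: write $C$ as $a b_0 b_1$ with $a \subseteq M$, $b_0 \subseteq H(M)$, $b_1 \subseteq H(M)^\perp$; argue that $\tp(b_1/M)$ has a unique extension (so $b_1$ is redundant), and then use injectivity of the $h_j$'s on real sorts to see that each coordinate of $b_0$ is $\mathcal{L}^{\mathcal{H}}$-interdefinable with a real element of $M$. You instead pass through canonical parameters $\bar c_v$ of the stable predicate $\langle h_i(\cdot), v\rangle$. This is correct in spirit but leaves a small gap: canonical parameters live in $M^{eq}$, not in $M$, and you need parameters in $\dcl_{\mathcal{L}^{\mathcal{H}}}(C)\cap M$ (real sorts). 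Saying ``$\bar c_v$ is $\mathcal{L}^{\mathcal{H}}$-definable from $v$'' puts it in $\dcl_{\mathcal{L}^{\mathcal{H}}}(C)$ but not automatically in $M$. The clean fix is precisely the paper's move: under the real-sorts and injectivity hypotheses, the $H(M)$-component $v_0$ of $v$ equals $h_j(x_0)$ for some $x_0$ in a real sort $S_j$, and $x_0 \in \dcl_{\mathcal{L}^{\mathcal{H}}}(v)$ since $h_j$ is a definable injection; then $x_0$ (not an abstract canonical parameter) is the real $M$-element you need. Once you make this substitution your argument goes through, and in fact collapses to the paper's.
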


\begin{proof}
First let $(M, H) \models T^{\mathcal{H}}$ be $\kappa$-saturated with $|(M, H)| = \kappa$ where $\kappa \geq \omega_1$. We show that $(4)$ from Lemma \ref{lemma: stable embedded conditions} holds. Let $\alpha$ be an automorphism of $M$. By the GNS-theorem, $\alpha$ induces an automorphism of $H(M)$. Define $\beta: (M, H) \to (M, H)$  extending $\alpha$ as follows:   on $H(M)$, $\beta$ is the Hilbert space isomorphism induced by $\alpha$, and on $H(M)^\perp$, $\beta$ is the identity. Then $\beta$ respects all the basic relations in $\mathcal{L}^{\mathcal{H}}$ so $\beta$ is an isomorphism and $T$ is stably embedded in $T^{\mathcal{H}}$. 

Now assume that the pieces of $\mathcal{H}$ belong to $T$ and that the direct limit maps are injective.
  Let $(M, H)$ be any model of $T^{\mathcal{H}}$.  Let $C \subseteq (M, H)$ and let $f$ be a $C$-definable function into $\Rr$ on a sort of $T$. We can assume that $f (x) = g(x, C)$ where $g$ is $0$-definable and $C$ is a finite tuple of $(M, H)$. We can also write the tuple $C$ as a pair $ab$ where $a \subseteq M$ and $b \subseteq H$. Finally, we can assume that $b = b_0b_1$ where $b_0 \subseteq H(M)$ and $b_1 \subseteq H(M)^\perp$, by expressing elements of $b$ are sums of elements of $b_0$ and $b_1$ and by noting that $b_0b_1$ and $b$ are inter-definable.


Let $(N, H')$ be any elementary extension of $(M, H)$. As above, we can construct an automorphism of $(N, H')$ by taking any automorphism of $N$ and extending it by any unitary automorphism of $H(N)^\perp$. It follows that $\tp(b_1/M)$ is completely determined by the values of the inner product between elements of $b_1$ and by the partial type which says $b_1 \subseteq H(M)^\perp$. In particular $\tp(b_1/M)$ is the unique extension to $M$ of $\tp(b_1)$. It follows   that $f$ is definable over $ab_0$. 

Since we have assumed that the sorts $(S_j)$ are part of $T$  and that the direct limit maps are injective, each element of $b_0$ is inter-definable with an element of $M$. Hence we have proved that $f$ is definable over $\dcl_{\mathcal{L}^{\mathcal{H}}}(C) \cap M$ in $\mathcal{L}^{\mathcal{H}}$. 

In order to show that $f$ is definable over $\dcl_{\mathcal{L}^{\mathcal{H}}}(C) \cap M$ in $\mathcal{L}$, it is enough to note that $S^{\mathcal{L}}(C') \cong S^{\mathcal{L^{\mathcal{H}}}}(C')$ via the restriction map for any $C' \subseteq M$, where $S^{\mathcal{L}}(C')$ and $S^{\mathcal{L}^{\mathcal{H}}}(C')$ are  type-spaces in the sort of $f$ over $C'$ in  $\mathcal{L}$ and  $\mathcal{L}^{\mathcal{H}}$ respectively. This is   seen by noting that in a sufficiently saturated extension $N$ of $M$, if $a,b\in M$ are conjugate over $C'$, then they are conjugate in any $(N, H) \models T^{\mathcal{H}}$.

Now let $f$ be a $C$-definable function between any two sorts $X$ and $X'$ of $T$. We need to show that its graph is $\dcl_{\mathcal{L}^{\mathcal{H}}}(C) \cap M$-definable in $\mathcal{L}$. Our result for functions $X \times X' \to \Rr$ shows that  $S_{X \times X'}^{\mathcal{L}}(\dcl_{\mathcal{L}^\mathcal{H}}(C) \cap M) \cong S_{X \times X'}^{\mathcal{L}^{\mathcal{H}}}(C)$ so $f$ is indeed $\dcl_{\mathcal{L}^\mathcal{H}}(C) \cap M$-definable in $\mathcal{L}$. 

Finally, if we do not assume that $\mathcal{H}$ is the direct limit of real sorts of $T$, then the final part of the proposition follows by choosing parameters in $M$ containing all necessary imaginaries in their definable closure.
 \end{proof}

\medskip

\subsection{Forking independence in interpretable Hilbert spaces }\label{subsection: forking independence}
This section aims to prove Proposition \ref{proposition: transfer of independence} which will be essential in the remainder of this paper. Proposition \ref{proposition: transfer of independence} says that we can use stability of the inner product maps and stable embeddedness of $T$ in $T'$ to obtain full forking independence in the sense of  Lemma \ref{lemma: forking independence in Hilbert spaces}. In this section, $T$ is a complete continuous logic theory.

 We begin with a general lemma.
Throughout this paper, `forking' is always meant with respect to stable  definable functions. 

\begin{lemma}\label{nonforking preserved under reducts}
Let $\mathcal{L}_0$ be a language and $\mathcal{L}$ an extension of $\mathcal{L}_0$, possibly with new sorts. Let  $T$ be a complete $\mathcal{L}$-theory and $T_0$ the reduct of $T$ to $\mathcal{L}_0$. Suppose that $T_0$ has weak elimination of imaginaries.

 Let $(M, N) \models T$ where $M \models T_0$ and $N$  denotes the new sorts in $\mathcal{L}$ added to $\mathcal{L}_0$.  Take $A \subseteq B \subseteq (M, N)$   such that $A = \bdd_{\mathcal{L}}(A)$. Let $p$ be a type over $B$ in a fragment of $\mathcal{L}_0$-formulas which does not fork over $A$ in the sense of $T$. Then  $p$ does not fork over $A\cap M$ in the theory  $T_0$.
\end{lemma}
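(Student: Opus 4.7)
The plan is to work formula-by-formula, recast non-forking as a condition on canonical bases, and then apply weak elimination of imaginaries in $T_0$ to pull the canonical base down into a real tuple lying in $A \cap M$.

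First I would reduce to the case where $p$ is a $\phi$-type for a single stable $\mathcal{L}_0$-formula $\phi(x,y)$, since non-forking (with respect to the stable-formulas fragment) is checked one stable formula at a time. Note that since $\phi$ is $\mathcal{L}_0$, the parameters occurring in $p|_\phi$ lie in $B \cap M$. Let $c \in T^{heq}$ be the canonical base of $p|_\phi$ computed in $T$, and $c_0 \in T_0^{heq}$ the canonical base of $p|_\phi$ computed in $T_0$ (which exists because $\phi$ is stable in $T_0$). The hypothesis that $p$ does not fork over $A$ in $T$ translates into the canonical-base statement $c \in \bdd_{\mathcal{L}}(A) = A$.

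The crucial linking step will be to argue that $c_0 \in \dcl_T^{heq}(c)$. Both $c$ and $c_0$ are canonical parameters for the same definable set $D_\phi = \{b : \phi(x,b) \in p\}$, merely computed in different imaginary-sort expansions: any $\mathcal{L}$-automorphism fixing $c$ fixes $D_\phi$ setwise, and, restricting to the $\mathcal{L}_0$-reduct, yields an $\mathcal{L}_0$-automorphism still fixing $D_\phi$, which in turn must fix $c_0$. Consequently $c_0 \in \dcl_T^{heq}(c) \subseteq A$.

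Next I would apply weak elimination of imaginaries in $T_0$ to $c_0$, producing a real tuple $d \subset M$ with $c_0 \in \dcl_{T_0}^{heq}(d)$ and $d \in \bdd_{T_0}^{heq}(c_0)$. Then
\[
d \in \bdd_{T_0}^{heq}(c_0) \subseteq \bdd_T^{heq}(c_0) \subseteq \bdd_T^{heq}(A) = A,
\]
and since $d$ lives in real sorts of $T_0$ (hence in $M$), we obtain $d \in A \cap M$. Therefore $c_0 \in \dcl_{T_0}^{heq}(d) \subseteq \bdd_{T_0}^{heq}(A \cap M)$, which is exactly the canonical-base characterization of $p|_\phi$ not forking over $A \cap M$ in $T_0$. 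Running this over every stable $\mathcal{L}_0$-formula $\phi$ in the fragment yields the lemma. The main obstacle will be the linking step $c_0 \in \dcl_T^{heq}(c)$, because it requires comparing canonical parameters across two different imaginary-sort expansions and checking that the action of the $\mathcal{L}$-automorphism group descends correctly to the $\mathcal{L}_0$-reduct; once this comparison is in hand, the rest is a routine combination of weak EI with the standard canonical-base description of non-forking.
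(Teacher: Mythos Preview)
Your proposal is correct and follows essentially the same route as the paper: both arguments identify the canonical parameter of the $\phi$-definition, observe it lies in $\dcl_{\mathcal{L}}(A)$ (your ``linking step'' is exactly the paper's line ``since $q$ is $\mathcal{L}$-definable over $A$, $\alpha$ is in the $\mathcal{L}$-definable closure of $A$''), apply weak EI in $T_0$ to produce a real tuple in $\bdd_{\mathcal{L}_0}$ of that parameter, and conclude this tuple lies in $A\cap M$. The only cosmetic difference is that the paper first passes to a non-forking extension over the model $M$ so that the $\mathcal{L}_0$-definition is manifestly over a model, whereas you work directly over $B\cap M$ using that complete $\phi$-types for stable $\phi$ are definable over any parameter set; both are fine.
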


\begin{proof}

Let $q$ be a non-forking extension of $p$ to $M$ in $\mathcal{L}_0$. We know that $q$ is $\mathcal{L}$-definable over $A$. The usual proof of the theorem which says that stable partial types over models are definable tells us that $q$ is $\mathcal{L}_0$-definable over $M$. Let $f(x, y)$ be a stable $\mathcal{L}_0$-definable function.

Let $\alpha$ be a canonical parameter of $d_qf(y)$, viewed as an $\mathcal{L}_0$-$M$-definable function, so that $\alpha$ is an imaginary element of $M$. Working in $(M, N)^{eq}$, since $q$ is $\mathcal{L}$-definable over $A$, $\alpha$ is in the $\mathcal{L}$-definable closure of $A$. By weak elimination of imaginaries in $T_0$, $\alpha$ is in the $\mathcal{L}_0$-definable closure of some $C \subseteq \bdd_{\mathcal{L}_0}(\alpha)$, so $d_qf(y)$ is $C$-definable in $\mathcal{L}_0$. Since adding imaginaries does not affect the definable or bounded closure, in $(M, N)$ we have $C \subseteq \bdd_\mathcal{L}(A) \cap M = A \cap M$ and $q$ is $\mathcal{L}_0$-definable over $A \cap M$.
\end{proof}


%
%
%
%
%


 If $H(M)$ is intepretable in $M$ and is the direct limit of the sorts $(M_j)_{j \in J}$ where the direct limit maps are injective and $M_j$ is a real sort of $M$, and if $A \subseteq M$ is definably closed, we write   $P_A$ for the orthogonal projection onto the subspace of $H(M)$ given by $A \cap H(M)$.  The following proposition follows from full embeddedness of $T$ in $T^{\mathcal{H}}$ and the charactersiation of forking in Hilbert spaces:

\begin{proposition}\label{proposition: transfer of independence}
Take $M \models T$.  Suppose that $H(M)$ is the direct limit of the sorts $(M_j)_{j \in J}$ and that the direct limit maps are injective. Suppose that the sorts $M_j$ belong to $M$, so that elements of $H(M)$ are identified with elements of $M$.

Let $A$ be a subset of $H(M)$ and let $B \subseteq C \subseteq M$ be $\bdd$-closed. If $A \ind_B C$ in the sense of $T$ with respect to the inner product maps between the pieces $(M_j)$, then for all $a \in A$, $P_{C \cap H(M)} a = P_{B \cap H(M)} a$.  
\end{proposition}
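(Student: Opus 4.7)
The plan is to unpack the hypothesis $A\ind_B C$ directly in terms of the values of the inner-product formulas and translate it into the desired equality of projections. Fix $a\in A$. The $\{f_{ij}\}$-type of $a$ over any set $D\subseteq M$ is determined by the function $b\mapsto \langle a,b\rangle$ on $b\in D\cap H(M)$, and the inner-product formulas are stable (indeed, they are the prototypical example of stability in continuous logic). Consequently, because $B$ is $\bdd$-closed, $\tp_{\{f_{ij}\}}(a/B)$ has a canonical definition; by Lemma~\ref{lemma: forking independence in Hilbert spaces}, this canonical scheme is precisely $y\mapsto \langle P_{B\cap H(M)}(a),\, y\rangle$, i.e.\ the canonical parameter of the definable type is the orthogonal projection $P_{B\cap H(M)}(a)$.

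Non-forking of $\tp_{\{f_{ij}\}}(a/C)$ over $B$ then says that this type is the non-forking extension of $\tp_{\{f_{ij}\}}(a/B)$, and hence is defined by the same canonical scheme. Evaluating at an arbitrary $c\in C\cap H(M)$ gives
\[
\langle a,c\rangle \;=\; \langle P_{B\cap H(M)}(a),\,c\rangle,
\]
so $a - P_{B\cap H(M)}(a)$ is orthogonal to every element of $C\cap H(M)$ and therefore to its closed linear span. Since $B\subseteq C$ forces $P_{B\cap H(M)}(a)\in\overline{\mathrm{span}}(C\cap H(M))$, splitting $a$ as $P_{B\cap H(M)}(a) + (a - P_{B\cap H(M)}(a))$ and projecting onto $C\cap H(M)$ yields $P_{C\cap H(M)}(a) = P_{B\cap H(M)}(a)$, which is the conclusion sought.

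The chief delicacy is justifying that the canonical parameter of the stable $\{f_{ij}\}$-type over $B$ is the projection onto $B\cap H(M)$ itself rather than onto some subtly larger subspace of $H(M)$ hidden inside $\bdd(B)$. This is precisely the role of Lemma~\ref{lemma: forking independence in Hilbert spaces} (which describes forking in Hilbert spaces via orthogonal projection onto the closed span of the parameter set), together with Proposition~\ref{proposition: T is stably embedded} (which, via full embeddedness of $T$ in $T^{\mathcal{H}}$, guarantees that $\bdd$-closure computed in $T$ agrees with $\bdd$-closure computed in the Hilbert expansion $T^{\mathcal{H}}$ on $T$-imaginaries, so no new vectors of $H(M)$ slip into $\bdd(B)$ beyond those already in $B\cap H(M)$).
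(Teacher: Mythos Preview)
Your approach is valid and, once completed, more direct than the paper's. The paper does not compute the definition scheme inside $T$: it passes to the expansion $T^{\mathcal{H}}$, uses Proposition~\ref{proposition: T is stably embedded} to identify $\bdd_{\mathcal{L}^{\mathcal{H}}}(B)\cap H$ with $B\cap H(M)$, and then applies Lemma~\ref{nonforking preserved under reducts} (together with weak elimination of imaginaries in Hilbert spaces) to transfer non-forking down to the pure Hilbert reduct, where Lemma~\ref{lemma: forking independence in Hilbert spaces} applies verbatim. Your route bypasses both $T^{\mathcal{H}}$ and Lemma~\ref{nonforking preserved under reducts}, but the price is that you must identify the $B$-definition scheme of the $\{f_{ij}\}$-type inside $T$ itself.

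That identification is where your argument is incomplete. Invoking Lemma~\ref{lemma: forking independence in Hilbert spaces} to describe the canonical scheme \emph{in $T$} is a category error: that lemma concerns forking in $T^{Hilb}$, not in $T$, and two $B$-definable functions on $M_j$ that agree on $B\cap M_j$ need not agree on $C\setminus B$. What closes the gap (without leaving $T$) is this: extend $\tp_{\{f_{ij}\}}(a/C)$ to a global non-forking $\{f_{ij}\}$-type over $M$, realised by some $a'$ in an elementary extension; the Riesz representative $v\in H(M)$ of the bounded functional $m\mapsto\langle a',m\rangle$ is uniquely determined by the $B$-definable schemes, hence $\Aut(M/B)$-invariant, hence (using $H(M)\subseteq M$) lies in $\dcl(B)\subseteq\bdd(B)=B$. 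Thus $v\in B\cap H(M)$, and comparing inner products with $B\cap H(M)$ forces $v=P_{B\cap H(M)}(a)$. Now $\langle a,c\rangle=\langle v,c\rangle$ for $c\in C\cap H(M)$ and your concluding computation goes through. The delicacy you flag is genuine, but it is resolved by locating $v$ inside $B$ via $\dcl$, not by comparing $\bdd$-closures across $T$ and $T^{\mathcal{H}}$.
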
  

\begin{proof}
Suppose that $A \ind_B C$ with respect to the inner product maps. Take $(M, H) \models T^{\mathcal{H}}$ where the $T^{\mathcal{H}}$ construction is applied to all pieces $(M_j)$. Let $B'= \bdd_{\mathcal{L}^\mathcal{H}}(B)$ and $C' = \bdd_{\mathcal{L}^\mathcal{H}}(C)$. By full embeddedness of $M$ in $(M, H)$, $B' \cap M = B$. Suppose that $v \in H \cap B' $. Then $v \in H(M)$ and we can find $x \in M$ such that $v = h_jx$. Since the interpretation maps are assumed to be injective, $x \in \dcl_{\mathcal{L}^\mathcal{H}}(B)$. Hence $B'  = \dcl_{\mathcal{L}^\mathcal{H}}(B) $, and similarly for $C'$. 

Write $h(A)$ for the result of mapping $A$ in $H$ by the appropriate interpretation maps $h_j$.  By considering definable bijections, we find that    $h(A) \ind_{B'} C'$ with respect to the maps $\langle x, y\rangle$ where $x$ and $y$ range in the balls of $H$.   Hilbert spaces have weak elimination of imaginaries so Lemma \ref{nonforking preserved under reducts} applies and we have $h(A) \ind_{B' \cap H} C' \cap H$ in the theory of Hilbert spaces.  

Note that $B' \cap H = B' \cap H(M)$ and similarly for $C'$.  Now the lemma follows by the characterisation of forking independence in Hilbert spaces (see Lemma \ref{lemma: forking independence in Hilbert spaces}).
\end{proof}

\section{Structure Theorems for Scattered Interpretable Hilbert Spaces}\label{section on structure}

\emph{In this section, we give our structure theorems for interpretable Hilbert spaces. These reduce the notion of scatteredness to the stronger notion of asymptotic freedom; see the definitions below. The main structure theorem is \ref{theorem: summary of general scattered  interpretable hilbert space}. In section \ref{subsection: strictly interpretable} we show how the decomposition of   Theorem \ref{theorem: summary of general scattered  interpretable hilbert space} can be almost recovered when working in classical logic with the weak NFCP. This is Corollary \ref{corollary: structure theorem for NFCP theories}. In Section \ref{subsection: examples of decompositions} we give concrete examples of the decomposition promised by \ref{theorem: summary of general scattered  interpretable hilbert space} and \ref{corollary: structure theorem for NFCP theories}.}

\medskip 

\subsection{Weak closure, scatteredness and asymptotic freedom}\label{subsection: key notions for structure theorems}

Fix a continuous logic theory $T$  and   take $\mathcal{H}$  an interpretable Hilbert space in $T$.   In this section, we will freely move to imaginary sorts of $T$, so we assume that the   direct limit maps on pieces of $\mathcal{H}$ are isometries, and for any $M \models T$, we identify  pieces of $H(M)$ with subsets of $H(M)$.

When $M \models T$ and $A \subseteq H(M)$, we write $P_A$ for the orthogonal projection onto  the closed subspace of $H(M)$ generated by $A$.

\begin{definition}\label{definition: partial order P}
Let $M \models T$ and let  $p$ be a type-definable set in $H(M)$. 
We define $\mathcal{P}(p) \subseteq H(M)$  to be the closure of the set of realisations of $p$  in the weak topology. 
\end{definition}

The notation `$\mathcal{P}(p)$' stands for the partial order which we will define in Definition \ref{definition: partial ordering}.  

\begin{lemma}\label{lemma: weak closure is definable}
Let $S$ be a piece of $\mathcal{H}$. Then for any $M \models T$, $\mathcal{P}(S)$ is a type-definable set in a piece of $\mathcal{H}$.

Let $p$ be a type-definable set in $\mathcal{H}$. Then for any $\omega$-saturated $M \models T$,  $\mathcal{P}(p)$ is a type-definable set in a piece of $\mathcal{H}$. 
\end{lemma}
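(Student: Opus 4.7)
The plan is to characterise $\mathcal{P}(p)$ inside a suitable piece $T$ of $\mathcal{H}$ via the standard Hilbert-space description of weak closure as a partial type. Without loss of generality I assume the direct-limit maps are isometries, so that pieces of $H(M)$ are identified with subsets of the Hilbert space and $p$ sits inside a sort $S$.

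First I would locate the ambient piece $T$. The inner product $f_{SS}$ is a definable function on $S$, hence bounded, so $\|s\| \le R$ for all $s \in S$ with some uniform $R$. The weak closure of a bounded set is contained in its closed convex hull, giving $\mathcal{P}(p) \subseteq \overline{\mathrm{conv}}(p) \subseteq B(0,R) \cap \mathcal{H}_p(M)$. The direct-limit construction of $\mathcal{H}$ in Proposition~\ref{proposition: equivalent definition of piecewise interpretable hilbert space} (together with Lemma~\ref{lemma: all operations on interpretable Hilbert space are definable} on definability of Hilbert-space operations) absorbs metric completions of finite bounded linear combinations, which is enough to produce a piece $T$ containing $B(0,R) \cap \mathcal{H}_p(M)$, and in particular containing $\mathcal{P}(p)$.

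Next I would invoke the classical characterisation: for a bounded $A \subseteq H$, a vector $v$ lies in $\overline{A}^w$ iff $v \in \mathcal{H}_A$ and for every $n$, every $a_1,\dots,a_n \in A$ and every $\epsilon > 0$ there is $a \in A$ with $|\langle v-a,a_i\rangle| < \epsilon$ for all $i$ (the first condition is needed because vectors orthogonal to $\mathcal{H}_A$ are invisible to inner products with $A$). When $p = S$ is a sort, both conditions translate directly into partial types in $T$: the first because $\mathcal{H}_S$ is $\bigwedge$-interpretable (Definition~\ref{definition: generalised piecewise interpretable}), and the second as the conjunction over $n,k \geq 1$ of
\[
\sup_{s_1,\dots,s_n \in S}\, \inf_{s \in S}\, \max_{i} \bigl|f_{TS}(v,s_i) - f_{SS}(s,s_i)\bigr| \,\le\, 1/k,
\]
which uses only the definable inner-product maps and the continuous-logic quantifiers over the sort $S$.

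For type-definable $p = \{s \in S : \phi_\alpha(s) = 0,\ \alpha \in I\}$, the same characterisation should hold with the sup/inf taken over realisations of $p$. The difficulty is that ``$s$ realises $p$'' is itself a partial type, so the condition must be approximated by replacing $p$ with definable $\delta$-neighbourhoods $p_{F,\delta} = \{s \in S : \max_{\alpha \in F} \phi_\alpha(s) \le \delta\}$ for finite $F \subseteq I$. In an $\omega$-saturated $M$, continuous-logic compactness lets me assemble, from the formulas
\[
\sup_{s_1,\dots,s_n \in p_{F,\delta}}\, \inf_{s \in p_{F,\delta}}\, \max_i \bigl|f_{TS}(v,s_i) - f_{SS}(s,s_i)\bigr| \,\le\, 1/k,
\]
a partial type whose realisations are exactly $\mathcal{P}(p)$. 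The main obstacle is this saturation-based translation: one has to verify that as $F$ grows and $\delta$ shrinks, the approximate supremum over $p_{F,\delta}$-tuples cannot produce ``phantom'' witnesses that would enlarge the defined set, while the infimum still remains achievable by genuine realisations of $p$. This is standard once set up carefully, and $\omega$-saturation is precisely what guarantees that the two sides of the sup/inf converge compatibly.
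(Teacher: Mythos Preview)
Your step 2 (the sup/inf characterisation of the weak closure, and the compactness argument for type-definable $p$ under $\omega$-saturation) is essentially correct and matches what the paper does in one line. The real content of the lemma, however, is step 1, and there your argument has a gap.

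You assert that the direct-limit construction produces a single piece $T$ containing all of $B(0,R)\cap\mathcal{H}_p(M)$. This is false in general. In the construction of Proposition~\ref{proposition: equivalent definition of piecewise interpretable hilbert space}, each piece $M^\eta_{(i_n)}$ is a countable product of sorts carrying a pseudometric that is the \emph{uniform} limit of pseudometrics on finite truncations; equivalently, elements of a single piece are those approximable by finite linear combinations from $S$ at a \emph{fixed} rate governed by $\eta$. Different elements of $B(0,R)\cap\mathcal{H}_S(M)$ require different rates. Concretely, if $S$ is an infinite orthonormal set, then $B(0,1)\cap\mathcal{H}_S(M)$ is the unit ball of $\ell^2(S)$, and for any proposed rate one can find an element whose $\ell^2$-tail decays more slowly; no single imaginary sort in the sense of Definition~\ref{definition: imaginary sort} can contain the whole ball. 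Lemma~\ref{lemma: all operations on interpretable Hilbert space are definable} only gives that each finite combination lands in \emph{some} piece, not that all limits do.

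The paper's missing ingredient is a Banach--Saks/Ces\`aro argument: if $(a_n)\subseteq S$ is the specific weakly convergent sequence built in Lemma~\ref{lemma: weak closure is set of limit points} (where $|\langle a_{n+1},a_i\rangle - \langle w,a_i\rangle|<1/(n{+}1)$), one checks directly that $\|\tfrac{1}{n}\sum_{k\le n}a_k - w\|^2 = O(1/n)$, with the implied constant depending only on the bound $R$ for $S$. This uniform rate is exactly what is needed to place all of $\mathcal{P}(S)$ into one piece built as in Proposition~\ref{proposition: equivalent definition of piecewise interpretable hilbert space}. Once that piece exists, your partial-type description goes through.
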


\begin{proof}
Let $M \models T$. By Lemma  \ref{lemma: weak closure is set of limit points} $\mathcal{P}(p)$ is the set of weak limit points of sequences in $S$. Moreover, Lemma \ref{lemma: weak closure is set of limit points} shows that for any $w \in \mathcal{P}(p)$ and for any $\e > 0$, there is a sequence $(a_n)$ in $S$ converging weakly to $w$. It is easy to check that the sequence $(\sum_{k = 0}^na_k/n)$ converges to $w$ in norm. Since the inner product is bounded on $S \times S$, we can control the rate of convergence of $(\sum_{k = 0}^na_k/n)$ uniformly for all weakly convergent $(a_n)$ by considering subsequences of the form $(\sum_{k = 1}^{\eta(n)} a_k/\eta(n))$, where $\eta$ is an increasing function $ \Nn \to \Nn$.  It is then straightforward to adapt the construction of Proposition  \ref{proposition: equivalent definition of piecewise interpretable hilbert space} to construct a piece of $H(M)$ containing $\mathcal{P}(S)$.\footnote{\label{footnote: new piece for partial order} this piece may not have been present in the original direct limit of $H(M)$, but in this section we freely add imaginary sorts to our structure.}

If $p$ is a type-definable set in $\mathcal{H}$ contained in a piece $S$, we use the previous construction to construct $\mathcal{P}(S)$ inside a piece of $\mathcal{H}$. For $\omega$-saturated $M \models T$, we can express the fact that $w \in \mathcal{P}(S)$ is the weak limit of a sequence in $p$. 
 \end{proof}
 
\begin{lemma}\label{lemma: base set for partial order}
Let $p$ be a  type-definable set in $\mathcal{H}$. 
When $M$ is $\omega_1$-saturated, $\mathcal{P}(p)$ is equal    to  the set $ \{P_{\bdd(A)}(b)\mid b\models p$,  $A \subseteq M\}$ and is closed under the maps $P_{\bdd(A)}$ for arbitrary $A \subseteq M$.
\end{lemma}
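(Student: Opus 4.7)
The plan is to use Morley sequences inside $M$ (available by $\omega_1$-saturation) together with Proposition~\ref{proposition: transfer of independence}, which translates Morley independence over a $\bdd$-closed set into Hilbert-space orthogonality of the residuals after projecting onto that set. Both inclusions of the claimed equality will follow this same template, and closure under $P_{\bdd(A)}$ will then be formal.

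For the inclusion $\{P_{\bdd(A)}(b)\mid b\models p,\ A\subseteq M\}\subseteq\mathcal{P}(p)$, I fix $b\models p$ and $A\subseteq M$, put $v=P_{\bdd(A)}(b)$, and produce a Morley sequence $(b_n)_{n<\omega}$ in $M$ over $\bdd(A)$ with $b_n\equiv_{\bdd(A)}b$; each $b_n$ realises $p$ and satisfies $P_{\bdd(A)}(b_n)=v$. Proposition~\ref{proposition: transfer of independence} turns the independences $b_n\ind_{\bdd(A)}b_m$ ($n\ne m$) into $\langle b_n-v,b_m-v\rangle=0$, so the residuals form a bounded orthogonal sequence and converge weakly to $0$. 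Thus $b_n\to v$ weakly, placing $v\in\mathcal{P}(p)$.

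For the reverse inclusion, I fix $w\in\mathcal{P}(p)$. By Lemma~\ref{lemma: weak closure is set of limit points} and $\omega_1$-saturation, $w$ is the weak limit of some sequence $(a_n)\subseteq p\cap M$, so $\langle a_n,c\rangle\to\langle w,c\rangle$ for every $c\in H(M)$. I form the partial type $q(x)$ expressing $p(x)$ together with $P_{\bdd(w)}(x)=w$; weak convergence makes $q$ approximately finitely satisfiable, hence consistent and realised in $M$ by $\omega_1$-saturation. I then take a Morley sequence $(b_n)_{n<\omega}$ of $q$ over $\bdd(w)$ in $M$ and set $A=\{b_n:n\geq 1\}$. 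Morley independence, combined with the condition $P_{\bdd(w)}(b_n)=w$ and Proposition~\ref{proposition: transfer of independence}, yields $\langle b_i-w,b_j-w\rangle=0$ for $i\ne j$, so the Ces\`aro means $N^{-1}\sum_{n=1}^{N}b_n$ converge in norm to $w$, whence $w\in\dcl(A)\subseteq\bdd(A)$. A further application of Proposition~\ref{proposition: transfer of independence} to $b_0\ind_{\bdd(w)}A$, combined with $\bdd(w\cup A)=\bdd(A)$, yields $P_{\bdd(A)}(b_0)=P_{\bdd(w)}(b_0)=w$, as required.

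Closure of $\mathcal{P}(p)$ under every $P_{\bdd(A)}$ is then immediate: the first inclusion shows $P_{\bdd(A)}(p)\subseteq\mathcal{P}(p)$, and orthogonal projection is weakly continuous while $\mathcal{P}(p)$ is weakly closed by definition, so $P_{\bdd(A)}$ sends $\mathcal{P}(p)$ into itself. The main obstacle is making rigorous the condition ``$P_{\bdd(w)}(x)=w$'' as a type-definable condition realisable by $\omega_1$-saturation; for this I must identify $\bdd_T(w)\cap H(M)$ explicitly, which is where I invoke the full embeddedness of $T$ in $T^{\mathcal{H}}$ from Proposition~\ref{proposition: T is stably embedded} to reduce the $T$-bounded closure of the single vector $w$ to its Hilbert-space counterpart, namely the line $\Rr w$ (with the degenerate case $w=0$ handled by replacing this line with the relevant $\emptyset$-invariant subspace, and imposing the corresponding orthogonality directly in $q$).
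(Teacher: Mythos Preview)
Your first inclusion and the closure argument are correct and match the paper's reasoning (the paper adds the small remark that one may assume $A$ is separable, so that $\omega_1$-saturation suffices to build the Morley sequence over $\bdd(A)$). Your closure argument via weak continuity of $P_{\bdd(A)}$ is in fact tidier than what the paper writes.

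For the reverse inclusion, the overall strategy works, but the final paragraph contains a genuine error. You claim that full embeddedness of $T$ in $T^{\mathcal{H}}$ reduces $\bdd_T(w)\cap H(M)$ to the line $\Rr w$. This is false: $\bdd_T(w)\cap H(M)$ is typically much larger than $\Rr w$ (take any example where $w$ lies in a piece carrying nontrivial algebraic structure, e.g.\ an equivalence class of bounded size). Proposition~\ref{proposition: T is stably embedded} goes in the other direction---it lets you compute $T^{\mathcal{H}}$-definable objects on $T$-sorts using $T$-parameters---and says nothing to the effect that $T$-bounded closure collapses to Hilbert-space bounded closure.

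Fortunately, your ``obstacle'' is not an obstacle, and the erroneous reduction is unnecessary. The condition $P_{\bdd(w)}(x)=w$ is directly a partial type over $\bdd(w)$: since $w\in\bdd(w)\cap H(M)$ (by Lemma~\ref{lemma: weak closure is definable}, $w$ lies in a piece of $\mathcal{H}$), it is the conjunction of the conditions $\langle x,c\rangle=\langle w,c\rangle$ for $c$ ranging over $\bdd(w)\cap H(M)$, and its finite approximate satisfiability in $p$ follows from $a_n\rightharpoonup w$ exactly as you say. The paper's version is slightly more direct: take $b\models p$ realising the \emph{eventual type} of $(a_n)$ over $\bdd(w)$ with respect to the (stable) inner product maps; then $\langle b,c\rangle=\lim_n\langle a_n,c\rangle=\langle w,c\rangle$ for every $c\in\bdd(w)\cap H(M)$, giving $P_{\bdd(w)}(b)=w$ immediately. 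With the paper's convention that imaginary sorts are freely added (so $w$ itself counts as an element of $M$), the subsequent Morley-sequence construction of a real-sort parameter set $A$ is then superfluous: one may simply take $A=\{w\}$.
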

 
\begin{proof}
 
%

  Take $w \in \mathcal{P}(p)$ and $(a_n)$ a   sequence in $p$  converging weakly to $w$. Write $A = \bdd(w)$.  
By saturation, we can find  $b \models p$ realising the eventual type of $(a_n)$ over $A$ with respect to the inner product maps. Then $w = P_A b$ and $\mathcal{P}(p)$ is contained in $\{P_{\bdd(A)} b \mid b \models p, A \subseteq M\}$. For the converse inclusion, if $w = P_{\bdd(A)}b$, we can assume that $A$ is separable and we can take a Morley  sequence $(b_n)$ in $\tp(b/\bdd(A))$ with respect to the inner product maps, as defined in  Definition \ref{definition: Morley sequence}. Proposition \ref{proposition: transfer of independence} and Lemma \ref{lemma: convergence of morley sequences in H spaces} show that $ b_n \rightharpoonup w$ so that $w \in \mathcal{P}(p)$.
\end{proof}


  In \cite{spinoff}, it is shown that $\mathcal{P}(p)$ can also be constructed as a type-definable set in an imaginary sort of $T$ coding canonical bases for $\langle x, y\rangle$-types consistent with $p$.

%

\begin{definition}\label{definition: partial ordering}
Let $M \models T$ be $\omega_1$-saturated and let $p$ be a type-definable set in $H(M)$. We define the partial order $\leq$ on $\mathcal{P}(p)$ as follows: we say that $v \leq w$ in $\mathcal{P}(p)$ is there is a finite sequence of $\bdd$-closed subsets $A_1, \ldots, A_n$ of $M$ such that $v = P_{A_n} \ldots P_{A_1} w$.
\end{definition}

The partial order on $\mathcal{P}(p)$ is especially interesting under the assumption of scatteredness, which is one of the main notions of this paper.    

\begin{definition}\label{definition: scattered}
  Let $M \models T$ be $\omega$-saturated and let $p$ be a type-definable set in $H(M)$. We say that $p$ is scattered   if $\mathcal{P}(p)  $ is locally compact in the norm topology.
\end{definition}

  While scatterdness is the most general assumption we will work with, the following stronger condition is of special interest in many model-theoretic situations:

\begin{definition}\label{definition: strictly definable}

Let $p, q$ be type-definable sets in $\mathcal{H}$. We say that the inner product map   on $p \times q$ is strictly definable if it takes only finitely many values on $p \times q$.
\end{definition}

Strict definability of the inner product map is often easier to verify than scatteredness. For example, if the type-definable sets $p, q$ are pieces of $\mathcal{H}$, then strict definability of the inner product on $p \times q$ is not model dependent.

\begin{lemma}\label{lemma: strictly definable implies scattered}
Let $p$ be a partial type in $\mathcal{H}$.  If the inner product map $f$ is strictly definable on $p\times p$ then in any $M \models T$, $\mathcal{P}(p)$ is a discrete metric space in $H(M)$  and hence $p$ is scattered.
\end{lemma}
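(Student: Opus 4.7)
The plan is to argue directly from the finite-valuedness of $f$ on $p \times p$, upgrading it to $\mathcal{P}(p) \times \mathcal{P}(p)$ using weak continuity of the inner product in each variable separately. First I would let $V \subseteq \Rr$ denote the finite set of values attained by $f$ on $p \times p$; note that $\langle a, a\rangle \in V$ for every $a \in p$, so $p$ is norm-bounded, and that $V$, being finite, is automatically closed in $\Rr$.

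The key step is to show that $\langle w, w' \rangle \in V$ for all $w, w' \in \mathcal{P}(p)$. For fixed $u \in p$, the map $x \mapsto \langle x, u\rangle$ is weakly continuous, so it sends the weak closure $\mathcal{P}(p)$ of $p$ into the closure of its image on $p$, which is contained in $V$. Hence $\langle w, u\rangle \in V$ for every $w \in \mathcal{P}(p)$ and $u \in p$. Now fixing any such $w$ and letting the second variable vary, the same reasoning applied to the weakly continuous map $x \mapsto \langle w, x\rangle$ gives $\langle w, w'\rangle \in V$ for every $w' \in \mathcal{P}(p)$.

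With this in hand, the identity $\|w - w'\|^2 = \|w\|^2 + \|w'\|^2 - 2\langle w, w'\rangle$ shows that the squared distance function takes only finitely many non-negative values on $\mathcal{P}(p) \times \mathcal{P}(p)$. Letting $\delta > 0$ be the smallest strictly positive value that arises, any two distinct $w, w' \in \mathcal{P}(p)$ satisfy $\|w - w'\| \geq \delta$, so $\mathcal{P}(p)$ is uniformly discrete in the norm topology. A uniformly discrete metric space is locally compact (each singleton is already a compact neighbourhood), so by Definition \ref{definition: scattered}, $p$ is scattered.

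There is no serious obstacle in this argument; the only point to watch is that $V$ is closed in $\Rr$, which is what allows weak continuity to transport membership in $V$ from $p$ to its weak closure. It is worth noting that the argument does not require $M$ to be $\omega$-saturated for the discreteness statement itself, and it gives the stronger conclusion that the whole inner product map, not merely the metric, takes finitely many values on $\mathcal{P}(p) \times \mathcal{P}(p)$.
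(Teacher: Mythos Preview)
Your proof is correct and follows essentially the same route as the paper's: both show that the inner product still takes values in the finite set $V$ on $\mathcal{P}(p)\times\mathcal{P}(p)$ by exploiting that $\langle\cdot,\cdot\rangle$ is weakly continuous in each variable separately, and then conclude uniform discreteness. Your topological phrasing (continuous maps send closures into closures of images) is a cleaner packaging of the paper's iterated-limit argument $\langle v,w\rangle=\lim_n\lim_m\langle a_n,b_m\rangle$, and avoids the incidental appeal to stability.
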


\begin{proof}
  Let $v, w \in \mathcal{P}(p)$. We showed in Lemma \ref{lemma: base set for partial order} that $\mathcal{P}(p)$ is the set of weak limit points of $p$ so there are   sequences $(a_n)$ and $(b_n)$ in $p$ such that $a_n \rightharpoonup v$ and $ b_n \rightharpoonup w$. Then $\langle v, w \rangle = \lim_n \lim_m \langle a_n, b_m \rangle$. By stability of the inner product and strict definability,  $ \langle a_n,  a_m \rangle$ must be eventually constant. Therefore $\langle v, w\rangle$ is one of the finitely many values   achieved by the inner product on $p \times p$.  It follows that $\mathcal{P}(p)$ is a discrete set and hence it is locally compact. 
\end{proof}

\noindent \textbf{Remarks:} (1) Saying that $p$ is scattered  is   stronger than saying that the set of realisations of $p$ is locally compact in $H(M)$, for $M \models T$ $\omega$-saturated. Consider the following example. Let $T$ be a two-sorted structure $(S_1, S_2)$ where the sort $S_1$ is an infinite set with the discrete metric and $S_2$ is the surface of the unit ball in an infinite dimensional Hilbert space. We add to $T$ the inner product map on $S_2$ and a function $g : S_1 \to S_2$. $T$ says that $f$ has dense image in $S_2$ and that every fibre of $f$ is infinite. Define the positive-definite map $f(x, y)$ on $S_1 \times S_1$ by   $f(x, x) = 2$ and $f(x, y) = \langle g(x), g(y)\rangle$ with the inner product is defined on the sort $S_2$. Let $\mathcal{H}$ be the interpretable Hilbert space generated by $S_1$  with inner product map $f$.

For any $M \models T$, $H(M)$ can be viewed as the sum of $H_2(M)$, a Hilbert space generated by the set $S_2$, and $H_1(M)$, a Hilbert space with orthonormal basis indexed by $S_1$. The set $S_1$ in $H(M)$ is the sum of this orthonormal set with certain vectors on the sphere $S_2$. Therefore $S_1$ is locally compact in $H(M)$ but $\mathcal{P}(S_1)$ contained $S_2$, which is not locally compact. Hence $S_1$ is not scattered.

(2) $\omega$-saturation is needed in the definition of scatteredness. Consider the following example: let $T$ be a theory of classical logic with equivalence relations $(E_n)_{n \geq 0}$ and constant symbols $(c_n)$ as follows. The constants $c_n$ are each in a different equivalence $E_0$-class and each $E_0$-class is infinite. We write $[c_n]_0$ for the $E_0$-class of $c_n$. The equivalence relations $(E_n)$ satisfy the following properties:

 \begin{enumerate}
\item for every $n \geq 0$ and for every $m \leq n$, $E_{n+1}$ equals $E_{n}$ on $[c_m]_0$. 
\item for every $n \geq 0$ and outside of the set $\bigcup_{m \leq n} [c_m]_0$, $E_{n+1}$ refines each $E_n$-class into infinitely many infinite equivalence classes.

\end{enumerate}
$T$ is a complete theory. We see that each definable set $[c_n]_0$ has Morley rank $n+1$. 
We define an inner product map $f$ on the main sort of $T$ as follows: 
\begin{enumerate}
\item for every $x , y$, $f(x, y) \leq 2$ 
\item for every $n \geq 0$, if $x,y $ are $E_m$-equivalent for every $m \leq n$, then $f(x, y) \geq \sum_{k = 0}^n 2^{-n}$
\item for every $n \geq 0$, if $x, y$ are $E_m$-equivalent for every $m \leq n$ and $x, y$ are not $E_{n+1}$-equivalent, then $f(x, y) = \sum_{k = 0}^n 2^{-k}$
\end{enumerate}
$f$ is clearly definable. To see that $f$ is positive-semidefinite, let $M \models T$ be saturated and such that for every $n$ we can enumerate the $E_n$-classes of $M$ by some cardinal $\kappa$. Let $H$ be a Hilbert space generated by orthonormal sequences $(v_n^\alpha)_{n \geq 0, \alpha < \kappa}$. For every $x \in M$, let $\alpha_n(x)$ be the ordinal such that $x$ is in the $\alpha_n(x)$-th $E_n$-class. For example, $\alpha_n(x)$ is constant on $[c_0]_0$ for all $n$. Consider the map $h : M \to H$ defined by $h(x) = \sum_{n = 0}^\infty v_n^{\alpha_n(x)}/2^n$. Then $\langle h(x), h(y)\rangle = f(x, y)$, so that $f$ defines an interpretable Hilbert space $\mathcal{H}$. 

Let $p$ be the type of an element which is not $E_0$-equivalent to any $c_n$. By considering weakly convergent sequences in $M$, we see that $\mathcal{P}(p) = \{ \sum_{k = 0}^\nu v_k^{\alpha_k(x)}/2^k \mid x \models p, \nu \in \Nn \cup \{\infty\} \} \cup \{0\}$, which is not locally compact. Hence the sort $S $ is not scattered in  $\mathcal{H}$.

Let $N \models T$ be the prime model, so that $N$ is the union of the $E_0$-classes of the elements $c_n$ and the type $p$ above is omitted. As above, $H(N)$ is generated by orthonormal sets $(v_n^\alpha)_{n \geq 0, \alpha < \omega}$ where $N$ is identified with the set $\{\sum_{k = 0}^\infty v_n^{\alpha(x)} \mid x \in N\}$. Then $\mathcal{P}(N) = N \cup \{\sum_{k = 0}^m v_k^{\alpha_k(x)}/2^k \mid m \leq n \in \Nn,   x \in [c_n]_0\}$. This set is locally compact.
\medskip

\begin{lemma}\label{lemma: sum of scattered sets is scattered}
Let $p$, $q$ be type-definable scattered sets in $\mathcal{H}$ such that in an $\omega$-saturated $M \models T$, for all $x \models p$ and $y \models q$, we have $\langle x, y\rangle = 0$. Let $r = \{x+y \mid x \models p, y \models q\}$. Then $r$ is scattered.
\end{lemma}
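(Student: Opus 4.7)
The plan is to identify $\mathcal{P}(r)$ as the set of orthogonal sums $v+w$ with $v \in \mathcal{P}(p)$ and $w \in \mathcal{P}(q)$, and then to deduce local compactness of $\mathcal{P}(r)$ from local compactness of the factors.

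First I would propagate the cross-orthogonality hypothesis to the weak closures: for any $v \in \mathcal{P}(p)$ and $w \in \mathcal{P}(q)$ one has $\langle v,w\rangle = 0$. By the preceding lemmas, $v$ is the weak limit of some sequence $(a_n)$ in $p$ and $w$ is the weak limit of some sequence $(b_m)$ in $q$; since $\langle a_n,b_m\rangle = 0$ by hypothesis, $\langle v,b_m\rangle = \lim_n \langle a_n,b_m\rangle = 0$ for each $m$, and then $\langle v,w\rangle = \lim_m \langle v,b_m\rangle = 0$.

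Second, I would prove the set-theoretic identity
\[
\mathcal{P}(r) \;=\; \{\,v + w \mid v \in \mathcal{P}(p),\ w \in \mathcal{P}(q)\,\}.
\]
The inclusion $\supseteq$ follows from sequential continuity of addition for the weak topology: if $a_n \rightharpoonup v$ and $b_n \rightharpoonup w$ with $a_n \models p$ and $b_n \models q$, then $a_n + b_n \in r$ and $a_n+b_n \rightharpoonup v+w$. For $\subseteq$, take a weakly convergent $a_n + b_n \rightharpoonup z$ with $a_n \models p$, $b_n \models q$. The sequence is norm-bounded, and the Pythagorean identity $\|a_n+b_n\|^2 = \|a_n\|^2 + \|b_n\|^2$ then bounds $(a_n)$ and $(b_n)$ separately. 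Passing to a subsequence along which $a_n \rightharpoonup v$ (by weak sequential compactness of bounded sets in a Hilbert space), we get $b_n = (a_n + b_n) - a_n \rightharpoonup z - v =: w$, with $v \in \mathcal{P}(p)$ and $w \in \mathcal{P}(q)$ by construction.

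Combining the two, and using the Pythagorean theorem, the addition map $\Phi : \mathcal{P}(p) \times \mathcal{P}(q) \to \mathcal{P}(r)$, $\Phi(v,w) = v+w$, is a bijective isometry once the source is equipped with the $\ell^2$-product metric $d((v,w),(v',w'))^2 = \|v-v'\|^2 + \|w-w'\|^2$. Injectivity is immediate from Step 1: if $v + w = v' + w'$ then $v - v' = w' - w$ and cross-orthogonality gives $\|v-v'\|^2 = \langle v-v', w'-w\rangle = 0$. Surjectivity is the identity above, and the isometry property is another application of Pythagoras. Since $\mathcal{P}(p)$ and $\mathcal{P}(q)$ are locally compact by hypothesis, their product is locally compact, and hence so is $\mathcal{P}(r)$, proving that $r$ is scattered. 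The main subtle point is the subsequence extraction in the second step, which rests on weak sequential compactness of bounded subsets of a Hilbert space (Eberlein--\v{S}mulian, or directly on the separable closed span of the chosen sequence).
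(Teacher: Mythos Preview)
Your proof is correct and follows essentially the same route as the paper's: identify $\mathcal{P}(r)$ with $\{v+w : v\in\mathcal{P}(p),\ w\in\mathcal{P}(q)\}$ and deduce local compactness from that of the factors. The paper simply asserts the identity and then argues directly that $B(v+w,\varepsilon)\cap\mathcal{P}(r)$ is contained in the sum of $B(v,\varepsilon)\cap\mathcal{P}(p)$ and $B(w,\varepsilon)\cap\mathcal{P}(q)$, whereas you supply the details of the identity (via weak sequential compactness and Pythagoras) and phrase the conclusion through the isometry with the $\ell^2$-product; these are the same argument at different levels of detail.
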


\begin{proof}
Fix $M \models T$ $\omega$-saturated. Since $\mathcal{P}(r)$ is the set of weak limit points of $r$ in $M$, $\mathcal{P}(r) = \{v + w \mid v \in \mathcal{P}(p), w \in \mathcal{P}(q)\}$. Fix $v + w \in \mathcal{P}(r)$ and $\e > 0$ such that the closed balls $B(v, \e)$ and $B(w, \e)$ are compact in $\mathcal{P}(p)$ and $\mathcal{P}(q)$ respectively. Then $B(v + w, \e) \cap \mathcal{P}(r)$ is contained in the sum of $B(v, \e)\cap \mathcal{P}(p)$ and $B(w, \e) \cap \mathcal{P}(q)$. It follows easily that $B(v+ w, \e)$ is compact in $\mathcal{P}(r)$. 
\end{proof}

\begin{definition}\label{definition: asymptotically free}
For $p$  a type-definable set in $\mathcal{H}$, we say that $p$ is asymptotically free  if for any $M \models T$, $x \models p$ in $M$ and $\e \geq 0$, the set   $\{y \models p\mid |\langle x, y \rangle| \geq \e\}$ is compact.

Equivalently, when $M \models T$ is $\omega$-saturated, for any $x, y \models p$, either $\langle x, y \rangle = 0$ or $x \in \bdd(y)$.
\end{definition}

The following lemma is trivial but important:
\begin{lemma}
If $p$ is an asymptotically free type-definable set in $\mathcal{H}$, then $p$ is scattered.
\end{lemma}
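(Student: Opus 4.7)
The plan is to apply the definition of asymptotic freedom (Definition \ref{definition: asymptotically free}) at the boundary value $\epsilon = 0$. For any $x \models p$ the condition $|\langle x, y\rangle| \geq 0$ is automatic, so the set $\{y \models p : |\langle x,y\rangle| \geq 0\}$ is just the whole realisation set $p(M)$. Thus asymptotic freedom directly asserts that $p(M)$ is norm-compact in $H(M)$, for every $M \models T$.

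The next step is to upgrade this norm compactness to weak closedness. I would invoke the standard fact that a norm-compact subset $K$ of a Hilbert space is automatically weakly closed: given $w$ in the weak closure of $K$, pick a sequence in $K$ converging weakly to $w$, extract a norm-convergent subsequence by compactness, and observe that its norm limit is also its weak limit, which must therefore be $w$. Applied to $K = p(M)$ inside an $\omega$-saturated $M$, this identifies the weak closure $\mathcal{P}(p)$ with $p(M)$ itself.

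Combining the two observations, $\mathcal{P}(p) = p(M)$ is norm compact, and a fortiori locally compact in the norm topology, which is precisely the definition of scatteredness (Definition \ref{definition: scattered}). The only point that needs care is reading the hypothesis with $\epsilon = 0$ permitted, so that asymptotic freedom really does deliver compactness of the whole realisation set and not merely of the sub-level sets $\{y : |\langle x, y\rangle| \geq \epsilon\}$ for $\epsilon > 0$; if one only had the latter, a separate local argument at the origin of $\mathcal{P}(p)$ would be required, using Cauchy--Schwarz to place $B(v,\|v\|/2)\cap \mathcal{P}(p)$ inside the compact set $\{z \models p : |\langle v,z\rangle|\geq \|v\|^2/2\}$ for $v\neq 0$ and handling $v=0$ separately. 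With the $\epsilon=0$ reading, however, no such obstacle arises, consistent with the paper's description of the result as ``trivial but important''.
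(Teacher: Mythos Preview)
Your main argument hinges on reading Definition~\ref{definition: asymptotically free} with $\epsilon = 0$, which would force $p(M)$ itself to be compact in every model. That reading is almost certainly a typo: it would make asymptotic freedom equivalent to $p \subseteq \bdd(\emptyset)$, contradicting both the ``Equivalently'' clause in the same definition and every nontrivial use of the notion in the paper (the types $q_\alpha$ produced by Lemma~\ref{lemma: orthogonal maps give asymptotically free}, an orthonormal family indexed by an infinite sort as in Section~\ref{subsection: examples of interpretable Hilbert spaces}, the applications in Section~\ref{subsection: application to scattered representations}). Under the intended reading $\epsilon > 0$, an infinite orthonormal set is asymptotically free but not compact, and your primary argument collapses. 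The word ``trivial'' does not rescue the $\epsilon=0$ reading: the lemma is also short under the correct reading, just not via global compactness of $p(M)$.

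Your fallback sketch for the $\epsilon > 0$ case is headed in the right direction but has two gaps. First, the set $\{z \models p : |\langle v, z\rangle| \geq \|v\|^2/2\}$ is only guaranteed compact by the definition when $v \models p$, and you have not shown that nonzero points of $\mathcal{P}(p)$ lie in $p(M)$. The missing step: if $a_n \rightharpoonup v \neq 0$ with $a_n \models p$, then $\lim_n \lim_m \langle a_n, a_m\rangle = \|v\|^2 > 0$, so for a suitable fixed $N$ a tail of $(a_m)$ lies in the compact set $\{z \models p : |\langle a_N, z\rangle| \geq \|v\|^2/2\}$; extracting a norm-convergent subsequence and using that $p(M)$ is metrically closed gives $v \in p(M)$. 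This yields $\mathcal{P}(p) \subseteq p(M) \cup \{0\}$, after which your Cauchy--Schwarz bound does handle every nonzero point. Second, you defer the case $v = 0$ without saying anything; one still needs that $0$ has a compact neighbourhood in $\mathcal{P}(p)$, which follows once $p(M)$ is closed and bounded away from $0$ (automatic for a complete type of nonzero norm, the case that matters throughout the paper).
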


We will see in Theorem \ref{theorem: summary of general scattered  interpretable hilbert space} that asymptotically free sets arise naturally in scattered interpretable Hilbert spaces. 
While the notion of asymptotic freedom is natural and will be shown to have interesting representation theoretic consequences in Section \ref{section: unitary representations}, the notion of scatteredness may appear less natural at first sight. However, by Lemma \ref{lemma: sum of scattered sets is scattered}, a finite sum of asymptotically free sets is scattered, and there is no other obvious candidate notion to characterise a finite sum of asymptotically free sets. Moreover, we have not found any general conditions weaker than scatteredness under which Theorem \ref{theorem: summary of general scattered  interpretable hilbert space} holds, or any weaker version of it. Therefore, scatteredness currently appears to be the correct notion for a general decomposition theorem in the style of Theorem \ref{theorem: summary of general scattered  interpretable hilbert space}.

\subsection{Decomposition into $\bigwedge$-interpretable subspaces }\label{subsection: local analysis}
\emph{Until the end of Section \ref{subsection: local analysis}, we make the following assumptions and notational conventions. We fix an $\omega_1$-saturated $M \models T$ and we fix a type-definable set $p$  in $\mathcal{H}$. We assume that $p$ is scattered.  Recall that we  write $H_p(M)$ for the $\bigwedge$-interpretable subspace of $H(M)$ generated by the set $p$ (see Definition \ref{definition: generalised piecewise interpretable}). }

\medskip

 The next theorem is the basic fact which shows that it is interesting to look at $\mathcal{P}(p)$ as a partial order. It also shows that  types over $\bdd$-closed subsets of $M$ contained  in $p$ are one-based in a restricted sense (see \cite{spinoff} for a discussion).

\begin{theorem}\label{projections commute}
Let $A$, $B$ be small subsets of $M$ such that $A = \bdd(A)$ and $B = \bdd(B)$. Then $A \cap H_p(M)$ and $B \cap H_p(M)$ are orthogonal over $A \cap B \cap H_p(M)$. Equivalently,  for any $v \in A \cap H_p(M)$, we have $P_{B}v = P_{A \cap B}v$. Equivalently, for any $v \in H_p(M)$,
\[
 P_{B} P_{A}v = P_{A}P_{B}v = P_{A \cap B}v\]
\end{theorem}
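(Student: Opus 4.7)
The plan is to establish the theorem by proving a one-basedness statement for $p$: for every $v \models p$ and every $\bdd$-closed $B \subseteq M$, the projection $P_B v$ lies in $\bdd(v) \cap H_p(M)$. The three formulations in the statement are equivalent by routine Hilbert space manipulations, so I focus on $P_B v = P_{A \cap B} v$ for $v \in A \cap H_p(M)$. Granting one-basedness, if $v \models p$ with $v \in A$, then $P_B v \in \bdd(v) \cap B \subseteq A \cap B$, and the decomposition $v = P_B v + (v - P_B v)$ with $v - P_B v \perp B \supseteq A \cap B$ yields $P_{A \cap B} v = P_{A \cap B}(P_B v) = P_B v$. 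The extension from $v \models p$ in $A$ to general $v \in A \cap H_p(M)$ goes through $T^{\mathcal H}$: by stable embeddedness (Proposition \ref{proposition: T is stably embedded}), one-basedness transfers to the inner-product type of any element of $A \cap H_p(M)$, and propagates by linearity and continuity.

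To prove one-basedness I argue by contradiction. Suppose $w := P_B v \neq P_{\bdd(v)} w$ for some $v \models p$, with nonzero residual norm $c := \|w - P_{\bdd(v)} w\|$. Then $w \notin \bdd(v)$, so by saturation $w$ has arbitrarily many $\Aut(M/v)$-conjugates, and by Lemma \ref{lemma: base set for partial order} each conjugate $w_\alpha = P_{B_\alpha} v$ belongs to $\mathcal{P}(p)$. Since $\bdd(v)$ is $\Aut(M/v)$-invariant, $P_{\bdd(v)} w_\alpha = P_{\bdd(v)} w$ is a common fixed vector in $\mathcal{P}(p)$. Choosing the $w_\alpha$ to form a Morley sequence in the inner-product type $\tp(w/v)$ and invoking Proposition \ref{proposition: transfer of independence}, I deduce that the residuals $w_\alpha - P_{\bdd(v)} w$ are pairwise orthogonal of common norm $c$; equivalently, the $w_\alpha$ lie pairwise at Hilbert distance $c\sqrt{2}$ on the sphere of radius $c$ about $P_{\bdd(v)} w$ in $\mathcal{P}(p)$.

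The main obstacle is closing the argument using the scatteredness hypothesis. Local compactness of $\mathcal{P}(p)$ supplies a compact neighbourhood $K$ of $P_{\bdd(v)} w$ of some fixed norm radius $r > 0$. If $c \leq r$, all $w_\alpha$ lie in $K$, but any compact metric space admits only finitely many points pairwise separated by a fixed positive distance $c\sqrt{2}$, contradicting the arbitrary cardinality supplied by saturation. When $c > r$, I plan to reduce to the small-$c$ case by descending along the partial order on $\mathcal{P}(p)$ (Definition \ref{definition: partial ordering}): applying further projections $P_C$ to $w$, for suitable $\bdd$-closed $C$, produces elements of $\mathcal{P}(p)$ that still violate one-basedness but with strictly smaller residual; scatteredness forces the descent to terminate inside the compact neighbourhood, where the previous argument applies. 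Making this descent rigorous — ensuring that each intermediate projection still carries a large family of conjugates, and that $c$ shrinks in a controlled way — will be the technically delicate step.
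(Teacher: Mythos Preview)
The large-$c$ case is a genuine gap, and your proposed descent does not close it. To get a local-compactness contradiction you need infinitely many separated points of $\mathcal{P}(p)$ inside a \emph{fixed} compact neighbourhood; your Morley sequence sits at distance $c$ from its centre $z = P_{\bdd(v)} w$, so when $c$ exceeds the radius of every compact neighbourhood of $z$ you learn nothing. Descending by further projections $P_C$ does not obviously help: projecting $w$ onto $\bdd(v)$ kills the residual entirely, while other choices of $C$ give no control over the new residual or the new centre, and you cannot invoke well-foundedness of the partial order since that is proved \emph{after} the present theorem. The paper supplies exactly the missing idea: Von Neumann's alternating-projection lemma. Setting $x_0 = P_A v$, $y_n = P_B x_n$, $x_{n+1} = P_A y_n$, one obtains elements of $\mathcal{P}(p)$ converging \emph{in norm} to the fixed point $P_{A \cap B} v \in \mathcal{P}(p)$. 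If the sequence never stabilises then each $x_n \notin B$, so a Morley sequence in $\tp(x_n/B)$ gives infinitely many separated points of $\mathcal{P}(p)$ within $\|x_n - P_{A\cap B}v\|$ of that fixed centre; for large $n$ this contradicts local compactness there. A short inner-product computation then shows the sequence was in fact constant from step zero. In other words, Von Neumann's iteration \emph{is} your descent made rigorous: it forces the iterates into any prescribed neighbourhood of a single centre.

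A smaller issue: your extension from ``$v \models p$ with $v \in A$'' to general $v \in A \cap H_p(M)$ is not justified by stable embeddedness --- realisations of $p$ lying in $A$ need not span $A \cap H_p(M)$. The clean route is to note that your one-basedness argument (once completed) works verbatim for any $v \in \mathcal{P}(p)$, deduce $P_B P_A v = P_{A \cap B} v$ for $v \models p$ by applying one-basedness twice (first to $v$, then to $P_A v \in \mathcal{P}(p)$), and extend to all of $H_p(M)$ by density and boundedness of the operators.
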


\begin{proof}
 It is enough to check the statement for  arbitrary $v  \models p$.  Define $x_0 = P_A v$, $y_n = P_B x_n$ and $x_{n+1} = P_A y_n$. It is well-known that  the sequences $(x_n)$ and $(y_n)$ converge to $w = P_{A \cap B}v \in \mathcal{P}(p)$ in the norm topology. See  Theorem 13.7 in \cite{VonNeumann1950} for more details. 
 
 Suppose for a contradiction that  $x_n$ and $y_n$ are distinct from $w$ for all $n$. Then $y_n \notin A$ and $x_n \notin B$. By Lemma \ref{lemma: base set for partial order}, for every $n$ we can find infinite sequences $(x_n^k)_k$ and $(y_n^k)_k$ such that $(x_n^k)_k$ is a sequence in $\tp(x_n / B)$ converging weakly to $y_n$ and similarly for $(y_n^k)_k$. Then for any $\e > 0$ there is $n \geq 0$ such that the sequence $(x_n^k)_k$ is within distance $\e$ of $w$. Since we are assuming that $\mathcal{P}(p)$ is locally compact, this is a contradiction and $(x_n), (y_n)$ are eventually constant equal to $w$.

Take $n \geq 1$ such that $y_n \in A \cap B$. We now show that $x_n \in A \cap B$.  Write $x_n = y_{n} + \alpha$ where $\alpha \perp B  $ and $y_{n-1} = x_n + \beta$ where $\beta \perp A $. We have 
\begin{IEEEeqnarray*}{rClL}
\langle \alpha, y_{n-1}\rangle & = & 0\\
& = & \langle \alpha, y_{n} + \alpha + \beta\rangle \\
& = & \langle \alpha, \alpha \rangle + \langle \alpha, y_{n}\rangle +  \langle \alpha, \beta\rangle\\
& = & \langle \alpha, \alpha \rangle + \langle \alpha, \beta\rangle \\
& = & \langle \alpha, \alpha\rangle + \langle x_n - y_{n}, \beta\rangle\\
& = & \langle \alpha, \alpha \rangle \text{ since $x_n - y_n \in A$}.
\end{IEEEeqnarray*}
 So $\alpha = 0$,  $x_n = y_n$ and $x_n \in A \cap B$. We use a similar calculation to show that $y_{n-1}\in A \cap B$ when $x_n \in A\cap B$, with $n\geq 1$. This proves by induction that $y_0 \in A$. Hence $P_{B} P_{A}v = P_{A \cap B}v$ and the theorem is proved. 
\end{proof}


\begin{lemma}\label{lemma: partial order is definable}
  For every $v \in \mathcal{P}(p)$, $\{w \in \mathcal{P}(p) \mid w \leq v\}$ is  uniformly  type-definable  over $v$. Therefore, this set is compact in the norm topology. 
\end{lemma}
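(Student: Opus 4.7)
The first move is to simplify the partial order using Theorem \ref{projections commute}. Since $P_B P_A v = P_{A \cap B} v$ for any $v \in H_p(M)$ and bdd-closed $A, B$, an induction shows that any composition $P_{A_n} \cdots P_{A_1} v$ equals $P_{A_1 \cap \cdots \cap A_n} v$, so $w \leq v$ iff $w = P_A v$ for some single bdd-closed $A$. Given such an $A$, we have $w \in A$ and hence $\bdd(w) \subseteq A$, whence another application of Theorem \ref{projections commute} yields $P_{\bdd(w)} v = P_{\bdd(w)} P_A v = P_{\bdd(w)} w = w$. The converse is trivial, so
\[
\{w \leq v\} \;=\; \{w \in \mathcal{P}(p) : v - w \perp \bdd(w) \cap H_p(M)\}.
\]

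To type-define this orthogonality condition uniformly over $v$, I would decompose $\bdd(w)$ by orbit size: for each piece $S$ of $H_p(M)$ and each $R > 0$, the set
\[
B_R(w) \;=\; \bigl\{u \in S : d(u, u') \leq R \text{ whenever } u' \equiv_w u\bigr\}
\]
is uniformly type-definable in $u$ over $w$, and $\bdd(w) \cap S = \bigcup_R B_R(w)$. The orthogonality condition becomes
\[
\bigwedge_{S, R} \; \bigl[\,\forall u \in S,\; u \in B_R(w) \Rightarrow \langle v - w, u\rangle = 0\,\bigr].
\]
Working in the extension $T^{\mathcal{H}}$ of Proposition \ref{proposition: T is stably embedded} so that inner products are first-class formulas, and using $\omega_1$-saturation of $(M,H)$ to replace each universal statement over a type-definable set by an approximable type-definable condition, I obtain a uniform type-definition on $(v, w)$. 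Conjoining with membership in $\mathcal{P}(p)$ (type-definable by Lemma \ref{lemma: weak closure is definable}) yields a uniform type-definition of $\{w \leq v\}$ over $v$.

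Compactness in norm then follows from type-definability together with boundedness: $v - w \perp w$ forces $\|w\| \leq \|v\|$ by Pythagoras, so $\{w \leq v\}$ lies in a norm-bounded region of the piece containing it. A bounded type-definable set over a small parameter in an $\omega_1$-saturated continuous-logic model is compact in the logic topology by saturation of type spaces, and since the norm is itself a definable function on each piece, the logic topology refines the norm topology and logical compactness passes to norm compactness.

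The main obstacle will be making the second step honestly type-definable rather than only $\bigvee\bigwedge$-definable: because $\bdd(w)$ is only a directed union of type-definable sets, ``$\forall u \in \bdd(w)$'' does not automatically give a single partial type in $(v, w)$. The fix is to use $\omega_1$-saturation of the ambient model $(M, H) \models T^{\mathcal{H}}$ to convert each statement ``$\forall u \in B_R(w), \langle v - w, u\rangle = 0$'' into an approximable conjunction of formulas, and then appeal to full embeddedness of $T$ in $T^{\mathcal{H}}$ (Proposition \ref{proposition: T is stably embedded}) to pull the resulting type back to one over parameters in $M$.
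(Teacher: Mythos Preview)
Your reduction of $\{w \leq v\}$ to the condition $P_{\bdd(w)}v = w$ is correct and matches the paper's first step. However, both the type-definability argument and the compactness argument have genuine gaps.

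\textbf{The $B_R(w)$ decomposition does not capture $\bdd(w)$.} In this framework every sort carries a bounded metric (the distance function on a sort $S$ takes values in some $[-n,n]$), so once $R$ exceeds the diameter of $S$ you have $B_R(w)=S$ identically. Hence $\bigcup_R B_R(w)=S$, not $\bdd(w)\cap S$, and your displayed conjunction collapses to the false condition $v-w\perp H_p(M)$. Bounded orbit diameter is strictly weaker than compactness of the orbit; an honest inner approximation of $\bdd(w)$ would require total-boundedness data (finite $\epsilon$-nets for every $\epsilon$), and it is not clear those assemble into a uniformly type-definable family in $(u,w)$. The paper avoids parametrizing $\bdd(w)$ altogether: using Lemma~\ref{lemma: base set for partial order}, it identifies $\{w\leq v\}$ with the set of weak limits of Hilbert-space-indiscernible sequences in $\tp(v)$ that begin at $v$, a description that is directly and uniformly type-definable over $v$.

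\textbf{The compactness argument has the refinement backwards.} Type-definable sets are metrically closed, so the logic topology is \emph{coarser} than the norm topology, and compactness does not transfer from a coarser topology to a finer one. Norm-boundedness plus type-definability is plainly insufficient: $\mathcal{P}(p)$ itself is type-definable and norm-bounded yet only assumed locally compact. What you are missing is containment in $\bdd(v)$. From $w=P_{\bdd(w)}v$ and Theorem~\ref{projections commute} one gets
\[
P_{\bdd(v)}w \;=\; P_{\bdd(v)}P_{\bdd(w)}v \;=\; P_{\bdd(w)}P_{\bdd(v)}v \;=\; P_{\bdd(w)}v \;=\; w,
\]
so $w$ lies in the closed span of $\bdd(v)\cap H_p(M)$ and hence in $\bdd_{\mathcal{L}^{\mathcal{H}}}(v)$. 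A type-definable set over $v$ contained in $\bdd(v)$ is then metrically compact by the standard argument (any infinite $\epsilon$-separated sequence would yield, via Ramsey, an infinite indiscernible sequence over $v$ inside $\bdd(v)$, a contradiction).
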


\begin{proof}
 Recall from Lemma \ref{lemma: weak closure is definable} that $\mathcal{P}(p)$ is a type-definable set in $H(M)$. Take $v \in \mathcal{P}(p)$. By Theorem \ref{projections commute}, $\{w \in \mathcal{P}(p) \mid w \leq v\} = \{P_{\bdd(A)} v \mid A \subseteq M\}$. As in Lemma \ref{lemma: base set for partial order}, $\{P_{\bdd(A)} v \mid A \subseteq M\}$ is the set of weak limit points of Hilbert space indiscernible sequences in $\tp(v)$ which begin at $v$. It is straightforward to check that this is  a uniformly type-definable set over $v$.


By Theorem \ref{projections commute}, $\{w \in \mathcal{P}(p) \mid w \leq v\}$ is contained in $\bdd(v)$ in the language $\mathcal{L}^{\mathcal{H}}$. By type-definability, this set is compact in the norm topology.
\end{proof}

\begin{lemma}\label{lemma: partial order is well-founded}
$\mathcal{P}(p)$ is well-founded with respect to the partial order from Definition \ref{definition: partial order P}.
\end{lemma}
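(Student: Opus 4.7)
I argue by contradiction. Suppose $v_0 > v_1 > v_2 > \cdots$ is an infinite strictly descending chain in $\mathcal{P}(p)$. The first observation is that, by Theorem \ref{projections commute}, projections onto $\bdd$-closed sets commute on $H_p(M)$ with $P_A P_B = P_{A \cap B}$, so every finite composition of such projections is again of the same form. Hence each descending step can be realized by a single projection $v_{n+1} = P_{B_n} v_n$ for some $\bdd$-closed $B_n$. Since $v_{n+1} \neq v_n$, a proper projection strictly reduces the norm, so $(\|v_n\|)_n$ is strictly decreasing and converges to some $\ell \geq 0$.

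Next I would establish the rigid identity $\langle v_n, v_m\rangle = \|v_{\max(n, m)}\|^2$ for $n \neq m$: for $n < m$ the relation $v_m \leq v_n$ gives $v_m = P_C v_n$ for some $\bdd$-closed $C$, whence
\[
\langle v_n, v_m\rangle = \langle v_n, P_C v_n\rangle = \|P_C v_n\|^2 = \|v_m\|^2.
\]
By Lemma \ref{lemma: partial order is definable}, $\{w \leq v_0\}$ is compact in the norm topology, so some subsequence $v_{n_k}$ converges to a limit $v$. Closedness of each $\{w \leq v_m\}$ (same lemma), together with $v_{n_k} \leq v_m$ for $n_k \geq m$, forces $v \leq v_m$ for all $m$. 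The orthogonality $v_n - v \perp v$ (from $v \leq v_n$) then gives $\|v_n - v\|^2 = \|v_n\|^2 - \|v\|^2$, and combined with monotonicity of $(\|v_n\|)$ this forces the whole sequence to converge in norm: $v_n \to v$, $\|v\|^2 = \ell$, and $v < v_n$ strictly for every $n$.

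The main obstacle is deriving a contradiction from this configuration. A natural attempt is to extract a Ramsey-style indiscernible subsequence from $(v_n)$ with respect to the stable inner-product formula: the rigid identity above would then force constant norms along the indiscernible subsequence, contradicting their strict decrease. The delicate point is that in continuous logic, literal subsequences of $(v_n)$ are only approximately indiscernible, while an exactly indiscernible sequence realized in a saturated extension degenerates (all pairwise inner products equal to $\ell$, all norms equal to $\sqrt{\ell}$) to a constant sequence at the limit $v$ and yields no immediate contradiction. Bridging this gap is the heart of the argument, and it must exploit the combined rigidity of (i) the identity $\langle v_n, v_m\rangle = \|v_{\max(n,m)}\|^2$, (ii) the pairwise orthogonal telescoping decomposition $v_n - v = \sum_{k \geq n}(v_k - v_{k+1})$ with $\|v_k - v_{k+1}\|^2 = \|v_k\|^2 - \|v_{k+1}\|^2$ producing an infinite orthogonal family inside $\bdd(v_0) \cap H(M)$, and (iii) the local compactness of $\mathcal{P}(p)$ near $v$ combined with the uniform $\bdd$-definability of the partial order from Lemma \ref{lemma: partial order is definable}, in order to rule out the accumulation of infinitely many strictly-greater elements of $\mathcal{P}(p)$ at the limit point $v$.
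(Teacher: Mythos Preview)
Your setup is correct and matches the paper: you establish that the strictly descending chain converges in norm to a limit $v$ with $v < v_n$ for every $n$, using commutativity of projections (Theorem \ref{projections commute}) and compactness of $\{w \leq v_0\}$ (Lemma \ref{lemma: partial order is definable}). But you then explicitly stall at the contradiction, and the directions you list --- extracting an indiscernible subsequence from $(v_n)$, or analyzing the orthogonal telescoping $v_n - v = \sum_{k\geq n}(v_k - v_{k+1})$ --- are dead ends. As you yourself observe, an indiscernible limit of $(v_n)$ collapses to the constant sequence at $v$ and carries no information.

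The finishing move is simpler and uses a \emph{different} element than the chain itself. From $v < v_n$ one has $v = P_B v_n$ for some $\bdd$-closed $B$; since $v \in B$ this forces $\bdd(v) \subseteq B$, and so $v_n \in \bdd(v)$ would give $v_n = P_B v_n = v$, a contradiction. Thus $v_n \notin \bdd(v)$. Now fix $\epsilon > 0$, choose $n$ with $\|v_n - v\| < \epsilon$, and use $\omega_1$-saturation of $M$ to produce an infinite indiscernible sequence in $\tp(v_n/v)$. This sequence lies in $\mathcal{P}(p)$ because $\mathcal{P}(p)$ is type-definable (Lemma \ref{lemma: weak closure is definable}); all its terms sit on the sphere of radius $\|v_n - v\|$ around $v$ and have constant positive pairwise distance, so it is an infinite discrete subset of $B(v,\epsilon) \cap \mathcal{P}(p)$. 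This contradicts local compactness of $\mathcal{P}(p)$, i.e.\ scatteredness. The point you missed is that one should not try to squeeze indiscernibility out of the chain $(v_n)$; rather, fix a single $v_n$ near $v$ and manufacture a fresh indiscernible sequence of its conjugates over $v$.
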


\begin{proof}
Suppose $(v_n)$ is an infinite   decreasing sequence in $\mathcal{P}(p)$. By Theorem \ref{projections commute}, we can write $v_n = P_{V_n}v_0$ where $V_n$ is a  $\bdd$-closed subspace of $H(M)$ and $V_{n+1} \subseteq V_n$. Since $\{w \in \mathcal{P}(p) \mid w < v_0\}$ is metrically compact, the sequence $(v_n)$ is convergent and it follows that it converges to $z :=P_{V} v_0$ with $V = \bigcap_n V_n$. Then $z  < v_n$ for all $n$ and $v_n \notin \bdd(z)$. For every $\e > 0$ we can find $n$ such that $\|v_n - z\| < \e$ and we can take an infinite indiscernible sequence in $\tp(v_n/z)$ which must lie in $\mathcal{P}(p)$, by  Lemma \ref{lemma: weak closure is definable}. Hence $\mathcal{P}(p)$ is not locally compact around $z$ and this contradicts scatteredness of $p$. Therefore any decreasing sequence in $\mathcal{P}(p)$ is eventually constant.
\end{proof}


We use Lemma \ref{lemma: partial order is well-founded} to decompose $H_p(M)$. Fix an enumeration $(p_\alpha)_{\alpha < \kappa}$ of the complete types in $\mathcal{P}(p)$ with the property that for any $a,b \in \mathcal{P}(p)$, if $b < a$ in $\mathcal{P}(p)$ then $\tp(b)$ comes before $\tp(a)$ in the sequence $(p_\alpha)$. Such an enumeration exists by Lemmas  \ref{lemma: weak closure is definable} and \ref{lemma: partial order is well-founded}. For any $\alpha  < \kappa$, let $V_\alpha$ be the subspace of $H(M)$ generated by the realisations of $\bigcup_{\beta < \alpha} p_\beta$ (set $V_0 = \{0\})$.

For every $\alpha$, find a complete type $q_\alpha$ in  $\mathcal{H}$  such that for some (any) $v \models p_\alpha$, there is $w \models q_\alpha$ such that $w = P_{V_\alpha^\perp} v$. 

 \begin{lemma}\label{lemma: orthogonal maps are definable}
For every $\alpha$, the relation $   P_{V_\alpha^\perp}  v = w$ is type-definable on $p_\alpha \times q_\alpha$.
\end{lemma}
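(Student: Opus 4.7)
The plan is to characterize the relation $P_{V_\alpha^\perp}v = w$ on $p_\alpha \times q_\alpha$ by two manifestly type-definable conditions and then verify the characterization. Explicitly, I claim that for $v \models p_\alpha$ and $w \models q_\alpha$, $P_{V_\alpha^\perp}v = w$ is equivalent to the conjunction of (i) $\langle w, u\rangle = 0$ for every $u$ realizing some $p_\beta$ with $\beta < \alpha$, together with (ii) $\langle v, w\rangle = \langle w, w\rangle$.

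First I would check that $q_\alpha$ is well-defined, i.e.\ that $\tp(P_{V_\alpha^\perp}v)$ does not depend on the choice of $v \models p_\alpha$. Given $v_1, v_2 \models p_\alpha$, any automorphism of $M$ taking $v_1$ to $v_2$ lifts via the GNS construction to a unitary of $H(M)$; this unitary permutes the realizations of each $p_\beta$, hence preserves $V_\alpha$, and therefore sends $P_{V_\alpha^\perp}v_1$ to $P_{V_\alpha^\perp}v_2$.

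Next I would verify the characterization. The forward direction is immediate: if $w = P_{V_\alpha^\perp}v$ then $w \in V_\alpha^\perp$ gives (i), and expanding $v = P_{V_\alpha}v + P_{V_\alpha^\perp}v$ against $w$ gives (ii). For the converse, set $w' = P_{V_\alpha^\perp}v$; then $\|w\| = \|w'\|$ since both realize $q_\alpha$, and by the forward direction applied to $w'$ we have $\langle v, w'\rangle = \|w'\|^2$. Combined with (ii), this yields $\langle v, w - w'\rangle = \|w\|^2 - \|w'\|^2 = 0$. Since $w - w' \in V_\alpha^\perp$ (by (i) and $w' \in V_\alpha^\perp$) and $v - w' = P_{V_\alpha}v \in V_\alpha$, we also have $\langle v - w', w - w'\rangle = 0$; subtracting gives $\langle w', w\rangle = \|w'\|^2$. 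Expanding $\|w - w'\|^2 = \|w\|^2 - 2\langle w, w'\rangle + \|w'\|^2$ then yields $0$, so $w = w'$.

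Finally, I would express (i) and (ii) as partial types. Condition (ii) is a single definable formula: the inner products $\langle v, w\rangle$ and $\langle w, w\rangle$ are definable between the pieces of $\mathcal{H}$ containing $p_\alpha$ and $q_\alpha$. Condition (i) is the conjunction, over $\beta < \alpha$, of a universal statement over the type-definable set $p_\beta$; since $p_\beta$ is contained in a fixed piece of $\mathcal{H}$ (a sort of $M$) and the inner product between that piece and the piece of $w$ is definable, this becomes a partial type in $w$ via the standard continuous-logic device together with $\omega_1$-saturation of $M$. The one step that requires a bit of care is precisely this last conversion from a universal quantifier ranging over a type-definable set to a partial type, but it is routine; combining the resulting type with (ii) delivers the required partial type defining the graph of $P_{V_\alpha^\perp}$ on $p_\alpha \times q_\alpha$.
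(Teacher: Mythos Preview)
Your characterization $(\mathrm{i})\wedge(\mathrm{ii})$ is correct, and your verification of the equivalence is clean. The gap is in the final step: the claim that $(\mathrm{i})$ --- ``$\langle w,u\rangle=0$ for every $u\models p_\beta$, $\beta<\alpha$'' --- is a partial type in $w$ is not justified. Universal quantification over a merely type-definable set does \emph{not} in general yield a type-definable condition: the set $\{w:\exists u\,(p_\beta(u)\wedge|\langle w,u\rangle|\ge\varepsilon)\}$ is type-definable, so its complement is open, and your condition $(\mathrm{i})$ is a countable intersection of such opens, hence only $G_\delta$. There is no ``standard device'' that turns this into a closed condition, and $\omega_1$-saturation does not help here in the way you suggest.

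Your approach can be repaired, but not by making $(\mathrm{i})$ type-definable --- rather by showing it is \emph{automatic} on $q_\alpha$. One argues that for $w_0:=P_{V_\alpha(M)^\perp}v_0$ the distance $d(w_0,V_\alpha)$ is preserved in any elementary extension $N\succ M$: any finite linear combination from $\bigcup_{\beta<\alpha}p_\beta(N)$ approximating $w_0$ can, by $\omega_1$-saturation of $M$ applied to its type over $w_0$, be matched inside $M$. Hence $w_0\in V_\alpha(N)^\perp$, and since any $2$-type extending $q_\alpha(w)\cup p_\beta(u)$ is realized over $w_0$ in some such $N$, one gets $\langle w,u\rangle=0$ for all $w\models q_\alpha$ and $u\models p_\beta$. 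With $(\mathrm{i})$ automatic, the relation is cut out on $p_\alpha\times q_\alpha$ by the single definable equation $(\mathrm{ii})$.

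The paper takes a different route: it fixes the constant $d:=d(v,V_\alpha)$ (independent of $v\models p_\alpha$ by invariance of $V_\alpha$), and for each $\varepsilon>0$ writes down an explicit existential formula $\phi_\varepsilon(v,w)$ asserting the existence of a fixed-shape finite combination from the $p_{\alpha_i}$ that $\varepsilon$-approximates $P_{V_\alpha}v$ and hence pins down $w=v-P_{V_\alpha}v$ up to $\varepsilon$; the intersection over $\varepsilon$ defines the graph. This avoids the orthogonality condition entirely.
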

 
 \begin{proof}
   Fix $v \models p_\alpha$ and write $d \geq 0$ for the distance between $x$ and $V_\alpha$. Since $p_\alpha$ is a complete type and $V_\alpha$ is generated by a union of $\bigwedge$-interpretable Hilbert spaces, $d$ does not depend on $x$. Moreover, $P_{V_\alpha} v$ is the unique element $z \in V_\alpha$ such that $\| v - z\| = d$. Note also that $P_{V_\alpha^\perp}  v = v - P_{V_\alpha} v$.

For every $\e > 0$, there is  $n_\e \geq 0$ such that  for every $ i\leq n_\e$ we can  find a type $p_{\alpha_i}$, $v_i \in  q_{\alpha_i}$ and $\lambda_i \in [-n_\e, n_\e] $   satisfying $\|\sum_{i \leq n_\e} \lambda_i  v_i -  v\| \leq d + \e$.   $n_\e$, the types $p_{\alpha_i}$ and the scalars $\lambda_i$ do not depend on $v$. Write $\phi_\e(v, w)$ for the type-definable set
\[
\exists v_1, \ldots v_{n_\e} \Big(\bigwedge_{i \leq n_\e}  p_{\alpha_i}(v_i)  
\wedge \| \sum \lambda_i   v_i -  v\| \leq d + \e 
\wedge \| v - \sum \lambda_i  v_i  -  w\| \leq \e \Big)
\]
The relation  $ P_{V_\alpha^\perp}  v  = w$  is defined  on $p_\alpha \times q_\alpha$ by the intersection of all $\phi_\e(v, w)$. 
 \end{proof}


It is easy to prove by induction that for every $\alpha \leq \kappa$ the Hilbert space generated by $\bigcup_{\beta < \alpha} q_\beta$ is equal to $V_\alpha$. Therefore, $H_p(M)$ is the orthogonal sum of the spaces generated by each $q_\alpha$.  

\begin{lemma}\label{lemma: orthogonal maps give asymptotically free}
For every $\alpha < \kappa$, $ q_\alpha$ is asymptotically free.
\end{lemma}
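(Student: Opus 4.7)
Fix an $\omega$-saturated $M \models T$ and suppose $w_1, w_2 \models q_\alpha$. By the definition of $q_\alpha$ together with completeness of $p_\alpha$, I may pick $v_1, v_2 \models p_\alpha$ with $w_i = P_{V_\alpha^\perp}(v_i)$; write $u_i = P_{V_\alpha}(v_i)$, so $v_i = u_i + w_i$. By the equivalent formulation of asymptotic freedom in Definition \ref{definition: asymptotically free}, it suffices to show: if $\langle w_1, w_2 \rangle \neq 0$ then $w_1 \in \bdd(w_2)$.

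The central idea is to compare $v_1$ with its projection $v_1' := P_{\bdd(w_2)}(v_1)$, exploiting the enumeration $(p_\beta)_{\beta < \kappa}$ which puts earlier types strictly below later ones in the partial order. By Lemma \ref{lemma: base set for partial order}, $v_1' \in \mathcal{P}(p)$, and by construction $v_1' \leq v_1$ in the partial order on $\mathcal{P}(p)$. I will argue that $v_1' = v_1$ (strictly, no descent). Suppose for contradiction that $v_1' < v_1$. Then $v_1'$ realises some $p_\beta$ with $\beta < \alpha$ (by the chosen enumeration), so $v_1' \in V_\alpha$. Since $w_2 \in q_\alpha \subseteq V_\alpha^\perp$, this gives $\langle v_1', w_2 \rangle = 0$. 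On the other hand, $v_1 - v_1' \perp \bdd(w_2)$ and $w_2 \in \bdd(w_2)$, so $\langle v_1 - v_1', w_2 \rangle = 0$, whence $\langle v_1, w_2 \rangle = \langle v_1', w_2 \rangle = 0$. But $\langle v_1, w_2 \rangle = \langle w_1 + u_1, w_2 \rangle = \langle w_1, w_2 \rangle$ since $u_1 \in V_\alpha \perp w_2$, contradicting the assumption $\langle w_1, w_2 \rangle \neq 0$.

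Hence $v_1' = v_1$, i.e.\ $v_1 \in \bdd(w_2)$. Since the map $v \mapsto P_{V_\alpha^\perp}(v)$ is invariant under all automorphisms (the collection $\bigcup_{\beta<\alpha}p_\beta$ is $\emptyset$-type-definable, so $V_\alpha$ is setwise preserved), we have $w_1 \in \dcl(v_1) \subseteq \bdd(w_2)$, as required.

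The only mildly delicate points are verifying that $P_{\bdd(w_2)}(v_1)$ genuinely lives in $\mathcal{P}(p)$ and sits below $v_1$ in the partial order — both of which are immediate from Lemma \ref{lemma: base set for partial order} and the closure of $\mathcal{P}(p)$ under projections onto $\bdd$-closed subsets — and that the enumeration respects the partial order, which is precisely how $(p_\alpha)$ was chosen using Lemma \ref{lemma: partial order is well-founded}. The projections-commute machinery of Theorem \ref{projections commute} is not even invoked here; the scattered/well-founded structure of $\mathcal{P}(p)$ plus the ordering of types does all the work.
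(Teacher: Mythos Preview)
Your proof is correct and follows essentially the same approach as the paper's. The paper's three-line argument works directly with $v,w\in q_\alpha$ and asserts that $u=P_{\bdd(w)}v$ satisfies $u<v$ and $u\in V_\alpha$; your version makes this rigorous by lifting to $v_1\in p_\alpha$ (where the partial order on $\mathcal{P}(p)$ and the enumeration property genuinely apply) before projecting onto $\bdd(w_2)$, which is exactly the right way to flesh out that step.
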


\begin{proof}
Suppose $v, w \in q_\alpha$ and $v \notin \bdd(w)$. Let $u = P_{\bdd(w)}v$ so that $u < v$ and  $u \in V_\alpha$. Then $\langle v, w\rangle = \langle u, w\rangle = 0$.
\end{proof}

\begin{theorem}\label{theorem: summary of general scattered  interpretable hilbert space}
 
Let $\mathcal{H}$ be an interpretable Hilbert space in $T$. Let $\mathcal{H}_p$ be a $\bigwedge$-interpretable subspace of $\mathcal{H}$ generated by a scattered type-definable set $p$. Then $\mathcal{H}_p$ is the orthogonal sum of $\bigwedge$-interpretable Hilbert spaces $(\mathcal{H}_\alpha)_{\alpha < \kappa}$ such that for all $\alpha < \kappa$, $\mathcal{H}_\alpha$ is generated by an asymptotically free complete  type.

The decomposition of $\mathcal{H}_p$ is not model dependent, in the sense that for any $M \models T$, $H_p(M)$ is either empty or is the orthogonal sum of the spaces $H_\alpha(M)$. 
\end{theorem}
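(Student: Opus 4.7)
My plan is to assemble the machinery developed in Lemmas \ref{lemma: weak closure is definable}--\ref{lemma: orthogonal maps give asymptotically free}, which essentially contain all the ingredients. Working in an $\omega_1$-saturated $M \models T$, Lemma \ref{lemma: weak closure is definable} gives that $\mathcal{P}(p)$ is type-definable in a piece of $\mathcal{H}$, and Lemma \ref{lemma: partial order is well-founded} gives that the partial order of Definition \ref{definition: partial ordering} is well-founded on it. I would fix an enumeration $(p_\alpha)_{\alpha < \kappa}$ of the complete types realised in $\mathcal{P}(p)$ such that $b < a$ in $\mathcal{P}(p)$ forces $\tp(b)$ to precede $\tp(a)$, and recursively set $V_\alpha$ to be the closed subspace of $H(M)$ generated by realisations of $\bigcup_{\beta < \alpha} p_\beta$, with $q_\alpha := \tp(P_{V_\alpha^\perp} v)$ for any $v \models p_\alpha$ (independent of $v$ by homogeneity of the complete type $p_\alpha$).

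The next step is to identify each $\mathcal{H}_\alpha$, defined as the closed span of realisations of $q_\alpha$, as a $\bigwedge$-interpretable subspace of $\mathcal{H}$. Here Lemma \ref{lemma: orthogonal maps are definable} is the key input: the graph of $v \mapsto P_{V_\alpha^\perp} v$ is type-definable as a relation on $p_\alpha \times q_\alpha$, so $q_\alpha$ itself is type-definable in a piece of $\mathcal{H}$, and $\mathcal{H}_\alpha$ is $\bigwedge$-interpretable in the sense of Definition \ref{definition: generalised piecewise interpretable}. A transfinite induction splitting any $v \models p_\alpha$ as $P_{V_\alpha} v + P_{V_\alpha^\perp} v$ shows that the closed span of $\bigcup_{\beta \le \alpha} q_\beta$ coincides with $V_{\alpha + 1}$; passing to closed unions at limits and at $\kappa$ yields the orthogonal decomposition $\mathcal{H}_p = \bigoplus_{\alpha < \kappa} \mathcal{H}_\alpha$. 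Orthogonality of distinct summands follows at once from $q_\alpha \perp V_\alpha \supseteq \mathcal{H}_\beta$ for $\beta < \alpha$, and asymptotic freedom of each $q_\alpha$ is exactly Lemma \ref{lemma: orthogonal maps give asymptotically free}.

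The remaining clause is model independence. The enumeration $(p_\alpha)$ and the partial types cutting out each $q_\alpha$ live in the type space of $T$ rather than in any specific model, and the type-definability statements of Lemmas \ref{lemma: weak closure is definable} and \ref{lemma: orthogonal maps are definable} are uniform schemes of formulas. Interpreting this data in an arbitrary $N \models T$ therefore produces the decomposition $H_p(N) = \bigoplus_{\alpha < \kappa} H_\alpha(N)$, with individual summands possibly empty when the corresponding types are omitted. I expect the main delicate point to be this uniformity across models rather than any single computation: one must check by transfinite induction that the partial types defining $V_\alpha$, which feed into the defining formulas of $q_\alpha$ via Lemma \ref{lemma: orthogonal maps are definable}, are themselves built from the earlier $q_\beta$ at the level of $T$, so that the construction closes up consistently. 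Granted this uniformity, the whole decomposition is visibly independent of $M$.
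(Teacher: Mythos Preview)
Your construction of the $(q_\alpha)$ in an $\omega_1$-saturated model is correct and is exactly what the paper does; Lemmas \ref{lemma: partial order is well-founded}, \ref{lemma: orthogonal maps are definable} and \ref{lemma: orthogonal maps give asymptotically free} assemble precisely as you describe.

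The gap is in the model-independence clause. Uniformity of the partial types cutting out each $q_\alpha$ only tells you that $q_\alpha(N)$ is a well-defined set and that the $H_\alpha(N)$ are pairwise orthogonal; it does not by itself show that the $H_\alpha(N)$ span $H_p(N)$. Your proposed transfinite induction stalls after one step: from $v \models p_{\alpha}$ in $N$ you correctly get $P_{V_\alpha^\perp} v \in \dcl(v) \subseteq N$ via Lemma \ref{lemma: orthogonal maps are definable}, but the remainder $P_{V_\alpha} v$ is merely some vector in $V_\alpha$, not a realisation of any $p_\beta$, so Lemma \ref{lemma: orthogonal maps are definable} no longer applies to it and there is no inductive handle on an arbitrary element of $V_\alpha$. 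The slogan ``interpreting the data in $N$ produces the decomposition'' hides exactly this point.

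The paper closes the gap by a different route, using the asymptotic freedom of $q_\alpha$ that has just been established rather than any uniformity of formulas. Fix $v \models p$ in $N \prec M$. If $w \models q_\alpha$ in $M$ satisfies $\langle v, w\rangle \neq 0$, then $w \in \bdd(v)$: otherwise an indiscernible sequence in $\tp(w/\bdd(v))$ is an infinite orthogonal family in $q_\alpha$ all having the same nonzero inner product with $v$, which is impossible. Since $\bdd(v)$ does not grow in passing from $N$ to $M$, every realisation of any $q_\alpha$ that is non-orthogonal to $v$ already lies in $N$, and these realisations (together with their $\bdd$-classes in $q_\alpha$) suffice to span $v$. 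That is the missing idea.
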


\begin{proof}
For $M \models T$ $\omega_1$-saturated, we constructed the asymptotically free types $(q_\alpha)_{\alpha < \kappa}$ such that $H_p(M)$ is the orthogonal sum of the spaces $H_\alpha(M)$ generated by    $q_\alpha$. We only need to show that this decomposition applies when the model is not $\omega_1$-saturated.

Let $N \models T$ be an arbitrary model where $p$ is realised. We can assume $N \prec M$ and we fix $v \models p$ in $N$.

Working in $M$, let $w$ be an element realising a type $q_{\alpha}$ such that $\langle v,  w\rangle \neq 0$.   If $w \notin \bdd(v)$, then we find an infinite indiscernible sequence $(w_n)$ in $\tp(w/\bdd(v))$. Since $q_\alpha$ is asymptotically free, $( w_n)$ is an infinite orthogonal sequence in $H(M)$ with $\langle v,   w_n\rangle \neq 0$, and this is a contradiction. Therefore $w \in \bdd(v)$ and hence $w \in N$. 

Working again in $M$, we can find a $\bdd$-independent family $(w_n)$ realising types $q_{\alpha_n}$ such that $\langle v,   w_n\rangle \neq 0$ and $v$ is in the closed span of $\bigcup  (\bdd(w_n) \cap q_{\alpha_n})$. Since each $w_n$ is in $N$, each set $\bdd(w_n) \cap q_{\alpha_n}$ is contained in $N$ and the spaces $(H_\alpha(N))_{\alpha < \kappa}$ generate  $H_p(N)$.
\end{proof}

 The following corollary shows that Theorem \ref{theorem: summary of general scattered  interpretable hilbert space} is not restricted to the $\bigwedge$-interpretable subspace $\mathcal{H}_p$ generated by a single type-definable set.

\begin{corollary}\label{corollary: orthogonal structure for general scattered Hilbert space}
  Suppose that $\mathcal{H}$ contains  $\bigwedge$-interpretable subspaces $(\mathcal{H}_i)_I$ such that each $\mathcal{H}_i$ is generated by a scattered type-definable set. Then the subspace of $\mathcal{H}$ generated by all $\mathcal{H}_i$  can be expressed as  the orthogonal sum of $\bigwedge$-interpretable subspaces $(\mathcal{H}_j)_{j \in J}$ such that for all $j \in J$, $\mathcal{H}_j$ is generated by an asymptotically free complete   type.
  
  This decomposition is not model dependent in the same sense as in Theorem \ref{theorem: summary of general scattered  interpretable hilbert space}.
 \end{corollary}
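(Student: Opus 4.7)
The plan is to recycle the proof of Theorem \ref{theorem: summary of general scattered  interpretable hilbert space} almost verbatim, replacing the single scattered set $\mathcal{P}(p)$ by the union $\mathcal{P} := \bigcup_{i \in I} \mathcal{P}(p_i)$, where $p_i$ denotes the scattered type-definable set generating $\mathcal{H}_i$. I will work throughout in an $\omega_1$-saturated $M \models T$, and recover model independence at the end as in Theorem \ref{theorem: summary of general scattered  interpretable hilbert space}.

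I extend the partial order of Definition \ref{definition: partial ordering} to all of $\mathcal{P}$ by the same formula: $v \leq w$ iff $v = P_{A_n} \cdots P_{A_1} w$ for some $\bdd$-closed $A_1, \ldots, A_n \subseteq M$. The crucial observation is that by Lemma \ref{lemma: base set for partial order} each $\mathcal{P}(p_i)$ is closed under projections onto $\bdd$-closed subsets of $M$, so any descending chain $\cdots \leq v_2 \leq v_1 \leq v_0$ stays inside the $\mathcal{P}(p_i)$ containing $v_0$. Hence the partial order on $\mathcal{P}$ decomposes as the union of the partial orders on the individual $\mathcal{P}(p_i)$'s, and well-foundedness of $\mathcal{P}$ follows immediately from Lemma \ref{lemma: partial order is well-founded} applied piecewise.

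Using well-foundedness I enumerate the distinct complete types in $\mathcal{P}$ as $(p_\alpha)_{\alpha < \kappa}$ so that $b < a$ implies $\tp(b)$ precedes $\tp(a)$, set $V_\alpha$ to be the closed span of $\bigcup_{\beta < \alpha} p_\beta(M)$ in $H(M)$, and define $q_\alpha := \tp(P_{V_\alpha^\perp} v)$ for any $v \models p_\alpha$. The definability of the relation $P_{V_\alpha^\perp} v = w$ follows as in Lemma \ref{lemma: orthogonal maps are definable}; the identification of $V_\alpha$ with the closed span of $\bigcup_{\beta < \alpha} q_\beta(M)$ is proved by induction as before; and asymptotic freedom of each $q_\alpha$ is verified verbatim as in Lemma \ref{lemma: orthogonal maps give asymptotically free}, namely for $v, w \models q_\alpha$ with $v \notin \bdd(w)$ the element $u = P_{\bdd(w)} v$ lies strictly below $v$ in the partial order on $\mathcal{P}$, so $\tp(u)$ appears earlier than $q_\alpha$ in the enumeration, which places $u$ in $V_\alpha$ and forces $\langle v, w \rangle = \langle u, w \rangle = 0$. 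Model independence is then obtained by repeating the final paragraph of the proof of Theorem \ref{theorem: summary of general scattered  interpretable hilbert space}: in an arbitrary $N \models T$, any vector in the span of $\bigcup_i H_{p_i}(N)$ is approximated by elements of $\bdd(w) \cap q_\alpha(N)$ for various $w \in N$, hence lies in the closed sum of the $H_\alpha(N)$.

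The only real friction I expect is the bookkeeping check that every step of Theorem \ref{theorem: summary of general scattered  interpretable hilbert space} depends only on properties of $\mathcal{P}$ that survive taking the union --- well-foundedness, closure under $\bdd$-projections, compatibility of the type enumeration with the order, and type-definability of downward sets --- rather than on anything specific to $\mathcal{P}$ coming from a single type-definable set. The potential worry that descending chains might hop between different components $\mathcal{P}(p_i)$ is ruled out by the preservation of each $\mathcal{P}(p_i)$ under projections, which is the one point where the scatteredness hypothesis on the generating sets $p_i$ is actually invoked.
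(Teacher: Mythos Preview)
Your approach is correct but differs from the paper's. The paper does not rerun the machinery of Theorem~\ref{theorem: summary of general scattered  interpretable hilbert space} on the union $\bigcup_i \mathcal{P}(p_i)$. Instead it first applies Theorem~\ref{theorem: summary of general scattered  interpretable hilbert space} to each $\mathcal{H}_i$ separately, obtaining asymptotically free types $(q_\alpha^i)$, and then proves a standalone lemma: if $q$ is an asymptotically free complete type and $V$ is any subspace generated by $\bigwedge$-definable sets, then the image $q'$ of $q$ under $P_{V^\perp}$ is again asymptotically free. With this in hand, one enumerates all the $q_\alpha^i$ as $(r_\alpha)$ and successively projects each $r_\alpha$ onto the orthogonal complement of the span of the earlier ones. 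Your route avoids isolating this projection lemma by absorbing it into the global well-founded enumeration, which is economical here; the paper's route has the advantage of extracting a reusable fact (asymptotic freedom is preserved under $P_{V^\perp}$ for $\bigwedge$-definable $V$) that stands on its own.

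One small point of phrasing: when you write that $u = P_{\bdd(w)} v$ ``lies strictly below $v$ in the partial order on $\mathcal{P}$'', note that $v \models q_\alpha$ need not lie in $\mathcal{P}$ at all. The argument still goes through (as in Lemma~\ref{lemma: orthogonal maps give asymptotically free}) once you pass to $v' \models p_\alpha$ with $v = P_{V_\alpha^\perp} v'$: since $v' \notin \bdd(w)$, the element $P_{\bdd(w)} v'$ is strictly below $v'$ in $\mathcal{P}$ and hence lands in $V_\alpha$, and $P_{\bdd(w)}$ preserves $V_\alpha$, so $u = P_{\bdd(w)} v' - P_{\bdd(w)} P_{V_\alpha} v' \in V_\alpha$. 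This is the same shorthand the paper uses, but it is worth being aware of what is being elided.
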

 
\begin{proof}
Take  $M \models T$ $\omega_1$-saturated as before.  Let $q$ be an  asymptotically free complete type of $\mathcal{H}$.  Let $V$ be a subspace of $H(M)$ generated by an arbitrary collection of $\bigwedge$-definable sets. As in Lemma \ref{lemma: orthogonal maps are definable}, the projection $P_{V^\perp}  $ is definable on $q$ and we can find a type $q'$ in $\mathcal{H}$ which is the image of $q$ under $P_{V^\perp}$. We check that $q'$ is asymptotically free.

Take $v, w \models q'$ with $v \notin \bdd(w)$ and suppose $v = P_{V^{\perp}}z$ where $z \models q$. Then $\langle v, w \rangle = \langle z, w\rangle$ and $z \notin \bdd(w)$. Take $(z_n)$ an infinite indiscernible sequence in $\tp(z/w)$. Then $(z_n)$ is an infinite orthogonal sequence in $H(M)$ and hence $\langle z,  w\rangle = 0$.

We apply Theorem \ref{theorem: summary of general scattered  interpretable hilbert space} to each $\mathcal{H}_i$ and obtain asymptotically free types $(q_\alpha^i)$ generating each $\mathcal{H}_i$. We can then ensure that these types are pairwise orthogonal by enumerating them as $(r_\alpha)$ and mapping each $r_\alpha$ to the orthogonal complement of the subspace generated by $\bigcup_{\beta < \alpha} r_\beta$.

  The same proof as in Theorem \ref{theorem: summary of general scattered  interpretable hilbert space} shows that this decomposition is not model dependent.
%
%
\end{proof}

\subsection{Strictly interpretable Hilbert spaces}\label{subsection: strictly interpretable} 

\emph{In Section \ref{subsection: strictly interpretable}, we fix an interpretable Hilbert space $\mathcal{H}$ in $T$. In the following results, there are no unstated assumptions on $T$ or $\mathcal{H}$.}

 \medskip
 
In this section, we investigate interpretable Hilbert spaces generated by strictly definable inner product maps. The next proposition shows that we can find an asymptotically free generating set  for such an interpretable Hilbert space  on which the inner product is still strictly definable.


\begin{proposition}\label{proposition: asymptotically free generating set for general strictly interpretable hilbert space}
Let  $p$ be a type-definable set in  $\mathcal{H}$. 
 If  the inner product map is strictly definable on $p \times p$, then there is an asymptotically free type-definable set $q$ generating $\mathcal{H}_p$ such that the inner product map   is strictly definable on $q \times q$.
 
Moreover, if $p$ is a finite union of complete types, then $q$ is a finite union of complete types. 
\end{proposition}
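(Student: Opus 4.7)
The plan is to derive the proposition from the general decomposition Theorem \ref{theorem: summary of general scattered  interpretable hilbert space}. Strict definability of the inner product on $p\times p$, say with values in a finite set $F\subset\Rr$, implies scatteredness by Lemma \ref{lemma: strictly definable implies scattered}. Theorem \ref{theorem: summary of general scattered  interpretable hilbert space} then produces pairwise orthogonal asymptotically free complete types $(q_\alpha)_{\alpha<\kappa}$ whose union generates $\mathcal{H}_p$. I would take $q:=\bigcup_\alpha q_\alpha$. Asymptotic freedom of $q$ as a whole is immediate: for $v\in q_\alpha$ and $w\in q_\beta$, if $\alpha\neq\beta$ then $\langle v,w\rangle=0$ by orthogonality, while if $\alpha=\beta$ the asymptotic freedom of $q_\alpha$ applies directly.

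The second step is to verify strict definability of the inner product on $q\times q$. The key intermediate observation is that the inner product on $\mathcal{P}(p)\times\mathcal{P}(p)$ also takes values in $F$: for $v,w\in\mathcal{P}(p)$ realised as weak limits of sequences $(a_n),(b_m)\subset p$, one has $\langle v,w\rangle=\lim_n\lim_m\langle a_n,b_m\rangle\in F$, since $F$ is closed (being finite). Writing $v=P_{V_\alpha^\perp}u$ and $v'=P_{V_\alpha^\perp}u'$ in a common $q_\alpha$, self-adjointness and idempotence of $P_{V_\alpha}$ give
\[
\langle v,v'\rangle=\langle u,u'\rangle-\langle P_{V_\alpha}u,P_{V_\alpha}u'\rangle.
\]
The first term lies in $F$ since $u,u'\in\mathcal{P}(p)$. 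For the second, I would identify $P_{V_\alpha}u$ with $P_{\bdd(A)}u$ for $A=\bdd(\bigcup_{\beta<\alpha}p_\beta)\subseteq M$, via Theorem \ref{projections commute} and Lemma \ref{lemma: base set for partial order}; this places $P_{V_\alpha}u$ in $\mathcal{P}(p)$, so the second term also lies in $F$. Hence $\langle v,v'\rangle\in F-F$, a finite set, and since cross-terms between distinct $q_\alpha,q_\beta$ vanish, strict definability holds on $q\times q$.

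For the \emph{moreover} part, assume $p$ is a finite union of complete types. I would show $\mathcal{P}(p)$ has only finitely many complete types, so that $\kappa<\omega$ and $q$ is a finite union of complete types (hence type-definable). The length of any strictly descending chain $v_0>v_1>\cdots$ in $\mathcal{P}(p)$ is at most $|F|$, since the norms $\|v_i\|^2\in F$ are strictly decreasing along the chain. Combined with Lemma \ref{lemma: partial order is definable} and the discreteness of $\mathcal{P}(p)$ coming from strict definability, each downset $\{w\leq v\}$ is finite. I would then stratify $\mathcal{P}(p)$ by depth and argue by induction on depth $d$ that each depth-stratum has finitely many complete types: a type at depth $d+1$ is encoded by a finite piece of data consisting of the (finitely many) types of its immediate predecessors at depth $\leq d$ together with the $F$-valued inner products between it and these predecessors. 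The base of the induction uses that indiscernible sequences in the finitely many types of $p$ can only have $|F|$ possible ``indiscernible inner-product types'', bounding the limit types at depth zero.

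The main obstacle will be this depth induction: while the depth bound $|F|$ and the finiteness of $\{w\leq v\}$ are clean, one must argue rigorously that the combinatorial data of a finite predecessor configuration together with $F$-valued inner products actually determines the complete type of a new element of $\mathcal{P}(p)$, which requires a stability-theoretic use of canonical bases coded on this finite skeleton. A secondary technicality is the identification $P_{V_\alpha}u=P_{\bdd(A)}u$: one must verify, via Theorem \ref{projections commute}, that $V_\alpha$ coincides with the subspace associated to the $\bdd$-closed parameter set $A=\bdd(\bigcup_{\beta<\alpha}p_\beta)$ rather than with a strictly larger subspace.
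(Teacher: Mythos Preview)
Your approach differs substantially from the paper's, and there is a genuine gap in your strict-definability argument.

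The paper does \emph{not} invoke Theorem~\ref{theorem: summary of general scattered  interpretable hilbert space} and its global subspaces $V_\alpha$. Instead it works locally: for each $v\in\mathcal{P}(p)$ it takes the \emph{finite-dimensional} space $V(v)=\mathrm{span}\{w\in\mathcal{P}(p)\mid w<v\}$ and sets $q=\{P_{V(v)^\perp}v : v\in\mathcal{P}(p)\}$. Since $\{w<v\}$ is finite and uniformly definable over $v$ (by Lemma~\ref{lemma: partial order is definable} and discreteness), the map $v\mapsto P_{V(v)^\perp}v$ is uniformly definable, so $q$ is type-definable; strict definability is then automatic because $P_{V(v)^\perp}v$ is a fixed linear combination of the finitely many elements of $\{v\}\cup\pi(v)$, with coefficients determined by the Gram matrix whose entries lie in $F$. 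Asymptotic freedom of $q$ is proved by a short direct computation showing $P_{\bdd(y)}P_{V(v)}v=P_{\bdd(y)}v$ whenever $P_{V(v)^\perp}v\notin\bdd(y)$.

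Your gap is the identification $P_{V_\alpha}u=P_{\bdd(A)}u$ with $A=\bdd\bigl(\bigcup_{\beta<\alpha}p_\beta\bigr)$. The set $\bigcup_{\beta<\alpha}p_\beta$ is a union of \emph{all realisations} of these types in the saturated model; its bounded closure is essentially all of $M$, so $\bdd(A)\cap H_p(M)=H_p(M)$, not $V_\alpha$. Lemma~\ref{lemma: base set for partial order} applies to projections onto $\bdd(A)$ for \emph{small} parameter sets $A$, and $V_\alpha$ is not of that form. Without this identification you have no reason for $P_{V_\alpha}u\in\mathcal{P}(p)$, and the claim $\langle P_{V_\alpha}u,P_{V_\alpha}u'\rangle\in F$ is unsupported. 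A secondary issue: in the general (non-``moreover'') case you have not explained why $q=\bigcup_\alpha q_\alpha$ is type-definable when $\kappa$ may be infinite.

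For the ``moreover'' clause, your depth induction is far more elaborate than needed. The paper simply observes that for a complete type $p$, $\mathcal{P}(p)=\bigcup_{b\models p}\{w\le b\}$; since $\{w\le b\}$ is finite and its isomorphism type (hence the finite list of types it contains) is constant over the complete type $p$, $\mathcal{P}(p)$ is a finite union of complete types.
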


\begin{proof}

 Take   $M \models T$ $\omega_1$-saturated. We have already seen that $\mathcal{P}(p)$ is a discrete type-definable set. It follows from   Lemma  \ref{lemma: partial order is definable} that for any $v \in \mathcal{P}(p)$, the set $\{w \in \mathcal{P}(p) \mid w \leq v\}$ is finite and uniformly definable over $v$. Additionally if  $p$ is a finite union of complete types, then  $\mathcal{P}(p)$ is a finite union of complete types.

  For every $v \in \mathcal{P}(p)$   write  $
\pi(v)$ for the finite set $\{w \in \mathcal{P}(p) \mid w  < v \}$.  Write $V(v)$ for the subspace of $H(M)$ spanned by $\pi(v)$. Let $q$ be the type-definable set in $\mathcal{H}$ of elements of the form $P_{V(v)^\perp} v$ for $v \in \mathcal{P}(p)$.   Since $P_{V(v)^\perp}v$ is a linear combination of $ \{v\} \cup \pi(v)$ and the coefficients in this linear combination only depend on the values of the inner product between elements in this set,   the inner product on $q \times q$ is strictly definable. We only have to show that $q$ is asymptotically free.

Fix $x, y \in q$ and find $v \in \mathcal{P}(p) $ such that $ x = P_{V(v)^\perp} v$. To simplify notation,   write $V = V(v)$ and $Y = \bdd(y)$.
 We will prove that if  $x \notin Y$ then $P_Y x = 0$.   Since  $P_{Y} x  = P_{Y}v - P_Y P_V  v$, it is enough to prove that $P_Y P_V v = P_Y v$. 
 We show that $\|P_YP_Vv - P_Y v \|^2 = 0$. Expanding the left hand side gives $\langle P_Y P_V v , P_Y P_V v\rangle + \langle P_Y v, P_Y v\rangle -2\langle P_Y v, P_Y P_V v\rangle$. 
 Now we have:
\[
\langle P_Y v, P_Y P_V v\rangle = \langle  P_V P_Y v, v\rangle = \langle P_Y v, v\rangle =  \langle P_Y v, P_Yv\rangle  
\] 
and similarly
\[
\langle P_Y P_Vv , P_Y P_V v\rangle  = \langle P_V P_Y P_V v, v\rangle  = \langle P_Y P_V v, v\rangle = \langle  v, P_V P_Y v\rangle = \langle  v, P_Yv\rangle =  \langle P_Y v, P_Y v\rangle 
\]
The proposition follows for $M$. To see that $q$ generates $\mathcal{H}_p$ independently of a choice of model, we observe that the realisations of $q$ are contained in $\bdd(p)$.
\end{proof}


\medskip

We now focus on the case where  $T$ is a  classical discrete  logic theory  and   $\mathcal{H}$ is generated by classical sorts of  $T$ with strictly definable inner product maps. 
We investigate to what extent the decomposition of Theorem \ref{theorem: summary of general scattered  interpretable hilbert space} can be recovered inside the classical logic sorts of $T$.

\begin{definition}\label{definition: strictly interpretable Hilbert space}
Let $T$ be a classical logic theory. If $\mathcal{H}$ is an interpretable Hilbert space in $T$ generated by classical imaginary sorts of $T$ with strictly definable inner product maps, we say that $\mathcal{H}$ is a strictly interpretable Hilbert space in $T$. 
\end{definition}

In Proposition \ref{proposition: asymptotically free generating set for general strictly interpretable hilbert space}, even if $T$ is a classical logic theory and $p$ is a definable set in a classical   sort of $T$, the asymptotically free set $q$ with  its strictly definable inner product map is not always a definable set. We show that this can be obtained with additional assumptions on $T$.

\begin{definition}
In a classical logic theory $T$,  a formula $\phi(x,y)$ (possibly with parameters) has the finite cover property  (the FCP) if for all $n\geq 1$ there are $a_1, \ldots, a_n$ such that $\bigwedge_{i\leq n} \phi(x,a_i)$ is inconsistent but for every $l\leq n$, $\bigwedge_{i\neq l} \phi(x,a_i)$ is consistent. If $\phi$ does not have the  FCP, we say $\phi$ has the NFCP.

We say that $T$ has the weak NFCP if all stable formulas of $T$ have the NFCP. 
\end{definition}

We will use the following easy lemma about NFCP formulas, which is a weak version of Theorem II.4.6 in \cite{Shelah1978a}:

\begin{lemma}\label{lemma: easy nfcp lemma}
Let $T$ be a classical logic theory and let $M\models T$ be $\omega$-saturated. Let $\phi(x, y)$ be a formula with the NFCP over $A \subseteq M $. There is $n\in \Nn$ such that, for all $n\leq \alpha < \omega$, any sequence $(a_i)_{i < \alpha}$  such that  $\models \phi(a_i, a_j)$  for all $i\neq j$ can be extended to a sequence $(a_i)_{i < \omega}$  with  the same property.
\end{lemma}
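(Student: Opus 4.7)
The plan is to prove the lemma by iterating a one-step extension: given a sufficiently long $\phi$-compatible finite sequence, produce one more element, and then iterate using $\omega$-saturation. The NFCP hypothesis enters only at the one-step stage.

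First, without loss of generality, assume the elements of $(a_i)_{i<\alpha}$ are distinct (duplicates can be dropped, since the remaining distinct elements still form a $\phi$-compatible sequence; taking $n$ large enough handles the initial bookkeeping). By $\omega$-saturation of $M$, the desired infinite extension is obtained by iterating the following one-step claim: there is $n \in \Nn$ such that, for any $\phi$-compatible distinct $(a_0, \ldots, a_{k-1})$ with $k \geq n$, some $a_k \in M \setminus \{a_0, \ldots, a_{k-1}\}$ satisfies $\phi(a_k, a_i) \wedge \phi(a_i, a_k)$ for all $i < k$.

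By $\omega$-saturation, the one-step claim reduces to consistency of the partial type
\[
\Sigma(x) = \{\chi(x, a_i) : i < k\}, \qquad \chi(x, y) := \phi(x, y) \wedge \phi(y, x) \wedge x \neq y,
\]
over the finite parameter set $\{a_0, \ldots, a_{k-1}\} \cup A$. Since $\chi$ is a Boolean combination of instances of $\phi$ (together with $\neq$), it inherits NFCP from $\phi$ by standard preservation results for stable formulas (Shelah, \emph{Classification Theory}, Chapter II); let $N$ be a bound witnessing NFCP of $\chi$, and set $n := N+1$. Then $N$-consistency of $\Sigma$ suffices for consistency: for any $I \subseteq \{0, \ldots, k-1\}$ with $|I| = N$, the inequality $k \geq N+1 > |I|$ produces some $t \in \{0, \ldots, k-1\} \setminus I$, and $a_t$ realises $\{\chi(x, a_i) : i \in I\}$ by $\phi$-compatibility (giving $\phi(a_t, a_i) \wedge \phi(a_i, a_t)$ for each $i \in I$, since $t \neq i$) together with distinctness (giving $a_t \neq a_i$).

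The main conceptual step is the reduction to NFCP of the Boolean combination $\chi$; this preservation is standard but is the only substantive ingredient beyond routine saturation. Everything else (the iteration producing the $\omega$-sequence, and the verification of $N$-consistency of $\Sigma$) is a direct compactness argument using the given $\phi$-compatible initial segment as a source of witnesses.
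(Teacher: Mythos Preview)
Your argument follows the same strategy as the paper's: show that the one-step extension type is $n$-consistent using elements of the given sequence as witnesses, apply NFCP to get consistency, realise by $\omega$-saturation, and iterate. The paper's proof is extremely terse---it simply takes $n$ from the NFCP bound for $\phi$, observes that $\{\phi(x,a_i)\mid i<\alpha\}$ is $n$-consistent (any $a_j$ outside a given $n$-subset witnesses it), and realises it.

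The one genuine difference is that the paper does not symmetrise: it only obtains $\phi(a_\alpha,a_i)$, not $\phi(a_i,a_\alpha)$. This is harmless in the paper's sole application (to the equivalence relation $F_\lambda$, which is symmetric), but for the lemma as literally stated your passage to $\chi(x,y)=\phi(x,y)\wedge\phi(y,x)\wedge x\neq y$ is the honest fix. Note, however, that your appeal to ``standard preservation results'' is doing real work: NFCP for $\phi(x,y)$ with the given variable partition does not automatically yield NFCP for $\phi(y,x)$ or for conjunctions, and the Shelah results you cite require stability of $\phi$. In the paper's setting (weak NFCP, applied to stable equivalence relations) this is available, but it is an extra hypothesis beyond what the lemma states. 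So your proof is correct under that implicit stability assumption, and is more careful than the paper's on the symmetry point; the paper's version is shorter because it silently uses the symmetry present in its application.
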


\begin{proof}
Take $n$ as given by the definition of NFCP for $\phi(x, y)$. Given $(a_i)_{i < \alpha}$, the partial type $\{\phi(x, a_i) \mid i < \alpha\}$ is $n$-consistent, so it is consistent. Take $a_{\alpha}$ a  realisation of this partial type.
\end{proof}

The next lemma can be seen as a strengthening of Lemma \ref{lemma: weak closure is definable} to the present context.

\begin{lemma}\label{lemma: extend interpretation to definable sets in NFCP}
Suppose that $T$ is a classical logic theory with the weak NFCP and $\mathcal{H}$ is strictly interpretable in $T$. Let $S$ be a piece of $\mathcal{H}$ which is a  classical sort of $T$  with a strictly definable inner product map. 


For any $M \models T$, $\mathcal{P}(S)$ is a definable set in a piece of $\mathcal{H}$ which is a classical sort of $T$ with strictly definable inner product map.


\end{lemma}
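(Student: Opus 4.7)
The plan is to use weak NFCP to promote the type-definable description of $\mathcal{P}(S)$ from Lemma \ref{lemma: weak closure is definable} to a genuinely definable one in an imaginary (classical) sort of $T$. Set-up: by Lemma \ref{lemma: strictly definable implies scattered} and its proof, $\mathcal{P}(S)$ is discrete, and the inner product restricted to $\mathcal{P}(S) \times \mathcal{P}(S)$ takes only the finitely many values in $V := \{\langle x,y\rangle : x,y \in S\}$, so in particular $\|v\|^2 \in V$ for every $v \in \mathcal{P}(S)$. By Lemma \ref{lemma: base set for partial order}, every $v \in \mathcal{P}(S)$ has the form $P_{\bdd(A)}(b)$ with $b \in S$, and equals the norm-limit of the averages of a Morley sequence $(b_n)$ in $\tp(b/\bdd(A))$ with respect to the inner product. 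Strict definability forces any such Morley sequence to satisfy $\langle b_i, b_j\rangle = c$ for $i \neq j$ and some $c \in V$; set $\phi_c(x,y) := (\langle x,y\rangle = c)$.

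Each $\phi_c$ is a stable formula in $T$, hence by weak NFCP has the NFCP. By Lemma \ref{lemma: easy nfcp lemma} and finiteness of $V$, choose a single $N \in \Nn$ such that every tuple $(b_1, \ldots, b_N) \in S^N$ satisfying $\phi_c$ pairwise off-diagonal extends to an infinite pairwise $\phi_c$-sequence, uniformly for all $c \in V$. Let $D_c \subseteq S^N$ denote the (classical-definable) set of such tuples.

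For $\bar b \in D_c$, let $v_c(\bar b)$ be the weak limit in $H(M)$ of any infinite pairwise $\phi_c$-extension of $\bar b$; this limit exists and lies in $\mathcal{P}(S)$ by Lemma \ref{lemma: weak closure is definable}. The central claim is that $v_c$ is a well-defined $T$-definable function $D_c \to \mathcal{H}$: $v_c(\bar b)$ is the canonical parameter of the $\phi_c$-type $\{\phi_c(x, b_i) : i \leq N\}$ extended by any infinite $\phi_c$-pairwise continuation, and by the standard Shelah-style analysis of NFCP for stable formulas this canonical parameter is $T^{eq}$-definable over $\bar b$ by a fixed formula depending only on $\phi_c$. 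In particular the value $v_c(\bar b)$ does not depend on which infinite extension is chosen. Taking the union over $c \in V$, we obtain
\[
\mathcal{P}(S) \;=\; \bigcup_{c \in V} v_c(D_c),
\]
a finite union of images of definable maps into a classical imaginary sort of $T$, and therefore itself definable in a classical sort. Strict definability of the inner product on this sort is inherited from $V$ being finite.

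The main obstacle is the definability claim in the third paragraph: one must verify that the NFCP/canonical-base argument applies uniformly to pairwise $\phi_c$-tuples rather than only to fully indiscernible sequences, and that the resulting canonical parameter coincides with the weak-limit element furnished by the construction in Lemma \ref{lemma: weak closure is definable}. Once this uniform definability is secured, setting up the imaginary sort carrying $\mathcal{P}(S)$, checking strict definability of the resulting inner product (using Step~1), and removing any residual saturation hypothesis are routine.
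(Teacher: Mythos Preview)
Your overall strategy---use NFCP to turn weak-limit data into definable data in a classical imaginary sort---matches the paper's, but the definable set $D_c$ you construct is too coarse, and this is not a technicality you can patch afterwards.

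The problem is exactly the one you flag as ``the main obstacle'': $v_c(\bar b)$ is not well-defined on $D_c$. A tuple $(b_1,\ldots,b_N)$ satisfying $\langle b_i,b_j\rangle = c$ pairwise may extend to many infinite pairwise-$\phi_c$ sequences with \emph{different} weak limits (or to sequences that do not converge weakly at all), because pairwise $\phi_c$ controls only $\langle b_n,b_m\rangle$, not $\langle b_n,y\rangle$ for general $y\in S$---and the latter is precisely what determines the weak limit. The canonical base of the finite $\phi_c$-type $\{\phi_c(x,b_i):i\leq N\}$ carries far too little information to pin down an element of $\mathcal{P}(S)$. Note also that your $D_c$ does not require the $b_i$ to have equal norm, which a genuine Morley sequence would.

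The paper resolves this by first passing to the quotient $S' = S^N/E$, where $E$ identifies tuples $\bar x,\bar y$ with $\mathrm{Med}_{i\leq N}\langle x_i,z\rangle = \mathrm{Med}_{i\leq N}\langle y_i,z\rangle$ for \emph{all} $z\in S$. This median construction encodes the full functional $z\mapsto\langle v,z\rangle$ of the intended weak limit $v$, so the map $S'\to H(M)$ is injective and well-defined by design. The NFCP is then applied not to $\phi_c$ alone but to a stronger stable equivalence relation $F_\lambda$ that demands $E$-equivalence \emph{and} constant pairwise inner product $\lambda$ \emph{and} constant norm; Lemma~\ref{lemma: easy nfcp lemma} for $F_\lambda$ is what makes the relevant subset $S^+\subseteq S'$ definable rather than merely type-definable. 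Your proposal needs an analogous device that controls the weak limit against all of $S$ before the NFCP step can do useful work.
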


\begin{proof}
Throughout this proof, we write $\overline{x}$ for tuples of variables and $x$ for single variables. We write $R$ for the finite set of  values achieved by the inner product on $S \times S$. Let $M$ be an arbitrary model of $T$.

Recall from elementary stability theory that there is a number $N$ such that for any   sequence $(x_n)$ in $S$ and $y \in S$, there is a unique $\lambda$ in $R$  such that   $|\{i \in \Nn \mid  \langle x_i, y\rangle \neq \lambda\}| < N/2$. Let $S'$ be the imaginary sort  $S^N/E$ where $E$ is the equivalence relation  defined by 
\[
\forall z \in S, \  \Med_{i\leq N} \langle x_i, z \rangle =  \Med_{i \leq N} \langle y_i, z \rangle
\]
and where $\Med_{i\leq N} \langle x_i, z \rangle $ is the median of the set of values $\{\langle x_i, z \rangle \mid i \leq N\}$.  
 Let $S^+$ be the type-definable subset of $S'$ consisting of elements $z$ such that there is a sequence $(x_n)$ in $S$, possibly constant, such that $(x_n)$ is Hilbert space indiscernible and for all $k_0 < \ldots < k_N$, the $E$-class of $(x_{k_0}, \ldots, x_{k_N})$ equals $z$.  
\begin{claim}
$S^+$ is a definable subset of $S'$
\end{claim}

\begin{proof}[Proof of claim]
  For any $\lambda \in R$, write   $F_\lambda (\overline{x}, \overline{y})$ for the formula on $S^N \times S^N$  which says
   \begin{enumerate}
   \item$\overline{x}$, $\overline{y}$ and  $E$-equivalent 
   \item For all $i, j \leq N$, $\langle x_i, y_j\rangle = \lambda$
   \item For all $i\neq j \leq N$, $\langle x_i, x_j\rangle = \langle y_i, y_j\rangle = \lambda$
   \item For all $i,j \leq N$, $\langle x_i , x_i\rangle = \langle y_j, y_j\rangle$.
   \end{enumerate}

    $F_\lambda $ is an equivalence relation so $F_\lambda$ is stable . By the weak NFCP  there is a number $n_\lambda$ such that for all $k\geq n_\lambda$ and $\overline{y}_1, \ldots, \overline{y}_k$, if $\{ F_\lambda(\overline{x}, \overline{y}_i)\mid i\leq k\}$ is $n_\lambda$-consistent then it is consistent. Take $m > n_\lambda$ for all $\lambda \in R$.
    
Let $S_0(\overline{x})$ be the definable set in $S^N $
 \[
\exists \overline{y}_1, \ldots, \overline{y}_{m} \bigvee_{\lambda \in R} \Big(  \bigwedge_{i \leq m } F_\lambda (\overline{x}, \overline{y}_i) \wedge \bigwedge_{i \neq j} F_\lambda (\overline{y}_i, \overline{y}_j) \Big)
\]
and let $S_1$ be $S_0/E$. We check that $S_1$ is in fact equal to $S^+$. 
$S_1$ contains $S^+$  because any indiscernible sequence $(x_n)$ witnessing the definition of $S^+$ can be broken down into $m $ $N$-tuples which witness the definition of $S_1$. Conversely, suppose $\overline{a} \models S_0$,  take  $ \overline{b}_1, \ldots, \overline{b}_{m}$ as given by the definition of $S_0$ and fix $\lambda$ such  that these satisfy 
\[
\bigwedge_{i \leq n_0 } F_\lambda (\overline{a}, \overline{b}_i) \wedge \bigwedge_{i \neq j} F_\lambda (\overline{b}_i, \overline{b}_j). 
  \]
 By Lemma \ref{lemma: easy nfcp lemma}, we can construct an   sequence of tuples $(\overline{b}_n)$ which satisfy $  F_\lambda (\overline{b}_i, \overline{b}_j) \wedge F_\lambda (\overline{a}, \overline{b}_i)$ for $i\neq j$. Concatenate the tuples $\overline{b}_i$ to form a   sequence $(c_k)$ in $S$. By construction, $(c_k)$ is Hilbert space indiscernible. It follows easily that $(c_k)$ witnesses the definition of $S^+$.
\end{proof}

Take $z \in S^+$ and let $(x_n)$ be a sequence in $S$ as in the definition of $S^+$ for $z$. Let $v$ be the weak limit of $(x_n)$. Note that $v \in \mathcal{P}(S)$. Then for all $y \in S$, $\langle v, y \rangle = \lim \langle x_n, y \rangle = \Med_{i \leq N} \langle x_i, y \rangle$ and this last value only depends on $z$. Therefore $v$ only depends on $z$ and we can define the definable  map $h : S^+ \to \mathcal{P}(S)$ which maps $z$ to $v$. Note that $h$ is injective.

An application of the weak NFCP similar to the one in the claim shows that in any $M \models T$, $\mathcal{P}(S)$ is the set of weak limit points of $S$. It follows that $h : S^+ \to \mathcal{P}(S)$ is bijective.
Since $S^+$ is definable, we can extend $h$ to $S'$ by mapping the complement of $S^+$ to $0$. The lemma follows.
\end{proof}

  Combining Lemma \ref{lemma: extend interpretation to definable sets in NFCP} and  Proposition \ref{proposition: asymptotically free generating set for general strictly interpretable hilbert space}, we obtain the following corollary:

\begin{corollary}\label{corollary: structure theorem for NFCP theories}
Let $T$ be a classical logic theory with the weak NFCP. If $\mathcal{H}$ is a strictly interpretable Hilbert space in $T$, 
then $\mathcal{H}$ is generated  by asymptotically free pieces which are classical imaginary  sorts of $T$ with  strictly definable inner product maps.
\end{corollary}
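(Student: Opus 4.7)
The plan is to apply Proposition \ref{proposition: asymptotically free generating set for general strictly interpretable hilbert space} piece by piece and then upgrade the resulting type-definable asymptotically free generators to genuinely definable sets in classical imaginary sorts, using Lemma \ref{lemma: extend interpretation to definable sets in NFCP} together with the weak NFCP. Write $\mathcal{H}$ as generated by classical imaginary sorts $(S_i)_{i \in I}$ of $T$ with strictly definable inner product maps; it suffices to produce, for each $i$, an asymptotically free \emph{definable} generator $q_i$ of $\mathcal{H}_{S_i}$ in a classical imaginary sort carrying a strictly definable inner product.

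Fix one such $S$. By Lemma \ref{lemma: extend interpretation to definable sets in NFCP}, $\mathcal{P}(S)$ sits as a definable subset of a classical imaginary sort of $T$, and its inner product map remains strictly definable. I would then inspect the construction from Proposition \ref{proposition: asymptotically free generating set for general strictly interpretable hilbert space} applied with $p := \mathcal{P}(S)$. The partial order on $\mathcal{P}(S)$ is cut out by a stable classical formula (it is determined by finitely many inner products, each lying in the finite value set of the strictly definable product), so $\pi(v) = \{w < v\}$ is uniformly definable over $v$, and a routine application of the weak NFCP to this stable formula yields a uniform finite bound $n$ on $|\pi(v)|$. With this bound in hand, the map $v \mapsto P_{V(v)^\perp}v$ becomes a $\emptyset$-definable function from $\mathcal{P}(S)$ into the classical imaginary sort coding linear combinations of at most $n+1$ elements of $\mathcal{P}(S)$ with coefficients drawn from the finite set of possible projection scalars (these scalars are determined by the finitely many possible Gram matrices of such subsets, hence themselves form a finite set). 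The image $q_S$ is then a definable subset of a classical imaginary sort, and Proposition \ref{proposition: asymptotically free generating set for general strictly interpretable hilbert space} guarantees that $q_S$ is asymptotically free, generates $\mathcal{H}_S$, and has strictly definable inner product. Collecting $\{q_{S_i} : i \in I\}$ yields the required generating family.

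The main obstacle is precisely the passage from type-definable to definable. It relies on the weak NFCP in two places: first via Lemma \ref{lemma: extend interpretation to definable sets in NFCP} to realise the weak closure $\mathcal{P}(S)$ as a classical definable set in the first place, and then again to bound $|\pi(v)|$ uniformly in $v$, so that the orthogonalisation procedure underlying Proposition \ref{proposition: asymptotically free generating set for general strictly interpretable hilbert space} can be expressed by a single classical-logic definable function rather than by a type-definable limit of such.
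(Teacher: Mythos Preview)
Your proposal is correct and follows the paper's approach: the paper proves this corollary in one line by combining Lemma \ref{lemma: extend interpretation to definable sets in NFCP} and Proposition \ref{proposition: asymptotically free generating set for general strictly interpretable hilbert space}, and you have simply unpacked what that combination entails. One small refinement: your assertion that the partial order on $\mathcal{P}(S)$ is ``cut out by a stable classical formula'' because it is ``determined by finitely many inner products'' glosses over the fact that $w \leq v$ is a priori only a type-definable condition (existence of a suitable indiscernible sequence from $v$ with weak limit $w$); its upgrade to a classical formula uses weak NFCP via an argument parallel to the claim inside Lemma \ref{lemma: extend interpretation to definable sets in NFCP}, after which the uniform bound on $|\pi(v)|$ follows from compactness alone rather than from a further NFCP application.
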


Corollary \ref{corollary: structure theorem for NFCP theories} does not give the same kind of information as Theorem \ref{theorem: summary of general scattered  interpretable hilbert space} because it does not decompose the strictly interpretable $\mathcal{H}$ into orthogonal subspaces generated by complete  types. We do not know if Corollary \ref{corollary: structure theorem for NFCP theories} can be strengthened in this direction.

   It is certainly possible to improve Corollary \ref{corollary: structure theorem for NFCP theories}
 when $T$  is an $\omega$-categorical classical logic theory. 
 In this case, inner product maps on classical sorts of $T$ are always strictly definable, since type spaces are finite. 
 Lemma \ref{lemma: extend interpretation to definable sets in NFCP} applies and we find that the sets $\mathcal{P}(p)$  are  definable sets in classical sorts of $T$. Moreover, the decomposition procedure of Theorem \ref{theorem: summary of general scattered  interpretable hilbert space} produces definable sets in classical imaginary sorts of $T$, on which the inner product maps are necessarily strictly definable. Therefore we have the corollary:  

\begin{corollary}\label{corollary: orthogonal structure for omega-categorical theories}
Let $T$ be an $\omega$-categorical classical logic theory and let $\mathcal{H}$ be a  strictly interpretable Hilbert space in $T$. Then $\mathcal{H}$ is isomorphic to an orthogonal sum of    interpretable Hilbert spaces each generated by asymptotically free complete types in classical imaginary sorts of $T$.
\end{corollary}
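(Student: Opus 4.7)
The plan is to combine the two preceding results and verify that, under $\omega$-categoricity, each object produced by the inductive construction lives inside a classical imaginary sort. First I would note that strict interpretability, together with Lemma \ref{lemma: strictly definable implies scattered}, shows that every piece of $\mathcal{H}$ is scattered. Consequently Corollary \ref{corollary: orthogonal structure for general scattered Hilbert space} applies and yields an orthogonal decomposition $\mathcal{H} = \bigoplus_{\alpha} \mathcal{H}_\alpha$ in which each $\mathcal{H}_\alpha$ is $\bigwedge$-interpretable and generated by an asymptotically free complete type $q_\alpha$. What remains is to upgrade each $q_\alpha$ from a type-definable object to a definable set in a classical imaginary sort, and to verify that the inner product on $q_\alpha \times q_\alpha$ is strictly definable.

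For the classical-sort part, I would invoke Lemma \ref{lemma: extend interpretation to definable sets in NFCP}, whose hypothesis applies because an $\omega$-categorical theory has the weak NFCP (all $\emptyset$-type-spaces are finite, so every formula is trivially NFCP). This lemma delivers, for each piece $S$ of $\mathcal{H}$, the weak closure $\mathcal{P}(S)$ as a definable set in a classical imaginary sort with strictly definable inner product map. Since $T$ is $\omega$-categorical, $\mathcal{P}(S)$ is partitioned into finitely many $\emptyset$-isolated complete types $p_\alpha$, and each $p_\alpha$ is itself a definable set in a classical imaginary sort.

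Next I would follow the inductive construction in the proof of Theorem \ref{theorem: summary of general scattered interpretable hilbert space}: $q_\alpha$ is the image of $p_\alpha$ under the orthogonal projection onto $V_\alpha^\perp$, where $V_\alpha$ is spanned by the realisations of $\bigcup_{\beta < \alpha} p_\beta$. Combined with the linear-combination formula from the proof of Proposition \ref{proposition: asymptotically free generating set for general strictly interpretable hilbert space}, each element of $q_\alpha$ is a fixed linear combination of a realisation of $p_\alpha$ with finitely many elements drawn from the previously constructed, $\emptyset$-definable pieces. Because there are only finitely many $p_\beta$ contributing at each stage (again by $\omega$-categoricity per sort), this exhibits $q_\alpha$ as a definable set in a classical imaginary sort of $T$. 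Strict definability of the inner product on $q_\alpha \times q_\alpha$ is then automatic: any $\emptyset$-definable function from a $\emptyset$-definable set into $\Rr$ factors through a finite Stone space and therefore has finite image.

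The main obstacle I anticipate is bookkeeping rather than any genuinely new idea: one must verify that the transfinite enumeration $(p_\alpha)$ used in the proof of Theorem \ref{theorem: summary of general scattered interpretable hilbert space} can be organised so that at each step the spanning set for $V_\alpha$ is a finite union of $\emptyset$-definable classical sorts, which in turn requires reading the construction of Lemma \ref{lemma: extend interpretation to definable sets in NFCP} uniformly across the countably many pieces of $\mathcal{H}$. Once this is in place, the statement of the corollary follows directly.
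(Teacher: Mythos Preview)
Your proposal is correct and follows essentially the same route as the paper. The paper's argument (given in the paragraph preceding the corollary rather than as a separate proof) is exactly: $\omega$-categoricity gives finite type spaces, hence weak NFCP and strictly definable inner products on every classical sort; Lemma \ref{lemma: extend interpretation to definable sets in NFCP} then places each $\mathcal{P}(S)$ in a classical imaginary sort; and the decomposition procedure of Theorem \ref{theorem: summary of general scattered  interpretable hilbert space} (which you trace through explicitly, via Corollary \ref{corollary: orthogonal structure for general scattered Hilbert space}) outputs definable sets in classical imaginary sorts because every $\emptyset$-type-definable set is $\emptyset$-definable. Your additional reference to the linear-combination formula of Proposition \ref{proposition: asymptotically free generating set for general strictly interpretable hilbert space} is a slight detour---the relevant projection is the one from Lemma \ref{lemma: orthogonal maps are definable}, onto $V_\alpha^\perp$ rather than onto $V(v)^\perp$---but in the $\omega$-categorical setting with finitely many types per sort the two reductions are interchangeable, and your bookkeeping concern is the only real content left, just as the paper acknowledges implicitly.
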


 We will see in Section \ref{subsection: omega-categorical structures} that Corollary \ref{corollary: orthogonal structure for omega-categorical theories} is equivalent to the classification theorem of Tsankov for unitary representations of oligomorphic groups.


\subsection{Interpretable Hilbert spaces in atomic $\omega$-near-homogeneous structures}

\emph{We observed in Section \ref{subsection: key notions for structure theorems} that the definition of scatteredness relies on $\omega$-saturation in an essential way. This  restricts the scope of Theorem \ref{theorem: summary of general scattered  interpretable hilbert space} somewhat, especially in relation with Section \ref{section: unitary representations}. }

\emph{In this section, we show that the proof of Theorem \ref{theorem: summary of general scattered  interpretable hilbert space} can be adapted to deduce a structure theorem for interpretable Hilbert spaces in atomic $\omega$-near-homogeneous structures generated by a type-definable set whose weak closure is locally compact. This is Proposition \ref{proposition: decomposition for atomic omega-near-homogeneous}. This will be applied in Proposition \ref{proposition: modified proof of theorem for locally compact orbit}. }

\medskip

We start by recalling some classical definitions from \cite{BenYaacov2008}.

\begin{definition}[\cite{BenYaacov2008} 8.7, 9.16,  12.2, 12.7, 12.11]\label{definition: atomic and omega-homogeneous}
A complete type $p$ in a theory $T$ is said to be principal if $p$ is distance-definable (see Definition \ref{definition: distance-definable}). 

A model $M \models T$ is said to be atomic if every complete type realised in $M$ is principal.

We define a metric on   type-spaces of $T$ as follows: let $M$ be a model of $T$ realising all types. If $p, q$ are complete types of $M$, we define $d(p, q) = \inf\{d(a, b) \mid a \models p$, $b \models q$, $a, b \in N\}$.

We say that $M \models T$ is $\omega$-near-homogeneous if for any two finite tuples $a$ and $b$ in $M$,   for every $\e > 0$ there exists $g \in \Aut(M)$ such that $d(g(a), b) < d(\tp(a), \tp(b)) + \e$.
\end{definition}

\noindent \textbf{Remark:} In this section, we will work with atomic $\omega$-near-homogeneous structures. Interesting examples of these will be given in Section \ref{section: unitary representations}. These assumptions come with certain technicalities. Recall that we work in continuous logic for metric structures so that every sort of $T$ comes equiped with a metric.   $\omega$-near-homogeneity is defined with respect to these metrics. As a result, $\omega$-near-homogeneity for an arbitrary model $M$ is \emph{not} preserved under adding imaginary sorts to $T$. A similar remark applies to atomic models. Therefore, we make the following assumption on $\mathcal{H}$:
\medskip
\begin{changemargin}{0.7cm}{0.7cm}
\emph{We assume that the pieces of $\mathcal{H}$ are real sorts of $T$ and that the direct limit maps on each piece of $\mathcal{H}$ are isometries.}
\end{changemargin}
\medskip
 With this assumption, if $M \models T$ is atomic and $\omega$-near-homogeneous, then the characterisation of  atomicity and $\omega$-near-homogeneity applies to types in $\mathcal{H}$.  

\medskip

\begin{lemma}\label{lemma: base set for partial order in atomic case}
Let $M \models T$ be atomic. Let $p$ be a type-definable set in $\mathcal{H}$. Then $\mathcal{P}(p)$ is closed under the projections $P_A$ where $A \subseteq M$ is $\bdd$-closed. If $v, w\in \mathcal{P}(p)$ and  $w = P_Av$ with $A$ $\bdd$-closed, then there is an infinite sequence $(v_n)$ in $\tp(v)$ converging weakly to $w$. 
\end{lemma}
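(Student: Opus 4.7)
The plan is to reduce to the $\omega_1$-saturated case (Lemma \ref{lemma: base set for partial order}) and then transfer back to the atomic model $M$ using elementarity together with the $\omega$-near-homogeneity that accompanies atomicity in continuous logic. I would first observe that the sequence claim implies the closure-under-projections claim: if $(v_n) \subseteq \tp(v) \cap M$ converges weakly to $w$ and each $v_n = g_n(v)$ for some $g_n \in \Aut(M)$, then $v_n \in \mathcal{P}(p)$ because $\mathcal{P}(p)$ is $\Aut(M)$-invariant and $v \in \mathcal{P}(p)$; since $\mathcal{P}(p)$ is the weak closure of $p(M)$ it is weakly closed, whence $w \in \mathcal{P}(p)$.

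For the sequence claim, embed $M \prec N$ with $N$ being $\omega_1$-saturated. Weak convergence is determined by pairing against vectors of $H(M)$ and the inner product is definable, so $v$ remains a weak limit of elements of $p(N)$ and $v \in \mathcal{P}_N(p)$. Applying Lemma \ref{lemma: base set for partial order} in $N$ yields a Morley sequence $(v_n^*) \subseteq \tp(v/A) \subseteq N$ with $v_n^* \rightharpoonup w$, each $v_n^*$ satisfying $\tp(v_n^*) = \tp(v)$.

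To descend the sequence to $M$, assume (by passing to a separable elementary substructure) that $M$ is separable and fix a countable dense family $\{y_k\}_{k} \subseteq H(M)$. Given $n$, choose $m_n$ large so that $|\langle v_{m_n}^*, y_k\rangle - \langle w, y_k\rangle| < 1/(2n)$ for all $k \leq n$. This witnesses, in $N$, that
\[
\inf_{x} \Big( d(x, \tp(v)) + \max_{k \leq n} \max\!\big(0,\; |\langle x, y_k\rangle - \langle w, y_k\rangle| - 1/(2n)\big) \Big) = 0,
\]
the predicate $d(\cdot, \tp(v))$ being definable by atomicity. Elementarity of $M \prec N$ forces the same infimum to be zero in $M$, producing $u_n \in M$ with $d(u_n, \tp(v))$ arbitrarily small and $|\langle u_n, y_k\rangle - \langle w, y_k\rangle| < 1/n$ for $k \leq n$. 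By $\omega$-near-homogeneity of the atomic model $M$, I would choose $g_n \in \Aut(M)$ with $d(g_n(v), u_n)$ exceeding $d(\tp(v), \tp(u_n))$ by less than an arbitrarily small $\e_n$; since $u_n$ is nearly in $\tp(v)$, the quantity $d(g_n(v), u_n)$ can be made as small as we wish. The Cauchy--Schwarz bound $|\langle g_n(v) - u_n, y_k\rangle| \leq \|g_n(v) - u_n\| \cdot \|y_k\|$ then guarantees that $v_n := g_n(v) \in \tp(v) \cap M$ still satisfies $|\langle v_n, y_k\rangle - \langle w, y_k\rangle| < 1/n$ for $k \leq n$. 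Diagonalising produces the desired sequence with $v_n \rightharpoonup w$.

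The principal obstacle is precisely this descent step: atomicity alone guarantees only that types realised in $M$ are principal, not that arbitrary consistent partial types over parameters from $M$ are realised in $M$. The combination of elementarity (which yields approximate realisations of $\tp(v)$ satisfying the inner-product constraints) with $\omega$-near-homogeneity (which upgrades these to exact realisations of $\tp(v)$ by applying an automorphism to $v$) is what circumvents this. The continuity of the inner product in the Hilbert norm is essential for controlling the inner-product approximations under the final automorphism adjustment.
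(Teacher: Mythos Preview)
Your overall strategy matches the paper's: pass to an $\omega_1$-saturated $N \succ M$, use Lemma~\ref{lemma: base set for partial order} there, and descend by elementarity. The divergence is in how you upgrade an approximate realisation of $q = \tp(v)$ in $M$ to an exact one. You invoke $\omega$-near-homogeneity, which the lemma does not assume; your claim that it ``accompanies atomicity'' holds for \emph{separable} atomic models (the continuous analogue of countable atomic models being $\omega$-homogeneous), but this is itself a nontrivial back-and-forth argument you would need to supply, and it is not automatic for uncountable atomic $M$. The paper avoids the detour entirely: atomicity says precisely that $d(\cdot, q)$ is a definable predicate, and for any distance-definable set the value $d(y, q)$ agrees with the metric distance to $q(M)$ in every model. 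So once elementarity produces $y \in M$ with $d(y, q) \leq \e$, one finds $v_{n+1} \models q$ in $M$ with $\|v_{n+1} - y\|$ close to $\e$ directly, without any automorphism.

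The paper also streamlines the weak-convergence test: rather than fixing a countable dense family $\{y_k\}$ in $H(M)$ (which is what forces your separability reduction), it tests each new $v_{n+1}$ only against $w$ and the previously constructed $v_1, \ldots, v_n$; this is exactly the setup of Lemma~\ref{lemma: weak closure is set of limit points}, which then yields $v_n \rightharpoonup w$. Both your separability reduction and the $\omega$-near-homogeneity step can therefore be dropped.
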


\begin{proof}
Write $S$ for the piece of $\mathcal{H}$ containing $p$. Suppose $w = P_A v$ where $v \models p$.   Write $q = \tp(v)$ and let $d(x)$ be the definable function which gives the distance to $q$. Let $N$ be an $\omega_1$-saturated elementary extension of $M$. Since $w = P_A v$ in $N$, Lemma \ref{lemma: base set for partial order} applies and $w$ is in the weak closure of the set of realisations of $q$ in $N$.  Hence $w$ is the weak limit of a sequence of realisations of $q$ in $N$.

 Suppose that  we have found $v_1, \ldots, v_n$ in $M$ satisfying $q$ such that for all $m < n$, $|\langle v_n, v_m\rangle - \langle w, v_m\rangle | \leq 1/n$ and $|\langle v_n, w\rangle - \langle w, w\rangle| \leq 1/n$. Fix $\e > 0$ small enough, to be determined below. By considering   $w$ inside the structure $N$, we find that $w$  satisfies the formula 
\begin{multline*}
\forall x_1, \ldots, x_n \in S,\exists y \in S , \Big( d(y) \leq \e \wedge |\langle y, w\rangle - \langle w, w\rangle| \leq 1/2n \\
\wedge \bigwedge_{m \leq n} |\langle y, x_m \rangle - \langle w,  x_m\rangle | \leq 1/2n \Big)
\end{multline*}
Find a realisation   $y$ of this formula over the tuple $v_1, \ldots, v_n$ previously constructed. Then there is $v_{n+1} \models q$ in $M$ with $\|v_{n+1} - y\| \leq \e$. Choose $\e > 0$ small enough so that $|\langle v_{n+1}, w\rangle - \langle w, w\rangle| \leq 1/(n+1)$ and  $|\langle  v_{n+1}, v_m \rangle - \langle w,  v_m\rangle | \leq 1/(n+1)$ for all $m \leq n$. 
In this way, we construct a sequence $(v_n)$ of realisations of  $q$ in $M$ which can be shown to converge weakly to $w$ as in Lemma \ref{lemma: weak closure is set of limit points}. 

Hence $w \in \mathcal{P}(p)$ and the second part of the lemma follows immediately.
\end{proof}

The proof of the following proposition follows the general structure of the proof of Theorem \ref{theorem: summary of general scattered  interpretable hilbert space}. We stress that the assumption  of Proposition \ref{proposition: decomposition for atomic omega-near-homogeneous} is weaker than scatteredness and that its conclusion is weaker than asymptotic freedom.

\begin{proposition}\label{proposition: decomposition for atomic omega-near-homogeneous}
Let $M \models T$ be atomic and $\omega$-near-homogeneous. Let $p$ be a type-definable set in $\mathcal{H}$ such that the weak closure $\mathcal{P}(p)$ in $H(M)$ is locally compact. Then $\mathcal{H}_p $ is the orthogonal sum of $\bigwedge$-interpretable subspaces $(\mathcal{H}_i)_{i \in I}$ such that for every $i \in I$, $\mathcal{H}_i$ is generated by a principal type $q_i$ and   for every $v, w \in M$ realising $q_i$, either $\langle v,w\rangle = 0$ or $v \in \bdd(w)$. 
\end{proposition}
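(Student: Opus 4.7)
The plan is to mimic the proof of Theorem \ref{theorem: summary of general scattered  interpretable hilbert space}, replacing every appeal to $\omega_1$-saturation with its atomic-model counterpart Lemma \ref{lemma: base set for partial order in atomic case}. The weakened hypothesis — local compactness of $\mathcal{P}(p)$ inside $M$ only — produces a correspondingly localized conclusion, namely asymptotic freedom restricted to pairs of realizations inside $M$.

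First I would prove the analog of Theorem \ref{projections commute} for $\bdd$-closed $A, B \subseteq M$ and $v \in \mathcal{P}(p)$. By Lemma \ref{lemma: base set for partial order in atomic case}, $\mathcal{P}(p) \cap H(M)$ is closed under the projections $P_A$, so the von Neumann iteration $x_0 = P_A v$, $y_n = P_B x_n$, $x_{n+1} = P_A y_n$ stays inside $\mathcal{P}(p)$ and converges in norm to $w := P_{A \cap B} v$. If $x_n \neq w$ at some stage, Lemma \ref{lemma: base set for partial order in atomic case} supplies an infinite sequence in $\tp(x_n)$ (hence in $\mathcal{P}(p)$) converging weakly to $y_n$, and passing to norm-convergent Ces\`aro averages as in Lemma \ref{lemma: weak closure is definable} yields infinitely many pairwise distinct points of $\mathcal{P}(p)$ arbitrarily close in norm to $w$, contradicting local compactness. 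Hence the sequence is eventually constant, and the Hilbert-space identity at the end of the original proof gives $P_A P_B v = P_{A \cap B} v$. The well-foundedness of $\leq$ on $\mathcal{P}(p)$ then follows by the same adaptation of Lemma \ref{lemma: partial order is well-founded}.

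Next I would build the decomposition as in the scattered case. Enumerate the complete types $(p_\alpha)_{\alpha < \kappa}$ realized in $\mathcal{P}(p) \cap M$ so that $p_\beta$ precedes $p_\alpha$ whenever some $w \models p_\beta$ satisfies $w < v$ for some $v \models p_\alpha$; this is possible by well-foundedness. Let $V_\alpha$ be the closed span of the realizations of $\bigcup_{\beta < \alpha} p_\beta$ and define $q_\alpha$ to be the type of $P_{V_\alpha^\perp}(v)$ for any $v \models p_\alpha$; the commuting-projection step makes this independent of the choice of $v$. The subspaces $\mathcal{H}_\alpha$ generated by $q_\alpha$ are pairwise orthogonal by construction and together generate $\mathcal{H}_p$ by induction on $\alpha$. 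Each $q_\alpha$ is realized in $M$, hence principal by atomicity of $M$. Finally, for $v, w \models q_\alpha$ in $M$ with $v \notin \bdd(w)$, writing $v = P_{V_\alpha^\perp}(v')$ with $v' \models p_\alpha$ and $u = P_{\bdd(w)}(v')$, Step 1 gives $u < v'$, so $u \in V_\alpha$; since $w \perp V_\alpha$, we conclude $\langle v, w \rangle = \langle v', w \rangle = \langle u, w \rangle = 0$.

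The main obstacle is the first step. The original proof of Theorem \ref{projections commute} exploited infinite sequences in $\tp(x_n / B)$ extracted via $\omega_1$-saturation, whereas Lemma \ref{lemma: base set for partial order in atomic case} only delivers weakly convergent sequences in $\tp(v)$ over $\emptyset$. The key observation justifying the substitution is that the contradiction is drawn purely against local compactness of the type-definable set $\mathcal{P}(p)$, which is insensitive to parameters: any infinite family of distinct points of $\mathcal{P}(p)$ norm-clustering near $w$ — and such a family is produced by the weakly convergent sequence together with Ces\`aro averaging — already violates local compactness. Atomicity plus $\omega$-near-homogeneity of $M$ is what guarantees that the algebraic data of $q_\alpha$ (existence of realizations, definability of the projection map $P_{V_\alpha^\perp}$ on $p_\alpha$) is preserved from the saturated setting to $M$ itself.
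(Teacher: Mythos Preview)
Your overall strategy matches the paper's, but two steps need repair.

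In the von Neumann adaptation, Ces\`aro averages of a sequence in $\tp(x_n)$ need not lie in $\mathcal{P}(p)$ (which is not convex), and ``infinitely many distinct points arbitrarily close to $w$'' does not by itself contradict local compactness. What actually works is to use the weakly convergent sequence $(v_k) \subseteq \tp(x_n) \subseteq \mathcal{P}(p)$ directly: since $\|v_k - y_n\|^2 \to \|x_n\|^2 - \|y_n\|^2 = \|x_n - y_n\|^2$ and $\|x_n - y_n\| \to 0$, for large $n$ the tail of $(v_k)$ sits inside any prescribed ball around $w$ yet has no norm-convergent subsequence (any norm limit would coincide with the weak limit $y_n$, whose norm is strictly smaller). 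That is the contradiction with local compactness.

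More seriously, in the final step you write $v = P_{V_\alpha^\perp}(v')$ with $v' \models p_\alpha$ in $M$. But $M$ is atomic, not saturated: given an arbitrary realization $v \models q_\alpha$ in $M$, an exact preimage under $P_{V_\alpha^\perp}$ need not exist in $M$. The paper's Claim 4 addresses exactly this: it shows that for every $\epsilon > 0$ there is $z \models p_\alpha$ in $M$ with $\|P_{V_\alpha^\perp}z - v\| < \epsilon$ (using principality of the types to transfer the relevant distance from a saturated extension back to $M$), and then runs the contradiction with such an approximate preimage $z$ chosen so that $z \notin \bdd(w)$ and $\langle z, w\rangle \neq 0$. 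Your last paragraph gestures at atomicity and $\omega$-near-homogeneity filling this gap, but the approximation argument is not carried out. The paper also uses $\omega$-near-homogeneity explicitly (its Claim 3) to ensure the projection-closed set is a union of complete types, a point you assume without comment.
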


\begin{proof}
  Let $\mathcal{Q}(p) \subseteq \mathcal{P}(p)$ be the smallest metrically closed subset of $H(M)$ containing $p$ and  which is closed under the projections $P_{A}$, where $A \subseteq M$ is $\bdd$-closed. Then $\mathcal{Q}(p)$ is locally compact.  
  Note that $\mathcal{Q}(p)$ contains $p$ but may be much smaller than $\mathcal{P}(p)$. We view $\mathcal{Q}(p)$ as a partial order with the order inherited from $\mathcal{P}(p)$.

\begin{claim}\label{claim: decomposition in atomic homogeneous case, 3}
$\mathcal{Q}(p)$ is a union of complete types in $M$.
\end{claim}

\begin{proof}
Observe that $\mathcal{Q}(p)$ is invariant under $\Aut(M)$. Suppose $v \in \mathcal{Q}(p)$ and suppose $w$ has the same type as $v$. By $\omega$-near-homogeneity, $w$ is the limit of a sequence of $\Aut(M)$-conjugates of $v$, and hence $w \in \mathcal{Q}(p)$. 
\end{proof}

\begin{claim}\label{claim: decomposition in atomic homogeneous case,1}
For any  $\bdd$-closed $A_1, A_2 \subseteq M$, $P_{A_1}P_{A_2} = P_{A_1 \cap A_2} = P_{A_2}P_{A_1}$ on $H_p(M)$.
\end{claim}

\begin{proof}
By Lemma \ref{lemma: base set for partial order in atomic case}, it is straightforward to adapt the application of Von Neumann's lemma in Theorem \ref{projections commute} to prove the claim.
\end{proof}

\begin{claim}\label{claim: decomposition in atomic homogeneous case, 2}
$\mathcal{Q}(p)$ is a well-founded partial order.
\end{claim}

\begin{proof}
This is a straightforward modification of the proof of Lemma \ref{lemma: partial order is well-founded}, using Claim \ref{claim: decomposition in atomic homogeneous case,1}.
\end{proof}

Applying Claim \ref{claim: decomposition in atomic homogeneous case, 3}, let  $(p_\alpha)$ be an enumeration of the complete types in $\mathcal{Q}(p)$. By well-foundedness, we can assume that the enumeration $(p_\alpha)$ respects the partial order on $\mathcal{Q}(X)$, in the same sense as in the proof of Theorem \ref{theorem: summary of general scattered  interpretable hilbert space}.
Let $V_\alpha$ be the closed subspace generated by $ \bigcup_{\beta < \alpha} p_\alpha$. For every $\alpha$, let $q_\alpha$ be the complete type obtained by applying $P_{V_\alpha^\perp}$ to the realisations of $p_\alpha$. $H_p(M)$ is the orthogonal sum of the subspaces generated by each $ q_\alpha$.

\begin{claim}\label{claim: decomposition in atomic homogeneous case, 4}
The relation $P_{V_\alpha^\perp}v  = w$ is a type-definable set on $p_\alpha \times q_\alpha$ and for every $y$ in $q_\alpha$ and $\e > 0$, there is $v$ in $p_\alpha$ such that $\|P_{V_\alpha^\perp}v  - w\| < \e$.
\end{claim}

\begin{proof}
Let $v \models p_\alpha$ in $M$ and let $d$ be the distance between $v$ and the subspace $V_\alpha$. By Lemma \ref{lemma: orthogonal maps are definable}, it is enough to check that  $d$ does not depend on $M$, i.e. if $M \prec N$ then the distance between $v$ and the subspace $V_\alpha(N)$   in $N$ is equal to $d$. 

If the distance between $v$ and $V_\alpha(N)$ is $< d$, there is a finite set $E$ of vectors realising the types $p_\beta$ in $N$ where $\beta < \alpha$ such that $v$ is at distance $<  d$ from the subspace generated by $E$. Since the types $p_\alpha$ are principal, we can find a set $E'$  of vectors in $M$ such that the Hilbert space type of the set $\{v\} \cup E'$ is arbitrarily close to the Hilbert space type of $\{v\} \cup E$. Hence the distance between $v$ and the subspace generated by $E'$ is $<d$, which is a contradiction.

A similar argument shows the second part of the claim.
\end{proof}


  We check that the types $q_\alpha$ satisfy the proposition. Fix $\alpha$ and $v, w \models q_\alpha$ in $M$ such that $v \notin \bdd(w)$ and suppose  that $\langle v,w \rangle  \neq 0.$ By Claim \ref{claim: decomposition in atomic homogeneous case, 4}, we can find a sequence  $(z_n)$ of realisations of $p_\alpha$ in $M$   such that $ \|v - P_{V_\alpha^\perp} z_n\| < 1/n$. By Claim \ref{claim: decomposition in atomic homogeneous case, 4} again, for large enough $n$ we have $z_n \notin \bdd(w)$.  We fix a sufficiently large $n$ such that $z_n \notin \bdd(w)$ and $\langle z_n, w\rangle \neq 0$. 
 
 Let $u = P_{\bdd(w)}z_n$ so that $u \in V_\alpha$ and hence $\langle u, w \rangle = 0$. However, $\langle u, w\rangle = \langle z_n, w\rangle \neq 0$, a contradiction.
\end{proof}

\subsection{Some elementary examples and counterexamples}\label{subsection: examples of decompositions}

The asymptotically free  type-definable sets  of Theorem \ref{theorem: summary of general scattered  interpretable hilbert space} or Corollary  \ref{corollary: structure theorem for NFCP theories} live in imaginary sorts of $T$ which may be difficult to identify in practice.   Nevertheless, in many special cases it is   possible to   give   a presentation of $\mathcal{H}$ which satisfies our various decomposition theorems without having to go through the proofs of these results.

\medskip

1. Let $T$ be the theory of an infinite set   $X$ and let $\mathcal{H}_1$ be the interpretable Hilbert space  generated by $X$ with inner product map $f(x, y) = 0$ for $x\neq y$ and $f(x, x) = 1$. Then $\mathcal{H}_1$ already satisfies the conclusion of Corollary \ref{corollary: orthogonal structure for omega-categorical theories}.

 Let $\mathcal{H}_n$ be the interpretable Hilbert space generated by $X^n$ with inner product map $f(\overline{x}, \overline{y}) = k$ if $\overline{x}, 
\overline{y}$ share $k$ entries, ignoring order. Then we can take the integer $N$ from Lemma \ref{lemma: extend interpretation to definable sets in NFCP} to be  equal to $2n+1$ so the factors   from Corollary \ref{corollary: orthogonal structure for omega-categorical theories} are given by a quotients of $X^{N}$. Going through the decomposition procedure of Theorem \ref{theorem: summary of general scattered  interpretable hilbert space} leads to the easy observation that  $\mathcal{H}_n$ is the orthogonal sum of $n$ copies of $\mathcal{H}_1$.  

Let $\mathcal{H}_1'$ be the interpretable Hilbert space generated by $X$ with inner product map $f(x, y) = 1$ if $x \neq y$ and $f(x, x) = 2$. Going through the decomposition procedure of Theorem \ref{theorem: summary of general scattered  interpretable hilbert space} again leads to the easy observation that $\mathcal{H}_1'$ is the orthogonal sum of a copy of $\mathcal{H}_1$ and a one-dimensional interpretable Hilbert space.

%
%
%
%

\medskip

2. For $n\geq 2$, let $T_n$ be the theory of the set of unordered $n$-tuples over an infinite set $X$. For $k \leq n$, $T_n$ has predicates $P_k(x,y)$  to say that $x$ and $y$ have exactly $k$ elements in common. Let $\mathcal{H}$ be the interpretable Hilbert space generated by the main sort $S$ with inner product map $f$ defined by $f(x,y) = k$ if and only if $P_k(x, y)$. Then  the structure of $\mathcal{H}$ is similar to the structure of  $\mathcal{H}_n$ from the previous example, but imaginary sorts of $T$ are needed to give the decomposition into orthogonal subspaces generated by asymptotically free definable sets.

\medskip
 
 3. Let $T = Th(\Zz, \leq)$. All three interpretable Hilbert spaces considered in Section \ref{subsection: examples of interpretable Hilbert spaces}, Example 2 already satisfy Theorem \ref{theorem: summary of general scattered  interpretable hilbert space}, as they are generated by an asymptotically free complete type.




\medskip

4. 
  We show the failure of Corollary \ref{corollary: structure theorem for NFCP theories} when some stable formula of $T$ has the FCP. 
  Let $T$ be the classical logic theory of an equivalence relation $E$ on a sort $S$ such that, for each $n$, $E$ has exactly one equivalence class of cardinality $n$. Define the  positive definite function $f$ by $f(x,x) = 2$ for all $x$, $f(x,y) = 1$ if $x \neq y$ and $xEy$ and $f(x,y) = 0$ if $\neg xEy$.   Write $\mathcal{H}$ for the   interpretable Hilbert space in $T$ generated by $S$ with inner product map $f$. Let $M\models T$ be $\omega_1$-saturated.

The set $\mathcal{P}(S)$ consists of the set $S$ together with $0$ and the weak limit point of each infinite $E$-class. In the notation of Lemma \ref{lemma: extend interpretation to definable sets in NFCP}, $N = 2$ and $S' = S^2$ quotiented out by the equivalence relation $E'$ on pairs $(x, y)$ with classes $x = y$, $x \neq y \wedge xEy$, and $\neg x E y$.  $S^+$ is the type-definable subset of $S'$ containing all $E'$-classes except those represented by pairs $(x, y)$ such that  $x \neq y \wedge x Ey$ and such that the $E$-class of $x$ is finite. $S^+$ is not definable.

Observe that there is no way of extending $h$ to a definable set $D$ containing $S^+$ in such a way that the inner product map on $D$ is strictly definable. Since there are no other natural candidates for generating sets, this suggests that the conclusion of Corollary \ref{corollary: structure theorem for NFCP theories} is false in this case.
Furthermore, we have the following classical result:

\begin{lemma}[\cite{Shelah1978a}, II 4.4]\label{FCP theorem}
Let $T$ be an arbitrary classical logic theory. If $T$ does not have the weak NFCP, then there is a definable equivalence relation $E((x, z), (y, z'))$ such that for all $n\geq 1$ there is a tuple $c_n$ such that the formula $E((x, c_n),(y, c_n))$ is an equivalence relation with more than $n$ but only finitely many equivalence classes.
\end{lemma}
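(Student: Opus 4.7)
\emph{Plan.} The hypothesis supplies a stable formula $\phi(x,y)$ with the FCP. For each $n \geq 1$, I would fix FCP witnesses $a_1^n, \ldots, a_n^n$ in a sufficiently saturated model so that $\bigwedge_{i \leq n} \phi(x, a_i^n)$ is inconsistent but every $(n-1)$-fold subconjunction is consistent. This is the only ingredient the hypothesis provides; the rest is a direct construction exploiting these witnesses.

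The natural equivalence relation to try is the scheme
\[
E_k\bigl((x_1, \bar z),\, (x_2, \bar z)\bigr) \;:=\; \bigwedge_{i \leq k} \bigl(\phi(x_1, z_i) \leftrightarrow \phi(x_2, z_i)\bigr),
\]
where $\bar z$ is a $k$-tuple of $y$-variables. For any fixed parameter $\bar c$ of length $k$, the classes of $E_k(\,\cdot\,,\,\cdot\,,\bar c)$ correspond to the realised truth patterns in $\{0,1\}^k$, so there are at most $2^k$ of them, and in particular finitely many. To coalesce the family $(E_k)_{k \geq 1}$ into a single formula $E((x,z),(y,z'))$ as in the statement, I would pass to $T^{\mathrm{eq}}$ and let $z$ range over an imaginary sort coding finite tuples of $y$-elements; alternatively, I could parametrise by canonical parameters of complete $\phi$-types over finite sets, using stability of $\phi$ to ensure that these canonical parameters live in a single definable sort.

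For the class count, take $c_n$ to be (the code of) the FCP witness $\bar a^{n+1}$ of length $n+1$. For each $i \leq n+1$, let $b_i$ realise $\bigwedge_{j \neq i} \phi(x, a_j^{n+1})$; inconsistency of the full $(n+1)$-fold conjunction then forces $\neg \phi(b_i, a_i^{n+1})$. The truth vectors $(\phi(b_i, a_j^{n+1}))_{j \leq n+1}$ are pairwise distinct, each having its unique zero in a different coordinate, so $b_1, \ldots, b_{n+1}$ fall into $n+1 > n$ distinct $E$-classes while the total class count remains bounded by $2^{n+1}$. This produces the desired $c_n$.

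The principal technical nuisance is the packaging issue in the second paragraph: fitting variable-length FCP data into a single imaginary sort so that $E$ is literally one formula rather than a scheme. Stability of $\phi$ together with the availability of arbitrary quotients in $T^{\mathrm{eq}}$ makes this routine but demands some care in bookkeeping. The combinatorial heart—turning FCP witnesses into more than $n$ distinct equivalence classes via the $b_i$—is essentially immediate once the setup is in place.
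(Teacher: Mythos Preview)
The paper does not supply its own proof of this lemma; it is stated as a citation to \cite{Shelah1978a}, II.4.4, and then used as a black box. So there is no paper-side argument to compare against directly.

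Your combinatorial core is correct: the FCP witnesses $a_1^{n+1},\ldots,a_{n+1}^{n+1}$ together with the realisations $b_i$ of the $n$-fold subconjunctions give $n+1$ distinct $E_{n+1}$-classes, while the total class count is bounded by $2^{n+1}$. The gap is precisely where you locate it, but it is not a bookkeeping nuisance---it is the content of the theorem. Neither of your fixes works as stated. There is no single sort in $T^{\mathrm{eq}}$ that codes $y$-tuples of every finite length, so the first suggestion does not produce one formula $E$. The second suggestion is too vague: canonical bases of individual $\phi$-types do sit in a fixed sort by bounded local rank, but a single $\phi$-type does not encode an equivalence relation with many classes, and you have not explained how the family $\{a_i^{n+1}\}_i$ collapses to one element of a fixed sort. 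More decisively, your scheme $E_k$ has at most $2^k$ classes for \emph{every} choice of parameter, so no fixed $k$ can serve for all $n$; whatever $E$ eventually works must be qualitatively different from every $E_k$.

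One standard route through Shelah II.4 is to show first that a stable $\phi$ with the FCP yields a single formula $\chi(x,\bar z)$, with $\bar z$ of fixed length, defining arbitrarily large finite sets; then $E(x,x';\bar z) := (x=x') \vee (\neg\chi(x,\bar z)\wedge\neg\chi(x',\bar z))$ has $|\chi(M,\bar c)|+1$ classes whenever $\chi(M,\bar c)$ is finite. The passage from FCP of a stable $\phi$ to such a $\chi$ is where stability enters substantively, beyond merely supplying the initial formula; that step is what your sketch is missing.
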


Therefore, any theory without the weak NFCP has an interpretable Hilbert space with the same properties as $\mathcal{H}$ constructed above. 


\section{Interpretable $L^2$-spaces} \label{section: L^2 spaces}

\emph{In this section, we study two particular sources of interpretable Hilbert spaces: $L^2$-spaces associated to absolute Galois groups and $L^2$-spaces associated to definable measures. We show that the decomposition theorems of Section \ref{section on structure} have natural interpretations in these cases, and we find strong structural similarities between these two sources of interpretable Hilbert spaces.}

\subsection{Absolute Galois Groups}\label{subsection: galois groups}

\emph{In this section, we rely on the results \cite{ChevalierHru2022} which show that absolute Galois groups $Gal(K)$ of definably-closed substructures $K \subseteq M$ are interpretable in a very general setting. These give rise to $L^2$-spaces, which are interpretable. }

\emph{In Definition \ref{definition: inverse system of finite probability spaces} we introduce the general notion of a $2$-regular inverse system of finite probability spaces. These inverse systems subsume both the interpretation of $L^2(Gal(K))$ and of the subspace of class functions on $Gal(K)$.  See Section \ref{subsection: definable measures} for another source of examples. In Proposition \ref{proposition: structure of profinite measure spaces}, we show that $L^2$-spaces associated to $2$-regular inverse systems of finite probability spaces decompose   into subspaces generated by asymptotically free sets in natural way. This strengthens the results of Section \ref{section on structure}}.

\medskip

Let $T$ be a classical lgoic theory in a language $\mathcal{L}$. Recall from \cite{ChevalierHru2022} that we say that $T$ admits elimination of finite imaginaries if $T$ eliminates imaginaries for equivalence relations whose classes are all finite. We will assume that $T$ admits elimination of finite imaginaries. 

 If $\mathcal{L}$ has sorts $(S_j)_{j \in J}$, we write $\mathcal{L}_P$ for the expansion of $\mathcal{L}$ by a collection $P$ of predicates $ (P_j)_{j \in J}$  where $P_j \subseteq S_j$. Let $T_P$ be the theory extending $T$ which says that the union $\bigcup_{j \in J}P_j$ is definably closed.

For any $M \models T$ with a choice of subset $K \subseteq M$ such that $\dcl(K) = K$, we can view $M$ as a model of $T_P$ where $K = P(M)$. Write $Gal(K)$ for the group of elementary maps $\acl(K) \to \acl(K)$ which fix $K$ pointwise. Recall that $Gal(K)$ is a profinite group.  In the following,  we will also consider the spaces    $Gal(K)^k/Ad(Gal(K)$ for any $k \geq 1$. This is the space of orbits of  $Gal(K)^k$ under diagonal conjugation by $Gal(K)$.  

Recall the classical result of \cite{Cherlin1980} which says that if $K$ is a perfect field, then $Gal(K)$ is interpretable in $T_P$ where $T = ACF$. The following theorem generalises this to the general setting of $T$ and $T_P$ as explained above.

\begin{theorem}[\cite{ChevalierHru2022}]\label{theorem: galois paper}
Let $T$ be a classical logic theory which admits elimination of finite imaginaries and elimination of quantifiers. Let $T_P$ be the $\mathcal{L}_P$-expansion of $T$ as defined above. Let $M \models T$ and let $K \subseteq M$ be definably closed. We view $M$ as an $\mathcal{L}_P$-structure and $K$ as an $\mathcal{L}$-structure. 

Let $I$ be the partial order consisting of pairs $(S, n)$ where $S$ is a finite Cartesian product of sorts of $T$ and $n \geq 0$. We define $(S, n) \leq (S', n')$ if $S'$ extends $S$ and $n \leq n'$.

Then $Gal(K)$ is $\mathcal{L}_P$-interpretable  in $M$ as an $I$-graded profinite group. For any $k \geq 1$,  $Gal(K)^k/Ad(Gal(K))$ is  $\mathcal{L}$-interpretable in $K$ as an $I$-graded profinite  space. 
\end{theorem}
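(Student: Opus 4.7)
The approach is to extend Cherlin's classical construction (which realises the absolute Galois group of a perfect field as an inverse limit of interpretable finite quotients) to the present general setting. The two hypotheses on $T$ play complementary roles: quantifier elimination guarantees that an elementary map between subsets of $\acl(K)$ is determined by its quantifier-free behaviour on finite tuples, while elimination of finite imaginaries allows finite $Gal(K)$-orbits to be coded as single points of interpretable sorts of $T$.

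For each pair $(S,n) \in I$, I would build the finite ``level'' $G_{S,n}$ as follows. Let $S^{[n]}$ denote the imaginary sort $S^n/\mathrm{Sym}(n)$ of unordered $n$-tuples; by elimination of finite imaginaries this is interpretable in $T$. A $K$-rational point $c$ of $S^{[n]}$ corresponds to a $Gal(K)$-invariant unordered $n$-tuple in $S(\acl(K))$, whose support is a union of complete $Gal(K)$-orbits. The action of $Gal(K)$ on such a $c$ is by permutation of the entries, landing in a finite subgroup of $\mathrm{Sym}(n)$. I define $G_{S,n}$ as the image of $Gal(K)$ under the simultaneous-restriction map $\sigma \mapsto (\sigma|_c)_c$ into $\prod_c \mathrm{Sym}(n)$, modulo the ambiguity in choosing an ordering for each $c$. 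What needs to be verified is that both the assignment $c \mapsto \pi_c$ and the coherence condition ``$(\pi_c)_c$ arises from a single $\sigma \in Gal(K)$'' are type-definable in $\mathcal{L}_P$ over $K = P(M)$; by QE this compatibility is read off from quantifier-free relations between finite orbits in $\acl(K)$, which already live in the interpretable sorts of $T$.

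The transition maps $G_{S',n'} \to G_{S,n}$ for $(S,n) \leq (S',n')$ come from restricting the indexing family. The inverse limit is $Gal(K)$, since an elementary map $\acl(K) \to \acl(K)$ fixing $K$ is exactly a compatible system of permutations of all finite Galois orbits. For the second statement, the observation is that the conjugation-invariant content of $(\sigma_1,\ldots,\sigma_k) \in Gal(K)^k$ on a single orbit $c$ is the $\mathrm{Sym}(c)$-conjugacy class of $(\sigma_1|_c,\ldots,\sigma_k|_c)$. Choosing a basepoint $a \in c$, this conjugacy class is the $Gal(K)$-orbit of the $(k+1)$-tuple $(a,\sigma_1(a),\ldots,\sigma_k(a))$, equivalently by QE its type over $K$. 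Hence $Gal(K)^k/Ad(Gal(K))$ is captured by coherent systems of types over $K$ of $(k+1)$-tuples in $\acl(K)$, and these types are themselves coded by canonical parameters in $K$ via another application of elimination of finite imaginaries. Crucially, this coding does not invoke the predicate $P$, so the resulting object is $\mathcal{L}$-interpretable in $K$ alone.

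The main obstacle I foresee is showing that the coherence condition defining $G_{S,n}$ is genuinely type-definable in $\mathcal{L}_P$ and uniformly so in $(S,n)$. The subtle point is that a single $\sigma \in Gal(K)$ imposes constraints linking the permutations on different orbits whenever those orbits share common coordinate data: the permutations cannot be chosen independently. The way to handle this is to invoke QE to express ``$\sigma(\bar{a}) = \bar{b}$'' as a quantifier-free condition on the pair $(\bar{a},\bar{b}) \in \acl(K)^{2m}$, which again lives in an interpretable ``diagonal-orbit'' sort. Once this local definability is established, the $I$-graded profinite inverse system is obtained by routine patching, following Cherlin's template; similarly, the passage from $\mathcal{L}_P$-interpretability in $M$ to $\mathcal{L}$-interpretability in $K$ for the conjugation quotient follows once one checks that the canonical parameters for the relevant types genuinely lie in the $\mathcal{L}$-sorts of $K$.
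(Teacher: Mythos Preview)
The paper does not prove this theorem: it is stated with the attribution \cite{ChevalierHru2022} in the header and is quoted as a result from that companion paper, with no argument given here. So there is no ``paper's own proof'' to compare your proposal against.

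That said, your sketch is a reasonable outline of the Cherlin-style argument one expects, and you have correctly identified the roles of the two hypotheses (quantifier elimination to reduce elementary maps to quantifier-free data on finite tuples; elimination of finite imaginaries to code orbits as points). You have also put your finger on the genuine technical point, namely that the coherence condition ``these permutations of the various orbits arise from a single $\sigma$'' must be shown to be type-definable in $\mathcal{L}_P$, uniformly in the index. One thing worth tightening: your description of $G_{S,n}$ as an image inside an infinite product $\prod_c \mathrm{Sym}(n)$ is not obviously an object living in a single imaginary sort; in practice one works with a single $K$-rational unordered tuple at a time (or a bounded family indexed by $(S,n)$) so that the quotient is manifestly finite and interpretable, and then checks that the transition maps between indices are definable. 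For the $\mathcal{L}$-interpretability of $Gal(K)^k/Ad(Gal(K))$ in $K$, your idea of coding conjugacy classes by types of $(k+1)$-tuples over $K$ is the right one, but you should make explicit why the canonical parameter for such a type lands in $K$ itself rather than merely in $\dcl(K)$ computed in the pair $(M,K)$; this is where elimination of finite imaginaries for $T$ (not $T_P$) is doing the work.
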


Let $T_P$ be as above and let $(S_i)_{i \in I}$ be the $I$-graded inverse system of $Gal(K)$ in imaginary sorts of $T_P$. Recall that $Gal(K)$ is a probability space with the Haar measure, and hence gives rise to an $L^2$-space.  Then $T_P$ has a strictly interpretable Hilbert space $\mathcal{H}$ generated by the sorts $(S_i)_I$ such that for every $(M, K) \models T_P$, $H(M, K)$ is canonically isomorphic to $L^2(Gal(K))$ in the following sense. For every $i \in I$, write $h_i  :S_i \to \mathcal{H}$ for the interpretation map. Then for any $(M, K) \models T_P$,    there is a surjective unitary  map $F : H(M) \to L^2(Gal(K))$ such that  for any $i \in I$ and $x \in S_i$, $F \circ h_i(x) = \I_{\pi^{-1}(x)}$ where $\pi$ is the quotient map of $Gal(K)$ to the finite group containing $x$. We say that $\mathcal{H}$ is \emph{the interpretation of $L^2(Gal(K))$} in $T_P$. 

With $(M, K) \models T_P$ as above and $k \geq 1$, let $(S_i')_{i \in I}$ be the $I$-graded inverse system of $Gal(K)^k/Ad(Gal(K))$ in imaginary sorts of $K$, viewed as an $\mathcal{L}$-structure. Note that $Gal(K)^k/Ad(Gal(K))$ is also a probability space with the measure inherited from the Haar measure.  In the same sense as above, $K$ \emph{interprets $L^2(Gal(K)^k/Ad(Gal(K))$}.

Note  that when $k = 1$, $L^2(Gal(K)^k/Ad(Gal(K))$   is   the Hilbert space of class functions on $Gal(K)$. By the construction of \cite{ChevalierHru2022},  $L^2(Gal(K)/Ad(Gal(K))$ can be viewed as a strictly interpretable subspace of $L^2(Gal(K))$ in the $\mathcal{L}_P$-structure $(M, K)$. 

\medskip

The two interpretable Hilbert spaces defined above are strictly interpretable in the sense of Definition \ref{definition: strictly interpretable Hilbert space}. We will show that these two interpretable Hilbert spaces decompose into interpretable Hilbert spaces generated by asymptotically free sets in a natural and explicit way. This will improve on Theorem \ref{theorem: summary of general scattered  interpretable hilbert space} and Proposition \ref{proposition: asymptotically free generating set for general strictly interpretable hilbert space}.

In order to make precise the underlying structure which we will use, we introduce the abstract notion of a $2$-regular $I$-graded inverse system of finite probability spaces. Examples include  the $I$-graded inverse systems of $Gal(K)$ and   of $Gal(K)^k/Ad(Gal(K))$ considered above. In Section \ref{subsection: definable measures} we will see that  another source of examples comes from $L^2$-spaces associated to $\omega$-categorical measurable theories.

\begin{definition}\label{definition: inverse system of finite probability spaces}
Let $(I, \leq)$ be a partial order such that for all $i, j \in I$ there is $k \in I$ with $i \leq k$ and $j \leq k$.

Let $\mathcal{L}_{pf, I}$ be the language with sorts $(S_i)_{i \in I}$ and for $i \leq j \in I$, a binary relation $\leq \subseteq S_i \times S_j$ and a  binary  relation $E \subseteq S_i \times S_i$.

An $I$-graded inverse system of finite   sets is an $\mathcal{L}_{pf, I}$-structure satisfying the following:
 \begin{enumerate}
\item for every $i \in I$, there is some $K_i \geq 1$ such that the relation $E$ is an equivalence relation on $S_i$ with equivalence classes of size at most $K_i$.
\item  the system of relations $\leq$ across the sorts  $(S_i)_{i \in I}$ define a partial order such that 
\begin{enumerate}
\item for $x \in S_i$ and $y \in S_j$, if $y \geq x$ then writing $a$ and $b$ for the $E$-classes of $x$ and $y$ respectively, $\geq \cap (b\times a)$ is the graph of a surjection $b \to a$.
\item if $i \leq j$, then for every $E$-class $a$ in $S_i$, there is a unique $E$-class $b$ in $S_j$ such that $\leq \cap( a \times b)$ is the graph of a bijection $a \to b$
\end{enumerate}
Hence $\leq$ factors through the equivalence relations $E$ and we will also write $\leq$ for the induced partial order on $E$-classes.
\item for any $i \leq j \in I$, if there are $E$-classes $a'$ in $S_i$ and $a$ in $S_j$ such that $\leq$ is a bijection $a' \to a$, then for any $E$-class $b$ in $S_j$ such that $b \leq a$, there is an $E$-class $b'$ in $S_i$ such that $\leq$ is a bijection $b' \to b$.


\item the directed system of $E$-classes in the sorts $S_i$ forms a lattice in the following sense:
\begin{enumerate}
\item for any $i \in I$ and $E$-classes $a, b \in S_i$, there is a unique class $c$ in $S_i$ such that $c = \max\{d \subseteq S_i \mid d \leq a, d\leq b\}$. We write $c = a\wedge b$. 
\item for any $i, j \in I$ there is $k \in I$ with $i \leq k$ and $j \leq k$ such that for any $E$-classes $a \subseteq S_i$ and $b \subseteq S_j$, there is $c \subseteq S_k$ such that $c = \min\{d\subseteq S_k \mid a \leq d, b \leq d\}$. We write $c = a \vee b$. 
\end{enumerate}
\end{enumerate}
Let $M$ be an $I$-graded inverse system of finite sets in a language extending $\mathcal{L}_{pf, I}$. We say that $M$ is an $I$-graded inverse system of finite probability spaces if for every $i \in I$ there is a strictly definable function $\mu_i : S_i \to [0, 1]$ satisfying the following:
\begin{enumerate}
\setcounter{enumi}{4}
\item for all $i \in I$ and for any $E$-class $a \in S_i$, $\sum_{x \in a} \mu_i(x) = 1$
\item for any $i \leq j \in I$, if $a$, $b$ are   $E$-classes in $S_i$ and $S_j$ respectively such that $a \leq b$, then for any $x \in a$, $\mu_i(x) = \sum_n \mu_j(y_n)$ where $y_1, \ldots, y_n$ is the pullback of $x$ to $b$
\end{enumerate}
We say additionally that $M$ is a $2$-regular $I$-graded inverse system if $M$ satisfies
\begin{enumerate}
\setcounter{enumi}{6} 
\item For every $i \in I$, take $k \in I$ such that for any $E$-classes $a, b$ in $S_i$, $a\vee b$ is  in $S_k$.   For any $E$-classes $a, b$ in $S_i$ and  $x \in a$,  $y \in b$, write $x \vee y = \{w \in a \vee b \mid w \geq x$ and $w \geq y\}$. Suppose $x \vee y \neq \emptyset$ and find $z \in a \wedge b$ such that $z \leq x$ and $z \leq y$. Then $\mu_k(x \vee y) = \mu_i(x) \mu_i(y)/\mu_i(z)$. \label{equation: modularity in profinite probability space} 
\end{enumerate}
\end{definition}
%

\noindent \textbf{Remarks:} (1) When $M$ is an $I$-graded inverse system of finite probability spaces, the sorts $(S_i)_{i \in I}$ form a  directed system of finite sets. We write $\lim_{\rightarrow}(S_i)_I$ for this direct limit. $\lim_{\rightarrow}(S_i)_I$ is an inverse system of finite probability spaces and we write $\lim_{\leftarrow} M$ for the inverse limit. An easy application of Caratheodory's theorem shows that $\lim_{\leftarrow} M$ is a Borel probability space.  
$M$ interprets $L^2(\lim_{\leftarrow} M)$ in the same sense as in the discussion following Theorem \ref{theorem: galois paper}. 

(2) $2$-regularity in Definition \ref{definition: inverse system of finite probability spaces} can be viewed as a local modularity property. See the remark following Proposition \ref{proposition: structure of profinite measure spaces} connecting this modularity property to Theorem \ref{projections commute}.

 \medskip

It follows from elementary Galois theory (and its generalisation due to \cite{Poizat1983}) that the $I$-graded inverse systems of $Gal(K)$  considered in Theorem \ref{theorem: galois paper} is $2$-regular. It   follows easily that the inverse system of   $Gal^k(K)/Ad(Gal(K))$ is also $2$-regular.

The choice of terminology `$2$-regularity' was chosen to distinguish this notion from the stronger notion (call it `$n$-regularity') which says that for any   $x_1, \ldots, x_n \in S_i$ such that $x_i \wedge x_j = x_{i'}\wedge x_{j'}$ for any $i \neq j$ and $i' \neq j'$,  $\mu(x_1 \vee \ldots \vee x_n) = (\prod_i \mu(x_i))\mu(\bigwedge_j x_j)$. It is easy to see that the $I$-graded inverse system of $Gal(K)$ is not usually  $3$-regular.

In the  next proposition,  we give a  decomposition of $L^2(\lim_{\leftarrow}M)$ into subspaces generated by asymptotically free definable sets. Since $L^2(\lim_{\leftarrow}M)$ is strictly interpretable in $M$,  Proposition \ref{proposition: structure of profinite measure spaces} entails Proposition \ref{proposition: asymptotically free generating set for general strictly interpretable hilbert space} in this particular setting. 


 \begin{proposition}\label{proposition: structure of profinite measure spaces}
Let $I$ be a directed partial order and let $M$ be a $2$-regular $I$-graded inverse system of finite  probability spaces.  
 
Then $L^2(\lim_{\leftarrow}M)$ is the completed  orthogonal sum of a family of strictly interpretable subspaces $(V_i)_{i \in I}$ where each $V_i$ is the completed orthogonal sum of a family of finite dimensional subspaces generated by an asymptotically free definable set $D_i$.

Each $D_i$ is in a classical imaginary sort of $M$ and can be expressed explicitly in terms of  $M$. 

%
%
%
%
%
%
\end{proposition}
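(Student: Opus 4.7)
The plan is to build the decomposition directly from the lattice of $E$-classes and then package the pieces into the sought subspaces $V_i$. After replacing each $E$-class by its bijective refinements at higher levels, every $c \in \mathcal{C}$ (where $\mathcal{C}$ is the lattice of $E$-classes modulo bijective refinement) gives rise to a well-defined finite dimensional subspace $L^2(c) \subseteq L^2(\lim_{\leftarrow}M)$, namely the closed span of the pullbacks of the indicators $\I_x$ for $x \in c$. For each $c \in \mathcal{C}$ let
\[
W_c := L^2(c) \ominus \sum_{c' < c} L^2(c').
\]
I will show the family $(W_c)_{c \in \mathcal{C}}$ is pairwise orthogonal with dense sum, and then re-index it by $I$ using the strict definability of the measures.

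The pairwise orthogonality is the heart of the argument, and is where $2$-regularity enters. Fix distinct $c_1, c_2 \in \mathcal{C}$, represented at a common level $i$, with meet $c_0 = c_1 \wedge c_2 \subseteq S_i$ and join $c_{12} = c_1 \vee c_2$ at some $k \geq i$, together with the natural surjections $\pi_1 : c_{12} \to c_1$ and $\pi_2 : c_{12} \to c_2$. For $f \in L^2(c_1)$ and $g \in L^2(c_2)$, writing both as functions on $c_{12}$ and expanding gives
\[
\langle f, g\rangle \;=\; \sum_{x \in c_1,\, y \in c_2} f(x)\,\overline{g(y)}\,\mu_k(x \vee y).
\]
By condition (7) of Definition \ref{definition: inverse system of finite probability spaces}, each nonzero term equals $\mu_i(x)\mu_i(y)/\mu_i(z)$, where $z \in c_0$ is the common image of $x$ and $y$ under the surjections $c_1 \to c_0$ and $c_2 \to c_0$. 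Grouping by $z$, the sum factorises and becomes $\langle E_{c_0} f,\, E_{c_0} g\rangle_{L^2(c_0)}$, where $E_{c_0}$ is the orthogonal projection onto $L^2(c_0)$. If $f \in W_{c_1}$ then $E_{c_0} f = 0$, hence $f \perp L^2(c_2) \supseteq W_{c_2}$; the case $c_1 < c_2$ is immediate from the definition of $W_{c_2}$. Density of $\bigoplus_c W_c$ in $L^2(\lim_{\leftarrow}M)$ follows from the inductive identity $L^2(c) = \bigoplus_{c' \leq c} W_{c'}$ together with the standard fact that $\bigcup_c L^2(c)$ is dense (the pullback $\sigma$-algebras of finite $E$-classes generate the Borel $\sigma$-algebra of the inverse limit).

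For the packaging, for each $i \in I$ let $V_i$ be the completed orthogonal sum of the $W_c$ for those $c \in \mathcal{C}$ arising at level $i$ as a strict (non-bijective) refinement of the predecessor classes, and let $D_i$ be the definable set of normalised generators
\[
e_x \;:=\; \I_x - \frac{\mu(x)}{\mu(\pi(x))}\,\I_{\pi^{-1}(\pi(x))}
\]
as $x$ ranges over elements of such classes at level $i$, where $\pi$ denotes the surjection onto the largest proper predecessor class. Strict interpretability of $V_i$ then follows from the strict definability of $\mu_i$: inner products between generators in different $W_c$ vanish by the orthogonality above, while within a single $W_c$ the inner products are polynomial expressions in the finitely many values of $\mu_i$, and so take only finitely many values on $D_i \times D_i$. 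Asymptotic freedom is immediate, since $\langle e_x, e_y\rangle \neq 0$ forces $e_x, e_y \in W_c$ for the same $c$, and this $c$ being finite gives $e_x \in \bdd(e_y)$. The main obstacle is the computation of the previous paragraph, in particular extracting the factorisation through $c_0$ from the $2$-regularity axiom; once that is established the remaining verifications are combinatorial bookkeeping in the lattice $\mathcal{C}$.
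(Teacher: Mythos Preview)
Your approach is essentially the same as the paper's: both define, for each $E$-class $c$, the orthogonal complement $W_c = L^2(c) \ominus \sum_{c'<c} L^2(c')$ and prove pairwise orthogonality by expanding the inner product over $c_1 \vee c_2$, applying the $2$-regularity identity $\mu(x\vee y) = \mu(x)\mu(y)/\mu(z)$, and factoring through the meet $c_0 = c_1 \wedge c_2$. Your phrasing via the conditional expectation $E_{c_0}$ is slightly cleaner than the paper's direct computation, but it is the same argument.

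There is one genuine slip in the packaging step. You define the generators $e_x = \I_x - \frac{\mu(x)}{\mu(\pi(x))}\I_{\pi^{-1}(\pi(x))}$ where $\pi$ is ``the surjection onto the largest proper predecessor class''. In the lattice of $E$-classes there is no reason for a unique maximal proper predecessor to exist; and even when one does, subtracting only along that single predecessor does not in general land you in $W_c$, since $W_c$ requires orthogonality to \emph{every} $L^2(c')$ with $c'<c$. Consequently your asymptotic-freedom argument (``$\langle e_x,e_y\rangle \neq 0$ forces $e_x,e_y$ in the same $W_c$'') breaks, because the $e_x$ as you wrote them need not lie in the $W_c$. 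The fix is exactly what the paper does: take $e_x := P_{W_{[x]}}\I_x$, the full orthogonal projection. This is still a linear combination of $\I_x$ and the finitely many generators of $\sum_{c'<[x]} L^2(c')$, with coefficients determined by finite-dimensional linear algebra in the values of $\mu_i$, so strict definability and the finite-range property survive, and now $e_x \in W_{[x]}$ by construction.
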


\begin{proof}
In this proof, we use the notation of Definition \ref{definition: inverse system of finite probability spaces} and  for any $x \in S_i$, we write $[x]$ for the $E$-class of $x$. We also identify elements $x \in S_i$ with vectors of $L^2(\lim_{\leftarrow}M)$.  More precisely, for any $M \models T$,  we identify $x \in S_i$ with $\I_{\pi_{[x]}^{-1}(x)}$ where  $\pi_{[x]} : (\lim_{\leftarrow}M) \to [x]$ is the inverse limit map to $[x]$.

Fix   $i \in I$. For any $E$-class $a$ in $S_i$, we write $H(a)$ for the finite-dimensional subspace of $L^2(\lim_{\leftarrow}M)$ spanned by the elements of $a$. We also write $W(a)$ for the subspace of $H(a)$ spanned by subspaces $H(b)$ where $b < a$.
 $W(a)$ is a finite-dimensional subspace of $H(a)$
   spanned by  fibres of proper surjections $a \to b$ where $b$ is an $E$-class of $S_i$. 

For any $x \in S_i$,  $P_{W[x]^\perp}x$ is a linear combination of $x$ and of vectors spanning $W[x]$. The coefficients in this linear combination depend only on the various inner products achieved between $x$ and the generators of $W[x]$. It follows that the map $S_i \times S_i\to \Rr, (x, y) \mapsto \langle P_{W[x]^\perp}x, y\rangle$ is strictly definable.

 Let $C_i$ be the classical imaginary sort of $T$ obtained by quotienting $S_i$ so that   $x \mapsto P_{W[x]^\perp}x$ is injective. $C_i$ is a   piece of $\mathcal{H}$ where $z \in C_i$ is identified with $P_{W[x]^\perp}x$ for some $x \in S_i$. The definable inclusions $S_j \to S_i$ for $j \leq i$ induce inclusions $C_j \subseteq C_i$ in $\mathcal{H}$. As remarked above, the inner product map on each $C_i$ is strictly definable.

%

\begin{claim}
Let $z_1, z_2 \in C_i$ and find $x_1, x_2 \in S_i$ such that $z_k = P_{W[x_k]^\perp}x_k$ for $k = 1,2$. If $[x_1] \neq [x_2]$ then $\langle z_1, z_2\rangle = 0$. Hence $C_i$ is asymptotically free.
\end{claim}

\begin{proof}
We can write $z_1 = \sum_{v \in [x_1]}\lambda_v v$ and $z_2 = \sum_{w \in [x_2]} \gamma_w w$ where the $\lambda_v$ and $\gamma_w$ are scalars. We have 
\[
\langle z_1, z_2 \rangle = \langle \sum_{u \in [x_1] \wedge [x_2]} \sum_{v \geq u} \lambda_v v, \sum_{u \in [x_1 ]\wedge [x_2]} \sum_{w \geq u} \gamma_w w\rangle  = \sum_{u \in [x_1] \wedge [x_2]} \langle \sum_{v \geq u} \lambda_v v, \sum_{w \geq u} \gamma_w w\rangle
\]
so it is enough to show that for every $u \in [x_1]\wedge [x_2]$, we have $\langle \sum_{v \geq u} \lambda_v v, \sum_{w \geq u} \gamma_w w\rangle = 0$. Note that $\langle z_1, u \rangle = 0$. Fixing such a $u$ we have
\begin{IEEEeqnarray*}{rCl}
\langle \sum_{v \geq u} \lambda_v v, \sum_{w \geq u} \gamma_w w\rangle & = & \sum_{v \geq u} \sum_{w \geq  u} \lambda_v \gamma_w \mu(v \vee w) \\
& = & \sum_{v \geq u} \sum_{w \geq  u} \lambda_v \gamma_w  \frac{\mu(v) \mu(w)}{\mu(u)} \\ 
& = & \big(\sum_{v \geq u} \lambda_v \frac{\mu(v)}{\mu(u)} \big) \big(\sum_{w \geq u} \gamma_w \mu(w)\big) \\
& = & \frac{1}{\mu(u)} \langle z_1 , u\rangle  \big(\sum_{w \geq u} \gamma_w \mu(w)\big) = 0 .
\end{IEEEeqnarray*}
For any $z_1, z_2 \in C_i$, it follows that if $\langle z_1, z_2 \rangle  \neq 0$, then $z_1, z_2$ are contained in some common $H(a)$. Since $H(a)$ is finite-dimensional,  we have $\acl(z_1) = \acl(z_2)$ and hence $C_i$ is asymptotically free. 
 \end{proof}

For every $i \in I$ we take the quotient of  $C_i$ which identifies the subset $\bigcup_{j < i } C_j$ with $0$. Hence the sets $D_i$ are pairwise orthogonal and asymptotically free. By the claim, $D_i$ generates an orthogonal family of finite dimensional subspaces of $L^2(\lim_{\leftarrow}M)$.  The proposition follows. 
\end{proof}

\noindent \textbf{Remarks:} (1) The decomposition procedure of Proposition \ref{proposition: structure of profinite measure spaces} is analogous to the decomposition procedure of Proposition \ref{proposition: asymptotically free generating set for general strictly interpretable hilbert space}. More precisely, recall that in Proposition \ref{proposition: asymptotically free generating set for general strictly interpretable hilbert space}, for $x \in S_i$, we wrote $V(x)$ for the finite dimensional space spanned by $\{w \in \mathcal{P}(\tp(x)) \mid w < x\}$. Then $V(x) \subseteq W[x]$ where $W[x]$ is defined as in the proof of Proposition \ref{proposition: structure of profinite measure spaces}.

To see this, fix $x \in S_i$ in $M$. It follows from Lemma \ref{lemma: weak closure is set of limit points} and Theorem \ref{projections commute}   that $\{y \in \mathcal{P}(\tp(x)) \mid y  < x\} $ is the set of weak limit points of infinite indiscernible sequences in $\tp(x)$ which begin at $x$. 
Let $(x_n)$ be such an indiscernible sequence. We leave it as an exercise to show that $(x_n)$ converges weakly to $\frac{\mu(x_0 \vee x_1)}{\mu((x_0 )}(x_0 \wedge x_1)$. Therefore 
\[
\mathcal{P}(\tp(x)) = \{\frac{\mu(x_0 \vee x_1)}{\mu(x_0)}(x_0 \wedge x_1) \mid (x_0, x_1)\text{ extends to an infinite indiscernible sequence}\}
\]

%
%

The above observation gives an interesting interpretation of the one-basedness phenomenon proved in Theorem \ref{projections commute}: in the present setting, one-basedness in the $L^2$-space  follows directly from axiom scheme (\ref{equation: modularity in profinite probability space}) in Definition \ref{definition: inverse system of finite probability spaces}, which is a modularity axiom. One-basedness in the $L^2$-space holds much more generally, since Theorem \ref{projections commute} holds in all $L^2$-spaces of inverse systems of finite probability spaces. Nonetheless, we have proved that when the inverse system is $2$-regular,  Hilbert space one-basedness interacts well with the underlying inverse limit structure and has a natural interpretation. 

Because the decomposition constructed in the proof of Proposition \ref{proposition: structure of profinite measure spaces} can be given explicitly in terms of the underlying inverse limit structure, we view Proposition \ref{proposition: structure of profinite measure spaces} as an explicit and meaningful strengthening of Proposition \ref{proposition: asymptotically free generating set for general strictly interpretable hilbert space}.

(2) Proposition \ref{proposition: structure of profinite measure spaces} can also be viewed as a clear illustration of some of the limitations of the techniques developed in this paper. We remarked after  Definition \ref{definition: inverse system of finite probability spaces} that $2$-regularity is strictly weaker than $3$-regularity, but we showed in Proposition \ref{proposition: structure of profinite measure spaces} that $2$-regularity already entails asymptotic freedom.  This shows clearly that Hilbert spaces are not the right setting for detecting interactions of arity $>2$ in $L^2$-spaces. 

(3) Note that the definable sets $D_i$ in Proposition \ref{proposition: structure of profinite measure spaces} are not complete types, so Proposition \ref{proposition: structure of profinite measure spaces} is not an instance of Theorem \ref{theorem: summary of general scattered  interpretable hilbert space}. However, applying the proof of Theorem \ref{theorem: summary of general scattered  interpretable hilbert space} to the sets $D_i$ in Proposition \ref{proposition: structure of profinite measure spaces} yields the following statement: for some indexing set $J$, $L^2(\lim_{\leftarrow} M)$ is the completed orthogonal sum of $\bigwedge$-interpretable subspaces $(W_j)_{j \in J}$ such that each $W_j$ is the completed orthogonal sum of a family of finite dimensional subspaces generated by an asymptotically free complete type $p_j$.

(4) Let $(M, K) \models T_P$ as in Theorem \ref{theorem: galois paper}.  
 The regular representation of $Gal(K)$ on $L^2(Gal(K))$ is the unitary representation generated by the group action $g : \I_{hN} \mapsto \I_{ghN}$  where $N \unlhd G$ is open. It is clear that every finite quotient of $Gal(K)$ acts definably on each sort $S_i$  of $Gal(K)$ and this induces a definable action on the interpretable Hilbert space $L^2(Gal(K))$. Hence we say that the regular representation of $Gal(K)$ is definable.

It is easy to check that $Gal(K)$ also acts on the definable sets $(D_i)_I$ constructed in the proof of Proposition \ref{proposition: structure of profinite measure spaces}. In fact, the action by $Gal(K)$ preserves each finite-dimensional subspace entering in the orthogonal sum in each $V_i$, with $V_i$ defined in the statement of Proposition \ref{proposition: structure of profinite measure spaces}.
 Hence we have recovered the well-known fact that the regular representation of a profinite group is the orthogonal sum of finite dimensional representations and we have shown that each of these finite dimensional representations is definable.

(5) Let $(M, K) \models T_P$ as in Theorem \ref{theorem: galois paper}. It is well-known that  the irreducible characters of finite quotients of $Gal(K) $ form an orthonormal set which generates  $L^2(Gal(K)/Ad(Gal(K)) )$. Unfortunately, the vectors in the sets $D_i$ constructed in the proof of Proposition \ref{proposition: structure of profinite measure spaces} are not usually irreducible characters of finite quotients of $Gal(K)$. Nonetheless,  it is easy to check that each finite-dimensional subspace of each $V_i$ in Proposition \ref{proposition: structure of profinite measure spaces}  is  spanned by a finite set of irreducible characters of finite quotients of $Gal(K)$. 
Therefore Proposition \ref{proposition: structure of profinite measure spaces} naturally leads us to character theory in this setting.

\subsection{Definable Measures}\label{subsection: definable measures}

\emph{In this section, we discuss   classical logic theories $T$ with a strictly definable measure $\mu$.  Examples include pseudofinite fields and more generally MS-measurable theories. For any definable set $X$, $L^2(X)$ is strictly interpretable in $T$.}

\emph{We introduce the strong germ property for $L^2(X)$ and show in Proposition \ref{proposition: strong germ property} that it holds in several cases of interest . In Corollary \ref{corollary: omega-categorical free profinite probability space}, we show that for any $\omega$-categorical measurable structure $M$, type-spaces over $M$ are, up to measure $0$,  $2$-regular inverse systems of finite probability spaces, as defined in Section \ref{subsection: galois groups}. This connection to Section \ref{subsection: galois groups} comes as a surprise.}

\medskip
 Suppose $T$ is a  classical logic theory with elimination of imaginaries and with a Keisler measure $\mu$ on a sort $X$ of $T$. This means that for all $M \models T$, $\mu$ is a finitely additive probability measure on the Boolean algebra $Def_x(M)$ of $M$-definable subsets in the variable $x$, where $x$ ranges in $X$. We view $Def_x(M)$ as an algebra of subsets of the type space $S_x(M)$. Suppose in addition that $\mu$ is \emph{strictly definable}, meaning that for any formula $\phi(x, y)$ and any $\lambda \geq 0$, the set of $a$ in $M$  such that $\mu(\phi(x, a)) = \lambda$ is a definable set. One important example is the theory $T$ of pseudofinite fields  with the counting measure $\mu$. $\mu$ was first shown to be definable in  \cite{Chatzidakis1992}. MS-measurable classes introduced  in \cite{Macpherson2008} generalise the case of pseudofinite fields and offer a rich source of examples.

Given $M \models T$, the measure $\mu$ on the algebra $Def_x(M)$ extends to a countably additive probability measure on the $\sigma$-algebra $\mathcal{D}_x(M)$ generated by $Def_x(M)$. We view $\mathcal{D}_x(M)$ as a $\sigma$-algebra over the type space $S_x(M)$ but when $M$ is $\omega_1$-saturated  we can also view $\mathcal{D}_x(M)$ as a $\sigma$-algebra of   subsets of $X$ itself. Write $L^2(X(M))$ for the  space of square-integrable functions on   $S_x(M)$  with respect to $\mathcal{D}$ and $\mu$.  $L^2(X(M))$ is densely generated by     functions of the form $\I_{\phi(x, a)}$.

For any formula $\phi(x, y)$, let $S_\phi$ be an imaginary sort of $T$ which is a Cartesian product of sorts of $T$ corresponding to the tuple $y$. If $\phi(x, y)$ and $\psi(x, y)$ are different formulas, we take the sorts $S_\phi$ and $S_\psi$ to be distinct, although they are copies of each other.

For $M \models T$ and for any formula $\phi(x, y)$, define the map $h_\phi : S_\phi \to L^2( X(M))$ by $h_\phi(a) = \I_{\phi(x, a)}$. By definability of the measure, the collection of maps $(h_\phi)$ gives an interpretable Hilbert space $\mathcal{H}$ such that $H(M)$ is canonically isomorphic to $L^2(X(M))$. We say that $T$ \emph{interprets $L^2(X)$}.  $L^2(X)$ is strictly interpretable, in the sense of Definition \ref{definition: strictly interpretable Hilbert space}. Therefore   the decomposition Theorem \ref{theorem: summary of general scattered  interpretable hilbert space} applies and we know that $L^2(X, \mu)$ is an orthogonal sum of $\bigwedge$-interpretable subspaces generated by asymptotically free complete types.

 We are interested in finding an explicit interpretation of Theorem \ref{theorem: summary of general scattered  interpretable hilbert space} for $L^2(X)$. It is not clear if it is possible to do this in full generality, but we will show in Corollary \ref{corollary: omega-categorical free profinite probability space} that when $T$ is an $\omega$-categorical measurable structure satisfying Fubini, $X(M)$ can be interpreted as a $2$-regular inverse system of finite probability spaces, and hence the explicit decomposition procedure of Proposition \ref{proposition: structure of profinite measure spaces} applies. 

Our main tool for Corollary \ref{corollary: omega-categorical free profinite probability space} will be the strong germ property and Proposition \ref{proposition: strong germ property} which asserts a certain form of elimination of hyperimaginaries for certain measurable theories. 

In the following, we write $\acl(0)$ for the algebraic closure of the emptyset, in the sense of classical logic. This is not to be confused with $\bdd(0)$, which requires us to view $T$ as a continuous logic theory. From the point of view of classical logic, $\bdd(0)$ can be identified with certain  hyperimaginaries of $T$. See Sections \ref{subsection: imaginaries in CL} and \ref{subsectionL CL and classical logic} for a discussion.
 

\begin{definition}
Let $T$ be a  classical logic theory  with a strictly definable measure. For $M \models T$, write $V_0$ for the subspace of $L^2(X(M))$ generated by vectors $\I_{\phi}$ where $\phi$ is a definable subset of $X$ over $\acl(0)$. 

We say that $T$ has the strong germ property if for some (any) $M \models T$, $V_0 = \bdd(0)$ in $L^2(X(M))$.
\end{definition}

 The strong germ property is of general interest in light of the one-basedness property of Theorem \ref{projections commute}. This theorem finds strong structural properties to hold between $\bdd$-closed subspaces of $L^2(X)$, so it is interesting to identify those subspaces explicitly. Adding parameters, the strong germ property says that $\bdd(a) \cap L^2(X)$ is the subspace we would expect. 

 Note that in any $M \models T$,  $V_0$ and $\bdd(0)$ correspond to subspaces of $L^2(X(M))$ of measurable functions with respect to various $\sigma$-algebras. If $\phi(x, a)$ is  a definable subset of $X$, $P_{\bdd(0)} \I_{\phi(x, a)}$ and $P_{V_0}\I_{\phi(x, a)}$ are the Radon-Nikodym derivatives of $\I_{\phi(x, a)}$ with respect to the appropriate $\sigma$-algebras. While we do not pursue this further, the next lemma asserts a form of probabilistic independence with respect to a certain disintegration of $\mu$ along $\bdd(0)$. See \cite{HrushovskiApproxEq} for a discussion.

\begin{lemma}\label{lemma: first independence for strong germ}
Let $T$ and $L^2(X)$ be as above. Let $\phi(x, y)$ and $\psi(x, z)$ be formulas of $T$ where $x$ ranges in $X$. Let $M \models T$ and suppose $a, b\in M$ are independent over $\bdd(0)$ with respect to the inner product formulas of $L^2(X)$. Then $P_{\bdd(0)}\I_{\phi(x, a) \wedge \psi(x, b)} = (P_{\bdd(0)}\I_{\phi(x, a)})( P_{\bdd(0)}\I_{\psi(x, b)})$ almost everywhere, viewed as functions on the probability space $(S(M), \mu)$.
\end{lemma}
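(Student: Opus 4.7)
The plan is to recast the claim as the familiar factorisation of conditional expectation under conditional independence. First, identify, for any $\bdd$-closed $D\subseteq M$, the Hilbert subspace $\bdd(D)\cap L^2(X(M))$ with $L^2(\mathcal F_D)$ for some sub-$\sigma$-algebra $\mathcal F_D$ of $\mathcal D_x(M)$ — generated, up to null sets, by the $D$-definable subsets of $X$ together with the hyperimaginary classes in $\bdd(D)$. Under this identification $P_{\bdd(D)}$ is the conditional expectation $\mathbb E[\,\cdot\,\mid \mathcal F_D]$, and since $\bdd(0)\subseteq\bdd(0,a)$ we have $\mathcal F_0\subseteq\mathcal F_a$; moreover $\I_{\phi(x,a)}$ is $\mathcal F_a$-measurable (and bounded), while $\I_{\psi(x,b)}$ is $\mathcal F_b$-measurable.

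The key step is to use the hypothesis $a\ind_{\bdd(0)}b$ to prove
\[
\mathbb E[\I_{\psi(x,b)}\mid \mathcal F_a]=\mathbb E[\I_{\psi(x,b)}\mid \mathcal F_0].
\]
By symmetry of Hilbert-space independence, the hypothesis yields $\bdd(0,b)\ind_{\bdd(0)}\bdd(0,a)$ with respect to the inner-product maps; since $\I_{\psi(x,b)}\in\dcl(0,b)\cap L^2\subseteq \bdd(0,b)\cap L^2$, this vector is itself independent from $\bdd(0,a)$ over $\bdd(0)$, and Proposition~\ref{proposition: transfer of independence} applied to this vector gives exactly $P_{\bdd(0,a)}\I_{\psi(x,b)}=P_{\bdd(0)}\I_{\psi(x,b)}$, which is the displayed equality.

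With this in place the conclusion follows by the standard tower property plus the "take out known factors" rule: condition $\I_{\phi(x,a)}\I_{\psi(x,b)}$ first on $\mathcal F_a$, pull out the $\mathcal F_a$-measurable bounded factor $\I_{\phi(x,a)}$, substitute using the independence identity above to replace $\mathbb E[\I_{\psi(x,b)}\mid \mathcal F_a]$ by $\mathbb E[\I_{\psi(x,b)}\mid \mathcal F_0]$, and then condition on $\mathcal F_0$, pulling out the now $\mathcal F_0$-measurable factor $\mathbb E[\I_{\psi(x,b)}\mid \mathcal F_0]$.

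The main obstacle I anticipate is the preliminary identification of $\bdd(D)\cap L^2$ with $L^2$ of a genuine sub-$\sigma$-algebra — equivalently, showing that this Hilbert subspace is closed under truncation / threshold indicators $f\mapsto \I_{f>t}$, or under bounded pointwise multiplication. For $\dcl$-closed $D$ this is immediate from the density of $D$-definable indicator functions, but for $\bdd$-closed $D$ one must check that the hyperimaginaries appearing in $\bdd(D)$ still correspond to honest measurable subsets (modulo null sets) of $(S_x(M),\mu)$, rather than to genuinely new continuous-logic objects. Once this step is granted the rest is classical probability theory.
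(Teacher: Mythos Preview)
Your approach is correct and is the paper's own argument recast in conditional-expectation language: the paper tests against arbitrary $g\in H_0:=\bdd(0)\cap L^2$, rewrites $\langle \I_{\phi\wedge\psi},g\rangle=\langle \I_\phi,\,g\I_\psi\rangle$, observes $g\I_\psi\in\bdd(b)$, and applies Proposition~\ref{proposition: transfer of independence} --- this is exactly your tower-plus-pullout computation unwound. Regarding your flagged obstacle, the paper also takes it for granted (see the sentence just before the lemma); it follows because $\Aut(M)$ acts on $L^2(X(M))$ via measure-preserving transformations of $(S_x(M),\mu)$, and such actions commute with the $1$-Lipschitz lattice operation $f\mapsto f^+$, so $\bdd(D)\cap L^2$ is a closed sublattice containing the constants and hence equals $L^2(\mathcal F_D)$ for an honest sub-$\sigma$-algebra.
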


\begin{proof}
Let $g \in L^2(X(M))$ be a function $S(M) \to \Rr$ which is in $\bdd(0)$. To make notation lighter, we write $H_0 = \bdd(0) \cap H(M)$. It is enough  to show 
\[
\langle P_{H_0}\I_{\phi(x, a) \wedge \psi(x, b)}, g\rangle = \langle (P_{H_0}\I_{\phi(x, a)})( P_{H_0}\I_{\psi(x, b)}), g\rangle.
\]
Note that 
$
\langle P_{H_0}\I_{\phi(x, a) \wedge \psi(x, b)}, g\rangle = \langle \I_{\phi(x, a) \wedge \psi(x, b)}, g\rangle = \langle \I_{\phi(x, a)} , g\I_{\psi(x, b)}\rangle.
$
Since $g \I_{\psi(x, b)} \in \bdd(b)$ and $\bdd(a), \bdd(b)$ are independent over $\bdd(0)$ in $L^2(X(M), \mu)$, we have
\begin{IEEEeqnarray*}{rCl}
\langle \I_{\phi(x, a)} , g\I_{\psi(x, b)}\rangle  
& = &\langle  P_{H_0}\I_{\phi(x, a)}, gP_{H_0}\I_{\psi(x, b)}\rangle\\
& = & \langle (P_{H_0}\I_{\phi(x, a)})(P_{H_0}\I_{\psi(x, b)}), g\rangle.
\end{IEEEeqnarray*}
\end{proof}

The argument for the next proposition is adapted from the independence theorem for probability logic of \cite{HrushovskiApproxEq}. We assume that $T$ carries a definable measure on all definable sets and we assume that the measure satisfies Fubini (see Definition 3.1 in \cite{ElwesMacpherson2008}).

\begin{proposition}\label{proposition: strong germ property}
Let $T$ be a classical logic theory with a strictly definable measure satisfying Fubini. Suppose $T$ eliminates imaginaries.

  Let $M \models T$ and suppose that for any formula $\phi(x, a) \subseteq X$ with $\phi(x, y) \subseteq X \times Y$ there is a positive-measure $\acl(0)$-definable set $Y' \subseteq Y$ containing $a$ 
such that
\begin{enumerate}
\item for every $\acl(0)$-definable set $Z \subseteq X$,   for all $a' \in Y'$, $\mu(\phi(x, a') \wedge Z(x)) = \mu(\phi(x, a) \wedge Z(x)) $
\item for every $\acl(0)$-definable $Z \subseteq X$  and any two   pairs $(a_1, a_2), (a_1', a_2')$ in $(Y')^2$   with $a_1 $ independent from $a_2$ over $\acl(0)$ with respect to the stable definable function $\mu_x(\phi(x, y_1) \wedge \phi(x, y_2) \wedge Z(x))$ and similarly for  $a_1', a_2'$, we have  
\[
\mu(\phi(x, a_1) \wedge \phi(x, a_2) \wedge Z(x)) = \mu(\phi(x, a_1') \wedge \phi(x, a_2') \wedge Z(x)).
\]
\end{enumerate}
Then $T$ has the strong germ property.
\end{proposition}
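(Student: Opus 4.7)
The plan is to show, for every formula $\phi(x,y)$ and every $a \in Y(M)$, that the projection $P_{\bdd(0)}\I_{\phi(x,a)}$ lies in $V_0$; this suffices since $\bdd(0)\cap L^2(X(M))$ is the closure of the image of $P_{\bdd(0)}$ on the dense family of generators $\I_{\phi(x,a)}$.  Fix the $\acl(0)$-definable positive-measure set $Y'\ni a$ given by the hypothesis and set
\[
\Phi(x):=\frac{1}{\mu(Y')}\,\mu_y\bigl(\phi(x,y)\wedge Y'(y)\bigr).
\]
Strict definability of $\mu$ makes $\Phi$ an $\acl(0)$-definable $[0,1]$-valued function whose level sets are $\acl(0)$-definable, so $\Phi\in V_0$.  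Using condition (1) together with Fubini, for every $\acl(0)$-definable $Z\subseteq X$ we have
\[
\langle\Phi,\I_Z\rangle=\frac{1}{\mu(Y')}\int_{Y'}\mu(\phi(x,y)\wedge Z)\,d\mu(y)=\mu(\phi(x,a)\wedge Z)=\langle\I_{\phi(x,a)},\I_Z\rangle,
\]
so $\Phi$ matches $\I_{\phi(x,a)}$ on a total family of $V_0$ and hence $\Phi=P_{V_0}\I_{\phi(x,a)}$.

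The key step is the norm identity $\|P_{\bdd(0)}\I_{\phi(x,a)}\|=\|\Phi\|$.  Two applications of Fubini give
\[
\|\Phi\|^2=\frac{1}{\mu(Y')^2}\int_{Y'\times Y'}\mu\bigl(\phi(x,y_1)\wedge\phi(x,y_2)\bigr)\,d(\mu\times\mu)(y_1,y_2),
\]
and by condition (2) with $Z=X$ the integrand takes a single value $C$ on the set of pairs $(y_1,y_2)$ that are $\acl(0)$-independent with respect to the stable definable function $f(y_1,y_2):=\mu_x(\phi(x,y_1)\wedge\phi(x,y_2))$; the complementary (forking) pairs form a null set by the standard genericity of non-forking extensions for strictly definable measures, so $\|\Phi\|^2=C$.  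For the other side, pick a Morley sequence $(a_n)$ in $\tp(a/\bdd(0))$ with respect to the inner-product formulas of $L^2(X)$.  By Lemma \ref{lemma: convergence of morley sequences in H spaces} the Ces\`aro averages $\frac{1}{N}\sum_{n\leq N}\I_{\phi(x,a_n)}$ converge in norm to $P_{\bdd(0)}\I_{\phi(x,a)}$, and expanding the squared norm of these averages using Hilbert space indiscernibility gives in the limit $\|P_{\bdd(0)}\I_{\phi(x,a)}\|^2=\mu(\phi(x,a_1)\wedge\phi(x,a_2))$.  Each $a_n$ has the $T$-type of $a$ and therefore lies in $Y'$; stable embeddedness of $T$ in $T^{\mathcal{H}}$ (Proposition \ref{proposition: T is stably embedded}) identifies $\bdd_{\mathcal{L}^{\mathcal{H}}}(0)\cap M$ with $\acl(0)$, and then Lemma \ref{nonforking preserved under reducts} shows that the Morley pair $(a_1,a_2)$, which is independent over $\bdd(0)$ in $T^{\mathcal{H}}$, is in fact $\acl(0)$-independent in $T$ with respect to the $\mathcal{L}$-formula $f$.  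Condition (2) therefore applies to $(a_1,a_2)$ and returns the same constant $C$.

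Finally, since $V_0\subseteq\bdd(0)\cap L^2(X(M))$, Pythagoras yields
\[
\|P_{\bdd(0)}\I_{\phi(x,a)}\|^2=\|\Phi\|^2+\|P_{\bdd(0)}\I_{\phi(x,a)}-\Phi\|^2,
\]
and the equality of the outer terms forces the error term to vanish, whence $P_{\bdd(0)}\I_{\phi(x,a)}=\Phi\in V_0$.  Varying $\phi$ and $a$ gives $\bdd(0)\cap L^2(X(M))\subseteq V_0$, which combined with the tautological reverse inclusion proves the strong germ property.  The main obstacle is the norm computation in the preceding paragraph: the Morley-sequence calculation naturally produces $\bdd(0)$-independence inside $T^{\mathcal{H}}$, whereas condition (2) is stated in terms of $\acl(0)$-independence inside $T$, and bridging these two notions is precisely where stable embeddedness of $T$ in $T^{\mathcal{H}}$ and elimination of imaginaries in $T$ enter essentially.
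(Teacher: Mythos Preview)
Your argument is correct and follows the same overall strategy as the paper: identify $P_{V_0}\I_{\phi(x,a)}$ with the average $\Phi$, show the norm identity $\|P_{\bdd(0)}\I_{\phi(x,a)}\|=\|\Phi\|$, and conclude by Pythagoras using $V_0\subseteq\bdd(0)$. The one substantive difference is in how $\|P_{\bdd(0)}\I_{\phi(x,a)}\|^2$ is computed. The paper first proves a product formula (Lemma~\ref{lemma: first independence for strong germ}) saying $P_{\bdd(0)}(\I_{\phi(x,a)}\I_{\psi(x,b)})=(P_{\bdd(0)}\I_{\phi(x,a)})(P_{\bdd(0)}\I_{\psi(x,b)})$ for $a\ind_{\bdd(0)}b$, applies it with $\psi=\phi$ and $b=a'$ an independent copy of $a$, and then tests against every $\acl(0)$-definable $Z$; you instead read off the norm directly from the Ces\`aro limit of a Morley sequence. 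Both routes land on the same value $\mu(\phi(x,a_1)\wedge\phi(x,a_2))$ for an independent pair, and both then match this against $\|\Phi\|^2$ via Fubini and condition~(2) (using, as you note explicitly and the paper uses tacitly, that the forking pairs in $Y'\times Y'$ are null). Your version is slightly more streamlined in that it stays at the level of norms and avoids the auxiliary product lemma; the paper's route has the advantage that the product formula is of independent interest. You are also more explicit than the paper about the passage from $\bdd(0)$-independence in the Hilbert-space sense to $\acl(0)$-independence with respect to the classical formula $\mu_x(\phi\wedge\phi)$, and your appeal to Proposition~\ref{proposition: T is stably embedded} and Lemma~\ref{nonforking preserved under reducts} is the right way to justify that step.
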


\begin{proof}
We   work inside an $\omega_1$-saturated model $M$. Write $V_0$ for the subspace of $L^2(X(M))$ generated by $\acl(0)$ definable sets and $H_0 = \bdd(0)$, so that $V_0 \leq H_0$. In this proof, we also write $\phi(x, a)$ instead of the indicator function $\I_{\phi(x, a)}$. 

 We show that $P_{V_0}(P_{H_0} \phi(x, a))^2 = (P_{V_0}\phi(x, a))^2$. Given this identity, we will have  
 \[
 \|P_{H_0}\phi(x, a)\|^2  = \int_X (P_{H_0} \phi(x, a))^2 d\mu = \int_X P_{V_0}(P_{H_0} \phi(x, a))^2 d\mu = \|P_{V_0} \phi(x, a)\|^2
 \] and the result will follow.
It is enough to show that for any $\acl(0)$-definable set $Z(x)$, we have 
\begin{equation}\label{equation: strong germ 1}
\int_X P_{V_0}(P_{H_0} \phi(x, a))^2 Z(x) d\mu(x) = \int_X  (P_{V_0} \phi(x, a))^2 Z(x) d\mu(x).
\end{equation}
Take $a'$ in $M$ independent from $a$ over $\bdd(0)$ realising $\tp(a/\bdd(0))$. Note that $P_{H_0} \phi(x, a) = P_{H_0} \phi(x, a')$.  By Lemma \ref{lemma: first independence for strong germ}, we have 
\begin{IEEEeqnarray*}{rCl}
\int_X P_{V_0}(P_{H_0} \phi(x, a))^2 Z(x) d\mu(x) &  = &  \int_X (P_{H_0} \phi(x, a))^2 Z(x) d\mu(x) \\
& = & \int_X P_{H_0} (\phi(x, a)  \phi(x, a'))Z(x)d\mu(x) \\
& = & \int_X \phi(x, a) \phi(x, a') Z(x)d\mu(x)\\
& = & \frac{1}{\mu(Y')^2} \int_{y \in Y'}\int_{y' \in Y'}\int_X \phi(x, y) \phi(x, y') Z(x) d\mu(y') d\mu(y)d\mu(x) \\
& = &  \frac{1}{\mu(Y')^2} \int_X (\int_{Y'} \phi(x, y) d\mu(y))^2 Z(x)d\mu(x)
\end{IEEEeqnarray*}
To deduce (\ref{equation: strong germ 1}), it is enough to show that $\int_{Y'}\phi(x, y) d\mu(y) = \mu(Y') P_{V_0}(\phi(x, a))$ on $X$. Since $\int_{Y'} \phi(x, y )d\mu(y) = \mu_y(Y'(y) \wedge \phi(x, y))$, which is a function in $V_0$, this follows by the   computation:
 \begin{IEEEeqnarray*}{rCl}
\mu(Y')\int_XP_{V_0} \phi(x, a) Z(x) d\mu(x) & = &  \mu(Y')\int_X \phi(x, a) Z(x) d\mu(x) \\
&=  & \int_X (\int_{Y'} \phi(x, y)   d\mu(y))Z(x)d\mu(x)
\end{IEEEeqnarray*}
\end{proof}

\noindent \textbf{Remark:  } The conditions of Proposition \ref{proposition: strong germ property} always hold when $T$ is an  $\omega$-categorical  measurable theory and $\mu$ satisfies Fubini. The stationarity theorem of \cite{ChevalierLevi2022} shows that the conditions of Proposition \ref{proposition: strong germ property} also hold when $T$ is the theory of pseudofinite fields or more generally when $T = ACFA$ and $\mu$ is the definable measure on sets of finite total dimension.

\medskip 

Let $T$ be a measurable theory, let $M \models T$ and let $a \subseteq M$. Let $x$ be a tuple of variables in the sort $X$. Let $\mathcal{A}(a)$ be the algebra of $M$-definable subsets of $X$ obtained by identifying  $\phi, \psi \in Def_x(a)$ if $\phi$ and $\psi$ agree almost everywhere. In the following corollary, we write $S'_x(a)$ for the Stone space of $\mathcal{A}(a)$. It is clear that $S'_x(a)$ is a probability space and $L^2(X(a)) \cong L^2(S'_x(a))$ canonically.

 
\begin{corollary}\label{corollary: omega-categorical free profinite probability space}
Let $T$ be an $\omega$-categorical classical logic theory with a strictly definable measure satisfying Fubini. 
Let $X$ be a Cartesian product of sorts of $T$ and $M \models T$. 

Then $S'_x(M)$ is interpretable in $M$ as a $2$-regular inverse system of finite probability spaces.

\end{corollary}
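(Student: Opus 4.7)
My plan is to realize $S'_x(M)$ as the interpretable inverse limit of a directed system of finite probability spaces, paralleling the $L^2(\mathrm{Gal}(K))$ construction of Section \ref{subsection: galois groups} but replacing finite Galois groups with the finite type spaces $S'_x(a)$. Let $I$ consist of finite Cartesian products $Y$ of sorts of $T$, directed by subproduct inclusion. By $\omega$-categoricity, the complete $2$-types $t(x, y)$ over $\emptyset$ with $y$ of sort $Y$ form a finite set, each isolated by a formula $\phi_t(x, y)$. I would take the imaginary sort
\[
S_Y = \{(a, t) : a \in Y,\ a \models t|_y,\ \mu(\phi_t(x, a)) > 0\},
\]
which is $M$-definable by strict definability of $\mu$. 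The equivalence relation $(a, t) E (a', t')$ iff $a = a'$ has finite classes, canonically identified with the finite probability space $S'_x(a)$ via $\mu_Y((a, t)) := \mu(\phi_t(x, a))$.

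For the inverse-system structure of Definition \ref{definition: inverse system of finite probability spaces}, define $(a, t) \leq (a', t')$ (for $Y \leq Y'$) by requiring that $a$ is the restriction of $a'$ and $\phi_{t'}(x, a') \vdash \phi_t(x, a)$. Axioms (1), (5) and (6) are then immediate from $\omega$-categoricity and additivity of $\mu$. The surjection in (2)(a) holds because each positive-measure type over $a$ has at least one positive-measure extension to $a'$, again by additivity. The bijection axiom (2)(b), the relative-refinement axiom (3) and the lattice axioms (4) follow the template of the Galois case, using that in $\omega$-categorical theories $\acl$ is finite and definable, so canonical algebraic extensions are available.

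The main obstacle is the $2$-regularity axiom (7). In our setting it asserts, for $\alpha, \beta$ whose lattice meet corresponds to a common base $\gamma$ and for types $\phi_t(x, \alpha), \phi_s(x, \beta)$ both refining the same $\gamma$-type $\phi_r$, the modular identity
\[
\mu\bigl(\phi_t(x, \alpha) \wedge \phi_s(x, \beta)\bigr) = \frac{\mu(\phi_t(x, \alpha))\,\mu(\phi_s(x, \beta))}{\mu(\phi_r(x, \gamma))}.
\]
I would derive this from the strong germ property, which applies here by Proposition \ref{proposition: strong germ property} and the remark following it. The strong germ property identifies $\bdd(\gamma) \cap L^2(X)$ with the $L^2$-span of $\acl(\gamma)$-definable sets, so the conditional expectation $P_{\bdd(\gamma)} \I_{\phi_t(x, \alpha)}$ is explicitly equal to $(\mu(\phi_t(x, \alpha))/\mu(\phi_r(x, \gamma)))\, \I_{\phi_r(x, \gamma)}$, and symmetrically for $\phi_s$. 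Since $\alpha, \beta$ are independent over $\bdd(\gamma)$ in the inner-product sense --- as $\gamma$ captures their shared $L^2$-information, a consequence of the one-basedness theorem \ref{projections commute} giving $\bdd(\alpha) \cap \bdd(\beta) \cap L^2(X) = \bdd(\gamma) \cap L^2(X)$ --- Lemma \ref{lemma: first independence for strong germ} yields $P_{\bdd(\gamma)} \I_{\phi_t \wedge \phi_s} = P_{\bdd(\gamma)} \I_{\phi_t} \cdot P_{\bdd(\gamma)} \I_{\phi_s}$ almost everywhere, and integrating against $\I_{\phi_r(x, \gamma)}$ gives the claimed $2$-regularity.
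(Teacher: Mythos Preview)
Your overall strategy matches the paper's closely: build the inverse system from the finite type-spaces $S'_x(a)$, then derive $2$-regularity from Theorem \ref{projections commute} together with the strong germ property (Proposition \ref{proposition: strong germ property}). Two points of difference are worth flagging.

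First, the paper indexes $I$ not by Cartesian products of sorts but by the complete types of finite tuples $a$ satisfying $S'_x(a) = S'_x(\acl(a))$, with $a \leq b$ defined by refinement of the associated measure algebras. This $\acl$-saturation condition is what makes the meet axiom (4)(a) work: one needs $[\alpha] \wedge [\beta]$ to be the $E$-class of some $\gamma$ with $S'_x(\gamma) = S'_x(\acl(\alpha) \cap \acl(\beta))$, and your sort $Y$ need not contain such a $\gamma$. Your remark about ``canonical algebraic extensions'' gestures at this, but the paper's choice of $I$ is the clean way to implement it.

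Second, in your $2$-regularity argument the appeal to Lemma \ref{lemma: first independence for strong germ} via ``$\alpha, \beta$ are independent over $\bdd(\gamma)$ in the inner-product sense'' is not justified by Theorem \ref{projections commute} alone. That theorem gives orthogonality of the \emph{subspaces} $\bdd(\alpha) \cap L^2(X)$ and $\bdd(\beta) \cap L^2(X)$ over their intersection; it does not give forking independence of the \emph{parameter tuples} $\alpha, \beta$, which are arbitrary here, so Lemma \ref{lemma: first independence for strong germ} does not apply as stated. The paper avoids this detour entirely: once the strong germ property identifies $\bdd(\alpha) \cap \bdd(\beta) \cap L^2(X)$ with the span of $\acl(\alpha) \cap \acl(\beta)$-definable sets, orthogonality gives directly
\[
\mu(\phi_t \wedge \phi_s) = \langle \I_{\phi_t}, \I_{\phi_s} \rangle = \langle P_{\bdd(\gamma)} \I_{\phi_t}, P_{\bdd(\gamma)} \I_{\phi_s} \rangle = \frac{\mu(\phi_t)\,\mu(\phi_s)}{\mu(\phi_r)},
\]
using exactly your computation of the two projections. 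No product formula for $P_{\bdd(\gamma)} \I_{\phi_t \wedge \phi_s}$ is needed.
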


\begin{proof}
We can assume that $T$ admits elimination of imaginaries. Fix $M\models T$. We consider finite tuples $a$ in $M$ such that $S'_x(a) = S'_x(\acl(a))$. Let $(D_i)_{i \in I}$ be the collection of definable sets which give the complete types of such tuples. If $a \in D_i$ and $b \in D_j$, we define $a \leq b$ if every positive-measure $a$-definable set is a union of positive-measure $b$-definable sets, up to measure $0$. For $i, j \in I$, define $i \leq j$ if there are $a \in D_i$ and $b \in D_j$ with $a \leq b$. 
As defined, $I$ is a preorder but we can move to a subset of $I$ so that $I$ becomes a partial order.

For every $i \in I$, we can construct  a piece  $S_i$  of $L^2(S'_x(M))$ in a classical imaginary sort of $T$  consisting of the vectors $\I_p$ where $p \in S'_x(a)$ for some $a \in D_i$. $S_i$ carries an equivalence relation $E$ such that the $E$-classes of $S_i$ are the sets $\{\I_p \mid p \in S'_x(a)\}$ for $a \in D_i$. For $i \leq j \in I$, we can define $\I_p \leq \I_q$ if the unique $a \in D_i$ and $b \in D_j$ coding the $E$-classes of $\I_p$ and $\I_q$ satisfy $a \leq b$ and if $q \subseteq p$ up to measure $0$. 

It is clear that the sets $(S_i)_I$ define an inverse system of finite probability spaces. We  show that it is free. Fix $i \in I$ and $a, b\in D_i$. Take $p \in S'_x(a)$, $q \in S'_x(b)$ and $r \in S'_x(a \wedge b)$ such that $p \subseteq r$ and $q \subseteq r$.   By Theorem \ref{projections commute}, the subspaces of $L^2(S'_x(M))$ given by $\bdd(\I_p)$ and $\bdd(\I_q)$ are orthogonal over $\bdd(\I_p) \cap \bdd(\I_q) = \bdd(a) \cap \bdd(b)$.  By Proposition \ref{proposition: strong germ property}, the subspace of $L^2(S_x'(M))$ given by $\bdd(a) \cap \bdd(b)$ is the subspace generated by $\acl(a) \cap \acl(b)$-definable sets.

Observe that $S'_x(\acl(a) \cap \acl(b)) = S'_x(a \wedge b)$. Therefore $\I_p$ and $\I_q$ are orthogonal over the linear span of $\{\I_r\}$. It follows that the $I$-graded  inverse system $(S_i)_I$ is free. 
\end{proof}

\noindent \textbf{Remark:} Let $T$ be the theory of an infinite dimensional vector space over a finite field with a bilinear nondegenerate symmetric or symplectic form. $T$ is $\omega$-categorical and MS-measurable (see \cite{Macpherson2008} for more details). Then for $M \models T$, $S'_x(M)$ \emph{cannot} be expressed as the inverse limit of a $3$-regular inverse system of finite probability spaces in a definable way. $3$-regularity in $S'_x(M)$ fails in a bad way, since the events $\langle x, a\rangle = 0$, $\langle x, b \rangle = 0$ and $\langle x, a+ b\rangle= 0$ are usually pairwise independent but not independent all together. Therefore Corollary \ref{corollary: omega-categorical free profinite probability space} is the strongest possible result.

This example shows how Hilbert space independence differs from independence in probability algebras, and it shows that the techniques of this paper are Hilbert space specific and do not generalise to arbitrary stably embedded structures. 

\section{Unitary Representations}\label{section: unitary representations}

\emph{  Throughout the paper so far, we were interested in interpretable Hilbert spaces. These are defined at the level of the theory rather than   individual models.  In this section, we will look at the connection with  representation theory, which requires fixing a group. In our context, this amounts to fixing a sufficiently homogeneous model. } 

\emph{In Section \ref{subsection: representations of automorphism groups}, we show that the notion of irreducibility for representations on   interpretable Hilbert spaces does not depend on the choice of model, and in fact is quite local in nature, in the sense that it is witnessed by the representation of $\Aut(\bdd(a))$ where $a$ is a finite tuple. See Propositions  \ref{proposition: representation on bdd-closed subspace}. We also show that interpretable Hilbert spaces provide an interesting framework for studying arbitrary unitary group representations.}

\emph{ In Section \ref{subsection: application to scattered representations}, we study the notion of asymptotic freedom from the point of view of representation theory. We show that asymptotically free complete types give rise to induced representations and we prove a new Mackey-style  irreducibility criterion for these induced representations. See Proposition \ref{proposition: irreducibility of induced representation}. Using a theorem of Howe and Moore, we also show that interpretable Hilbert spaces generated by asymptotically free complete types capture all representations of algebraic groups over local fields of characteristic $0$. }
 
 \emph{ In Section \ref{subsection: omega-categorical structures} we review the special case of $\omega$-categorical structures and we recover the   theorem of \cite{Tsankov2012} which classifies all unitary representations of oligomorphic groups.}

\subsection{Unitary representations of automorphism groups}\label{subsection: representations of automorphism groups}
\emph{In this section, we show how interpretable Hilbert spaces give rise to unitary representations of automorphism groups. We show that irreducibility is a property of the interpretable Hilbert space $\mathcal{H}$. Using a general continuous logic construction and a theorem of Howe and Moore, we also show that interpretable Hilbert spaces provide an interesting framework for studying unitary representations of arbitrary groups.}

\medskip

 We start by recalling some basic definitions. See the appendix of \cite{Bekka2008}  for more background on unitary representations.
 
\begin{definition}
Let $G$ be a group and $H$ a Hilbert space, real or complex. A unitary representation $\sigma$ of $G$ on $H$ is a group action $G \times H \to H$ such that for every $g\in G$  $\sigma(g)$ is a unitary map if $H$ is a complex Hilbert space and $\sigma(g)$ is an orthogonal map if $H$ is a real Hilbert space.

When $G$ is a topological group, we say that  $\sigma$ is continuous if $\sigma : G \times H \to H$ is continuous. This is equivalent to $\sigma(\cdot, v)$ being continuous for every $v \in V$. 
\end{definition}

\noindent
\textbf{Convention:} In this paper, we only consider continuous unitary representations of topological groups, so we just say `representation' instead of `continuous unitary representation'.

\begin{definition}
Let $G$ be a group and let $\sigma, \sigma'$ be two representations of $G$ on the Hilbert spaces $H$ and $H'$ respectively.   $\sigma$ and $\sigma'$ are equivalent if there is a surjective isometry $U : H \to H'$ such that for all $g \in G$ and $v\in H$, $U (\sigma(g) v) = \sigma'(g) U(v)$. We say that $U$ intertwines $\sigma$ and $\sigma'$.
\end{definition}

Let $T$ be an arbitrary continuous logic theory and let $M \models T$. Then $\Aut(M)$ is a topological group with the topology of pointwise convergence. $\Aut(M)$ has a basis     at the identity consisting of subsets of the form $\{g \in \Aut(M) \mid d(g A, A) < \e, A \subseteq M \text{ finite}, 
\e > 0\}$. When $T$ is a classical logic theory, this is a basis of subgroups. Note that we can add imaginary sorts to $M$ without changing the topology on $\Aut(M)$.

Suppose $\mathcal{H}$ is an interpretable Hilbert space in $T$. For any $M \models T$, $\Aut(M)$ has a \emph{canonical unitary representation} $\pi$  on $H(M)$ given by $\pi(g)h x = h( gx)$ where $h$ is the direct limit map on any piece of $\mathcal{H}$. Since we can add the pieces of $\mathcal{H}$ to $M$ without affecting the topology on $\Aut(M)$,  $\pi$ is a continuous representation.   

  The following lemma is an easy definition chase:
   
  \begin{lemma}
   If $\mathcal{H}$, $\mathcal{H'}$ are isomorphic interpretable Hilbert spaces in $T$, then for any $M\models T$, the representations of $\Aut(M)$ on $H(M)$ and $H'(M)$ are equivalent.
  \end{lemma}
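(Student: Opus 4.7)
The plan is to unpack what an isomorphism $\Phi : \mathcal{H} \to \mathcal{H}'$ of interpretable Hilbert spaces in $T$ actually provides, and then verify that the resulting Hilbert space isomorphism $\Phi_M : H(M) \to H'(M)$ automatically intertwines the canonical representations because it is built from $\emptyset$-definable data. Concretely, an isomorphism of interpretable Hilbert spaces (as set up in Appendix \ref{appendix: more theory}) amounts to choosing cofinal presentations of $\mathcal{H}$ and $\mathcal{H}'$ by pieces $(M_i)_{i \in I}$ and $(M_j')_{j \in J}$ with direct limit maps $h_i$ and $h_j'$, together with $\emptyset$-definable transition maps relating the two presentations whose inner-product relations agree. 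By the positive-semidefinite version of the GNS uniqueness in Proposition \ref{proposition: equivalent definition of piecewise interpretable hilbert space}, such data determine, for each $M \models T$, a unique surjective isometry $\Phi_M : H(M) \to H'(M)$ satisfying $\Phi_M \circ h_i = h'_{\phi(i)} \circ \tau_i$, where $\tau_i$ is the relevant $\emptyset$-definable map of sorts.

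Once this is in place, I would verify equivariance as follows. Fix $g \in \Aut(M)$ and a piece $M_i$. For $x \in M_i$,
\[
\Phi_M\bigl(\pi(g)\, h_i x\bigr) \;=\; \Phi_M\bigl(h_i(gx)\bigr) \;=\; h'_{\phi(i)}\bigl(\tau_i(gx)\bigr) \;=\; h'_{\phi(i)}\bigl(g\,\tau_i(x)\bigr),
\]
where the last step uses that $\tau_i$ is $\emptyset$-definable and therefore commutes with the action of $\Aut(M)$. The right-hand side equals $\pi'(g)\,h'_{\phi(i)}(\tau_i x) = \pi'(g)\,\Phi_M(h_i x)$. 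Hence $\Phi_M \pi(g) = \pi'(g) \Phi_M$ on $h_i(M_i)$ for each $i \in I$.

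Finally, the images $\bigcup_i h_i(M_i)$ have dense linear span in $H(M)$, so by linearity of $\Phi_M$, $\pi(g)$ and $\pi'(g)$, and norm-continuity of all three maps, the intertwining identity extends from this total set to all of $H(M)$. Thus $\Phi_M$ intertwines $\pi$ and $\pi'$, proving that the two representations are equivalent.

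The only potential obstacle is purely notational: making sure the appendix's definition of isomorphism is invoked at the right level of generality (so that $\Phi_M$ really is assembled piece-by-piece from $\emptyset$-definable data, rather than only existing abstractly as an isometry of Hilbert spaces). Once that bookkeeping is done, the proof is essentially a definition chase, as the lemma states.
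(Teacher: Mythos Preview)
Your proposal is correct in spirit and matches the paper's own assessment that this is ``an easy definition chase'' (the paper gives no proof). However, your unpacking of the isomorphism is not quite what Definition~\ref{definition: interpretable Hilbert space morphism} and Lemma~\ref{lemma: reformulation of morphism definition} actually say: an isomorphism $F : H(M) \to H'(M)$ is a surjective unitary map such that each cross inner product $(x,y) \mapsto \langle F(h_i x), h'_{i'} y\rangle$ is $\emptyset$-definable --- there are no definable sort maps $\tau_i$ given a priori, and $F(h_i(M_i))$ need not land inside the image of a single $h'_{j'}$. The ``potential obstacle'' you flag is therefore real at the level of bookkeeping, though not fatal.

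The argument is in fact cleaner if you work directly with the paper's definition. Since each map $f_{ii'}(x,y) = \langle F(h_i x), h'_{i'} y\rangle$ is $\emptyset$-definable, it is $\Aut(M)$-invariant, so $\langle F(h_i(gx)), h'_{i'}(gy)\rangle = \langle F(h_i x), h'_{i'} y\rangle$ for every $g \in \Aut(M)$. Rewriting via the canonical representations and using that $\pi'(g)$ is unitary gives $\langle \pi'(g)^{-1} F \pi(g)\, h_i x,\ h'_{i'} y\rangle = \langle F(h_i x),\ h'_{i'} y\rangle$ for all $x,y$; density of the pieces in $H'(M)$ then yields $\pi'(g)^{-1} F \pi(g) = F$ on a total set, and your density-and-continuity step finishes exactly as you wrote.
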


We turn to a discussion of irreducibility for interpretable Hilbert spaces.

\begin{definition}
Let $\mathcal{H}_0$ be an $\bigwedge$-interpretable subspace of $\mathcal{H}$.  
We say that $\mathcal{H}_0$   is irreducible if there do not exist complete types $q, q'$ in  $\mathcal{H}_0$ such that $q(x) \cup q'(y)$ implies $\langle  x,  y \rangle = 0$.
 \end{definition}
 
We will study irreducibility for $\bigwedge$-interpretable Hilbert spaces   in relation with the notion of $\omega$-near-homogeneity See Definition \ref{definition: atomic and omega-homogeneous}. As in the discussion following Definition \ref{definition: atomic and omega-homogeneous}, when discussing a $\omega$-near-homogeneous structure with an interpretable Hilbert spaces $\mathcal{H}$, we will always assume that the $\omega$-near-homogeneity property applies to the pieces of $\mathcal{H}$.

\begin{lemma}\label{lemma: irreducible hilbert space}
Let $\mathcal{H}_0$ be a $\bigwedge$-interpretable subspace of $\mathcal{H}$.  
\begin{enumerate}
\item if  $\mathcal{H}_0$ is irreducible then for any   $\omega$-near-homogeneous $M \models T$  the canonical representation $\pi$ of $\Aut(M)$ on $H_0(N)$ is irreducible.
\item if there is some  $M \models T$ realising all types of $T$ such that the canonical representation $\pi$ of $\Aut(M)$ on $H_0(N)$ is irreducible, then $\mathcal{H}_0$ is irreducible
\end{enumerate}
 
\end{lemma}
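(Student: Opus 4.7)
The plan is to prove the two directions separately, both by contrapositive. For (2), assume $\mathcal{H}_0$ is not irreducible, so that there are complete types $q, q'$ in pieces of $\mathcal{H}_0$ such that $q(x) \cup q'(y) \vdash \langle x, y\rangle = 0$. Since $M$ realises all types of $T$, pick realisations $a \models q$ and $b \models q'$ inside $M$ and view them as nonzero vectors in $H_0(M)$ via the interpretation maps. Since complete types over $\emptyset$ are $\Aut(M)$-invariant, $\pi(g) b$ realises $q'$ for every $g \in \Aut(M)$, and so $\langle a, \pi(g) b\rangle = 0$ for all $g$. Hence the closed span of $\{\pi(g) b : g \in \Aut(M)\}$ is a nonzero closed $\Aut(M)$-invariant subspace of $H_0(M)$ contained in $\{a\}^\perp$, in particular proper, contradicting irreducibility of $\pi$.

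For (1), assume by contrapositive that the representation on $H_0(M)$ is not irreducible: there is a proper nonzero closed $G$-invariant subspace $V \subsetneq H_0(M)$, where $G = \Aut(M)$. Both $V$ and $V^\perp$ are nonzero and $G$-invariant. Using density of the piece vectors in $H_0(M)$, choose piece vectors $a, b \in M$ (in pieces of $\mathcal{H}_0$) so that $v := P_V a$ and $w := P_{V^\perp} b$ are both nonzero; note that $\langle v, \pi(g) w\rangle = 0$ for every $g \in G$ since $\pi(g) w \in V^\perp$ and $v \in V$. The aim is to use this identity to produce complete types $q, q'$ of piece vectors in $\mathcal{H}_0$ such that $q(x) \cup q'(y) \vdash \langle x, y\rangle = 0$, contradicting irreducibility of $\mathcal{H}_0$.

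The main step, and the principal obstacle, uses the $\omega$-near-homogeneity of $M$: specialised to the case of equal types, it implies that the $\Aut(M)$-orbit of any finite tuple of piece vectors in $M$ is dense in the set of realisations of its type inside $M$. Combined with continuity of the inner product formulas on pieces, this converts the orbit-vanishing $\langle v, \pi(g) w\rangle = 0$ into vanishing of an explicit definable combination of inner products on every tuple in $M$ realising the appropriate type; approximating $v$ and $w$ by finite linear combinations of piece vectors yields a sequence of such identities with uniformly vanishing defect. The delicate point is lifting this \emph{realisation-in-$M$} vanishing to an implication of partial types in the monster model of $T$, which is what irreducibility of $\mathcal{H}_0$ is phrased against. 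I would carry this out by passing to a sufficiently saturated elementary extension and exploiting stability and $\emptyset$-definability of the inner product formulas to push the identity to arbitrary realisations, and finally extract a pair of single-variable complete types witnessing the failure of irreducibility of $\mathcal{H}_0$.
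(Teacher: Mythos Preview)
Your argument for (2) is correct and is essentially the contrapositive of the paper's two-line direct proof, so no issue there.

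For (1) you have chosen the harder direction of the contrapositive, and the outline has a real gap. The paper proves (1) directly and in one step: given nonzero $v,w\in H_0(M)$, the standing assumption is that pieces of $\mathcal H$ are real sorts with isometric direct-limit maps, so $v$ and $w$ themselves lie in pieces and have complete types $q_1,q_2$. Irreducibility of $\mathcal H_0$ gives realisations (in some model) with nonzero inner product, hence $d(q_1,q_2)^2<\|v\|^2+\|w\|^2$; $\omega$-near-homogeneity then produces $g\in\Aut(M)$ with $\|gv-w\|$ close to $d(q_1,q_2)$, forcing $\langle gv,w\rangle\neq 0$. No approximation, no lifting, no extraction of types.

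Your route runs into two problems. First, you take $v=P_Va$ and $w=P_{V^\perp}b$ and then approximate by linear combinations of piece vectors; but under the paper's conventions $v$ and $w$ already lie in pieces, so the detour through linear combinations is unnecessary and only produces relations on \emph{tuples} with fixed scalar coefficients, from which ``extracting a pair of single-variable complete types'' is not a step you have explained and is not obvious. Second, and more seriously, even granting $v,w$ are piece vectors with types $q_1,q_2$, what you can establish in $M$ is that $q_1(M)\subseteq V$ and $q_2(M)\subseteq V^\perp$ (via density of orbits), hence orthogonality \emph{in $M$}. Passing from this to $q_1(x)\cup q_2(y)\vdash\langle x,y\rangle=0$ in the monster is exactly where the content lies: $M$ need not realise all $2$-types extending $q_1\cup q_2$, and your appeal to ``stability and $\emptyset$-definability'' does not supply the missing argument. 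In fact the cleanest way to close that gap is precisely the metric computation the paper uses in the direct proof (a non-orthogonal pair in the monster would force $d(q_1,q_2)^2<\|v\|^2+\|w\|^2$, and $\omega$-near-homogeneity would then contradict $V\perp V^\perp$), so the contrapositive route ultimately collapses back onto the direct one.
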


\begin{proof}
(1)  Let  $ M \models T$ be $\omega$-near-homogeneous. Let $v, w\in H(M)$ be two nonzero vectors. Write $q_1, q_2$ for their respective types.  By irreducibility of $\mathcal{H}_0$ and the assumption following Definition \ref{definition: atomic and omega-homogeneous},  $d(q_1, q_2)^2  < \|v\|^2 + \|w\|^2$. By $\omega$-near-homogeneity, we can find $g \in \Aut(M)$ such that $\|gv - w\|$ is arbitrarily close to $d(q_1, q_2)$. Then $gv$ and $w$ are not orthogonal and the representation of $\Aut(M)$ on $H_0(M)$ is irreducible.

(2) Take $M \models T$ as in the statement. Let $p, q$ be types of $\mathcal{H}_0$ and let $v, w$ be realisations in $H_0(M)$. There is $g \in \Aut(M)$ such that $\langle gv, w\rangle \neq 0$ so $\mathcal{H}_0$ is irreducible. 
\end{proof}

The following proposition shows that irreducibility is a local notion, when the interpretable Hilbert space is generated by a complete type. See also Proposition \ref{proposition: irreducibility of induced representation} for a qualitatively similar statement.

 \begin{proposition}\label{proposition: representation on bdd-closed subspace}
Let $p$ be a complete type in   $\mathcal{H}$. Take $M \models T$   $\omega$-near-homogeneous and realising all types. 
 
Let $a \models p$ in $H(M)$, let $K$ be the subgroup of $\Aut(M)$ fixing $\bdd(a)$ setwise and let $A = H_p(M) \cap \bdd(a)$. If the canonical representation of $K$ on $A$ is irreducible, then $\mathcal{H}_p$ is irreducible.
 \end{proposition}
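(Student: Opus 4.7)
The plan is to prove the contrapositive: assuming $\mathcal{H}_p$ is not irreducible, I would exhibit a proper nontrivial closed $K$-invariant subspace of $A$, contradicting the hypothesis. By the contrapositive of Lemma \ref{lemma: irreducible hilbert space}(2), non-irreducibility of $\mathcal{H}_p$ forces the canonical representation of $\Aut(M)$ on $H_p(M)$ to be non-irreducible as well (since $M$ realises all types), so there is a proper nontrivial closed $\Aut(M)$-invariant subspace $V \subsetneq H_p(M)$ with orthogonal complement $V^{\perp}$ inside $H_p(M)$, also $\Aut(M)$-invariant. I would then focus on the two projections $P_V a$ and $P_{V^{\perp}} a = a - P_V a$.

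The central step is to show that $P_V a$ and $P_{V^{\perp}} a$ both lie in $A$. Since $V$ and $V^\perp$ are $\Aut(M)$-invariant, the projections $P_V$ and $P_{V^\perp}$ commute with every $g \in \Aut(M)$. Restricting to $g \in \Aut(M/a)$, one sees that $P_V a$ and $P_{V^\perp} a$ are $\Aut(M/a)$-fixed. Passing to a sufficiently saturated elementary extension of $M$ (in order to promote the single-point orbit to a genuine $\bdd$-closure statement) and descending using Proposition \ref{proposition: T is stably embedded}, I would conclude $P_V a, P_{V^\perp} a \in \bdd(a) \cap H_p(M) = A$.

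Next I would verify that both projections are nonzero. If $P_V a = 0$, then by $\Aut(M)$-equivariance $P_V(g a) = g(P_V a) = 0$ for every $g \in \Aut(M)$; by $\omega$-near-homogeneity together with $M$ realising $p$, the $\Aut(M)$-orbit of $a$ is norm-dense in the set of realisations of $p$ in $M$, so continuity of $P_V$ forces it to vanish on all of $p$ and hence on the closed span $H_p(M) \supseteq V$, giving $V = 0$, a contradiction. The same argument with $V$ replaced by $V^{\perp}$ shows $P_{V^{\perp}} a \neq 0$.

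With these in hand the conclusion is immediate: $V \cap A$ is a closed subspace of $A$ containing the nonzero vector $P_V a$, and is $K$-invariant because $K \subseteq \Aut(M)$ preserves both $V$ and $A = H_p(M) \cap \bdd(a)$ by definition of $K$. It is a proper subspace of $A$, since the nonzero vector $P_{V^{\perp}} a \in A$ is orthogonal to $V$ and therefore lies in $A \setminus V$. This contradicts irreducibility of the $K$-representation on $A$. The main obstacle I foresee is the saturation/descent step identifying $P_V a$ as an element of $\bdd(a)$: $\omega$-near-homogeneity alone need not pin down the fixed-point set of $\Aut(M/a)$, and the interpretable setting (in particular the theory $T^{\mathcal{H}}$ and the stable embeddedness result) is precisely what is needed to transfer the invariance of $P_V a$ through a saturated monster and back to $M$.
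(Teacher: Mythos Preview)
Your contrapositive strategy is natural, but the step you yourself flag is a genuine gap as written. For an \emph{arbitrary} closed $\Aut(M)$-invariant subspace $V$, the element $P_V a$ is $\Aut(M/a)$-fixed, but under mere $\omega$-near-homogeneity this does not force $P_V a \in \bdd(a)$. Your proposed fix—pass to a saturated extension and descend via stable embeddedness—does not work, because $V$ is not a type-definable object: there is no canonical ``same $V$'' in the saturated model, so you cannot run an $\Aut(N/a)$-invariance argument there, and Proposition~\ref{proposition: T is stably embedded} only transfers \emph{definable} data. The approach can be repaired by choosing $V$ to be $\bigwedge$-interpretable: non-irreducibility of $\mathcal{H}_p$ gives orthogonal nonzero types $q,q'$, and with $V = H_q(M)$ one can pass to a saturated $N$, observe $P_{H_q(N)}a \in \dcl(a)$ by genuine $\Aut(N/a)$-invariance, check that this element lies in $H_q(M)$ (since $H_q$ is type-definable piecewise), and conclude $P_{H_q(M)}a = P_{H_q(N)}a \in \bdd(a)$. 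Your steps (4) and (5) then go through.

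The paper argues directly rather than by contrapositive, and avoids the issue entirely. Given any two types $q,q'$ in $\mathcal{H}_p$ with realisations $v,w$, it first conjugates so that $P_A v, P_A w \neq 0$, then uses irreducibility of $K$ on $A$ to arrange $\langle P_A v, P_A w\rangle \neq 0$. The key point is that $A$ is already a $\bdd$-closed subspace, so Proposition~\ref{proposition: transfer of independence} applies: taking a nonforking extension of $\tp(v/A)$ to $\bdd(Aw)$ and a realisation $z$ in a saturated extension yields $\langle z,w\rangle = \langle P_A v, P_A w\rangle \neq 0$; one then pulls this back into $M$ using that $M$ realises all types and $\omega$-near-homogeneity. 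Thus the paper never needs to locate a projection inside $\bdd(a)$ for an unspecified invariant subspace; it works only with the projection onto the given $\bdd$-closed $A$, where the forking machinery is available.
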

 
 \begin{proof}
 Let $q, q'$ be two types in $\mathcal{H}_p$. Let $v, w$ be realisations of $q,q'$ in $H(M)$. Choosing $\Aut(M)$-conjugates of $v$ and $w$ if necessary, we can assume that $P_Av $ and $P_Aw$ are nonzero. Conjugating $v$ by an element of $K$, we can assume that $\langle P_A v, P_A w \rangle \neq 0$.
 
Consider the nonforking extension $r$ of $\tp(v/A)$ to $\bdd(Aw)$ with respect to the inner product maps of $\mathcal{H}$. If $z$ realises $r$ in an $\omega_1$-saturated elementary extension $N$ of $M$, we have $\langle z, w\rangle = \langle P_Av, P_Aw\rangle \neq 0$. Let $r' = \tp(z, w)$.
  
  By our assumption on $M$, $r'$ is realised in $M$ by some pair $(z', w')$. By $\omega$-near-homogeneity, we can  assume that  $w'$ is arbitrarily close to $ w$.  Now $\tp(z') =  \tp(v)$ so we can find $g \in \Aut(M)$ taking $v$ arbitrarily close to $z'$. Now we have $  \langle gv, w\rangle \approx \langle z', w'\rangle  = \langle z, w \rangle \neq 0$.
  \end{proof}

 We now introduce a construction which allows us to connect the study of interpretable Hilbert spaces with the study of unitary representations of arbitrary groups. Given a faithful representation of a group $G$ on a Hilbert space $H$, we will construct a continuous logic structure with universe generated by the orbit of an arbitrary vector in $H$ under the action of $G$. We use the following general continuous logic construction:

\begin{definition}
Let $M$ be a continuous logic structure in a language $\mathcal{L}$   and let $G$ be a subgroup of $\Aut(M)$. We define an expansion  $M_G$ of $M$ in a language $\mathcal{L}_G$ as follows. 

For every $n \geq 1$, for every finite Cartesian product $P$ of sorts of $M$ and for every orbit $O$ of $G$ on $P$, we add a  function symbol $r_O : P \to [0, \infty)$. $M_G$ is the structure obtained from $M$ by interpreting each function $r_O(x)$ as $d(x, \overline{O})$, the distance between $x$   and the metric closure of $O$.
 \end{definition}

We could equivalently define $M_G$ as the maximal $G$-invariant expansion of $M$. The following lemma is straightforward: 

\begin{lemma}
Let $M$ be a continuous logic structure, $G$ a subgroup of $\Aut(M)$. Then $\Aut(M_G)$ is the closure of $G$ in $\Aut(M)$ with the topology of pointwise convergence. 
\end{lemma}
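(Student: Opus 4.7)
The plan is to prove the two inclusions $\overline{G} \subseteq \Aut(M_G)$ and $\Aut(M_G) \subseteq \overline{G}$ separately. The key observation is that for any $G$-orbit $O$ on a finite Cartesian product $P$ of sorts, the closed set $\overline{O}$ is $G$-invariant (since $G$ acts by isometries), hence for every $g \in G$ and every tuple $x \in P$ one has $r_O(gx) = d(gx, \overline{O}) = d(x, g^{-1}\overline{O}) = d(x, \overline{O}) = r_O(x)$.

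First I would show $\overline{G} \subseteq \Aut(M_G)$. An element $h$ of the closure of $G$ in $\Aut(M)$ is, by definition, an automorphism of $M$; it suffices to verify that $h$ preserves each additional function symbol $r_O$. Fix a tuple $x$ and a net $(g_i)$ in $G$ with $g_i x \to h x$. Since the distance-to-set function $y \mapsto d(y, \overline{O})$ is $1$-Lipschitz, we get $r_O(g_i x) \to r_O(hx)$. But $r_O(g_i x) = r_O(x)$ by the observation above, so $r_O(hx) = r_O(x)$ and $h \in \Aut(M_G)$.

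Next I would show $\Aut(M_G) \subseteq \overline{G}$. Let $h \in \Aut(M_G)$ and let a basic neighbourhood of $h$ be specified by a finite tuple $a$ in $M$ (in some finite Cartesian product $P$ of sorts) and $\e > 0$; I need $g \in G$ with $d(g(a), h(a)) < \e$. Let $O$ be the $G$-orbit of $a$ in $P$. Since $a \in O$, we have $r_O(a) = 0$, and since $h$ preserves $r_O$, we obtain $r_O(h(a)) = 0$, i.e.\ $h(a) \in \overline{O}$. By definition of the closure of $O$, there exists $g \in G$ with $d(g(a), h(a)) < \e$, which is exactly what is needed.

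The argument is essentially a bookkeeping check, and no step looks like a serious obstacle; the only point that deserves attention is confirming that when we pass to nets (in the definition of the pointwise-convergence topology on $\Aut(M)$) the Lipschitz continuity of $y \mapsto d(y, \overline{O})$ together with the $G$-invariance of $\overline{O}$ really do force $r_O$ to be preserved by limits. Both facts are immediate from the construction, so the proof reduces to the two short paragraphs above.
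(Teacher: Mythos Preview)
Your proof is correct; the paper states the lemma as ``straightforward'' and omits the proof entirely, so there is no approach to compare against. Your two-inclusion argument via the $1$-Lipschitz distance functions $r_O$ is exactly the kind of verification the authors have in mind.
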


 We apply the above construction to unitary representations:

\begin{lemma}\label{lemma: elementary facts about G-expansion of representation}
Let $G$ be an arbitrary group and let $H$ be a Hilbert space with a cyclic faithful representation $\sigma$ of $G$. Let $v$ be a cyclic vector and let $X$ be the closure of the orbit of $v$ in $H$.  
Let $M$ be the continuous logic structure with universe  $X$ and with the inner product map on $X$ induced from $H$ as   only function symbol. The following hold:
\begin{enumerate}
\item $M_G$ is an atomic $\omega$-near-homogeneous structure (see Definition \ref{definition: atomic and omega-homogeneous}) and $X$ is a complete type in $M_G$ 
\item there is an interpretable Hilbert space $\mathcal{H}$ in $Th(M)$ generated by the sort $X$ such that, writing $\pi \upharpoonright G$ for the restriction of the canonical representation of $\Aut(M_G)$ to $G$, we have $\pi \upharpoonright G \simeq \sigma$.
\item $\Aut(M_G)$ is isomorphic to the closure of $G$ in $U(H)$, the unitary group of $H$, in the weak operator topology.
\end{enumerate}
\end{lemma}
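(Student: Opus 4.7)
The plan is to handle each of the three claims in sequence: first exploit the design of $M_G$ to read off types from orbit-closure data, then apply the GNS-type Proposition \ref{proposition: equivalent definition of piecewise interpretable hilbert space} to produce $\mathcal{H}$, and finally identify $\Aut(M_G)$ with a subgroup of $U(H)$ by matching topologies.

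For (1), I would observe that by the construction of $M_G$, for any tuple $\bar a \in X^n$ and any $G$-orbit $O$ on $X^n$, the interpretation of $r_O$ gives $r_O(\bar a) = d(\bar a, \overline{O})$. Hence $\tp_{M_G}(\bar a)$ is determined by the map $O \mapsto r_O(\bar a)$, and two tuples $\bar a, \bar b$ have equal type iff they lie at equal distances from every orbit closure. Taking $O = G \cdot \bar a$ forces $\bar b \in \overline{G \cdot \bar a}$, and symmetrically, so the realisations of $\tp(\bar a)$ are exactly $\overline{G \cdot \bar a}$; this set is distance-definable via $r_{G \cdot \bar a}$, so every realised type is principal and $M_G$ is atomic. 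Moreover, since the realisations of $\tp(\bar a)$ coincide with $\overline{G \cdot \bar a}$, for any $\varepsilon > 0$ there exists $g \in G \subseteq \Aut(M_G)$ with $d(g\bar a, \bar b) < \varepsilon$, yielding $\omega$-near-homogeneity. Finally, $X = \overline{Gv}$ is the zero set of the definable function $r_X$, so $X$ is a single complete principal type.

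For (2), the inner product $\langle \cdot, \cdot \rangle$ is a function symbol of $\mathcal{L}$, hence $\emptyset$-definable in $Th(M)$, and is positive-semidefinite on $X \times X$. Proposition \ref{proposition: equivalent definition of piecewise interpretable hilbert space} thus yields an interpretable Hilbert space $\mathcal{H}$ in $Th(M)$ with direct-limit map $h_X : X \to H(M_G)$ satisfying $\langle h_X(x), h_X(y) \rangle_{H(M_G)} = \langle x, y \rangle_H$ and such that $h_X(X)$ has dense span. By the uniqueness clause of Theorem \ref{theorem: classical GNS theorem} applied to the inclusion $X \hookrightarrow H$ (whose image has dense span in $H$ by cyclicity of $v$), there is a unique surjective unitary $U : H(M_G) \to H$ with $U \circ h_X = \mathrm{id}_X$. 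For $g \in G$, the canonical representation $\pi$ satisfies $\pi(g) h_X(x) = h_X(gx)$, so $U \pi(g) U^{-1}$ agrees with $\sigma(g)$ on $X$ and therefore on $H$ by continuity and density, giving $\pi \upharpoonright G \simeq \sigma$.

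For (3), each $\phi \in \Aut(M)$ is a bijection of $X$ preserving the inner product, and therefore extends by linearity and continuity to an isometry $\widehat\phi$ of $\overline{\mathrm{span}(X)} = H$; since $\phi^{-1}$ extends similarly, $\widehat\phi \in U(H)$, and the map $\phi \mapsto \widehat\phi$ embeds $\Aut(M)$ as the setwise stabiliser of $X$ in $U(H)$. The topology of pointwise convergence on $X$ on the left corresponds to pointwise norm convergence on $X$ on the right; since unitaries are uniformly bounded in operator norm and $X$ has dense span in $H$, a standard $\varepsilon/3$ argument upgrades this to pointwise norm convergence on all of $H$, i.e., to the strong operator topology. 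As SOT and WOT agree on $U(H)$, the two topologies coincide on the stabiliser subgroup, and the lemma preceding the statement identifies $\Aut(M_G)$ with the closure of $G$ in $\Aut(M)$, which under the embedding is the closure of $G$ in $U(H)$ under the WOT. The main obstacle is precisely this bookkeeping in (3): one must marshal uniform boundedness of unitaries, density of $\mathrm{span}(X)$, and the equivalence of SOT and WOT on $U(H)$ so that the two closure operations genuinely line up. Everything else is a direct application of the definition of $M_G$ and the GNS construction already developed in the paper.
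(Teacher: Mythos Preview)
Your proposal is correct and follows essentially the same route as the paper, which disposes of (1) and (2) as ``trivial'' and for (3) states only that the weak operator topology on $U(H)$ coincides with pointwise convergence on $X$. Your argument simply unpacks these claims: orbit-closure data determines types and gives principality and near-homogeneity, Proposition~\ref{proposition: equivalent definition of piecewise interpretable hilbert space} plus GNS uniqueness gives (2), and the chain ``pointwise on $X$ $\Rightarrow$ SOT $\Rightarrow$ WOT on $U(H)$'' together with the preceding lemma gives (3). One small point you leave implicit in (3): the WOT-closure of $G$ in $U(H)$ automatically lands in the setwise stabiliser of $X$ (since SOT-limits of elements of $G$ send each $x\in X$ to a norm-limit of points of $X$, and $X$ is norm-closed), so closure in the stabiliser and closure in $U(H)$ agree; and in (1) you only spell out near-homogeneity for tuples of equal type, though the general case follows by the same density argument.
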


\begin{proof}
(1) and (2) are trivial. For (3), it is enough to note that the weak operator topology on $U(H)$ coincides with the pointwise convergence topology on $U(H)$ with respect to the set $X$. 
\end{proof}

 It is also clear that the structure $M_G$ above remains atomic and  $\omega$-near-homogeneous under expansion by the pieces of $\mathcal{H}$ so that the assumption following Definition \ref{definition: atomic and omega-homogeneous} holds.

A classical theorem of \cite{HoweMoore1979} shows that when $G$ is a connected Lie group, $\Aut(M_G)$ is not significantly larger than $G$. We recall this theorem here:

\begin{theorem}[\cite{HoweMoore1979}, Theorem 2.1]\label{theorem: howe moore for closure in unitary group}
Let $G$ be a connected Lie group with a faithful continuous  unitary representation   on $H$ and write $\overline{G}$ for the closure of $G$ in $U(H)$, the unitary group of $H$, with the weak operator topology. Then 
\begin{enumerate}
\item The normaliser of $G$ in $U(H)$ is closed and hence $G$ is normal in $\overline{G}$
\item $\overline{G}/G$ is at most 2-step nilpotent
\item $\overline{G}/Z(\overline{G})$ is a Lie group and $\overline{G}/G\cdot Z(\overline{G})$ is Abelian
\item If the adjoint group of $G$ is closed, then $\overline{G}  =G  \cdot Z(\overline{G})$. 
\end{enumerate}
\end{theorem}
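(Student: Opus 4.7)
The plan is to analyze $\overline{G}$ via the conjugation action of its elements on $G$, exploiting the Lie-theoretic structure of $\Aut(G)$ for connected Lie groups. First I would establish that the faithful continuous representation $\sigma : G \to U(H)$ is a homeomorphism onto its image, i.e.\ the weak operator topology on $\sigma(G)$ coincides with the Lie group topology. This uses the no-small-subgroups property of connected Lie groups: some weak neighbourhood of $1 \in U(H)$ contains no non-trivial subgroup of $\sigma(G)$, and combined with faithfulness and continuity of $\sigma$ this upgrades to a homeomorphism.

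Second, for each $u \in \overline{G}$ I would show that $g \mapsto ugu^{-1}$ sends $G$ into $G$ and defines a continuous automorphism of $G$, thereby proving (1). Write $u = \lim u_\lambda$ with $u_\lambda \in G$; each $\mathrm{Ad}(u_\lambda)$ is an inner automorphism of $G$. Since $\Aut(G)$ is itself a Lie group for $G$ connected, one shows (using the first step together with equicontinuity of $\{\mathrm{Ad}(u_\lambda)\}$ on compact subsets of $G$) that the net $\mathrm{Ad}(u_\lambda)$ converges in $\Aut(G)$ to an automorphism $\alpha_u$ agreeing with conjugation by $u$. This produces a continuous homomorphism $\phi : \overline{G} \to \Aut(G)$, whose kernel is $C_{U(H)}(G) \cap \overline{G}$; by density of $G$ in $\overline{G}$ this kernel equals $Z(\overline{G})$.

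Third, I would analyse the image $\phi(\overline{G}) \leq \Aut(G)$, which contains $\phi(G) = \mathrm{Inn}(G) \cong G/Z(G)$. A standard structural result states that for $G$ a connected Lie group, $\Aut(G)/\mathrm{Inn}(G)$ is a Lie group whose identity component is abelian. Combined with the short exact sequence $1 \to Z(\overline{G}) \to \overline{G} \to \phi(\overline{G}) \to 1$, this yields the $2$-step nilpotency of $\overline{G}/G$ (item 2) and the Lie structure of $\overline{G}/Z(\overline{G})$ together with the abelian quotient $\overline{G}/(G \cdot Z(\overline{G}))$ (item 3). For (4), closedness of the adjoint group $\mathrm{Ad}(G) = \mathrm{Inn}(G)$ in $\Aut(G)$, together with the weak density of $G$ in $\overline{G}$ and continuity of $\phi$, forces $\phi(\overline{G}) = \mathrm{Inn}(G)$; thus each $u \in \overline{G}$ differs from some $g \in G$ by an element of $\ker(\phi) = Z(\overline{G})$, giving $\overline{G} = G \cdot Z(\overline{G})$.

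The hard part will be the second step. A priori a weak operator limit of inner automorphisms need not define a continuous action on all of $G$, so bridging the weak operator topology on $U(H)$ with the intrinsic topology on $\Aut(G)$ is the key technical obstacle. The delicate point is the passage from pointwise weak convergence of $u_\lambda g u_\lambda^{-1}$ for fixed $g$ to convergence of $\mathrm{Ad}(u_\lambda)$ in $\Aut(G)$; this rests on the absence of small subgroups in $G$ and on careful control of the induced action on one-parameter subgroups.
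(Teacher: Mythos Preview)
The paper does not prove this theorem; it is simply quoted from \cite{HoweMoore1979} (Theorem 2.1 there) and used as a black box. So there is no ``paper's own proof'' to compare your attempt against.

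As a side remark on your sketch: the overall strategy --- showing the representation is a homeomorphism onto its image, then producing a continuous homomorphism $\overline{G}\to\Aut(G)$ whose kernel is $Z(\overline{G})$, and finally invoking structural facts about $\Aut(G)/\mathrm{Inn}(G)$ --- is broadly in the spirit of Howe--Moore's argument. You have correctly identified the genuinely delicate point: passing from weak-operator convergence $u_\lambda\to u$ to convergence of $\mathrm{Ad}(u_\lambda)$ in $\Aut(G)$. But since the present paper treats the result purely as an external input, a comparison is not really possible here; if you want feedback on the proof itself you should consult \cite{HoweMoore1979} directly.
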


See also Theorem 3.1 in \cite{HoweMoore1979} for an extension of the above result to connected groups $G$.

\medskip

In the next section, we will show that asymptotic freedom is a natural notion from the point of view of representation theory. We remark here that it is more difficult to interpret directly our notion of scatteredness in representation theoretic terms. The main obstacle is that we require $\omega$-saturation to verify scatteredness. 
 While $\omega$-saturation holds in ultraproducts of structures $M_G$, it   does not usually hold for a single structure. 
 
Nevertheless, since every structure $M_G$ is atomic and $\omega$-near-homogeneous, Proposition \ref{proposition: decomposition for atomic omega-near-homogeneous} applies when the weak closure of the orbit $X$ is locally compact. Proposition \ref{proposition: decomposition for atomic omega-near-homogeneous} can be easily reformulated in purely representation theoretic terms:

\begin{proposition}\label{proposition: modified proof of theorem for locally compact orbit}
Let $G$ be a group and let   $\sigma$ be a faithful cyclic unitary representation of   $G$ on $H$. Let $v$ be a cyclic vector and let $X$ be the metric closure of the orbit of $v$ in $H$. 

Suppose that the weak closure of $X$ in $H$ is locally compact in the metric topology. Then $\sigma$ is equivalent to an orthogonal sum of representations $(\tau_i)$ such that each $\tau_i$ has a cyclic vector $v_i$ satisfying the following: if $w$ is a conjugate of $v_i$ such that $\langle w, v_i\rangle \neq 0$, then the orbit of $w$ under the stabilizer of  $v_i$ is precompact.
\end{proposition}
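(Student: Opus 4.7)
The plan is to reduce the proposition to Proposition~\ref{proposition: decomposition for atomic omega-near-homogeneous} by packaging the representation $\sigma$ inside the continuous-logic structure $M_G$ of Lemma~\ref{lemma: elementary facts about G-expansion of representation}. Concretely, I would form $M_G$ as in that lemma, obtaining an atomic $\omega$-near-homogeneous structure in which $X$ is a complete type, together with an interpretable Hilbert space $\mathcal{H}$ in $Th(M_G)$ generated by $X$ whose canonical representation, restricted to $G \leq \Aut(M_G)$, is isomorphic to $\sigma$. The hypothesis that the weak closure of $X$ in $H$ is locally compact in the norm topology says exactly that the set $\mathcal{P}(X)$ of Definition~\ref{definition: partial order P} is locally compact, so Proposition~\ref{proposition: decomposition for atomic omega-near-homogeneous} applies with $p = X$.

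This yields an orthogonal decomposition $\mathcal{H} = \bigoplus_{i \in I} \mathcal{H}_i$ into $\Aut(M_G)$-invariant (hence $G$-invariant) $\bigwedge$-interpretable subspaces, each $\mathcal{H}_i$ generated by a principal type $q_i$ satisfying the dichotomy: for any $v, w \models q_i$, either $\langle v, w\rangle = 0$ or $v \in \bdd(w)$. Setting $\tau_i := \sigma|_{\mathcal{H}_i(M_G)}$ and picking any $v_i \models q_i$, I would then verify that $v_i$ is $G$-cyclic for $\tau_i$: principality of $q_i$ combined with $\omega$-near-homogeneity of $M_G$ implies the $\Aut(M_G)$-orbit of $v_i$ is dense in the set of realisations of $q_i$, which in turn densely spans $\mathcal{H}_i$; density of $G$ in $\Aut(M_G)$ in the pointwise-convergence topology, as asserted by Lemma~\ref{lemma: elementary facts about G-expansion of representation}(3), then shows that the $G$-orbit of $v_i$ is dense in its $\Aut(M_G)$-orbit, so it densely spans $\mathcal{H}_i(M_G)$.

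Finally, if $w = g v_i$ is a $G$-conjugate of $v_i$ with $\langle w, v_i\rangle \neq 0$, then $w \models q_i$, and the dichotomy forces $w \in \bdd(v_i)$; unwinding the definition of $\bdd$, this means the orbit of $w$ under $\Stab_{\Aut(M_G)}(v_i)$ is relatively compact in the norm topology of $H$, and the smaller orbit $\Stab_G(v_i) \cdot w$ is a fortiori precompact, as required. The main interpretive obstacle is to keep track of the passage between $G$-orbits and $\Aut(M_G)$-orbits, and between their respective stabilisers; this is exactly what Lemma~\ref{lemma: elementary facts about G-expansion of representation}(3) is designed for, and the fact that for the final clause we only need the one-way inclusion $\Stab_G(v_i) \subseteq \Stab_{\Aut(M_G)}(v_i)$ makes the translation painless.
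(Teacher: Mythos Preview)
Your proposal is correct and is precisely the route the paper takes: the proposition is stated as a direct representation-theoretic reformulation of Proposition~\ref{proposition: decomposition for atomic omega-near-homogeneous}, applied to the structure $M_G$ of Lemma~\ref{lemma: elementary facts about G-expansion of representation}, and you have accurately unwound the translation between $\bdd$-closure and precompactness of stabiliser orbits.
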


\subsection{Unitary representations with asymptotically free orbits} \label{subsection: application to scattered representations}

\emph{In this section we study interpretable Hilbert spaces generated by asymptotically free complete types from the point of view of representation theory. We show that asymptotically free complete types give rise to induced representations, and we prove a Mackey-style irreducibility criterion for these induced representations. }

\emph{We also relate asymptotic freedom to the notion of disintegrated strongly minimal sets  in continuous  logic.}

\emph{Finally, using a theorem of Howe and Moore, we show that interpretable Hilbert spaces generated by an asymptotically free complete type capture all irreducible unitary representations of   algebraic groups over a local field of characteristic $0$.}
 \medskip

  We begin by recalling the notion of induced representation in the special case where we induce from an open subgroup. See \cite{Bekka2008} for more details. Let $G$ be a topological group and take $K$ an open subgroup of $G$. Let $\sigma$ be a representation of $K$ on a Hilbert space $V$. We suppose here that $V$ is a real Hilbert space (the case of complex Hilbert spaces is similar). Write $\Rr G$ for the free vector space on $G$. We define $G \otimes_\sigma V$, the $\sigma$-tensor of $G$ and $V$, to be the vector space $\Rr G \otimes V$ quotiented by a suitable subspace so that $gk \otimes_\sigma v = g \otimes\sigma(k) v$ for all $g \in G$ and $k \in K$.

We define an inner product on $G \otimes_\sigma V$ as follows. For any  $g , g' \in G$ and $v,v' \in V$,  if $gK \neq g'K$, then $\langle g \otimes_\sigma v, g' \otimes_\sigma v'\rangle = 0$. If $gK = g'K$, then find $k \in K$ such that $g' = gk$ and define $\langle g \otimes_\sigma v, g' \otimes_\sigma v'\rangle =\langle v, \sigma(k) v'\rangle$. Observe that if we choose a set of coset representatives for $G /K$, we can identify $G \otimes_\sigma V$ with the orthogonal sum of copies of $V$ indexed by $G/K$. 

We will always work with the Hilbert space completion of $G \otimes_\sigma V$. We also write $G \otimes_\sigma V$ for this completion. We  define the induced representation of $G$ from $\sigma$, denoted $\Induce_K^G(\sigma)$, as the unitary  representation of $G$ on $G \otimes_\sigma V$ given by $g \cdot( g' \otimes_\sigma v) = gg'\otimes_\sigma v$. Since $K$ is open in $G$, the induced representation is continuous.

\begin{proposition}\label{proposition: representation theory of asymptotically free type}
Let $T$ be a continuous logic theory with an interpretable Hilbert space $\mathcal{H}$.  Suppose $p$ is an asymptotically free  complete  type  in   $\mathcal{H}$. For $x, y \models p$, write $x \sim y $ if $\bdd(x) = \bdd(y)$ and write $[x]$ for the equivalence class of $x$ under $\sim$.

Let   $M\models T$, write $G = \Aut(M)$ and suppose that for some (any) $a \models p$, the orbit of $a$ under $G$ is metrically dense in $p$. Fix $a \models p$ in $H(M)$ and write $K$ for the open subgroup of elements of $G$ which fix $[a]$ setwise. Then the canonical representation $\pi$ of $G$ on $H_p(M)$  is equivalent to $\Induce_K^G(\sigma)$ where $\sigma$ is the restriction of $\pi$ to $K$ on  the Hilbert space $V$ spanned by $ [a]$. 

  \end{proposition}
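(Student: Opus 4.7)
The plan is to construct an explicit unitary intertwiner $U \colon G \otimes_\sigma V \to H_p(M)$, defined on elementary tensors by $U(g \otimes_\sigma v) = \pi(g) v$, and then verify the four required properties (well-definedness, isometry, density of image, equivariance). Well-definedness is immediate from $\pi(gk)v = \pi(g)\sigma(k)v$, and equivariance $\pi(g)\circ U = U \circ \Induce_K^G(\sigma)(g)$ follows directly from the formula $\pi(g)\pi(h)v = \pi(gh)v$. Density of image comes from the hypothesis that the $G$-orbit of $a$ is metrically dense in $p$: the image contains $\pi(g)a$ for every $g \in G$, so its closure contains (the closed span of) $p$, which by definition is $H_p(M)$.

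The heart of the argument — and the one step that really uses asymptotic freedom — is the isometry property. First I would record the symmetric consequence of Definition \ref{definition: asymptotically free}: for any $x, y \models p$, either $\langle x,y\rangle = 0$ or $\bdd(x) = \bdd(y)$. Indeed, asymptotic freedom gives $x \in \bdd(y)$ whenever $\langle x,y\rangle \neq 0$, and applying the same statement with $x, y$ interchanged yields $y \in \bdd(x)$, so $x \sim y$. From this I deduce the key orthogonality: if $gK \neq g'K$ then $[ga] \neq [g'a]$, so no realisation of $[ga]$ is $\sim$-equivalent to any realisation of $[g'a]$, and therefore every element of $[ga]$ is orthogonal to every element of $[g'a]$. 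By bilinearity and continuity, $\pi(g)V \perp \pi(g')V$. For the case $gK = g'K$, writing $g' = gk$ with $k \in K$ one has $\langle \pi(g)v, \pi(gk)v'\rangle = \langle v, \pi(k)v'\rangle = \langle v, \sigma(k)v'\rangle$, which matches the defining inner product on $G \otimes_\sigma V$.

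Putting these together, $U$ is an isometry on finite sums of simple tensors, hence extends to an isometric embedding of the Hilbert completion $G \otimes_\sigma V$ into $H_p(M)$; combined with density of its image, $U$ is a surjective unitary intertwiner, proving the equivalence.

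The main obstacle I anticipate is the isometry computation across distinct cosets, since it is exactly the place where asymptotic freedom enters and where one has to move from the statement about generators (realisations of $[ga]$ and $[g'a]$) to arbitrary vectors of $\pi(g)V$ and $\pi(g')V$. This step is purely linear-algebraic once the generator-level orthogonality is established, but care is needed to check that the equivalence $x \sim y$ from asymptotic freedom is genuinely symmetric so that the orthogonality is all-or-nothing between distinct $\sim$-classes. A minor side point worth noting is that openness of $K$ — required to make $\Induce_K^G(\sigma)$ a continuous representation — follows because $[a]$ is contained in $\bdd(a)$ and the pointwise stabilizer of any finite tuple is open, hence so is the setwise stabilizer of the bounded set $[a]$.
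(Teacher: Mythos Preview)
Your proposal is correct and follows the same route as the paper: both construct the explicit intertwiner $U(g \otimes_\sigma v) = \pi(g)v$, use asymptotic freedom to establish pairwise orthogonality of the subspaces $\pi(g)V$ for distinct cosets $gK$, and then verify isometry, surjectivity and equivariance directly.

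The one point that needs correction is your side argument for openness of $K$. In continuous logic the pointwise stabilizer of a finite tuple is \emph{not} open; basic open neighbourhoods of the identity have the form $\{g \mid d(g\bar a,\bar a)<\e\}$, not $\{g \mid g\bar a=\bar a\}$. The paper's argument instead uses the inner product: since $\|x\|$ is constant on the complete type $p$, one has $\langle x,y\rangle = \|x\|^2 - \tfrac{1}{2}d(x,y)^2$, so for $d(x,y)$ small enough $\langle x,y\rangle>0$ and hence $x\sim y$ by asymptotic freedom. Thus the open set $\{g \mid d(ga,a)<\e\}$ already lies in $K$, and $K$ is open.
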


\begin{proof}
Since $p$ is asymptotically free, $p$ is metrically locally compact and hence there is $\e > 0$ such that for any $x, y \models p$, if $d(x, y) < \e$ then $[x] = [y]$. Therefore $K$ is open in $G$. Let $A = [a]$ and write  $\{A_i\mid i \in I\}$ for the orbit of $A$ under $G$ setwise (we ignore permutations of $A$).  For every $i\in I$ pick $g_i\in \Aut(M)$ which maps $A$ to $A_i$. Then $\{g_i\mid i \in I\}$ is a list of representatives for the left cosets of $K$ in $\Aut(M)$.  Since $p$ is asymptotically free, the sets $A_i$ are pairwise orthogonal in $H_p(M)$.

Write $\pi$ for the canonical representation of $\Aut(M)$ on $H_p(M)$ and let $\sigma$ be the  restriction of $\pi$ to $K$ on $V$, the vector space spanned by $A$. Let $\iota = \mathrm{Ind}_{K}^G(\sigma)$ and write $W = G \otimes_\sigma V$. We show that $\iota$ and $\pi$ are equivalent. Write also $g_i\otimes_K V   $ for the subspace of $W$ given by $\{g_i \otimes_\sigma v \mid v \in V\}$. Take $w \in W$ and write $w = \sum w_i$ where $w_i \in g_i \otimes_\sigma V$. Let $P_i : g_i  \otimes_\sigma V \to V$ be the  Hilbert space isomorphism taking $g_i   \otimes_\sigma v$ to $v$. We define
\[
U(w) = \sum \pi(g_i) P_i(w_i).
\]
  
Since $A_i \perp A_j$ for $i\neq j$, we have $\pi(g_i)P_i(w_i)\perp \pi(g_j) P_j(w_j)$, so $U$ is well-defined, and it is easy to check that $U$ is in fact a surjective     isometry.   $U$ intertwines $\iota$ and $\pi$: take $g  \in G, i\in I, v\in V$. Then, writing $gg_i = g_jk$, we have 
\[
U(\iota(g)(g_i\otimes_K v)) = U(g_j \otimes_K \sigma(k)v) = \pi(g_j)(\sigma(k)v) = \pi(g_j k)v = \pi(g)U(g_i \otimes_K v).
\]
\end{proof}

\noindent \textbf{Remark:} Taking $a$ and $K$ as in Proposition \ref{proposition: representation theory of asymptotically free type}, we note that $\Aut(M/[a])$ is a normal subgroup of $K$ contained in the kernel of $\sigma$. Write $G_1$ for the group of automorphisms of the set $[a]$. Then  $\sigma$ factors through $Aut(M/[a])$ to a faithful representation  of a subgroup of $G_1$.   Since $[a]$ is a separable locally compact metric space, the closure of $K / \Aut(M/[a])$ in $G_1$ is  locally compact with respect to the topology of pointwise convergence. We say that the canonical representation of $G$ is obtained from the representation of $K / \Aut(M/[a])$ by inflation.

\medskip

We prove a version of Mackey's irreducibility criterion  for the induced representations arising in Proposition \ref{proposition: representation theory of asymptotically free type}. General irreducibility criteria for induced representations have been studied in \cite{Mackey1951}, \cite{Godement1948}, \cite{Corwin1975}.  See also Proposition 4.1 in \cite{Tsankov2012} for an irreducibility criterion in the case of oligomorphic groups. In all cases, irreducibility of the induced representation is verified at the level of the commensurator (also called the quasi-normalizer) of the subgroup $K$. See \cite{Corwin1975} or \cite{Tsankov2012} for a discussion. Our criterion also takes place at the level of the commensurator of the subgroup we are inducing from.

In all cases cited above, the irreducibility criterion for induced representations holds only when we induce from \emph{a finite dimensional representation}. It is shown in \cite{BekkaCurtis2003} that the naive generalisation of the criterion of \cite{Corwin1975} does not hold for infinite dimensional representations. Our Proposition \ref{proposition: irreducibility of induced representation} has the interesting feature that it does not require finite-dimensionality of the representations we are inducing from.

\begin{proposition}\label{proposition: irreducibility of induced representation}
  Suppose $p$ is an asymptotically free complete type in $\mathcal{H}$. 
 For $x, y \models p$, write $x \sim y $ if $\bdd(x) = \bdd(y)$ and write $[x]$ for the equivalence class of $x$ under $\sim$.

Let $M \models T$ and fix $a \models p$ in $M$. Write $K$ for the subgroup of elements of $G$ which fix $[a]$ setwise. Let $\sigma$ be the restriction of the canonical representation $\pi$ of $\Aut(M)$ to $K$ on the Hilbert space $V$ spanned by $[a]$. Then 
\begin{enumerate}
\item If $\sigma$ is irreducible, then $\mathcal{H}_p$ is irreducible
\item If $M$ is $\omega$-near-homogeneous and $\sigma$ is reducible, then $\mathcal{H}_p$ is reducible.
\end{enumerate}
 

\end{proposition}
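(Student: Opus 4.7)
The plan is to work with the orthogonal decomposition $H_p(M) = \bigoplus_{[x]} V_{[x]}$ furnished by asymptotic freedom, where $[x]$ ranges over the $\sim$-classes of realizations of $p$ in $M$ and, via any element of $\Aut(M)$ sending $[a]$ to $[x]$, each $V_{[x]}$ carries a representation of $K_{[x]} = \Stab_{\Aut(M)}([x])$ unitarily equivalent to $\sigma$. This is the induced representation picture of Proposition \ref{proposition: representation theory of asymptotically free type}. Both parts reduce, via Lemma \ref{lemma: irreducible hilbert space}, to statements about the canonical representation on $H_p(M)$ for a sufficiently $\omega$-near-homogeneous $M$ realizing all types, in which $\Aut(M)$ acts transitively on the set of $\sim$-classes (the $\epsilon$-separation of $\sim$-classes in $p$, coming from asymptotic freedom, converts approximate orbits into exact ones).

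For part (2), the construction is direct. Given a proper nontrivial closed $K$-invariant $V_0 \subsetneq V$, set $W = \overline{\mathrm{span}}\,\{g\,v : g \in \Aut(M),\ v \in V_0\}$. Asymptotic freedom forces $V \perp V_{[g(a)]}$ for $g \notin K$, so $W$ decomposes block-wise along the $\sim$-classes with $W \cap V_{[a]} = V_0$ and $W \cap V_{[g(a)]} = g V_0 \subseteq V_{[g(a)]}$ for $g \notin K$; hence $W \cap V = V_0$ and $W^\perp \cap V = V_0^\perp \cap V$ are both proper and nonzero. Pick $v_0 \in V_0 \setminus \{0\}$ and $v_1 \in (V_0^\perp \cap V) \setminus \{0\}$ and set $q = \tp(v_0)$, $q' = \tp(v_1)$. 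By $\omega$-near-homogeneity every realization of $q$ in $M$ lies in $\overline{\Aut(M)\cdot v_0} \subseteq W$, and similarly realizations of $q'$ sit in $W^\perp$, so $\langle x, y\rangle = 0$ for all $x \models q$ and $y \models q'$ in $M$. Since this is a type-definable condition, compactness transfers it to every model of $T$, witnessing reducibility of $\mathcal{H}_p$.

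For part (1), assume $\sigma$ is irreducible, take $M$ as above, and let $W \subseteq H_p(M)$ be a nonzero closed $\Aut(M)$-invariant subspace. The image $P_V(W) \subseteq V$ is $K$-invariant, so by $\sigma$-irreducibility it is either zero or norm-dense in $V$. If it is zero, then $W \perp V$, and applying elements of $\Aut(M)$ together with transitivity on $\sim$-classes gives $W \perp V_{[x]}$ for every $[x]$, so $W = 0$. Otherwise I must upgrade norm-density of $P_V(W)$ to the genuine inclusion $V \subseteq W$, from which $\Aut(M)$-transitivity on $\sim$-classes forces $W = H_p(M)$.

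The main obstacle is precisely this upgrade step in part (1): the conjugate representations $\sigma$ and $\sigma^g$ can agree on $K \cap gKg^{-1}$, as the action of $S_\infty$ on $\ell^2$ already shows, so no intertwiner-based Mackey criterion is directly available. The plan is to exploit asymptotic freedom quantitatively. Given $w \in W$ with $P_V w = v \neq 0$ and residual $w - v \in \bigoplus_{[x] \neq [a]} V_{[x]}$, use $\omega$-near-homogeneity to produce elements $g_n \in K$ whose action on the residual spreads it across unboundedly many pairwise orthogonal $V_{[x]}$'s, while acting on the $V$-component via $\sigma(g_n)$. Suitable Cesàro-type averages of the $g_n w$ remain inside the closed set $W$; asymptotic freedom makes the residual tend to $0$ weakly, while a second application of $\sigma$-irreducibility lets one arrange the $V$-component of the average to approximate any prescribed vector of $V$ in norm. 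Passing to the limit places every element of $V$ inside $W$, which closes the argument.
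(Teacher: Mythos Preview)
Your argument for part (2) is correct and essentially the same as the paper's: both exploit the decomposition $\Induce_K^G(\sigma) = \Induce_K^G(\sigma_1) \oplus \Induce_K^G(\sigma_2)$ coming from asymptotic freedom, then read off orthogonal complete types.

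For part (1) you have the right overall strategy---use automorphisms to spread the residual into pairwise orthogonal blocks and pass to a weak limit inside $W$---but there is a genuine gap. Write the residual $r = w - v$ as supported on classes $[g_j a]$ with $[g_j a] \neq [a]$. Your spreading move needs $g_n \in K$ (indeed $g_n \in \Aut(M/\bdd(a))$, so that $\sigma(g_n)v = v$ and the $V$-component survives) that send the support of $r$ into pairwise disjoint families of classes. This works for indices $j$ with $g_j a \notin \bdd(a)$. But it can happen that $g_j a \in \bdd(a)$ while $a \notin \bdd(g_j a)$; for such $j$ the block $V_{[g_j a]}$ is \emph{fixed} by $\Aut(M/\bdd(a))$, and even the full setwise stabiliser $K$ can only permute these blocks within the compact set $\bdd(a)$, so no spreading is possible. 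Your ``second application of $\sigma$-irreducibility'' does not touch this obstruction: the problem is not controlling the $V$-component of the average, it is that part of the residual cannot be made to vanish weakly at all.

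The paper circumvents this by a dual manoeuvre: instead of starting from $w \in W$ and pushing its residual to zero, it starts from $v \in V$, looks at $P_Z v = \sum_j u_j$, and uses the \emph{minimality} of the projection. After killing the $g_j a \notin \bdd(a)$ pieces as you do, it picks any remaining bad index $j_1$ (with $g_{j_1} a \in \bdd(a)$, $a \notin \bdd(g_{j_1} a)$), spreads instead over $\Aut(M/\bdd(g_{j_1} a))$, and obtains a vector in $Z$ which retains $u_0$ but drops $u_{j_1}$---strictly closer to $v$ than $P_Z v$, a contradiction. The closest-point property of the projection is what makes the second case go through; your setup with an arbitrary $w \in W$ does not give you this leverage.
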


\begin{proof}

To see (2), note that by Proposition \ref{proposition: representation theory of asymptotically free type} and easy facts about induced representations, if $\sigma = \sigma_1 \oplus \sigma_2$, then $\pi = \Induce_K^G(\sigma_1) \oplus \Induce_K^G(\sigma_2)$, so $\pi$ is reducible, and this is witnessed by two complete types realised in $M$. Hence  $\mathcal{H}_p$ is reducible.

We prove (1). Suppose that $\sigma$ is irreducible. If we move to an $|M|^+$-strongly homogeneous and $|M|^+$-saturated elementary extension $M'$ of $M$, then the representation of the subgroup of $\Aut(M')$ which fixes $[a]$ setwise is also irreducible on $V$. Hence by Lemma \ref{lemma: irreducible hilbert space}(2), we can assume that $M$ is $\omega_1$-strongly homogeneous.

 Recall that in Proposition \ref{proposition: representation theory of asymptotically free type} we expressed $H_p(M)$ as the orthogonal sum of subspaces $g_i \otimes_{\sigma} V$ where $(g_i)_{i \in I}$ is a set of coset representatives of $G/K$. Suppose $0$ is an indexing element in $I$ with 
  $g_0 = e$ so that $V = g_0 \otimes_{\sigma} V$. 
 
 Suppose that we   have  a $G$-invariant subspace $Z$ of $H_p(M)$.  Then the orthogonal projection $P_Z$ commutes with $G$. Fix  a nonzero $v \in V$. We can write $P_Zv = \sum_{j \in J} u_j$ where $J \subseteq I$ is countable and $u_j \in g_j \otimes_{\sigma} V$ is nonzero. Write $u_0$ for the element of $\{u_j \mid j\in J\}$ which lies in $V$. Since the $u_j$ are pairwise orthogonal, $u_0 =0$ would imply that $P_Zv = 0$. Switching if necessary to $Z^\perp$, we can assume that $P_Z v \neq 0$. 
 
Write $J_0  = \{j \in J \mid g_ja \notin \bdd(a)\}$ and $J_1 = \{j \in J \mid a \notin \bdd(g_ja)\}$. Then $J = \{0\} \cup J_0 \cup J_1$. We show that $J_0 = J_1 = \emptyset$. By $\omega_1$-strong homogeneity  and saturation of $M$, we can find a sequence $(\alpha_n)$ in $\Aut(M/\bdd(a))$ such for any $n \neq m$ 
\[
 \alpha_n\big\{[ g_ja] \mid j \in J_0\big\}  \cap \alpha_m\big\{[ g_ja] \mid j \in J_0\big\} = \emptyset.
\]
Hence for all $n\neq m$, the sets $\alpha_n\{  u_j \mid j \in J_0\}$ and $\alpha_m\{ u_j \mid j \in J_0\}$ are orthogonal. For all $n$ and $j \in J \setminus J_0$, we also have $\alpha_n u_j = u_j$. Therefore $(\alpha_n P_Z(v))$ converges weakly to $ \sum_{j \in J \setminus J_0} u_j$.
 Since $Z$ is $G$-invariant, we have $\sum_{j \in J \setminus J_0} u_j \in Z$. Since $\sum_{j \in J \setminus J_0} u_j$ is at least as close to $v$ as $\sum_{j \in J} u_j$, we conclude that  $J_0 = \emptyset$.

Suppose for a contradiction that we have $j_1 \in J_1$. Let $J_2 = \{j \in J \mid g_{j}a \notin \bdd(g_{j_1}a)\}$. Note that $0 \in J_2$. By the same argument as above, we have $\sum_{j \in J \setminus J_2}u_j \in Z$. Therefore $\sum_{j \in J_2} u_j = \sum_{j \in J} u_j - \sum_{j \in J \setminus J_2}u_j \in Z$. Since $j_1 \notin J_2$, $\sum_{j \in J_2} u_j$ is an element of $Z$ closer to $v$ than $P_Zv$ and this is a contradiction. Hence $J_1 = \emptyset$ and $P_Zv \in V$. 

Therefore, $Z \cap V$ is a nonempty $K$-invariant subspace of $V$. Since $\sigma$ is irreducible, we have $Z\cap V = V$. By $G$-invariance of $Z$, we have $g_i \otimes_{\sigma} V \subseteq Z$ for all $g_i$ and hence $Z = H_p(M)$. This proves that $\pi$ is irreducible. 
\end{proof}

\noindent \textbf{Remark:} We leave it to the reader to compare Proposition \ref{proposition: irreducibility of induced representation} with the failure of the naive criterion considered in \cite{BekkaCurtis2003}. In particular, we note that our criterion in Proposition \ref{proposition: irreducibility of induced representation} is very much tied to a specific equivalence relation   on the   type $p$.

\medskip

  We now relate asymptotic freedom and strongly minimality in continuous logic. Strong minimality has been defined in  \cite{Hanson2020} and we rephrase the definition here:
  
 \begin{definition}
  We say that a continuous logic theory $T$ with a sort $X$ is strongly minimal if for every $M \models T$, every $M$-definable function $f(x)$ in one variable into $\Rr$ has a unique generic value $\alpha$ on $X$, meaning that for any $\beta \neq \alpha$, the set $\{f(x) = \beta\}$ is compact.
 \end{definition}

 \begin{lemma}
Suppose $T$ is a strongly minimal continuous logic theory. Then for any $M \models T$, $\bdd$ is a pregeometry.
 \end{lemma}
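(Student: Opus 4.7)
The plan is to verify the four pregeometry axioms for $\bdd$: reflexivity ($A \subseteq \bdd(A)$), finite character, idempotence ($\bdd(\bdd(A)) = \bdd(A)$), and exchange. The first three axioms hold for the bounded closure operator in any continuous logic theory: reflexivity is immediate from the definition, finite character follows from compactness of type spaces, and idempotence is a routine consequence of the characterization of $\bdd$ via totally bounded orbit closures. Strong minimality plays no role there, so I would dispatch them in a single opening paragraph with a pointer to the continuous logic appendix and spend the body of the proof on exchange.

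For exchange, suppose $a \in \bdd(Ab) \setminus \bdd(A)$; I want to conclude $b \in \bdd(Aa)$. Unpacking $a \in \bdd(Ab)$, there is an $A$-definable real-valued function $f(x,y)$ and a value $r = f(a,b)$ such that the $Ab$-definable level set $\{x : f(x,b) = r\}$ is compact and contains the orbit closure of $a$ over $Ab$. Now specialize and view $h(y) := f(a,y)$ as a one-variable $Aa$-definable function into $\Rr$. By strong minimality in the sort of $y$, this $h$ has a unique generic value $\alpha$ such that $\{y : h(y) = \beta\}$ is compact for every $\beta \neq \alpha$. The easy case is $r \neq \alpha$: then $b$ lies in the compact $Aa$-definable fiber $\{y : h(y) = r\}$, so $b \in \bdd(Aa)$ and we are done.

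The main obstacle is the remaining case $r = \alpha$, in which $b$ belongs to the unique non-compact fiber of $h$. Here I would derive a contradiction with $a \notin \bdd(A)$ by a symmetry argument. Because ``$r$ is the generic value of $f(a',\cdot)$'' is an $A$-invariant property of $a' \equiv_A a$, and $a \notin \bdd(A)$ yields an infinite $A$-indiscernible sequence $(a_n)$ of realizations of $\tp(a/A)$ with pairwise distance bounded below, one wants to locate a single $b' \equiv_A b$ that is simultaneously generic for every $h_n(y) := f(a_n,y)$, i.e.\ with $f(a_n,b') = r$ for all $n$. Granted such a $b'$, the sequence $(a_n)$ lies entirely inside the compact $Ab'$-definable set $\{x : f(x,b') = r\}$, contradicting its pairwise separation.

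The hardest step, and where I expect the technical work to sit, is establishing the existence of this common $b'$: it requires turning the single-function strong-minimality statement into a genericity condition that survives finite intersection, via finite satisfiability and $\omega$-saturation of the monster model, possibly after reformulating ``generic value of $h$ at $b$'' as an open, uniform, approximate condition (in the style of $|h(y) - r| < \varepsilon$) so that the partial type $\{|f(a_n,y) - r| < \varepsilon : n < \omega,\ \varepsilon > 0\}$ is visibly consistent. Once this common generic $b'$ is secured, the exchange property, and hence the lemma, follows.
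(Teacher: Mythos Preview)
Your approach is correct and close in spirit to the paper's, but you take a detour that the paper avoids. After obtaining an $A$-definable $g(x,y)$ with $g(a,b)$ not the generic value of $g(\cdot,b)$, the paper observes directly that for any $c \notin \bdd(A)$ the generic value $\alpha_y$ of $g(c,\cdot)$ and the generic value $\alpha_x$ of $g(\cdot,c)$ coincide; since $r = g(a,b) \neq \alpha_x = \alpha_y$, one lands immediately in your ``easy case'' and there is no second case to handle. Your ``hard case'' contradiction is, when unpacked, precisely a proof of $\alpha_x = \alpha_y$, so the mathematical content is the same --- the paper just packages it as a single symmetry claim rather than a case split. One remark on execution: the step you flag as hardest is easier than you suggest. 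Since $b \notin \bdd(A)$ (otherwise $a \in \bdd(Ab) = \bdd(A)$), any $b'$ chosen generic over $A \cup \{a_n : n < \omega\}$ already satisfies $b' \equiv_A b$ by uniqueness of the generic type over $A$, and $f(a_n,b') = r$ for every $n$ because $r$ is the generic value of each $f(a_n,\cdot)$; no approximate partial-type argument is needed.
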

 
 \begin{proof}
We check the exchange property, i.e. for $A \subseteq M$ and $a,b \in M$, if $a \in \bdd(Ab)$ and $a \notin \bdd(A)$, then $b \in \bdd(Aa)$. 

Take $a, b $ in $M$ such that $a \in \bdd(Ab)$ and $a \notin \bdd(A)$. Recall that a complete type $q$ over $Ab$ is uniquely determined by the values $f(q)$ where $f$ ranges over $Ab$-definable functions into $\Rr$. By strong minimality, there is an $Ab$-definable function $f$ with generic value $\alpha$ such that the value $f(a)$ is not generic. By a standard approximation argument, we can find an $A$-definable function $g(x, y)$ such that $g(a, b)$ is not the generic value of $g(x, b)$. 

 Observe that there is some $\alpha_y$ such that for any $c \notin \bdd(A)$, $\alpha_y$ is the generic value of $g(c, y)$. There is a corresponding value $\alpha_x$ for $g(x, c)$, and it follows that $\alpha_x = \alpha_y$. Therefore $g(a, b)$ is not the generic value of $g(a, y)$ and $b \in \bdd(Aa)$. 
 \end{proof}

 \begin{definition}
 Let $T$ be strongly minimal. We say that $T$ is disintegrated if the pregeometry is trivial on $T$, in the sense that  for any $M \models T$ and $A , B \subseteq M$, $\bdd(A \cup B) = \bdd(A)   \cup \bdd(B)$  
 \end{definition}

\begin{proposition}\label{proposition: asymptotically free type is strongly minimal disintegrated}

Let   $T$ be a continuous logic theory with a single sort $X$, and a single $\Cc$-valued binary function $b$ on $X$.
Assume $b$ is positive definite, and that $X$ is an asymptotically free complete type.   Then  
$T$ is   strongly minimal  and disintegrated.
\end{proposition}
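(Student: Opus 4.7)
I would prove strong minimality and disintegratedness separately, working in an $\omega$-saturated monster model of $T$ and writing $H$ for the Hilbert space generated by the asymptotically free type $X$.

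For strong minimality, I first classify the complete types of elements of $X$ over a $\bdd$-closed parameter set $A$. Since $b$ is positive-definite and hence stable, $T$ is stable and types over $\bdd$-closed sets are stationary. For any $x \in X \setminus \bdd(A)$, applying asymptotic freedom to $x$ and each $a \in A$ (using $x \notin \bdd(a) \subseteq \bdd(A)$) yields $b(x, a) = 0$. By stationarity, all such $x$ realize a single complete type over $A$, the generic type $p_A$. Consequently every $A$-definable $f : X \to \Rr$ takes a unique value $\alpha$ on $p_A$, and for $\beta \neq \alpha$ the closed set $\{f = \beta\}$ is contained in $\bdd(A) \cap X$, which is compact for finite $A$ by Definition \ref{definition: asymptotically free} and Theorem \ref{projections commute}. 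Hence $\{f = \beta\}$ is compact and strong minimality follows.

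For disintegratedness, by finite character of $\bdd$ it suffices to show $\bdd(a_1, \ldots, a_n) = \bdd(\emptyset) \cup \bigcup_i \bdd(a_i)$ for each finite tuple from $X$. I induct on $n$, the cases $n \leq 1$ being trivial. For the inductive step, suppose $y \in \bdd(a_1, \ldots, a_n) \cap X$ is a minimal witness, i.e. $y$ does not lie in the $\bdd$-closure of any proper sub-tuple (reducible to this situation via the inductive hypothesis). The aim is to conclude $y \in \bdd(\emptyset)$. Assume otherwise; asymptotic freedom then yields $b(y, a_i) = 0$ for each $i$ and $y \perp \bdd(\emptyset) \cap H$. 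Take a Morley sequence $(a_1^k)_{k \in \omega}$ in $\tp(a_1/\bdd(\emptyset, a_2, \ldots, a_n, y))$; since this type entails $y \in \bdd(a_1, a_2, \ldots, a_n)$, each $a_1^k$ satisfies $y \in \bdd(a_1^k, a_2, \ldots, a_n)$. By Theorem \ref{projections commute} (one-basedness, via Von Neumann's alternating-projection lemma) together with Morley independence, the subspaces $\bdd(a_1^k, a_2, \ldots, a_n) \cap H$ commute with pairwise intersections equal to $\bdd(a_2, \ldots, a_n, y) \cap H$. Combining this intersection computation with the orthogonality $y \perp a_1^k$ from asymptotic freedom forces $y \in \bdd(a_2, \ldots, a_n)$, contradicting minimality.

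The hard part will be the concluding Hilbert-space computation in the disintegratedness argument: one must carefully blend the intersection supplied by Theorem \ref{projections commute} with the pointwise orthogonality from asymptotic freedom to rule out ``exotic'' elements in $\bdd(y, a_2, \ldots, a_n) \cap H$. The canonical base of $\tp(y/a_1^0, a_2, \ldots, a_n)$, located by one-basedness in $\bdd(y) \cap \bdd(a_2, \ldots, a_n)$, should be the critical technical lever for closing the contradiction.
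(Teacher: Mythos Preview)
Your approach differs substantially from the paper's, and it has a genuine gap: you never invoke the hypothesis that $b$ is the \emph{only} function symbol in the language, yet this is what makes the proposition true. The paper's proof is short and exploits exactly this. Define $\sim$ on $X$ as the transitive closure of $b(x,y)\neq 0$; by asymptotic freedom each $\sim$-class $[a]$ is contained in $\bdd(a)$, and since $X$ is a complete type any two $\sim$-classes are isomorphic. Because $b$ vanishes between distinct classes and $b$ is the only basic function, any bijection of the set of $\sim$-classes lifts to an automorphism of the model. Hence $\bdd(A)=\bigcup_{a\in A}[a]$, which yields disintegratedness and strong minimality in one stroke.

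Without the minimality of the language your steps do not close. In your strong-minimality argument, knowing $b(x,a)=0$ for all $a\in A$ does not by itself determine $\tp(x/A)$; stationarity tells you the unique non-forking extension is realized, but it does not tell you that every $x$ with vanishing $b$-pairings realizes it --- for that you need that $b$-values determine types, i.e.\ the language hypothesis. In your disintegratedness argument the same issue recurs in a sharper form: Theorem \ref{projections commute} lets you commute projections onto $\bdd$-closed subspaces of $H$, but it does not compute $\bdd$ in $T$, and Proposition \ref{proposition: transfer of independence} only transfers independence from $T$ to $H$, not conversely. Your ``concluding Hilbert-space computation'' would at best show that $y$ lies in a certain subspace of $H$; converting that into the model-theoretic statement $y\in\bdd(a_2,\ldots,a_n)$ again requires that $T$-types are governed by $b$. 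Once you make that explicit, the automorphism argument above is available and the inductive Morley-sequence detour becomes unnecessary.
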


\begin{proof}
 Let $M \models T$. We consider the equivalence relation $\sim$ on $X$ defined as the transitive closure of the relation $\langle x, y \rangle \neq 0$. For $x \in X$, write $[x]$ for the $\sim$-class of $x$.   
Since $X$ is a complete asymptotically free type, if $a \in X(M)$ then $[a] \subset bdd(a)$; within $bdd(a)$, $[a]$
is the complement of a $\bigwedge$-definable set over $a$.  If $b \in X(N)$ for some $N \models T$, there exists
an elementary isomorphism $bdd(a) \to bdd(b)$; clearly it carries $[a]$ to $[b]$;  
so any two $\sim$-classes are isomorphic and the isomorphism type of these classes does not depend on the choice of model $M$.   

Let $N$ be a model of $T$,  and let $\{x_i \mid i \in I\}$ be a set of representatives of $\sim$-classes in $X(N)$. Then any bijection $I \to I$ can be extended to an automorphism of $M$. Hence   for $A \subseteq X$, $\bdd(A) = \bigcup_{a \in A}[a]$. It follows   that $T^-$ is strongly minimal and disintegrated.
\end{proof} 

Finally, we show how asymptotic freedom is a natural representation theoretic notion. Let $\sigma$ be a representation of a group $G$ on $H$ and take $v \in H$. We say that the orbit of $v$ is \emph{asymptotically free} if for every $\alpha > 0$, the set of conjugates $w$ of $v$ such that $|\langle w, v\rangle | \geq \alpha$ is precompact. 

  We have already seen that representations with asymptotically free orbits capture   representations of automorphism groups of measurable structures on the associated $L^2$-spaces, and representations of absolute Galois groups on various associated $L^2$-spaces. In Section \ref{subsection: omega-categorical structures}, we show that representatiions with asymptotically free orbits also capture all representations of automorphism groups. Finally, thanks to Theorem 6.1 in \cite{HoweMoore1979}, we show that irreducible unitary representations of certain algebraic groups also have asymptotically free orbits:
   
\begin{proposition}[\cite{HoweMoore1979}, Theorem 6.1]\label{proposition: representations of algebraic groups are asymptotically free}
Let $\sigma$ be an   irreducible representations of an algebraic group over a local field of characteristic $0$. Then every orbit of $\sigma$ is asymptotically free.
\end{proposition}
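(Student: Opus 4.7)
The plan is to reduce the statement directly to Howe--Moore's matrix coefficient vanishing theorem, modulo handling the center and the possibly finite-dimensional case.

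Fix a nonzero $v \in H$ and $\alpha > 0$; we must show that
\[
K_{\alpha,v} \ := \ \{\sigma(g)v \mid g \in G,\ |\langle \sigma(g)v, v\rangle| \geq \alpha \}
\]
is precompact in $H$. First I would dispose of the trivial cases: if $\sigma$ is one-dimensional, then $\sigma(g)v = \chi(g)v$ for a unitary character $\chi$, so the whole orbit lies on the sphere of radius $\|v\|$ in a one-dimensional subspace, hence is precompact. So assume $\sigma$ is nontrivial and infinite-dimensional.

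The central input is Theorem 6.1 of \cite{HoweMoore1979}, which for a nontrivial irreducible unitary representation $\sigma$ of an algebraic group $G$ over a local field of characteristic $0$ asserts that the matrix coefficient $g \mapsto \langle \sigma(g)v, v\rangle$ vanishes at infinity on $G/Z_\sigma$, where $Z_\sigma$ denotes the subgroup on which $\sigma$ acts by unitary scalars (for $G$ semisimple and $\sigma$ nontrivial, $Z_\sigma = \ker \sigma$ is finite; in general one reduces to the semisimple quotient by exploiting that the radical must act through a character by irreducibility and Schur). Applying this, the preimage
\[
S_{\alpha,v} \ := \ \{g \in G \mid |\langle \sigma(g)v, v\rangle| \geq \alpha\}
\]
is precompact in $G/Z_\sigma$: outside a compact subset of $G/Z_\sigma$ the matrix coefficient is strictly less than $\alpha$.

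To convert precompactness in $G/Z_\sigma$ into precompactness in $H$, I would use that on $Z_\sigma$ the representation acts by scalars of modulus $1$, so $\sigma(S_{\alpha,v}) = \sigma(S_{\alpha,v} \cdot Z_\sigma)$ and the induced map $G/Z_\sigma \to U(H)$ is continuous in the strong operator topology. A precompact subset of $G/Z_\sigma$ therefore maps to a precompact family of unitary operators, and applying this family to the fixed vector $v$ yields a precompact subset of $H$. Since $K_{\alpha,v}$ is precisely the image of $S_{\alpha,v}$ under the orbit map $g \mapsto \sigma(g)v$, we conclude $K_{\alpha,v}$ is precompact, which is exactly asymptotic freedom.

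The main obstacle is the passage through the center and radical: Howe--Moore in its cleanest form is about semisimple groups, so one must verify that for a general algebraic $G$ over a local field of characteristic $0$ the full strength of Theorem 6.1 gives vanishing of matrix coefficients modulo the subgroup on which $\sigma$ acts by scalars. Once that input is secured, the remaining transfer from $G$ to $H$ is a routine continuity argument for the orbit map.
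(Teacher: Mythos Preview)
Your approach is essentially the same as the paper's for the connected case: both invoke Howe--Moore's Theorem 6.1 to get vanishing of matrix coefficients modulo the subgroup where $\sigma$ acts by scalars (the paper calls it $P$, you call it $Z_\sigma$), and then transfer precompactness from $G/Z_\sigma$ to the orbit in $H$. Two remarks.

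First, the sentence ``the induced map $G/Z_\sigma \to U(H)$ is continuous in the strong operator topology'' is not literally correct: $\sigma$ does not factor through $G/Z_\sigma$ as a map into $U(H)$, only into $PU(H)$. The clean way to finish is to observe that if $S_{\alpha,v}$ has precompact image in $G/Z_\sigma$ then $S_{\alpha,v} \subseteq K \cdot Z_\sigma$ for some compact $K \subseteq G$, whence $K_{\alpha,v} \subseteq \{\lambda\,\sigma(k)v : k \in K,\ |\lambda|=1\}$, which is the continuous image of the compact set $S^1 \times K$ under $(\lambda,k) \mapsto \lambda\,\sigma(k)v$. This is presumably what you meant, and the paper's ``the action by $P$ does not affect compactness'' is the same observation.

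Second, and more substantively: the paper applies Howe--Moore's Theorem 6.1 only in the \emph{connected} case, and then treats a general algebraic $G$ by passing to a connected normal algebraic subgroup $G_0$ of finite index, decomposing $\sigma\restriction G_0$ as a finite orthogonal sum of irreducibles, and noting that a $G$-orbit is a finite union of $G_0$-orbits. Your ``main obstacle'' paragraph discusses the radical and center, which is a different reduction; you never address disconnectedness. Since the input theorem is stated for connected groups, this is a genuine gap in your argument, though one that is easily filled by exactly the finite-index reduction the paper uses.
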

   
\begin{proof}

Let $G$ be a connected algebraic  group over a local field of characteristic $0$ with a representation $\sigma$ on $H$. Let $P \leq G$ be the preimage under $\sigma$ of the circle group in $U(H)$.  Then  Theorem 6.1 in \cite{HoweMoore1979} shows that for any $v \in H$, the map $g \mapsto |\langle \sigma(g)v, v\rangle|$ tends to $0$ on $G/P$. Since the action by $P$   does not affect compactness, we deduce that the orbit of $v$ is asymptotically free.

If $G$ is not connected,  we   find a connected normal algebraic subgroup $G_0 $ such that $G/G_0$ is finite. 
Then $\sigma$ splits as a finite orthogonal sum of   irreducible representations of $G_0$. 
For any $v \in H$, the $G$-orbit of $v$ is a finite union of $G_0$ orbits in the irreducible subrepresentations and hence the $G$-orbit of $v$ is asymptotically free.
\end{proof}

  See \cite{Bekka2000} for an overview of the Howe-Moore result and its extension to various additional cases.

Combining Proposition \ref{proposition: representations of algebraic groups are asymptotically free} and Proposition \ref{proposition: asymptotically free type is strongly minimal disintegrated}, we find:

\begin{corollary}
Let $G$ be an algebraic group over a local field of characteristic $0$. Then any irreducible representation of $G$ can be understood as a continuous homomorphism $\rho : G \to \Aut(X)$ for some strongly minimal disintegrated $X$ possessing a definable positive definite function. 

$D$ and $\rho$ depend only on the choice of $G$ orbit in the representation of $G$.
\end{corollary}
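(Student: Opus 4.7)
The plan is to combine the two preceding propositions via the continuous logic construction of Lemma \ref{lemma: elementary facts about G-expansion of representation}. Fix an irreducible unitary representation $\sigma$ of $G$ on a Hilbert space $H$ and pick a nonzero vector $v \in H$; by irreducibility, $v$ is automatically cyclic. Let $X$ be the metric closure of the $G$-orbit of $v$ and equip $X$ with the inner product inherited from $H$, viewed as a single positive-definite binary function. By Proposition \ref{proposition: representations of algebraic groups are asymptotically free}, the $G$-orbit of $v$ is asymptotically free, and by Lemma \ref{lemma: elementary facts about G-expansion of representation}(1) the expanded structure $M_G$ is atomic and $\omega$-near-homogeneous and $X$ is a complete type in $M_G$.

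Next, I would pass to the reduct $T$ of $Th(M_G)$ to the single sort $X$ with the inner product as the only symbol. The representation-theoretic statement ``the orbit of $v$ is asymptotically free'' translates verbatim into the model-theoretic statement of Definition \ref{definition: asymptotically free}: for any $w, w' \models X$, either $\langle w, w'\rangle = 0$ or $w' \in \bdd(w)$, because the set of conjugates $w'$ with $|\langle w, w'\rangle| \geq \epsilon$ is precompact and hence, under $\omega$-near-homogeneity, of bounded orbit over $w$. With this in hand, Proposition \ref{proposition: asymptotically free type is strongly minimal disintegrated} applies directly to $T$, yielding that $T$ is strongly minimal and disintegrated, and by construction it carries a definable positive definite function (the inner product).

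To define $\rho : G \to \Aut(X)$, I would use that $\sigma$ acts by isometries preserving the inner product and stabilising the orbit $Gv$, hence extends by continuity to isometries of $X$; these are automorphisms of the reduct. Lemma \ref{lemma: elementary facts about G-expansion of representation}(2) shows that the canonical representation of $\Aut(X)$ restricted along $\rho$ recovers $\sigma$ up to equivalence, so nothing is lost in passing from $\sigma$ to $\rho$. Continuity of $\rho$ is immediate from continuity of $\sigma$ and the pointwise-convergence topology on $\Aut(X)$. The final dependence statement holds essentially by construction: the sort $X$ and the map $\rho$ are read off directly from the orbit $Gv \subseteq H$ and the action of $G$ on it, with no further choices.

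The main obstacle, such as it is, is conceptual rather than computational: ensuring that Proposition \ref{proposition: asymptotically free type is strongly minimal disintegrated} is applied to the correct object, namely the reduct of $Th(M_G)$ to the single sort $X$ with inner product, rather than to $Th(M_G)$ itself (which would carry extra structure from the $G$-orbits and would not in general be strongly minimal). Once this reduction is made, everything fits together and the corollary follows.
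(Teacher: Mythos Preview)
Your proposal is correct and follows exactly the approach the paper intends: the paper states this corollary without proof, simply noting that it follows by combining Proposition \ref{proposition: representations of algebraic groups are asymptotically free} and Proposition \ref{proposition: asymptotically free type is strongly minimal disintegrated}, and you have correctly spelled out how these fit together via the $M_G$ construction of Lemma \ref{lemma: elementary facts about G-expansion of representation}. Your observation that one must pass to the reduct consisting of the single sort $X$ with only the inner product (rather than the full $M_G$) before invoking Proposition \ref{proposition: asymptotically free type is strongly minimal disintegrated} is exactly the point, and your handling of the dependence on the orbit is adequate.
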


\subsection{Unitary representations of automorphism groups of $\omega$-categorical structures} \label{subsection: omega-categorical structures}

\emph{In this section, we recall some results  of \cite{Tsankov2012} and \cite{Ibarlucia2021} about unitary representations of automorphisms groups of $\omega$-categorical structures and we show that Corollary \ref{corollary: orthogonal structure for omega-categorical theories} combined with Proposition \ref{proposition: representation theory of asymptotically free type} recovers the classification theorem in \cite{Tsankov2012}.}

\medskip

We refer the reader to \cite{BenYaacov2008} for standard facts concerning $\omega$-categorical structures in continuous logic.   Recall   the Ryll-Nardzewski theorem in continuous logic which says that $T$ is $\omega$-categorical if and only if every complete type is principal (see \cite{BenYaacov2008} 12.10).  See also \cite{BenYaacov2008} 12.11 whcih shows that if $M$ is the separable model of an $\omega$-categorical theory, then $M$ is $\omega$-near-homogeneous.

For an $\omega$-categorical continuous logic theory $T$,  we define the expansion $T^{princ}$ as in Definition  \ref{definition: expansion by a distance-definable set} by adding $p$ as a new sort to $T$, where $p$ is any principal type of $T$.     The following result is Lemma 1.1 in \cite{Ibarlucia2021}  rephrased in the language of this paper.

\begin{lemma}
Let $T$ be an $\omega$-categorical continuous logic theory and let $M$ be an $\omega$-near-homogeneous model of $T$.
Let $\sigma$ be a representation of $\Aut(M)$ on a Hilbert space $H$. Then there is an interpretable Hilbert space $\mathcal{H}$ in $T^{princ}$ such that $\sigma$ is equivalent to the canonical representation of   $\Aut(M)$ on $H(M)$.
\end{lemma}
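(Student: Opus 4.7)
The plan is to exhibit $\sigma$ as the canonical representation on an interpretable Hilbert space in $T^{princ}$, by building interpretation maps out of the smooth vectors of $\sigma$. For each finite tuple $a \subseteq M$, let $V_a \subseteq H$ be the closed subspace of $\sigma(\Aut(M/a))$-invariant vectors. Since $\Aut(M)$ has a neighbourhood basis of the identity consisting of the stabilisers $\Aut(M/a)$ of finite tuples, continuity of $\sigma$ implies that $\bigcup_a V_a$ has dense span in $H$; so it will suffice to construct compatible interpretation maps on each $V_a$ and invoke Proposition \ref{proposition: equivalent definition of piecewise interpretable hilbert space}.

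For each smooth vector $v \in V_a$, setting $p = \tp(a)$, I would define $\phi_v : X_p \to H$ first on the $\Aut(M)$-orbit of $a$ by $\phi_v(ga) = \sigma(g)v$. This is well-defined because $g_1 a = g_2 a$ implies $g_1^{-1}g_2 \in \Aut(M/a)$, which fixes $v$; by $\omega$-near-homogeneity the orbit of $a$ is dense in $X_p$, so $\phi_v$ extends by continuity. Given a second smooth vector $w \in V_b$ with $q = \tp(b)$, the inner product $\langle \phi_v(ga), \phi_w(g'b)\rangle = \langle \sigma(g)v, \sigma(g')w\rangle$ is invariant under right multiplication of $g$ and $g'$ by elements of $\Aut(M/a)$ and $\Aut(M/b)$ (because $v$ and $w$ are fixed there), and under simultaneous left multiplication by any $h \in \Aut(M)$ (because $\sigma$ is unitary); hence it depends only on $\tp(ga, g'b)$.

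The main obstacle, and the place where both $\omega$-categoricity and $\omega$-near-homogeneity enter essentially, will be to upgrade this observation to genuine definability of the inner product maps. By Ryll-Nardzewski for continuous logic (\cite{BenYaacov2008}, 12.10), the $\omega$-categoricity hypothesis forces $S_{pq}(\emptyset)$ to be metrically compact; continuity of $\sigma$ then makes the function induced on this compact space continuous, which in continuous logic means it is a definable predicate, whence so is $\langle \phi_v(x), \phi_w(y)\rangle$ as a function on $X_p \times X_q$ in $T^{princ}$. With definability in hand, Proposition \ref{proposition: equivalent definition of piecewise interpretable hilbert space} applied to the family $\{\phi_v\}$ produces the interpretable Hilbert space $\mathcal{H}$ together with a canonical surjective isometry $H(M) \to H$ that intertwines the two representations, since the $\Aut(M)$-equivariance of each $\phi_v$ is built in by construction.
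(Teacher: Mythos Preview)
Your outline matches the standard argument (the paper itself gives no proof, deferring to Ibarluc\'ia's Lemma~1.1), but two steps are not justified in the continuous setting.

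First, the sentence ``$\Aut(M)$ has a neighbourhood basis of the identity consisting of the stabilisers $\Aut(M/a)$'' is false for continuous $T$: the basic neighbourhoods are $\{g : d(ga,a)<\epsilon\}$, and pointwise stabilisers are closed but generally not open. The conclusion that $\bigcup_a V_a$ is dense still holds, but you need the Bruhat--Tits/circumcentre argument: continuity of $\sigma$ gives, for each $v$ and $\epsilon>0$, a tuple $a$ with $\|\sigma(g)v-v\|<\epsilon$ whenever $d(ga,a)$ is small; the $\Aut(M/a)$-orbit of $v$ then has diameter $\leq 2\epsilon$ and its circumcentre lies in $V_a$ within $\epsilon$ of $v$.

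Second, and more seriously, the extension of $\phi_v$ ``by continuity'' to $X_p$ and the claim that ``continuity of $\sigma$ makes the function induced on $S_{pq}$ continuous'' both presuppose that $d(ha,a)\to 0$ forces $\|\sigma(h)v-v\|\to 0$. Continuity of $\sigma$ only gives this with $a$ replaced by some possibly larger tuple $c$; passing from $c$ back to $a$ amounts to showing that the orbit map $\Aut(M)/\Aut(M/a)\to X_p$ is open, i.e.\ that every neighbourhood $U\cdot\Aut(M/a)$ of the stabiliser contains $\{h:d(ha,a)<\delta\}$ for some $\delta$. This is where $\omega$-categoricity does real work beyond metric compactness of $S_{pq}$: one uses either that the separable $\omega$-categorical model is \emph{exactly} $\omega$-homogeneous (so the orbit equals $X_p$ and Effros' theorem applies to the Polish action), or the Roelcke-precompactness characterisation identifying bi-$K$-invariant continuous functions on $\Aut(M)$ with definable predicates. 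Without this step, ``depends only on $\tp(ga,g'b)$'' gives $\Aut(M)$-invariance but not the uniform continuity needed for definability in continuous logic.
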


If $T$ is a classical logic theory, then the $T^{princ}$ construction is not needed, since a principal type is a definable set $D$ and we can  define the interpretation map outside of $D$ to be the trivial $0$ map.  Now the following lemma is a combination of Lemma 3.1 in \cite{Tsankov2012} and Lemma 1.1 in \cite{Ibarlucia2021}:

\begin{lemma}\label{lemma: unitary representations strictly interpretable in classical omega categorical theory}
Let $T$ be a classical logic $\omega$-categorical theory and let $M$ be an   $\omega$-homogeneous model of $T$. Let $\sigma$ be a unitary representation of $\Aut(M)$ on a Hilbert space $H$. Then there is a strictly interpretable Hilbert space $\mathcal{H}$ in  $T$ such that $\sigma$ is equivalent to the canonical representation of $\Aut(M)$ on $H(M)$.
\end{lemma}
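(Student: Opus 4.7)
The strategy is to apply the GNS-style criterion of Proposition \ref{proposition: equivalent definition of piecewise interpretable hilbert space}: I will produce a family of definable sets $(S_i)_{i\in I}$ from $T$ together with maps $F_i\colon S_i \to H$ whose pairwise inner product maps are definable and take only finitely many values, so that $\bigcup_i F_i(S_i)$ has dense span in $H$. This will yield a strictly interpretable Hilbert space $\mathcal{H}$, and by construction the intertwining between $H(M)$ and $H$ will be visibly $\Aut(M)$-equivariant.

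The first step is to locate a rich enough supply of vectors stabilised by pointwise stabilisers of finite tuples. Since $T$ is classical $\omega$-categorical, $\Aut(M)$ is a topological group with a basis of open subgroups of the form $K_a=\Aut(M/a)$ for $a$ a finite tuple from $M$. By continuity of $\sigma$, for every $v\in H$ and every $\e>0$ there is a tuple $a$ with $\|\sigma(k)v-v\|<\e$ for all $k\in K_a$. The $K_a$-orbit of $v$ is then norm-bounded, so its closed convex hull is weakly compact and possesses a unique element of minimal norm; this element is $K_a$-invariant by uniqueness, and within distance $\e$ of $v$. Hence the union, over all finite tuples $a\subseteq M$, of the subspaces $H^{K_a}$ of $K_a$-invariant vectors has dense linear span in $H$. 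Pick $\{v_i\}_{i\in I}$ inside $\bigcup_a H^{K_a}$ such that $\bigcup_i \Aut(M)\cdot v_i$ spans a dense subspace of $H$, and for each $i$ fix a tuple $a_i$ with $v_i\in H^{K_{a_i}}$.

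Next, by $\omega$-categoricity the type $p_i=\tp(a_i)$ is isolated by a formula, so its realisation set $S_i\subseteq M$ is a definable set (in a Cartesian product of sorts of $T$). Using $\omega$-homogeneity of $M$, define $F_i\colon S_i\to H$ by $F_i(b)=\sigma(g)v_i$ for any $g\in\Aut(M)$ with $g(a_i)=b$; any two such $g$ differ by an element of $K_{a_i}$ and so give the same value of $F_i(b)$, so the map is well-defined. For any 4-tuple of tuples realising the same 2-type, the computation
\[
\langle F_i(b'),F_j(c')\rangle=\langle\sigma(hg_b)v_i,\sigma(hg_c)v_j\rangle=\langle\sigma(g_b)v_i,\sigma(g_c)v_j\rangle=\langle F_i(b),F_j(c)\rangle
\]
shows that $(b,c)\mapsto\langle F_i(b),F_j(c)\rangle$ factors through the 2-type space on $S_i\times S_j$. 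By $\omega$-categoricity again, this space is finite, so the inner product map is strictly definable in $T$ on each pair $S_i\times S_j$.

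Finally, Proposition \ref{proposition: equivalent definition of piecewise interpretable hilbert space} produces an interpretable Hilbert space $H(M)$ with direct limit maps $h_i\colon S_i\to H(M)$ whose inner products agree with those of the $F_i$; by construction it is strictly interpretable, so defines a strictly interpretable $\mathcal{H}$ in $T$ (Definition \ref{definition: strictly interpretable Hilbert space}). The uniqueness part of the GNS theorem gives a unitary isomorphism $U\colon H(M)\to H$ with $U\circ h_i=F_i$, because both generating systems satisfy the same inner product data and span densely. That $U$ intertwines the canonical representation on $H(M)$ with $\sigma$ is immediate from the computation $F_i(g\cdot b)=\sigma(g)F_i(b)$ together with the fact that the canonical representation on $H(M)$ is defined by $g\cdot h_i(b)=h_i(g\cdot b)$. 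The main delicate point is the convex-hull argument producing enough $K_a$-invariant vectors: everything after that is a careful book-keeping of types and $\omega$-homogeneity, while the $\omega$-categoricity of $T$ is precisely what turns the ``finitely many 2-types'' observation into strict definability of the inner product.
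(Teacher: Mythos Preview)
Your proof is correct. Note that the paper does not actually prove this lemma; it merely cites it as ``a combination of Lemma 3.1 in \cite{Tsankov2012} and Lemma 1.1 in \cite{Ibarlucia2021}.'' Your argument is essentially the content of those references: the convex-hull step producing enough $K_a$-invariant vectors is precisely Tsankov's Lemma 3.1, and the passage from these vectors to a (strictly) interpretable Hilbert space via the GNS criterion is the observation behind Ibarluc\'ia's Lemma 1.1, specialised to the discrete $\omega$-categorical case. One small remark: your $S_i$ are definable subsets rather than full sorts, but as the paper notes just before the lemma, in classical logic one extends each $F_i$ by $0$ off $S_i$ to get an inner product map on the ambient sort, so this is harmless.
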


Let $T$ be a classical logic $\omega$-categorical theory and let $M$ be an $\omega$-homogeneous model of $T$. Note that if $p$ is a type in a classical imaginary sort of $M$, then the relation $x \in \acl(y)$ is symmetric  and transitive on $p$. This is because $\acl(x)\cap p$ is a finite set with fixed cardinality.  Applying Corollary \ref{lemma: unitary representations strictly interpretable in classical omega categorical theory}, Corollary \ref{corollary: orthogonal structure for omega-categorical theories} and Proposition \ref{proposition: representation theory of asymptotically free type}, we deduce directly that every unitary representation of $\Aut(M)$ is an orthogonal sum of  representations obtained by inflation from representations of   groups of partial automorphisms of finite sets of the form $\acl(a) \cap \tp(a)$, where $a$ is a classical imaginary element of $M$.  This is precisely the classification theorem 5.2 in \cite{Tsankov2012}.
 
It remains to be seen if it is possible to build on the techniques developed in this paper in order to find a classification of the unitary representations of continuous logic $\omega$-categorical structures. This is an open question for future research.

\appendix
\section{Continuous logic}

In \emph{any} version of continuous logic, one has the notion of a {\em type-definable set}.  The collection of type-definable sets is closed under positive Boolean combinations and under projections.   As in the discrete logic case, the projection $(\exists x)p(x,y)$ of a partial type $p(x,y)$ is  the partial type $q(y)$ such that, {\em in a sufficiently saturated model}, $q(b)$ holds iff there exists $a$ with $p(a,b)$. We can thus freely write formulas or sentences involving positive first order operations.  These describe partial types in any formulation of continuous logic, regardless of a specific calculus.

With this in mind, we give a presentation of continuous logic based on the approach of \cite{HensonIovino2002}. A similar approach was recently used in  \cite{Gomez2021}. This approach uses the syntax of classical logic and uses classical results of model theory to deduce corresponding results in continuous logic. This formalism  is completely equivalent to the approach of \cite{BenYaacov2008} and this paper is written so that it is always easy to translate any argument into the formalism of \cite{BenYaacov2008} if one wishes to do so.

In \ref{appendix: continuous logic}, we define syntax and type-spaces and we discuss briefly the relation to  \cite{HensonIovino2002}. In \ref{appendix: usual model theoretic notions}, we recall some   model theoretic notions which are used in this paper. This section is mainly based on \cite{BenYaacov2008}. In \ref{appendix: Hilbert spaces in continuous logic}, we recall some classical facts about the model theory of Hilbert spaces.

\subsection{Continuous logic and type-spaces}\label{appendix: continuous logic}
\emph{In this section we introduce the basic concepts of continuous logic. Since we take classical discrete logic as our starting point, we will use the qualifiers `classical logic...' and  `continuous logic...' to highlight the differences between the two logics. We do not use this terminology elsewhere in this paper.}


 \medskip

In continuous logic, we will work with  positive formulas (and their negations):
\begin{definition}
Let $\mathcal{L}$ be an arbitrary language for classical first-order logic. We say that an $\mathcal{L}$-formula $\phi(x)$ is positive if $\phi(x)$ is logically equivalent to a formula which uses only the logical connectives $\wedge$ and $\vee$ and the usual quantifiers $\forall$ and $\exists$.
\end{definition}
Note that if $\phi$ and $\psi$ are positive $\mathcal{L}$-formulas, then $(\neg \phi \rightarrow \psi)$ is positive. In general, positive formulas can have very weak expressive power but we will always work in certain languages and theories where they have strong expressive power.  The restrictions we impose on $\mathcal{L}$ are the following.

We always fix a multi-sorted language $\mathcal{L}$ with sorts $(S_i)_{i\in I}$ and $(I_n)_{n \geq 1}$. We refer to the sorts $(S_i)$ as the \emph{metric sorts} and to the sorts $I_n$ as the \emph{value sorts}.   $\mathcal{L}$ has an equality relation on every value sort but not on any metric sort.   Each sort $I_n$ will be identified with the interval $[-n, n]$, so we add functions $i_{nm}: I_n \to I_m$ for $n\leq m$ which will play the role of inclusion functions. The value sorts are also equipped with the following structure of the real numbers: functions $+,- ,\times,  \max$ between the appropriate value sorts and predicates $= $ and $\leq$ (note that we choose $\leq$ and not $<$, for reasons which become clear below). In each value sort $I_n$ we also add a constant symbol for each rational number in $[-n, n]$.  Each metric sort is equipped with a function $d_i : S_i \times S_i \to I_n$ for some $n$. $\mathcal{L}$ may contain more function symbols but no  other relation symbols.

Whenever we fix $\mathcal{L}$, we also fix a \emph{minimal $\mathcal{L}$-theory} $T_\mathcal{L}$. This is a collection of $\mathcal{L}$-sentences $T_\mathcal{L} = T_0 \cup T_\Rr$ satisfying the following conditions:
\begin{enumerate}
\item   $T_\Rr$ says that the value sorts satisfy the full first-order theory of the real numbers (where $I_n$ is identified with $[-n, n]$)   
 
\item $T_0$ says that every $d_i$ is a pseudometric on $ S_i$ and that $d_i$ is bounded by some rational $c_i$. 

\item $T_0$ says that every function symbol $f$ in $\mathcal{L}$ is a uniformly continuous function in the following way: for every rational $\e > 0$ there is a  rational $\delta > 0$ such that $T_\mathcal{L}$ contains the positive sentence
\[
  \forall x, y( d(x, y) < \delta \rightarrow d_i(f(x), f(y)) \leq \e)
\]
where $x, y$ are finite tuples of variables appropriate for $f$ and $d$ is the max-metric on the sorts corresponding to    the tuple $x$ and $d_i$ is the metric on the sort of $f(x)$.
\end{enumerate}
When working with $\mathcal{L}$, we only ever consider models of $T_\mathcal{L}$.
\begin{definition}
A continuous logic $\mathcal{L}$-structure $M$ is an $\mathcal{L}$-structure in the usual sense of classical first order logic such that $M \models T_\mathcal{L}$, the value sorts of $M$ are the standard real numbers, and each metric sort of $M$ is a complete metric space.
\end{definition}

A useful way of constructing continuous logic $\mathcal{L}$-structures is to quotient sufficiently saturated classical $\mathcal{L}$-structures: given any model $M$ of $T_\mathcal{L}$, we define $\tilde{M}$ to be the structure obtained by quotienting every sort of $M$ by the $\bigwedge$-definable equivalence  relation $E(x, y)$ which says that $x$ and $y$ are within distance $\leq 1/n$ for every $n\geq 0$. It is easy to check that for every function symbol $f\in \mathcal{L}$, the interpretation $f^M$ of $f$ in $M$ induces a uniformly continuous function $f^{\tilde{M}}$ on $\tilde{M}$. Therefore $\tilde{M}$ is a classical $\mathcal{L}$-structure and $\tilde{M} \models T_\mathcal{L}$ which respects the uniform continuity conditions of $T_\mathcal{L}$.

Therefore, if $M \models T_\mathcal{L}$ then $\tilde{M} \models T_\mathcal{L}$. If $M$ is $\omega_1$-saturated, then the value sorts of $\tilde{M}$ are the standard real numbers and each sort of $\tilde{M}$ is  a complete metric space. Therefore $\tilde{M}$ is a continuous logic $\mathcal{L}$-structure. We say that $\tilde{M}$ is the \emph{standardisation} of $M$.

 The key property of positive formulas is the following: if $\phi(x)$ is any positive formula  and $M \models \phi(a)$ where $a \in M$ and $M$ is a classical $\mathcal{L}$-structure, then $\tilde{M} \models \phi(\tilde{a})$ where $\tilde{a}$ is the equivalence class of $a$ in $\tilde{M}$.

\medskip 
 
For a continuous logic language $\mathcal{L}$, a positive formula $\phi(x)$ in $\mathcal{L}$, and a rational $\e > 0$, we   define the \emph{$\e$-approximation of $\phi(x)$}, written $\phi^\e(x)$, as the formula obtained from $\phi$ by weakening all the bounds mentioned in $\phi$ by $\e$. This construction is adapted from Section 5 in \cite{HensonIovino2002}. Explicitly, $\phi^\e$ is defined up to logical equivalence inductively as follows:
\begin{enumerate}
\item if $\phi(x)$ is of the form $f(x) = r$ where $r$ is a rational and $f(x)$ is an $\mathcal{L}$-term, then $\phi^\e(x)$ is $|f(x) - r| \leq \e$
\item if $\phi(x)$ is of the form $f(x) \leq r$, then $\phi^\e(x)$ is $f(x) \leq r + \e$
\item if $\phi(x) = \psi(x) \wedge \chi(x)$, then $\phi^\e(x) = \psi^\e(x) \wedge \chi^\e(x)$, and similarly if $\phi(x) = \psi(x) \vee \chi(x)$
\item if $\phi(x) = (\exists y) \psi(x, y)$, then $\phi^\e(x)  = (\exists y)\psi^\e(x, y)$, and similarly if $\phi(x) = (\forall y) \psi(x, y)$.
\end{enumerate}
Note that in any   $M \models T_\mathcal{L}$, we always have  $\phi(M) \subseteq \phi^\e(M)$ for any $\e > 0$.  

In continuous logic, we are usually interested in knowing if a formula $\phi(x)$ is \emph{approximately satisfied} by a point $a$ in $M$. This means that for all $\e > 0$, $M \models \phi^\e(a)$, where $\models$ is meant in the usual sense of classical logic. The difference between satisfaction and approximate satisfaction is the same as the difference between saying $(\exists y) f(a, y) = 0$ and $\inf_y |f(a, y) |= 0$. 
\cite{HensonIovino2002} take the route of defining a new relation of approximate satisfaction inside a continuous logic structure (see section 5 in \cite{HensonIovino2002}). In this paper, we prefer to keep the usual notion of satisfaction but we work with approximations of formulas.

In Section 5 of \cite{HensonIovino2002}, the authors define the \emph{weak negation} $neg(\phi)(x)$ of a positive $\mathcal{L}$-formula $\phi(x)$ as follows: 
\begin{enumerate}
\item if $\phi(x)$ is of the form $f(x) = r$, then $neg(\phi)(x)$ is $|f(x) - r| \geq 0$
\item if $\phi(x)$ is of the form $f(x) \leq r$, then $neg(\phi)$ is $f(x) \geq r$
\item if $\phi(x) = \psi(x) \wedge \chi(x)$ then $neg(\phi)(x) = neg(\psi)(x) \vee neg(\chi)(x)$, and similarly if $\phi(x) = \psi(x) \vee \chi(x)$. 
\item if $\phi(x) = (\exists y) \psi(x, y)$, then $neg(\phi) = (\forall y) neg(\psi)(x, y)$, and similarly if $\phi(x) = (\forall y) \psi(x, y)$.
\end{enumerate}

Note that $neg(\phi)$ is a positive formula and in any model of $T_{\mathcal{L}}$, $\neg \phi \subseteq neg(\phi)$. It is easy to check that for any continuous logic structure $M$ and $a \in M$, for any positive formula, for all $\e > 0$, either $M \models \phi^\e(a)$ or $M \models neg(\phi^\e(a))$ and that for $\e < \delta$, $\phi^\e$ and $neg(\phi^\delta)$ are inconsistent. This shows that continuous logic languages have strong expressive power.


\begin{definition}
Let $T$ be a consistent collection of $\mathcal{L}$-sentences. We say that $T$ is a continuous logic $\mathcal{L}$-theory if $T 
\supseteq T_\mathcal{L}$ and for every $\phi\in T$, either $\phi \in T_\mathcal{L}$ or there is some positive sentence $\psi$ and $\e > 0$ such that $\phi = \psi^\e$

We say that $T$ is a complete continuous logic $\mathcal{L}$-theory if for every positive $\mathcal{L}$-sentence $\phi$, for every $\e > 0$, $T$ contains either $\phi^\e$ or $neg(\phi^\e)$.

When $M$ is a continuous logic $\mathcal{L}$-structure, we write $Th(M) = \{\phi^\e \mid  M \models \phi, \e > 0\}$

\end{definition}

Therefore, a complete continuous logic theory $T$ is not maximal consistent with respect to positive sentences. Nevertheless, if $M \models T$ is a continuous logic structure and $N$ is a nonprincipal ultrapower of $M$, then the set of positive sentences true in $\tilde{N}$ is maximal consistent and does not depend on $M$.

Let $M, N$ be continuous logic  structures.  We say that   $N$   is an \emph{elementary extension} of $M$ (and we write $M \prec N$) if $M$ is a substructure of $N$ in the usual sense and for any tuple $a$ in $M$ and any positive formula $\phi(x)$, if $M \models \phi(a)$ then $N \models \phi(a)$.  Note that $Th(M) = Th(N)$ although more positive formulas and sentences may be true in $N$ than in $M$.

\begin{definition}
Let $T$ be a continuous logic theory. A continuous logic type-definable set $p(x)$ is a collection of positive formulas consistent with $T$ such that for every $\phi(x) \in p(x)$ there is some positive $\psi(x)$ and $\e > 0$ and $\phi = \psi^\e$.

A continuous logic complete type $p(x)$ is a continuous logic type-definable set such that for any positive formula $\phi(x)$ and $\e > 0$,  $p$ contains either $\phi^\e$ or $neg(\phi^\e)$.

When $M \models T$ is a continuous logic structure, $a \in M$ and $ A \subseteq M$, we write $\tp(a/A)$ for the continuous logic type of $a$ over $A$. This is the set of formulas $\phi^\e(x)$ such that $\phi(x)$ is a positive formula over $A$ satisfied by $a$.

We write $S_x(T)$ for the space of continuous logic complete types in $x$. 
\end{definition}

Equivalently, we could define a  continuous logic   type $p(x)$ as a continuous logic type-definable set such that there is a maximal consistent set $q(x)$ of positive $\mathcal{L}$-formulas  with $p  = \{\phi^\e \mid \phi  \in q \}$.

Saturation is defined as expected for continuous logic structures, with respect to continuous logic types. Note that the existence of saturated continuous logic  structures follows from the existence of saturated models in classical first order logic: if $M \models T$ is  $\kappa$-saturated in the usual sense of classical logic, it is easy to show that the standardisation  $\tilde{M}$ is $\kappa$-saturated as a continuous logic structure. Homogeneous continuous logic structures are constructed in the same way.

We know from classical logic that $S_x(T)$ is a compact topological space. A basis of  closed sets of $S_x(T)$ is given by the formulas contained in the   continuous logic types of $T$. Moreover, our discussion of the weak negation $neg(\phi)$ shows that $S_x(T)$ is Hausdorff.
%
%
%


For the purpose of this appendix, write $S_{pos}(T)$ for the set of maximal consistent types of positive formulas. For every $p(x) \in S_{pos}(T)$, define $\tilde{p} = \{\phi^\e \mid \phi \in p\} \in S_x(T)$. 
When $M \models T$ is an $\omega_1$-saturated continuous logic structure, every tuple $a \in M$ realises some $p \in S_{pos}(T)$. Therefore, $S_{pos}(T)$ and $S_x(T)$ are homeomorphic topological spaces via the map $p \mapsto \tilde{p}$. 

In this paper we    often  work in $\omega_1$-saturated continuous logic structures. In this context, continuous logic is equivalent to working with the fragment of positive formulas  in classical logic, as is clear from the definitions we have laid down. Arguments about the type space $S_x(T)$ can be streamlined by discussing the space $S_{pos}(T)$  and working with arbitrary positive formulas. Whenever deducing results about non-saturated continuous logic structures, we are careful to introduce approximations of formulas to transfer the results appropriately.


  \medskip
\begin{changemargin}{0.7cm}{0.7cm}     
\emph{
In the remainder of this appendix, we work in continuous logic, so we say `structure' instead of `continuous logic structure', `theory' instead of `continuous logic theory',  `  type' instead of ` continuous logic type', etc. }
 \end{changemargin}

\subsection{Standard facts and definitions  in continuous logic}\label{appendix: usual model theoretic notions}
 Many basic results in classical logic go through to continuous logic unchanged.   We record some  definitions and results which are used in this paper.   In this section, $T$ always denotes a complete continuous logic theory.

\subsubsection{Bounded and definable closure} 
 
\begin{definition}\label{definition: bdd}
Let $M \models T$  and take $A\subseteq M$.   We say that a tuple $b\in M$ is in the bounded closure of $A$ if for every     elementary extension $N$ of $M$, there is no infinite indiscernible sequence realising $\tp (b /A)$ in $N$. 
We write $\bdd(A)$ for the bounded closure of $A$ in $M$. 

We say that a tuple $c \in M$ is in the definable closure of $A$ if for every elementary extension $N$ of $M$, $c$ is the only realisation of $\tp(c/A)$ in $N$. We write $\dcl(A)$ for the definable closure of $A$ in $M$.
\end{definition}

See 10.7 and 10.8 in \cite{BenYaacov2008} for standard results about definable and  bounded closure. Note in particular that if $M \prec N$ and $A \subseteq M$ then $\bdd(A)$ is the same set in $M$ and in $N$, and similarly for $\dcl(A)$.

\subsubsection{Definable functions}
\begin{definition}\label{definition: definable function}
Let $M$ be a    $\kappa$-saturated model of $T$ and take $A\subseteq M$    of size $< \kappa$. Let $X$ and $Y$ be finite Cartesian products of sorts of $T$ and let $p$ be a type-definable subset of $X$ over $A$.
 We say that a function $f : p \to Y$   is definable over $A$ if the set $\{(x, f(x)) \mid x \in p\}$ is    type-definable   over $A$.

When $M$ contains $A$ but isn't sufficiently saturated, we say that $f$ is  definable over $A$ if there is a sufficiently saturated elementary extension $N$ of $M$ and a definable function on $N$ which restricts to $f$.

\end{definition}

When $f : p \to Y$ is definable over $A$, we often identify $f$ with its graph in the type space $S_{xy}(A)$.   $f$ is definable on $p$ over $A$ if and only if the function $p \times Y \to \Rr, (x, y) \mapsto d(f(x), y)$ is definable on $p$ over $A$. See 9.24 in \cite{BenYaacov2008}.

In the special case where $Y$ is  the value sort of $T$ corresponding to the interval $[-n, n]$, the  definable functions $p \to Y$ over $A$  are exactly the continuous functions   $S_x(A) \cap p \to[-n, n]$. This is because the type-space of a $Y$ can always be identified with  the interval of real numbers $[-n, n]$. 

Since type-spaces are compact Hausdorff topological spaces, any complete type $q$ in $S_x(A)$ is uniquely determined by the values $f(q)$ where $f$ ranges over the $A$-definable functions $X \to \Rr$.
Urysohn's lemma also entails that any definable $f : p \to \Rr$ extends to a continuous function $S_x(A) \to \Rr$ so the local definition of $f$ on $p$ is not usually relevant. This is a significant difference with general definable functions $p \to Y$.

A useful technical fact is that any $A$-definable   $f : X \to \Rr$  is the uniform limit of a sequence of $A$-definable functions $(f_n)$ on $X$ such that for all $n$, there is a finite tuple $a_n$ in $A$ and a  function $g_n(x, y)$ definable by a term in the language $\mathcal{L}$ such that $f_n(x) = g_n(x, a_n)$.  To see this, note that such $g_n(x, a_n)$ form a lattice of functions on $S_x(A)$ which separate points. Therefore the Stone-Weierstrass theorem applies. One consequence of this is that  any $A$-definable $f : X \to \Rr$ is definable over a countable subset of $A$.

When working with  definable functions, we will often write down   formulas which contain symbols for these functions, e.g $M \models f(a) = 0$. This is a slight abuse of notation, especially when the functions are only defined on a type-definable set $p$. \emph{These expressions are meant as shorthand for type-definable sets in $\mathcal{L}$.}


As a final comment on definable functions, let $M \models T$, $A \subseteq N$, let $p$ be a type-definable set over $A$, and let $f : p \to Y$ be $A$-definable, where $Y$ is any finite product of sorts of $T$. We have seen that $f$ is defined at the level of the type-space or equivalently at the level of an elementary extension of $M$. Nevertheless, if $p(M) \neq \emptyset$ then $f : p(M) \to Y(M)$ is a total function. This is because $f(a) \in \dcl(Aa)$ for all $a \in M$ and we have seen that $\dcl(Aa)$ does not depend on $M$.

\subsubsection{Canonical parameters and imaginaries}\label{subsection: imaginaries in CL}

In this paper, we make a slightly non-standard use of the notion of canonical parameter. 
\begin{definition}\label{definition: imaginary element}
Let $M \models T$, let $A\subseteq M$ and let $f : X \to Y$ be   $A$-definable. We say that a single element $c \in M$ is a canonical parameter for $f$ if    for any   elementary extension  $N$ of $M$,  any automorphism of $N$ preserves $f$ if and only if it fixes $c$. 
 \end{definition}

This definition is slightly non-standard for the reason that we do not allow tuples of elements as canonical parameters. This is because canonical parameters consisting of exactly one element   play an important role in this paper. We obtain canonical parameters  by adding imaginary sorts whenever we need them. Imaginary sorts in continuous logic are a special case of hyperimaginary sorts in classical logic. See  \cite{BenYaacov2010} Section 5 for a clear comparison. 

\begin{definition}\label{definition: imaginary sort}

  An imaginary sort $S$ of $T$ is a Cartesian product of at most countably many sorts $(S_n)$ of $T$ endowed with a pseudo-metric $d$ such that there is an increasing sequence $(n_k)$ in $\Nn$ and definable pseudo-metrics $d_k$ on $\prod_{i =0}^{n_k} S_i$ such that the pseudo-metrics $(d_k)$ converge uniformly to $d$ on $S$. 
  
  This means that for any $\e > 0$, there is $K \geq 0$ such that for all $k \geq K$, in any $M \models T$, for any $(a_n), (b_n) \in \prod_{n \geq 0} S_n$, $|d((a_n), (b_n)) - d_k((a_n)_{n \leq n_k}, (b_n)_{n \leq n_k})| < \e$.
  

\end{definition}

If $S$ is an imaginary sort of $M$ with metric $d$, expressed as the product of the sorts $(M_n)$, we can add the sort $S$ to the language with the metric $d$ and projection maps $\pi_n : S \to M_n$. This construction is carried out in detail in \cite{BenYaacov2010}.

We   use  imaginary sorts   in two ways. Firstly, we can   use an imaginary sort to add a countable Cartesian product of metric spaces to the language: if $d_n$ is a metric on $M_n$ with diameter $1$ and $S$ is the product of the sorts $(M_n)$, then we can define $d(x, y) = \sum d_n(x_n, y_n)/2^n$. 

 Secondly, we can use imaginary sorts to add canonical parameters for arbitrary definable functions. If $f$ is a definable function over a countable set $A \subseteq M$, we have seen that we can express $f$ as the uniform limit of functions $g_n(x, a_n)$ where $a_n \subseteq A$ is finite and $g$ is a term in the language. For every $n$, let $S_n$ be the product of the sorts corresponding to the tuple $a_n$ and let $d_n$ be the definable pseudometric $d_n(y, z) = \sup_x d'(g_n(x, y), g_n(x, z))$ where $d'$ is the metric on the sort of $x$. The notion of forced limit in \cite{BenYaacov2010} shows how to ensure that the metrics $d_n$ are uniformly convergent. Let $S$ be the Cartesian product of the sorts $S_n$ and let $d$ be the limit of the pseudo-metrics $d_n$. Quotienting out $S_n$ and $S$ to obtain metric spaces, $S$ is an imaginary sort of canonical parameters for $f$. See \cite{BenYaacov2010} for details.
  
 
\subsubsection{Stability and definable types}

\begin{definition}
Let $M \models T$. Take $A\subseteq M$    and let $f : X \times Y \to \Rr$ be an $A$-definable function where $X$ and $Y$ are finite Cartesian products of sorts of $M$ and $f$ takes values in $\Rr$. We say $f$ is unstable if there is some elementary extension $N$ of  $M $,  indiscernible sequences $(a_n), (b_n)$ in $N$ and $\e >0$ such that $|f(a_n, b_m) - f(a_m, b_n)| \geq \e$ for all $n\neq m$. We say $f$ is stable if $f$ is not unstable. 
\end{definition}

\begin{definition}
Let $M \models T$  and let $A, B \subseteq M$. Let $p\in S_x(A)$ and let $f(x, y)$ be an $A$-definable function into $\Rr$.  
We say that $p$ is definable over $B$ with respect to $f$ if there is a $B$-definable function $g(y)$ such that for any tuple $a$ in $A$ in the sort of $y$, $p(x)$ entails that $f(x, a) = g(a)$. In that case we write $g(y) = d_pf(y)$.


%
%
\end{definition}

In this paper we   work with   local stable independence. First developed   in \cite{Shelah1978a}   and \cite{Pillay1986}    for classical logic, local stable independence for continuous logic has roots in \cite{Shelah1975}  and was studied in \cite{BenYaacov2010}. We only recall the main definition:

\begin{definition}\label{definition: forking}
 Let $M \models T$, let $C\subseteq B \subseteq M$ and $p(x)\in S(B)$. Let $\Delta$ be a set of stable functions definable over $A$. 
 We say that $p(x)$ does not fork over $C$ with respect to $\Delta$ if we can add imaginary sorts to $M$ and extend  $p$ to $\bdd(B)$ so that $p$  is definable over $\bdd(C)$ with respect to $\Delta$.

Let $A \subseteq M$. We  write $A \ind_C^\Delta B$ if $\tp(A/BC)$ does not $\Delta$-fork over $C$.

When $\Delta$ is the set of all stable functions, we simply say that $p$ does not fork over $C$ and we write $A \ind_C B$. 
\end{definition}

We refer the reader to \cite{Pillay1986} and \cite{BenYaacov2010} for an exposition of the theory of stable independence.   In this paper we   make use of Morley sequences in the context of local stability.  

\begin{definition}\label{definition: Morley sequence}
Let $M \models T$  and let $\Delta$ be a set of stable formulas. We say that a sequence $(a_n)$ in $M$ is a Morley sequence   over $A$ with respect to $\Delta$ if for all $n$ we have $a_{n+1} \ind^\Delta_{A}a_0\ldots a_n$ and $\tp(a_n / \bdd(A)) = \tp(a_0 / \bdd(A))$.
\end{definition}
 
\subsubsection{Continuous logic and classical logic}\label{subsectionL CL and classical logic}
  
   Continuous logic is a direct generalisation of classical logic, and there is a canonical way of taking a classical logic theory $T$ and viewing it as a continuous logic theory $T^{cont}$. The construction of $T^{cont}$ is as follows.
   
    Every sort of $T$ is now viewed as a metric space with the discrete metric with diameter $1$. We remove the equality symbol from the sorts of $T$. Observe that there is no loss of information in doing this, since $x = y$ is equivalent to $d(x, y)\leq 1/2$ and $x \neq y$ is equivalent to $d(x, y) \geq 1/2$, both of which are approximations of positive formulas. We add the usual value sorts to $T$.   Function symbols in the language of $T$ are unchanged. For each relation symbol $R$ in the language of $T$, we substitute   a function symbol $f_R$ which we view as the indicator function of $R$ in the corresponding continuous logic sort. It is then clear how to axiomatise a continuous logic theory $T^{cont}$ so that there is an exact correspondence between models of $T$ and   models of $T^{cont}$. 

In this paper, when we construct a theory $T^{cont}$ from a classical logic theory $T$, we  distinguish two kinds of imaginary sorts in $T^{cont}$. Firstly we have the \emph{classical imaginary sorts of  $T^{cont}$} which come from the imaginary sorts of $T$ defined in the usual way (see \cite{TentZiegler}). 
Secondly we have the \emph{continuous logic imaginary sorts of  $T^{cont}$} which are imaginary sorts obtained by the constructions sketched in  \ref{definition: imaginary sort}. These correspond to certain hyperimaginary sorts of $T$. Unless specified otherwise, `imaginary sort' and `imaginary element' always refer to imaginaries in continuous logic, as defined in  \ref{definition: imaginary sort}.

\subsubsection{Stable embeddedness  }

We show that the classical notion of stable embeddedness adapts easily from classical logic to  continuous logic. An account of stable embeddedness in the context of classical logic can be found in  the appendix of \cite{Chatzidakis1999}. Here we take $T$ to be a complete continuous logic theory.

\begin{definition}\label{definition: stable embededness}
Let $\mathcal{D}$ be a collection of type-definable sets of $T$. We say that $\bigcup \mathcal{D}$ is stably embedded in $T$ if for any $\kappa$-saturated $M \models T$ with $|M| = \kappa$, any $M$-definable function  from a  finite Cartesian products of sets in $\mathcal{D}$ to $\Rr$ is definable over $\bigcup \mathcal{D}(M)$. 
\end{definition}

In the above definition, saturation is not essential, but it is convenient to include it. Saturation can be eliminated by considering imaginaries as in  \cite{Chatzidakis1999}. We will replicate that argument in Proposition \ref{proposition: T is stably embedded} in a more restricted setting so we only consider saturated structures for now.
  
\begin{lemma}\label{lemma: stable embedded conditions}
Let $\mathcal{D}$ be a collection of distance-definable sets in $T$ (see Definition \ref{definition: distance-definable}). The following are equivalent:
\begin{enumerate}
\item $ \bigcup \mathcal{D}$ is stably embedded in $T$.\
\item  For any $\kappa$-saturated $M  \models T$ with $|M| = \kappa > |\mathcal{L}|$, for every  finite tuple $a\in M$, $\tp(a/\bigcup\mathcal{D}(M))$ is definable over a subset  of $\bigcup\mathcal{D}(M)$ of size at most $|\mathcal{L}|$.
\item For any $\kappa$-saturated $M  \models T$ with $|M| = \kappa > |\mathcal{L}|$, for every finite tuple $a\in M$, there is a subset $C$ of $\bigcup\mathcal{D}(M)$ of size at most $|\mathcal{L}|$ such that $\tp(a/C)$ extends uniquely to $\bigcup\mathcal{D}(M)$.\
\item For any $\kappa$-saturated  $M \models T$ with $|M| = \kappa > |\mathcal{L}|$, every automorphism of $\bigcup\mathcal{D}(M)$ extends to an automorphism of $M$.
\end{enumerate}
\end{lemma}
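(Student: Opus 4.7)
The plan is to prove the implications (1)$\Rightarrow$(2)$\Rightarrow$(3)$\Rightarrow$(4)$\Rightarrow$(1) in a cycle. Throughout, fix $M \models T$ a $\kappa$-saturated, $\kappa$-strongly homogeneous model with $|M| = \kappa > |\mathcal{L}|$, and write $D = \bigcup \mathcal{D}(M)$.

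For (1)$\Rightarrow$(2): Fix a finite tuple $a \in M$. The type $\tp(a/D)$ is determined by the values $f(a,\bar{c})$ as $\bar{c}$ ranges over $D$ and $f$ ranges over $M$-definable $\mathbb{R}$-valued functions. For each formula $\phi(x;\bar{y})$ over $\emptyset$ and each $\bar{c}$ in a sort of $\mathcal{D}$, the instance $\phi(a;\bar{y})$ is an $M$-definable function into $\mathbb{R}$ on the product of sorts in $\mathcal{D}$; by (1) it is definable over $D$, and by the remark after Definition \ref{definition: definable function} using Stone--Weierstrass, over a countable subset of $D$. Taking a union over all formulas $\phi$, which there are at most $|\mathcal{L}|$ of, the resulting parameter set $C \subseteq D$ has size at most $|\mathcal{L}|$ and defines every instance, so $\tp(a/D)$ is definable over $C$.

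For (2)$\Rightarrow$(3): If $\tp(a/D)$ is definable over $C \subseteq D$ of size $\leq |\mathcal{L}|$, then the definition schema gives a canonical extension of $\tp(a/C)$ to $D$ (send each $\bar{c} \in D$ to the value specified by the defining function), and any extension of $\tp(a/C)$ to $D$ must agree with this canonical one since the defining functions already pin down all the values. So the extension is unique.

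For (3)$\Rightarrow$(4): Let $\sigma$ be an automorphism of $D$; we extend it to $M$ by back-and-forth. At a stage with partial extension on a finite tuple $a$, pick $C \subseteq D$ of size $\leq |\mathcal{L}|$ witnessing (3) for $a$. Apply $\sigma$ to $\tp(a/C)$ to obtain a type over $\sigma(C) \subseteq D$; by $\kappa$-saturation and $|\sigma(C)| < \kappa$, realize it by some $a' \in M$. Uniqueness of extension to $D$ in (3) (applied on each side) forces $a$ and $a'$ to have the same (unique) extended type over $D$ modulo $\sigma$, so by strong homogeneity of $M$ one may produce an automorphism extending $\sigma \cup \{a \mapsto a'\}$. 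Iterating transfinitely using enumerations of $M$ gives the full extension.

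For (4)$\Rightarrow$(1): Let $f : X \to \mathbb{R}$ be $M$-definable, where $X$ is a finite Cartesian product of sets in $\mathcal{D}$. To see that $f$ is definable over $D$, it suffices, by $\kappa$-saturation and continuity of $f$ on the type space over $M$, to show that $f$ factors through the restriction map $S_X(M) \to S_X(D)$. So let $\bar{c}, \bar{c}' \in D$ satisfy $\tp(\bar{c}/D) = \tp(\bar{c}'/D)$, in particular $\tp(\bar{c}) = \tp(\bar{c}')$ in the induced structure on $D$. By $\kappa$-strong homogeneity of $D$ (which follows from $\kappa$-saturation of $M$ and distance-definability of the sorts in $\mathcal{D}$), there is an automorphism $\sigma_0$ of $D$ with $\sigma_0(\bar{c}) = \bar{c}'$ fixing a given small subset. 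By (4), $\sigma_0$ extends to an automorphism $\sigma$ of $M$; since $f$ takes values in $\mathbb{R}$, $\sigma$ fixes $f(\bar{c})$, hence $f(\bar{c}) = f(\bar{c}')$. Thus $f$ is a well-defined continuous function on $S_X(D)$, and definability follows.

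The main obstacle is the last implication (4)$\Rightarrow$(1): translating the purely set-theoretic invariance given by extensions of automorphisms into actual definability requires using both the continuity of definable real-valued functions and the existence of automorphisms of $D$ realising arbitrary type-isomorphisms, which in turn uses the distance-definability hypothesis on the sets in $\mathcal{D}$ to guarantee that $D$ inherits enough homogeneity from $M$.
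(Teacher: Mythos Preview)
Your implications $(1)\Rightarrow(2)\Rightarrow(3)\Rightarrow(4)$ are essentially correct and match the paper's approach (modulo a small omission in $(2)\Rightarrow(3)$, where one must use distance-definability of the sets in $\mathcal{D}$ to express ``$\phi(x,c)=d_\phi(c)$ for all $c\in D$'' as a type over $C$; the paper does this via a compactness argument with the formula $\forall y\,(d(y,D)<\delta\to |g(x,y)-d_pg(y)|\le\varepsilon)$).

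The implication $(4)\Rightarrow(1)$, however, has a genuine gap. You try to show that the $M$-definable function $f$ factors through $S_X(D)$ by taking $\bar c,\bar c'$ with the same type over $D$, producing an automorphism $\sigma$ of $M$ with $\sigma(\bar c)=\bar c'$, and concluding $f(\bar c)=f(\bar c')$. But $f$ has parameters from $M$, and $\sigma$ moves those parameters: what you get is $f^{\sigma}(\bar c')=f(\bar c)$, not $f(\bar c')=f(\bar c)$. Moreover, if $\bar c,\bar c'\in D(M)$ already have the same type over $D(M)$, they are equal, so the argument is vacuous at the level of $M$; and if you pass to an extension $N\succ M$ to realise nontrivial types, condition (4) only gives you automorphisms of $M$, not of $N$. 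So neither reading of the argument goes through.

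The paper closes the cycle differently, proving $(4)\Rightarrow(3)$ by contrapositive: assuming $(3)$ fails for some tuple $a$, one enumerates the realisations $(a_i)_{i<\kappa}$ of $\tp(a)$ and builds an automorphism $\sigma$ of $D(M)$ by a transfinite recursion that, at stage $\alpha$, inserts a parameter $b\in D$ into the domain specifically to block $a\mapsto a_\alpha$ from being elementary. After $\kappa$ steps $\sigma$ is defined on all of $D(M)$ but cannot be extended to send $a$ anywhere in $M$, contradicting $(4)$. This construction is the missing idea; the direct route from $(4)$ to $(1)$ does not seem to work without first recovering a uniqueness-of-extension statement like $(3)$.
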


\begin{proof}
To make notation lighter we can assume that $\mathcal{D}$ is closed under finite Cartesian products. We fix $M \models T$ $\kappa$-saturated with $|M| = \kappa$.

\medskip
$(1) \Rightarrow (2)$ Let $a\in M$. Let $f(x,y)$ be a definable function into $\Rr$ with  $y$ in the sort of $ D \in \mathcal{D}$ and $x$ in the  sort of $a$. By $(1)$ there is a $\bigcup\mathcal{D}(M)$-definable function $g(y)$ such that $g(y)  = f(a, y)$ on $D$. $g$ defines $\tp(a/D(M))$ for $f$. Moreover, $g$ is definable over a  countable $A\subseteq D(M)$, so $\tp(a/\bigcup\mathcal{D}(M))$ is definable over a subset $B$ of $\bigcup\mathcal{D}(M)$ with $|B| \leq |\mathcal{L}|$.

\medskip 
 
  $(2) \Rightarrow (1)$: Let $f(x)$ be an $M$-definable function into $\Rr$ where $x$ is a finite tuple in the sort of $D \in \mathcal{D}$. We can assume that $f(x) = g(x, b)$ where $b$ is a finite tuple in $M$ and $g$ is $0$-definable. $q = \tp(b/\bigcup \mathcal{D}(M))$ is definable over some small $C \subseteq \mathcal{D}(M)$ so we have a $C$-definable  function $d_qg(x) = f(x)$.

\medskip

$(2) \Rightarrow (3)$: Let $p(x) = \tp(a /\bigcup \mathcal{D}(M))$, $a\in M$. Suppose that $p $ is definable over $C\subseteq \bigcup \mathcal{D}(M)$. 
Let $f(x)$ be a $\bigcup\mathcal{D}(M)$-definable function into $\Rr$. We show that the restriction of $p(x)$ to $C$ determines the value of $f(x)$. 
We can assume that $f(x) = g(x, b)$ where $b$ is a finite tuple in $\bigcup \mathcal{D}(M)$ and $g(x, y)$ is $0$-definable.

By (2), $p$ is definable   over $C$ with respect to $g$  and we write $d_pg(y)$ for its definition. Write $d(y, D)$ for the definable function which gives the distance to $D$. An easy compactness argument shows that for every $\e > 0$ there is $\delta > 0$ such that $p(x)$ contains the positive formula over $C$:
\[
\forall y(d(y, D) < \delta \rightarrow |g(x, y) - d_p(g(y))| \leq \e)
\]
Therefore $p \upharpoonright C$ has a unique extension to $\bigcup \mathcal{D}(M)$.

\medskip

$(3) \Rightarrow (2)$: Let $a \in M$, write $p(x) = \tp(a / \bigcup \mathcal{D}(M))$ and let $f(x , y)$ be a definable function with $y$ in the sort of  $D \in \mathcal{D}$ and $x$ a tuple in the sort of $a$. By $(3)$, there is   $C \subseteq \bigcup \mathcal{D}(M)$ such that $p(x) $ is the unique extension of $p \upharpoonright C$ to $\bigcup \mathcal{D}(M)$. Let $N$ be an elementary extension of $M$. Suppose there are $c, c' \in N$ both realising $p \upharpoonright C$ and $b\in D(N)$ such that $|f(c, b)  - f(c', b)| \geq  \delta$ for some $\delta > 0$. Then 
\[
N \models \exists x, x',y \big((p\upharpoonright C)(x) \wedge (p \upharpoonright C)(x') \wedge D(y) \wedge |f(x, y) - f(x', y)| \geq \delta \big)
\]
The above is a type-definable set. Since $M \prec N$ and  $M$ is saturated over $C$, $M$ satisfies the same type-definable set. This contradicts (3) and this proves that $p$ has a unique extension to $\bigcup \mathcal{D}(N)$. 
Hence for any $M \prec N$, $p(x)$ is $C$-invariant in $N$.   It follows that $p(x)$ is $C$-definable.

 \medskip
 
 $(3) \Rightarrow (4):$  Let $\sigma$ be an automorphism of $\bigcup \mathcal{D}(M)$ and suppose that we have extended it to an automorphism $\sigma : \bigcup \mathcal{D}(M) \cup A \to \bigcup \mathcal{D}(M) \cup B$ where $|A| < \kappa$. Let $a \in M$. There is $C \subseteq \bigcup \mathcal{D}(M)$ with $|C| < \kappa$ such that $\tp(a / A, C)$ extends uniquely to $A \cup \bigcup \mathcal{D}(M)$. By saturation, we can find $b \in M$ such that $\sigma$ extends to an automorphism $\bigcup \mathcal{D}(M)\cup Aa \to \bigcup \mathcal{D}(M) \cup Bb$. The result follows by a back-and-forth argument.
 
 \medskip
 
 $(4) \Rightarrow (3)$: Suppose $(3)$ fails for $M  \models T$, where $M$ is $\kappa$-saturated with cardinality  $\kappa$. Fix $a \in M$ which witnesses the failure of $(3)$. Let $(a_i)_{i < \kappa}$ be an enumeration of the realisations of $\tp(a)$ in $M$. Suppose we have constructed an isomorphism $\sigma : C \to \sigma(C)$ where $C$ is a subset of $\bigcup \mathcal{D}(M)$ such that for some $\alpha < \kappa$ and all $i < \alpha$, the maps $\sigma: aC \to a_i \sigma(C)$ are not isomorphisms. 
 
 Suppose that $\sigma_1 : aC \to a_\alpha \sigma(C)$ is an isomorphism. By the failure of $(3)$ there is $a'$ in $M$ and $b\in \bigcup \mathcal{D}(M)$ such that $\tp(a/C)  = \tp(a'/C)$ and $\tp(a/bC) \neq \tp(a'/bC)$. Then we can find $b'$ such that $\sigma_1 : a'bC \to a_\alpha b'\sigma(C)$ is an isomorphism. Then we extend $\sigma$ by putting $\sigma(b) = b'$. Note that now we cannot extend $\sigma$ to $a$ by sending $a$ to $a_\alpha$. By enumerating $\bigcup \mathcal{D}(M)$, we can also make sure that after $\kappa$ iterations of this procedure $\sigma$ is defined on all of $\bigcup \mathcal{D}(M)$. This contradicts $(4)$.
 \end{proof}

\subsection{Hilbert spaces in continuous logic }\label{appendix: Hilbert spaces in continuous logic}
   We recall here   basic facts about the model theory of Hilbert spaces which we   use in this paper. We refer the reader to \cite{BenYaacov2008} for a more complete summary. In this paper we    work with  Hilbert spaces over $\Rr$, but all results can be transposed to complex Hilbert spaces without modification. 

On one presentation of the model theory of Hilbert spaces,  the language of Hilbert spaces in continuous logic consists of countably many metric sorts, which stand for balls with radius $n$ around $0$. We add appropriate inclusion maps between the metric sorts. The language consists of the usual vector space structure over $\Rr$ and a function  $\langle \cdot  , \cdot \rangle $ on each metric sort into an appropriate value sort which stands for the inner product. The axiomatisation of the theory of infinite dimensional  Hilbert spaces $T^{Hilb}$ is as expected. 
In this paper, we usually don't disinguish between a Hilbert space and a  model of $T^{Hilb}$.  

$T^{Hilb}$  is complete, has quantifier-elimination, is stable, and is totally categorical.
The theory of Hilbert spaces does not have elimination of imaginaries, but it has weak elimination of imaginaries:

\begin{lemma}[\cite{BenYaacov2004} 1.2]\label{elimination of imaginaries in Hilbert spaces}
Let $M \models T^{Hilb}$. Let $\alpha$ be a canonical parameter for an $M$-definable function $f$ in an arbitrary imaginary sort of $T^{Hilb}$. Then there is a closed subspace $H$ of $M$ such that each point of $H$ is in $\bdd(\alpha)$ and $\alpha$ is definable over $H$.
\end{lemma}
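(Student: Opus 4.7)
The plan is to set $H = \dcl(\alpha) \cap M_{\mathrm{vec}}$, where $M_{\mathrm{vec}}$ denotes the union of all the vector-sort balls of $M$, and to check that this $H$ meets both requirements.

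The first step is to verify that $H$ is a closed linear subspace: closure under addition and scalar multiplication is automatic because these are $\emptyset$-definable operations, so $\dcl(\alpha)$ is stable under them; closure under norm-limits is the standard fact that definable closure in continuous logic is metrically closed within any sort. Every element of $H$ is then trivially in $\bdd(\alpha)$, so one half of the lemma is immediate. Moreover, $H$ is automatically stable under every $\sigma \in \mathrm{Aut}(\bar{M}/\alpha)$.

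The substantive part is showing $\alpha \in \dcl(H)$. I would begin by invoking stability (equivalently, finite character of definability) to pick a finite tuple $\bar{a} \subseteq M$ with $\alpha \in \dcl(\bar{a})$; by quantifier elimination in $T^{\mathrm{Hilb}}$ and Stone--Weierstrass, every definable function into $\mathbb{R}$ is a uniform limit of continuous functions of finitely many inner products involving finitely many parameters, so the data coding the function up to interdefinability is genuinely a finite tuple of vectors. Now consider the orbit $\mathcal{O}$ of $\bar{a}$ under $\mathrm{Aut}(\bar{M}/\alpha)$ in a monster extension; since $\alpha \in \dcl(\bar{a})$, this orbit is bounded. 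Take $V_0$ to be the smallest closed subspace containing the projections of $\bar{a}$ onto the closed span of $\mathcal{O}$; then $V_0$ is finite-dimensional and fixed set-wise by $\mathrm{Aut}(\bar{M}/\alpha)$. Since the action on $V_0$ is by unitaries preserving norms, each orbit lies in a sphere of a finite-dimensional subspace, which is norm-compact and hence of bounded cardinality, so $V_0 \subseteq \bdd(\alpha) \cap M_{\mathrm{vec}} = H$. It remains to deduce $\alpha \in \dcl(V_0)$, and here I would argue that $V_0$ carries, by construction, a tuple $\bar{a}_0$ on which the definable function coded by $\alpha$ only depends (components of $\bar{a}$ orthogonal to $V_0$ average out over $\mathcal{O}$ and cannot affect $f$), so $\alpha \in \dcl(\bar{a}_0) \subseteq \dcl(V_0) \subseteq \dcl(H)$.

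The main obstacle is the last step: rigorously showing that $\alpha$ is recoverable from $V_0$ rather than from a strictly larger subspace. The cleanest way is to identify $\alpha$ with the canonical base of $\tp(\bar{a}/\mathrm{bdd}(\alpha))$ in the Hilbert space sense, which by the explicit description of types in $T^{\mathrm{Hilb}}$ (a type is determined by the projection of a realisation onto the closed span of the base together with its distance to that span) is coded precisely by an orthogonal projection whose range is $V_0$. Handling the interplay between the continuous-logic imaginary sort housing $\alpha$ and the real subspace $V_0$ — and verifying that the minimal such $V_0$ is itself definable over $\alpha$ — is the delicate point, but both are standard once one unpacks stability and quantifier elimination for $T^{\mathrm{Hilb}}$.
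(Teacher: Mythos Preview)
The paper does not actually prove this lemma; it is quoted from \cite{BenYaacov2004} without argument, so there is no proof to compare against. I will simply assess your sketch on its own.

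Your choice $H = \dcl(\alpha)\cap M_{\mathrm{vec}}$ is too small. Let $v\neq 0$ and let $\alpha$ be a canonical parameter for $f(x)=\langle x,v\rangle^2$; an automorphism fixes $f$ iff it sends $v$ to $\pm v$, so $\alpha$ codes the unordered pair $\{v,-v\}$. Any $\sigma\in\Aut(\bar M/\alpha)$ may act arbitrarily on $v^\perp$ and may also negate $v$, so the only vector fixed by all such $\sigma$ is $0$. Thus $\dcl(\alpha)\cap M_{\mathrm{vec}}=\{0\}$, and $\alpha$ is certainly not definable over $\{0\}$. The subspace the lemma is after here is $\Rr v$, which sits in $\bdd(\alpha)$ (the orbit of $\lambda v$ is $\{\lambda v,-\lambda v\}$) but not in $\dcl(\alpha)$. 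This already shows that the first half of your argument cannot be repaired without changing $H$.

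There is a second, independent error: you assert that the $\Aut(\bar M/\alpha)$-orbit of $\bar a$ is bounded ``since $\alpha\in\dcl(\bar a)$''. That implication runs the wrong way --- boundedness of the orbit of $\bar a$ over $\alpha$ is exactly $\bar a\in\bdd(\alpha)$, which need not follow from $\alpha\in\dcl(\bar a)$. In the example above, take $\bar a=v+w$ with $0\neq w\in v^\perp$: then $\alpha\in\dcl(\bar a)$ but the orbit of $\bar a$ over $\alpha$ contains $v+w'$ for every $w'$ on the sphere of radius $\|w\|$ in $v^\perp$, which is not compact. Your construction of $V_0$ is also unclear as written (projecting $\bar a$ onto the span of its own orbit returns $\bar a$), so the finite-dimensionality claim does not follow.

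The correct idea is the one you reach only in your final paragraph: take $H$ to be the span of the projections $P_{\bdd(\alpha)}a_i$, i.e.\ the canonical base of $\tp(\bar a/\alpha)$ in the Hilbert-space sense. One checks directly that $\bdd(\alpha)\cap M_{\mathrm{vec}}$ is a closed subspace (closure under the vector-space operations and under metric limits is automatic for $\bdd$), so these projections make sense and lie in $\bdd(\alpha)$; that $\alpha$ is definable over them is then the substance of the argument. Your sketch should be reorganised around this from the start, dropping the $\dcl(\alpha)$ ansatz and the bounded-orbit claim.
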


\begin{definition}\label{definition: orthogonal projection to subspace}
If $H$ is a Hilbert space and   $V \leq H$ is a closed subspace, write $P_{V}$ for the orthogonal projection onto $V$.
\end{definition}

 We will make much use of the characterisation of forking independence in Hilbert spaces. See \cite{BenYaacov2008} for a proof:

\begin{lemma}\label{lemma: forking independence in Hilbert spaces}
Let $M \models T^{Hilb}$. Let $A,B, C\subseteq M$ with $B\subseteq C$. Then $\bdd(B)$ is the closed subspace of $M$ generated by $B$ and $A\ind_B C$ if and only if for all $a\in A$, $P_{\bdd(C)}(a) = P_{\bdd(B)}(a)$. 
\end{lemma}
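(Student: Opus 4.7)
The plan is to prove the two claims of the lemma in sequence. First I identify $\bdd(B)$ with the closed linear span $\overline{\mathrm{span}}(B)$, and then I characterise forking via orthogonal projection. Throughout, I reduce to the case where $A$ consists of a single element $a$, since both forking independence and the projection condition are checked element-by-element (and the inner-product type of a finite tuple over a set is determined by the pairwise inner products, so finite tuples behave just like single vectors after choosing a basis).

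For the first assertion, the containment $\overline{\mathrm{span}}(B)\subseteq\bdd(B)$ is immediate: any finite rational linear combination of elements of $B$ is fixed pointwise by every $\mathrm{Aut}(M/B)$, hence lies in $\dcl(B)$, and $\bdd(B)$ is closed in the norm topology because it is type-definable with respect to the metric. For the reverse, I take $v\in M\setminus\overline{\mathrm{span}}(B)$ and write $v=u+w$ with $u=P_{\overline{\mathrm{span}}(B)}v$ and $0\ne w\perp\overline{\mathrm{span}}(B)$. Passing to a sufficiently saturated elementary extension, the orthogonal complement of $\overline{\mathrm{span}}(B)$ is infinite-dimensional, so I can choose pairwise orthogonal vectors $(w_n)$ with $\|w_n\|=\|w\|$, all orthogonal to $\overline{\mathrm{span}}(B)$; the sequence $v_n:=u+w_n$ is infinite and Hilbert-space indiscernible over $B$ because all inner products $\langle v_n,b\rangle=\langle u,b\rangle$ and $\langle v_n,v_m\rangle=\|u\|^2$ for $n\ne m$ depend only on finitely much data. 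Hence $v\notin\bdd(B)$.

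For the second assertion, I use the fact that the type of $a$ over any $\bdd$-closed (equivalently, closed linear) subspace $V$ of $M$ is completely determined, by quantifier elimination and the Cauchy–Schwarz inequality, by the single vector $P_V(a)$ together with $\|a\|$: indeed $\langle a,y\rangle=\langle P_V(a),y\rangle$ for every $y\in V$. Consequently $\tp(a/\bdd(B))$ is stationary, and its canonical base (in the sense of continuous logic stability theory, cf.\ Section~\ref{subsection: imaginaries in CL}) is interdefinable with $P_{\bdd(B)}(a)$; this interdefinability uses weak elimination of imaginaries for Hilbert spaces (Lemma \ref{elimination of imaginaries in Hilbert spaces}), which guarantees the canonical parameter of the definition of $\tp(a/\bdd(B))$ lies in a closed subspace and is definable over the single vector $P_{\bdd(B)}(a)$. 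The same discussion applies to $\tp(a/\bdd(C))$ with canonical base $P_{\bdd(C)}(a)$.

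The equivalence then follows from the general principle that, in a stable theory, $a\ind_B C$ exactly when $\tp(a/\bdd(C))$ is the nonforking extension of $\tp(a/\bdd(B))$, which in a stationary context means that the two types share the same canonical base. Translating via the identification above, this is precisely $P_{\bdd(C)}(a)=P_{\bdd(B)}(a)$. The only technical point worth watching is to verify the $(\Leftarrow)$ direction directly if one wishes to avoid invoking stationarity: one can instead check that when the projections agree, the function $y\mapsto\langle a,y\rangle$ on $\bdd(C)$ is definable over $\bdd(B)$ (namely by $y\mapsto\langle P_{\bdd(B)}(a),y\rangle$), so $\tp(a/\bdd(C))$ is definable over $\bdd(B)$, hence does not fork over $B$. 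The main obstacle is being careful about the passage between continuous-logic canonical bases and orthogonal projections, but this is handled cleanly by Lemma \ref{elimination of imaginaries in Hilbert spaces}.
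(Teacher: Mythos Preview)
The paper does not actually supply a proof of this lemma; it simply states it and refers the reader to \cite{BenYaacov2008}. Your argument is correct and follows the standard route one finds in that reference: first identify $\bdd(B)$ with the closed linear span via an indiscernible-sequence construction in the orthogonal complement, then use quantifier elimination to see that the inner-product type of $a$ over a closed subspace $V$ is determined by $P_V(a)$ together with $\|a\|$, so that definability of $\tp(a/\bdd(C))$ over $\bdd(B)$ is exactly the projection condition.

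One small comment on the $(\Rightarrow)$ direction: your phrasing ``the two types share the same canonical base'' is slightly loose. What non-forking literally gives is $\Cb(\tp(a/\bdd(C)))\subseteq\bdd(B)$, hence $P_{\bdd(C)}(a)\in\bdd(B)$; one then concludes $P_{\bdd(B)}(a)=P_{\bdd(B)}\big(P_{\bdd(C)}(a)\big)=P_{\bdd(C)}(a)$ since $a-P_{\bdd(C)}(a)\perp\bdd(C)\supseteq\bdd(B)$. This is a trivial unwinding, but worth spelling out so that the argument does not appear to assume what it proves.
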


Finally, we recall some elementary facts about the weak topology in Hilbert spaces which we use in our proofs. If $H$ is a Hilbert space, recall that the weak topology has a sub-basis consisting of the sets 
\[
\{v \in H \mid \langle v, w\rangle\in U, w \in H, U \subseteq \Rr\text{ open}\}.
\]
Recall also that the unit ball is compact in the weak topology and that every bounded sequence in $H$ has a weakly convergent subsequence. 

\begin{lemma}\label{lemma: weak closure is set of limit points}
Let $H$ be a Hilbert space and $A \subseteq H$ a bounded subset of $H$. Then the closure of $A$ in the weak topology is the set of limit points of sequences in $A$ in the weak topology. 
\end{lemma}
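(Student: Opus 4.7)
One inclusion is immediate: any weak limit of a sequence in $A$ lies in the weak closure of $A$. The content is the converse, so fix $w$ in the weak closure of $A$ and let $R$ be a bound for $A$ so that in particular $\|w\| \leq R$ as well.

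The plan is first to construct by induction a sequence $(a_n)$ in $A$ such that for every $n \geq 1$,
\[
|\langle a_n - w, v\rangle| < 1/n \quad \text{for all } v \in \{w, a_1, \ldots, a_{n-1}\}.
\]
At each step the constraint is finitely many weak-neighbourhood conditions on $w$, so by the definition of the weak closure such $a_n$ can be chosen in $A$. By construction one has
\[
\lim_n \langle a_n, w\rangle = \langle w, w\rangle \quad \text{and} \quad \lim_n \langle a_n, a_m\rangle = \langle w, a_m\rangle \text{ for each fixed } m.
\]

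Next I would show that $(a_n)$ converges weakly to $w$. Let $V$ be the closed subspace of $H$ generated by $\{w\} \cup \{a_n : n \geq 1\}$; this is a separable Hilbert space, and since $V$ is weakly closed, it suffices to show $a_n \rightharpoonup w$ inside $V$. Now the sequence $(a_n)$ is bounded by $R$, and on the bounded set $B_R \cap V$ of the separable Hilbert space $V$ the weak topology is metrizable (a standard fact, since a countable dense subset of $V$ gives a countable family of seminorms separating points on $B_R$). Hence it is enough to prove that every weakly convergent subsequence of $(a_n)$ has $w$ as its limit.

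So let $(a_{n_k})$ be a weakly convergent subsequence, with limit $w' \in V$ (using Banach--Alaoglu/weak compactness of bounded sets). From the two displayed limits above, $\langle w', w\rangle = \langle w, w\rangle$ and $\langle w', a_m\rangle = \langle w, a_m\rangle$ for every $m$, so $w - w'$ is orthogonal to every generator of $V$ and hence $w - w' \perp V$. But $w, w' \in V$, so $w - w' \in V$, forcing $w = w'$. This completes the proof.

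The only delicate point is the metrizability reduction in the last paragraph; this is the step I would present carefully, since the weak topology on all of $H$ is not metrizable when $H$ is infinite-dimensional, and the argument relies essentially on restricting to a separable subspace and a bounded subset.
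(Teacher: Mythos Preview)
Your proof is correct and follows essentially the same approach as the paper: construct $(a_n)$ recursively so that $\langle a_n, v\rangle \to \langle w, v\rangle$ for $v \in \{w, a_1, a_2, \ldots\}$, then show any weak subsequential limit must equal $w$. The paper reaches the conclusion $z = w$ via the identity $\langle z, w\rangle = \langle z, z\rangle = \langle w, w\rangle$ rather than your orthogonality argument, and it leaves the metrizability reduction implicit where you spell it out, but these are cosmetic differences.
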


\begin{proof}
Suppose $w$ is in the closure of $A$ in the weak topology. Let $v_0 \in A$ be any vector such and construct $(v_n)$ inductively as follows: given $v_0, \ldots, v_n$, find $v_{n+1} \in A$ such that $|\langle v_{n+1}, v_i\rangle - \langle w, v_i\rangle| < 1/(n+1)$ and $|\langle v_{n+1}, w \rangle - \langle w, w\rangle| < 1/(n+1)$. Since $A$ is bounded, we can assume that $(v_n)$ converges weakly to some $z \in H$. We check that $z = w$. Firstly we have
 \[
\langle z, w\rangle = \lim_n \langle v_n, w\rangle =\lim_n \lim_m \langle v_n, v_m\rangle = \lim_n\langle   v_n, z\rangle = \langle z, z\rangle.
\]
Similarly, 
\[
\langle w, w \rangle = \lim_n \lim_m \langle v_n, v_m\rangle = \langle z, z\rangle.
\]
Hence $z = w$. We also deduce that the original   sequence $(v_n)$ converges weakly to $w$ and that it is not necessary to take a subsequence.
\end{proof}

The next lemma is proved in a similar way to Lemma \ref{lemma: weak closure is set of limit points}:
\begin{lemma}\label{indiscernible sequences in Hilbert spaces}
Let $M \models T^{Hilb}$ and $A \subseteq M$ and $v \in M$. Let $(v_n)$ be an indiscernible sequence in  $\tp(v/A)$ in $M$. Then there is an orthogonal sequence $(w_n)$ in $M$ and $w\in M$ such that for all $n$, $v_n = w_n + w$, $w_n \perp A$ and  $w_n \perp w$. It follows that $(v_n)$ converges weakly to $w$ (we write $v_n \rightharpoonup w$).

Moreover, $w$ is the unique element of $M$ such that $\langle w, w\rangle = \langle v_1, v_0\rangle = \langle w, v_n\rangle$ for all $n$.
\end{lemma}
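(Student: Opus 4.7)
The plan is to exploit indiscernibility to reduce the inner product data of $(v_n)$ to two scalars, then construct $w$ explicitly as a Cesàro limit. Since $(v_n)$ is indiscernible over $A$, there are constants $c$, $d$, and for each $a \in A$ a constant $\alpha_a$, such that $\|v_n\|^2 = c$, $\langle v_n, v_m \rangle = d$ for $n \neq m$, and $\langle v_n, a\rangle = \alpha_a$ for all $n$. In particular $d = \langle v_1, v_0\rangle$ is the scalar appearing in the uniqueness clause.

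First I would define $u_N = \frac{1}{N}\sum_{n=1}^N v_n$ and verify by a direct computation that $\langle u_N, u_M\rangle = \frac{c}{\max(N,M)} + \frac{\max(N,M)-1}{\max(N,M)}\,d$, so that $\|u_N - u_M\|^2 = (c - d)(\frac{1}{\min(N,M)} - \frac{1}{\max(N,M)})$. Thus $(u_N)$ is Cauchy; by completeness of the Hilbert space sort, it converges in norm to some $w \in M$. This gives $\|w\|^2 = d$ and $\langle v_n, w\rangle = \lim_N \langle v_n, u_N\rangle = d$ for every $n$, and $\langle w, a \rangle = \alpha_a$ for all $a \in A$.

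Next I would set $w_n = v_n - w$ and verify the three orthogonality claims by direct expansion: $\langle w_n, w\rangle = \langle v_n, w\rangle - \|w\|^2 = d - d = 0$; $\langle w_n, a \rangle = \alpha_a - \alpha_a = 0$; and for $n \neq m$, $\langle w_n, w_m\rangle = \langle v_n, v_m\rangle - \langle v_n, w\rangle - \langle w, v_m\rangle + \|w\|^2 = d - d - d + d = 0$. The weak convergence $v_n \rightharpoonup w$ then follows since $(w_n)$ is a bounded orthogonal sequence (with $\|w_n\|^2 = c - d$), which is a standard fact (bounded orthogonal sequences tend weakly to $0$).

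For the uniqueness clause, suppose $w'$ satisfies $\|w'\|^2 = d$ and $\langle w', v_n\rangle = d$ for all $n$. Then $\langle w', w\rangle = \lim_N \langle w', u_N\rangle = d$, so $\|w - w'\|^2 = d - 2d + d = 0$. There is no real obstacle here; the only mildly delicate point is remembering that the Hilbert-space sorts of a model of $T^{\mathrm{Hilb}}$ are complete (so the Cauchy sequence $(u_N)$ actually has a limit in $M$), which is part of the continuous logic setup recalled in the appendix.
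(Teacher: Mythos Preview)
Your proof is correct and complete. The paper does not give a full proof of this lemma, only the hint that it is ``proved in a similar way to Lemma~\ref{lemma: weak closure is set of limit points}''; your Ces\`aro-mean construction of $w$ is a clean realization of that idea and in fact matches the averaging technique the paper uses explicitly in the proof of Lemma~\ref{lemma: weak closure is definable}.
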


\begin{lemma}\label{lemma: convergence of morley sequences in H spaces}
Let $M\models T^{Hilb}$. Let $(v_n)$ be a Morley sequence over $A\subseteq M$. Then $(v_n)$ converges weakly to $P_{\bdd{A}}(v_0)$.
\end{lemma}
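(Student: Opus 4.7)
The plan is to combine Lemma \ref{indiscernible sequences in Hilbert spaces}, which already gives weak convergence of the Morley sequence to some vector $w$, with the characterisation of forking in Hilbert spaces (Lemma \ref{lemma: forking independence in Hilbert spaces}) to identify $w$ with $P_{\bdd(A)}(v_0)$. A Morley sequence over $A$ is in particular $\bdd(A)$-indiscernible with respect to the inner product, so Lemma \ref{indiscernible sequences in Hilbert spaces} applies and provides a decomposition $v_n = w_n + w$, with $(w_n)$ an orthogonal sequence satisfying $w_n \perp A$ and $w_n \perp w$, and moreover $\|w\|^2 = \langle v_1, v_0\rangle$. It remains to compute this last inner product and to argue that $w$ actually lies in the $\bdd$-closure of $A$.

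For the inner product, I would first observe the auxiliary fact that for any $v, v'$ realising the same type over $\bdd(A)$, we have $P_{\bdd(A)}(v) = P_{\bdd(A)}(v')$: an automorphism fixing $\bdd(A)$ pointwise that sends $v$ to $v'$ must also fix $P_{\bdd(A)}(v)$, and the projection is the unique closest point in $\bdd(A)$. Applied to the Morley sequence, this gives $P_{\bdd(A)}(v_n) = P_{\bdd(A)}(v_0)$ for every $n$. Next, since $v_1 \ind_A v_0$, Lemma \ref{lemma: forking independence in Hilbert spaces} yields $P_{\bdd(A v_0)}(v_1) = P_{\bdd(A)}(v_1) = P_{\bdd(A)}(v_0)$, and therefore
\[
\langle v_1, v_0\rangle = \langle P_{\bdd(Av_0)}(v_1), v_0\rangle = \langle P_{\bdd(A)}(v_0), v_0\rangle = \|P_{\bdd(A)}(v_0)\|^2.
\]
Combining this with Lemma \ref{indiscernible sequences in Hilbert spaces} gives $\|w\|^2 = \|P_{\bdd(A)}(v_0)\|^2$.

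Finally, to pin down $w$, I would apply $P_{\bdd(A)}$ to the decomposition $v_0 = w_0 + w$: since $w_0 \perp A$ and $\bdd(A)$ is the closed subspace generated by $A$, we have $w_0 \perp \bdd(A)$, so $P_{\bdd(A)}(w) = P_{\bdd(A)}(v_0)$. The orthogonal decomposition $w = P_{\bdd(A)}(w) + (w - P_{\bdd(A)}(w))$ then gives
\[
\|w\|^2 = \|P_{\bdd(A)}(v_0)\|^2 + \|w - P_{\bdd(A)}(w)\|^2,
\]
and comparing with the norm identity above forces $w = P_{\bdd(A)}(w) = P_{\bdd(A)}(v_0)$.

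The only mildly delicate point is the last step, which requires us to know not merely that $(v_n)$ converges weakly to \emph{some} vector, but to verify that this limit sits inside $\bdd(A)$; the clean way I see is the norm calculation above, which quietly uses both the forking-independence characterisation and the automorphism-invariance of the projection to $\bdd(A)$.
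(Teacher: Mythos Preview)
Your argument is correct and uses the same two ingredients as the paper: Lemma \ref{indiscernible sequences in Hilbert spaces} for the existence of the weak limit $w$, and Lemma \ref{lemma: forking independence in Hilbert spaces} to compute the relevant inner product. The paper's proof is simply the one-line version of your computation, observing that for $n>m$ one has $\langle v_n, v_m\rangle = \langle P_{\bdd(Av_0\ldots v_m)}(v_n), v_m\rangle = \langle P_{\bdd(A)}(v_n), v_m\rangle = \langle P_{\bdd(A)}(v_0), v_m\rangle$ and then invoking the uniqueness clause of Lemma \ref{indiscernible sequences in Hilbert spaces}; your norm comparison is an equally valid way to make the final identification.
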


\begin{proof}
It is enough to check that $\lim_n\langle v_n, v_m\rangle = \langle P_{\bdd(A)}(v_0), v_m\rangle$ for all $m$.
For $n > m$ we have $\langle v_n, v_m \rangle = \langle P_{\bdd(A v_0 \ldots v_m)}(v_n) , v_m\rangle  = \langle P_{\bdd(A)}(v_n), v_m\rangle = \langle P_{\bdd(A)}(v_0), v_m\rangle$. 
\end{proof}

\section{More theory of interpretable Hilbert spaces}\label{appendix: more theory}

\emph{In this appendix, we elaborate on some technical notions which we only briefly touched upon in Section \ref{subsection: definition of interpretable Hilbert spaces}.  }

\medskip

%
%

Let $T$ be a continuous logic theory with an interpretable Hilbert space $\mathcal{H}$. Let $M \models T$.

\begin{definition}\label{definition: interpretable Hilbert space morphism}
Let   $H(M), H'(M)$ be   interpretable Hilbert spaces in $M$. Suppose $H(M)$ is the direct limit of $(M_j)_{j\in J}$ and $H'(M)$ is the direct limit of $(M_{j'})_{J'}$. Write $h_j :M_j \to H(M)$ and $h_j': M_{j'} \to H'(M)$ for the direct limit maps.

 We say that a map $F : H(M) \to H'(M)$ is an embedding of piecewise interpretable Hilbert spaces if $F$ is a unitary map and for all $j\in J$ and $j' \in J'$ and all $\e \geq 0$, the set $\{(x,y) \in M_j \times M'_{j'} \mid \|F( h_j x ) -  h_{j'}' y\| \leq \e\}$ is type-definable. 

If $F$ is also surjective, we say that $F$ is an isomorphism of piecewise interpretable Hilbert spaces. 
\end{definition}

\begin{lemma}\label{lemma: reformulation of morphism definition}
Let $H(M), H'(M)$ be   interpretable Hilbert spaces in $M$. Let $(M_i)_{i \in I}$ and $(M_{i'})_{i' \in I'}$ be pieces of $H(M)$ and $H'(M)$ respectively which generate $H(M)$ and $H'(M)$. Write $h_i : M_i \to H(M)$ and $h_{i'}'  : M_{i'} \to H'(M)$ for the direct limit maps.
  

A unitary map $F : H(M) \to H'(M)$ is an embedding of   interpretable Hilbert spaces if and only if for all $i\in I$ and $i' \in I'$ the    map $M_i \times M_{i'} \to \Rr, (x, y) \mapsto \langle F(h_i x), h_{i'}'y \rangle$ is definable. 
\end{lemma}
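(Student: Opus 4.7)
My plan is to prove this by a short polarization argument in the forward direction and then by exploiting the explicit construction from Proposition 1.5 in the reverse direction.

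For the forward direction, suppose $F$ is an embedding. For any $j \in J$ and $j' \in J'$, the definition says that $\{(x,y) \in M_j \times M_{j'} : \|F(h_j x) - h'_{j'} y\| \leq \e\}$ is type-definable for every rational $\e \geq 0$, which is exactly the condition that $(x,y) \mapsto \|F(h_j x) - h'_{j'} y\|$ is a definable function into $\Rr$. Since $F$ is unitary, $\|F(h_j x)\|^2 = \langle h_j x, h_j x\rangle$ is definable on $M_j$ (it is one of the inner product maps of $\mathcal{H}$), and $\|h'_{j'} y\|^2$ is definable on $M_{j'}$ for the same reason. The polarization identity
\[
\langle F(h_j x), h'_{j'} y\rangle = \tfrac{1}{2}\bigl(\|F(h_j x)\|^2 + \|h'_{j'} y\|^2 - \|F(h_j x) - h'_{j'} y\|^2\bigr)
\]
then exhibits the inner product as a definable function. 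Specializing to $j \in I$ and $j' \in I'$ gives the claim.

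For the converse, assume $(x,y) \mapsto \langle F(h_i x), h'_{i'} y\rangle$ is definable for all $i \in I$ and $i' \in I'$. By reversing the polarization identity above, it suffices to prove that $(x,y) \mapsto \langle F(h_j x), h'_{j'} y\rangle$ is definable for \emph{all} $j \in J$ and $j' \in J'$. I would use the explicit direct-limit construction from Proposition 1.5: an element $v$ of a piece $M_j$ is represented by a sequence $(\overline{x}_n)$ whose entries are scalar-vector tuples drawn from the generating pieces $M_{i_k}$, and the direct-limit map $h_j$ is the uniform limit of maps $g_k$ whose values are explicit linear combinations $\sum \lambda_k h_{i_k} x_k$. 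Linearity and boundedness (unitarity) of $F$ imply that $F \circ g_k$ converges uniformly to $F \circ h_j$, so
\[
\langle F(h_j x), h'_{j'} y\rangle = \lim_{k,\ell} \sum_{s,t} \lambda_s \mu_t \, \langle F(h_{i_s} x_s), h'_{i'_t} y_t\rangle.
\]
Each summand on the right is definable in $(x,y)$ by hypothesis, and a uniform limit of uniformly bounded definable $\Rr$-valued functions on a fixed sort is definable, so the left-hand side is definable.

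The main obstacle is verifying rigorously that the approximations produced by Proposition 1.5 give \emph{uniform} rates across all $x \in M_j$ and $y \in M_{j'}$ simultaneously, so that the double limit really defines a continuous-logic definable function. This is a book-keeping issue rather than a conceptual one: because each piece $M_j$ sits inside a bounded ball of $H(M)$ (the bound coming from the inner product map $f_{jj}$) and the construction of the $g_k$ in Proposition 1.5 already enforces uniformly Cauchy convergence with rate $2^{-n}$, the required uniformity is built in. Once this is in place the argument is purely formal.
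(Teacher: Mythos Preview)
Your reverse direction is essentially the paper's argument: approximate $h_j x$ uniformly by finite linear combinations of vectors from the generating pieces $(M_i)$ (the paper does this via a compactness argument rather than by quoting Proposition~1.5, but the content is the same), push through the bounded linear map $F$, and use that a uniform limit of definable $\Rr$-valued functions is definable. This part is fine.

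The forward direction, however, has a real gap. You assert that type-definability of the sets $\{(x,y):\|F(h_j x)-h'_{j'}y\|\le\e\}$ for all $\e$ ``is exactly the condition'' that the distance map is definable. It is not: this only says that every \emph{sub}level set is closed in the type space, i.e.\ that the distance map is lower-semicontinuous. Definability (continuity on the type space) would also require the \emph{super}level sets $\{\|F(h_j x)-h'_{j'}y\|\ge\e\}$ to be type-definable, and nothing in Definition~B.1 gives that. Consequently you cannot simply invoke the polarization identity, since that identity expresses the inner product as a continuous combination of three functions only one of which you have shown to be definable from below; the resulting inner-product map is then only known to be upper-semicontinuous, not definable.

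The paper avoids this obstacle by a genuinely different manoeuvre: rather than trying to recover the inner product from the distance via polarization, it uses the embedding hypothesis together with compactness to find, for each $\e>0$, a single piece $M_{i'_\e}$ of $H'(M)$ in which every $F(h_i x)$ is $\e$-approximated by some $h'_{i'_\e}x'$. One can then write
\[
\{(x,y):\langle F(h_i x),h'_{i'}y\rangle\in D\}=\bigcap_{\e>0}\bigl\{(x,y):\exists x'\in M_{i'_\e},\ \|F(h_i x)-h'_{i'_\e}x'\|\le\e/B\ \wedge\ \langle h'_{i'_\e}x',h'_{i'}y\rangle\in D_\e\bigr\},
\]
where the condition inside the braces is type-definable because the distance condition is a sublevel set (exactly what the embedding definition provides) and the inner-product condition is internal to $H'(M)$. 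This bypasses the need to know both level-set directions for the distance. Your argument would need a replacement for the polarization step along these lines.
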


\begin{proof}
Suppose that $F$ is an embedding  of piecewise interpretable Hilbert spaces. Fix $i \in I$, $i' \in I'$ and $D$ a closed bounded subset of $\Rr$. For every $\e >0$, let $D_\e$ be the closed set $\{x\in \Rr \mid \exists y\in D, |y-x| \leq \e\}$. Let $B$ be an upper bound on $\{\|h_{i'}' y\| \mid y \in M_{i'}\}$.

By compactness, for every $\e >0$ we can find $i'_\e \in I'$ such that for every $x \in M_i$, there is $x' \in M_{i'_\e}$ such that $\|h_{i'_\e}' x' - F(h_i x)\| < \e/B$. The set $\{(x, y) \in M_i \times M_{i'} \mid \langle F(h_i x), h_{i'}' y\rangle \in D \}$ is equal to the intersection over $\e > 0$ of the sets 
\[
  \{(x, y) \in M_i \times M_{i'} \mid \exists x'\in M_{i_\e}, \|F(h_ix) - h_{i'_\e}' x'\| \leq \e/B \text{ and } \langle h_{i'_\e}' x', h_{i'}  y\rangle \in D_\e\}.
\]
Hence this set  is type-definable.

%

Conversely, suppose that $H(M)$ and $H'(M)$ respectively are the direct limits of the sorts $(M_j)_J$ and $(M_{j'})_{J'}$. We   write $(h_j)$ and $(h_{j'}')$ for the direct limit maps. Fix $j \in J$,  $j' \in J'$. For any $x \in M_j$, the element $F(h_j x)$ is uniquely determined by the collection of maps $M_{i'} \to \Rr, y \mapsto \langle F(h_j x), h_{i'}' y\rangle$, for $i' \in I'$. 

Moreover, by a standard compactness argument, for arbitrary $\e > 0$ there is $n_\e \geq 1$ such that for any $x \in M_j$, $h_jx$ is within distance $\e$ of a vector of the form $\sum_{k = 1}^{n_\e} \lambda_k h_{i_k} x_{i_k}$ where $i_k \in I$ for all $k \leq n_\e$, $x_{i_k} \in M_{i_k}$   and $|\lambda_k| \leq n_\e$.  By considering small enough $\delta$ and quantifying over elements $\sum_{k = 1}^{n_\delta} \lambda_k h_{i_k} x_{i_k}$, we find that $\{(x, y) \in M_j \times M_{j'} \mid \|F(h_j x) - h_{j'}' y\| \leq \e \}$ is type-definable if the maps $\langle F(h_ix), h'_{i'}y\rangle$ are definable.

\end{proof}

The following sharpening of Proposition \ref{proposition: equivalent definition of piecewise interpretable hilbert space} follows easily:

\begin{lemma}\label{lemma: isomorphic hilbert spaces}
Let $M$ be a continuous logic structure. As in Proposition \ref{proposition: equivalent definition of piecewise interpretable hilbert space}, suppose  we are given a Hilbert space $H$,  a collection of distinct imaginary sorts $(M_i)_{i \in I}$ of $M$ and maps $F_i : M_i \to H$  such that 
 for all $i,j \in I$, the map $M_i \times M_j \to \Rr, (x, y)\mapsto \langle F_ix, F_j y\rangle$ is definable.  

Then there is an interpretable Hilbert  space $H(M)$ which is unique up to isomorphism such that the sorts $(M_i)_{I}$ are pieces of $H(M)$,   $\bigcup h_i(M_i)$ has dense span in $H(M)$, and for $ x \in M_i$, $y\in M_j$, $\langle h_i x, h_j y\rangle_{H(M)} = \langle F_ix, F_j y\rangle_H$.
\end{lemma}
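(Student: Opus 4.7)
The existence of $H(M)$ is already given by Proposition \ref{proposition: equivalent definition of piecewise interpretable hilbert space}, so the only new content is uniqueness. My plan is to construct the desired isomorphism directly on a dense set and then verify the definability condition via the criterion of Lemma \ref{lemma: reformulation of morphism definition}.

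Suppose $H(M)$ and $H'(M)$ are two interpretable Hilbert spaces in $M$ both satisfying the conclusion of the lemma, with direct limit maps $h_i : M_i \to H(M)$ and $h_i' : M_i \to H'(M)$ for $i \in I$. First I would define a map $F$ on the linear span of $\bigcup_i h_i(M_i)$ by setting $F(h_i x) = h_i'(x)$ and extending by linearity. To check that $F$ is well-defined, I would verify that if $\sum_k \lambda_k h_{i_k}(x_k) = 0$ in $H(M)$, then $\sum_k \lambda_k h_{i_k}'(x_k) = 0$ in $H'(M)$; this follows because for any finite linear combination,
\[
\Big\|\sum_k \lambda_k h_{i_k}'(x_k)\Big\|^2 = \sum_{k, \ell} \lambda_k \lambda_\ell \langle F_{i_k} x_k, F_{i_\ell} x_\ell\rangle_H = \Big\|\sum_k \lambda_k h_{i_k}(x_k)\Big\|^2,
\]
since both inner products agree with $\langle F_{i_k}x_k, F_{i_\ell}x_\ell\rangle_H$ by hypothesis. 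The same calculation shows that $F$ is isometric on its domain, and since the span of $\bigcup h_i(M_i)$ is dense in $H(M)$, $F$ extends uniquely to an isometric linear map $F : H(M) \to H'(M)$. Surjectivity of $F$ follows from the density of the span of $\bigcup h_i'(M_i)$ in $H'(M)$, because the image of $F$ contains every $h_i'(x)$.

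It remains to verify that $F$ is an isomorphism of interpretable Hilbert spaces in the sense of Definition \ref{definition: interpretable Hilbert space morphism}. By Lemma \ref{lemma: reformulation of morphism definition}, it suffices to check that for any $i, i' \in I$, the map $M_i \times M_{i'} \to \Rr$ defined by $(x, y) \mapsto \langle F(h_i x), h_{i'}'(y)\rangle$ is definable. But by construction $F(h_i x) = h_i'(x)$, so
\[
\langle F(h_i x), h_{i'}'(y)\rangle = \langle h_i'(x), h_{i'}'(y)\rangle = \langle F_i x, F_{i'} y\rangle_H,
\]
and the rightmost expression is definable by the standing hypothesis of the lemma. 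The same identity applied to $F^{-1}$ shows that $F^{-1}$ is also an embedding of interpretable Hilbert spaces, completing the uniqueness argument.

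I do not anticipate any serious obstacle: the heart of the proof is the simultaneous use of the definability hypothesis on the inner product maps $\langle F_i x, F_j y\rangle$ to ensure both that $F$ is a well-defined isometry of Hilbert spaces and that $F$ respects the piecewise structure. The only mild care needed is to go through Lemma \ref{lemma: reformulation of morphism definition} rather than verifying type-definability of the sets $\{(x, y) : \|F(h_i x) - h_{i'}'(y)\| \leq \e\}$ directly, which would duplicate work already carried out in the proof of that lemma.
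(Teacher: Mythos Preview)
Your proposal is correct and follows essentially the same route as the paper: existence by Proposition \ref{proposition: equivalent definition of piecewise interpretable hilbert space}, a Hilbert space isomorphism $F$ from the matching inner products on the pieces, and then Lemma \ref{lemma: reformulation of morphism definition} to upgrade $F$ to an isomorphism of interpretable Hilbert spaces. The only cosmetic difference is that the paper invokes the uniqueness clause of the GNS theorem to obtain $F$, whereas you unpack that clause by hand (defining $F$ on finite linear combinations and checking well-definedness via the common inner-product values); also, your verification that $F^{-1}$ is an embedding is harmless but not needed, since Definition \ref{definition: interpretable Hilbert space morphism} takes an isomorphism to be a surjective embedding.
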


\begin{proof}
%

Let $H(M)$, $H'(M)$ be two piecewise interpretable Hilbert spaces satisfying the existence claim. Since the inner product maps on the pieces $M_i$ are identical, the uniqueness statement of the GNS theorem applies and we find an isomorphism of Hilbert spaces $F: H(M) \to  H'(M)$. Lemma \ref{lemma: reformulation of morphism definition}   applies directly  so $F$ is an isomorphism of  interpretable Hilbert spaces. 
\end{proof}

Recall that we did not require direct limits of sorts of $M$ to have injective transition maps. We now show that this does not present any significant disadvantage.

\begin{lemma}\label{lemma: transition maps can be assumed isometric}
Let $H(M)$ be an interpretable Hilbert space in $M$. Then $H(M)$ is isomorphic to an interpretable Hilbert space $H'(M)$ which is a direct limit of imaginary sorts $(M_{j'})_{J'}$ with isometric direct limit maps. 
\end{lemma}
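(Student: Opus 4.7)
The plan is to pass to quotient sorts that identify elements of $M_j$ having the same image in $H(M)$, so that the new direct limit maps become isometries.

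First, for each piece $M_j$ of $H(M)$ with direct limit map $h_j : M_j \to H(M)$ and inner product map $f_{jj} : M_j \times M_j \to \Rr$, I define the pseudo-metric
\[
\tilde d_j(x, y) := \sqrt{f_{jj}(x, x) + f_{jj}(y, y) - 2 f_{jj}(x, y)} = \|h_j x - h_j y\|.
\]
Since $f_{jj}$ is definable, $\tilde d_j$ is a definable pseudo-metric on $M_j$. Using Definition \ref{definition: imaginary sort} applied to the single factor $M_j$ equipped with the definable pseudo-metric $\tilde d_j$ (which is trivially a uniform limit of definable pseudo-metrics --- the constant sequence $\tilde d_j$), I obtain an imaginary sort $M_{j'}$ of $M$, namely the quotient $M_j / \{\tilde d_j = 0\}$. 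The map $h_j$ factors canonically through an injective, and in fact isometric, map $h_{j'} : M_{j'} \to H(M)$.

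Next I check that the family $(M_{j'})_{j \in J}$ inherits the direct limit structure of $(M_j)_{j \in J}$ in a definable way. For $i \leq j$ in $J$, the old transition map $M_i \to M_j$ descends to a definable map $M_{i'} \to M_{j'}$: two elements $\tilde d_i$-equivalent in $M_i$ are sent to $\tilde d_j$-equivalent elements of $M_j$ because $h_i = h_j$ composed with the old transition map. For any $i, j \in J$, the inner product map $f_{ij} : M_i \times M_j \to \Rr$ factors through $\tilde d_i$ and $\tilde d_j$ (it is $\tilde d_{i+j}$-continuous by Cauchy--Schwarz) and hence induces a definable map $M_{i'} \times M_{j'} \to \Rr$ agreeing with $(x, y) \mapsto \langle h_{i'} x, h_{j'} y \rangle$.

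Applying Proposition \ref{proposition: equivalent definition of piecewise interpretable hilbert space} to the family $(M_{j'})_{j \in J}$ with the maps $h_{j'} : M_{j'} \to H(M)$ (which have dense span in $H(M)$, since the original $h_j(M_j)$ do) yields an interpretable Hilbert space $H'(M)$ in which each $M_{j'}$ is a piece and the direct limit maps are, by construction, the isometries $h_{j'}$. Finally, since $H(M)$ and $H'(M)$ both satisfy the hypotheses of Lemma \ref{lemma: isomorphic hilbert spaces} with respect to the sorts $(M_{j'})_{j \in J}$ and the same inner product data, the uniqueness clause of that lemma produces an isomorphism $H(M) \cong H'(M)$ of interpretable Hilbert spaces. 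The only delicate point is verifying that the quotient by a definable pseudo-metric genuinely yields an imaginary sort in the sense of Definition \ref{definition: imaginary sort}; this is immediate here because $\tilde d_j$ is itself definable, so no uniform-convergence argument is needed.
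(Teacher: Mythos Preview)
Your proof is correct and follows essentially the same idea as the paper's: quotient each piece $M_j$ by the equivalence relation $h_j x = h_j y$, which the paper phrases as passing to the imaginary sort of canonical parameters of $f_{jj}(x,\cdot)$ and you phrase as quotienting by the definable pseudo-metric $\tilde d_j(x,y)=\|h_jx-h_jy\|$; these give the same quotient sort, and your choice of metric makes the isometry statement transparent. The only cosmetic difference is that the paper builds $H'(M)$ directly as the direct limit of the $(M'_j)$ with the induced transition maps, whereas you route through Proposition~\ref{proposition: equivalent definition of piecewise interpretable hilbert space} and Lemma~\ref{lemma: isomorphic hilbert spaces}; both are fine.
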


\begin{proof}
Suppose that $H(M)$ is the direct limit of the sorts $(M_j)$ with inner product maps $(f_{ij})$. Write $h_{ij} : M_i \to M_j$ for the transition maps between the sorts $M_i, M_j$ when $i\leq j$ and $h_j : M_j \to H(M)$ for the direct limit maps. For every $j \in J$, let $M'_j$ be the imaginary sort of canonical parameters of the inner product map $f_{jj}$, where we view $f_{jj}(x, y)$ as a function in $y$ with a parameter $x$. For $i\leq j \in J$, define the transition map $h'_{ij}: M'_i \to M'_j$ as the map which takes the canonical parameter for the map $f_{ii}(a, .)$ to the canonical parameter for the map $f_{jj}(h_{ij}a, .)$. Observe $h'_{ij}$ is well-defined because $f_{ii}(a, .) = f_{ii}(b, .)$ if and only if $h_ia = h_ib$. It is clear that the direct limit $H'(M)$ of $(M'_j)_J$ satisfies the lemma.
\end{proof}

As a direct application of the construction in Proposition \ref{proposition: equivalent definition of piecewise interpretable hilbert space} and Lemma \ref{lemma: transition maps can be assumed isometric}, we have:

\begin{lemma}\label{lemma: all operations on interpretable Hilbert space are definable}
Let $H(M)$ be an interpretable Hilbert space in $M$. Then $H(M)$ is isomorphic to an interpretable Hilbert space $H'(M)$ in $M$ which is a direct limit of $(M_{j'})_{ j' \in J'}$ with isometric direct limit maps $h'_{j'}$ such that the Hilbert space operations on $H'(M)$ are piecewise bounded\footnote{\label{footnote: piecewise bounded terminology} Thanks to Arturo Rodr\'iguez Fanlo for this terminology}. This means that
\begin{enumerate}
\item for every $i, j \in J'$, there is $k \in J'$ such that $i,j \leq k$ and $h_i'(M_i) + h_j'(M_j) \subseteq h_k'(M_k)$ and the map $M_i \times M_j \to M_k, (x, y) \mapsto (h_k')^{-1}(x+y)$ is definable
\item for every $i \in J'$ and $n\geq 0$, there is $k$ such that for $x \in M_i$ and $\lambda \in [-n, n]$, $\lambda h_i' x \in h_i' M_k$ and the map $[-n,n ] \times M_i \to M_k$, $(\lambda , x) \mapsto  (h_i')^{-1} (\lambda h_i' x)$ is definable.
\end{enumerate}
\end{lemma}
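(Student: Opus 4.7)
The plan is to apply Proposition \ref{proposition: equivalent definition of piecewise interpretable hilbert space} and Lemma \ref{lemma: transition maps can be assumed isometric} to the collection of pieces of $H(M)$ itself, thereby producing an isomorphic interpretable Hilbert space whose pieces already consist of the finite bounded linear combinations needed to witness the addition and scalar multiplication operations. Concretely, I would first invoke Lemma \ref{lemma: transition maps can be assumed isometric} to reduce to the case where the direct limit maps of $H(M)$ are isometric, and identify each piece $S_\alpha$ with its image in $H(M)$. Then I would apply the construction in the proof of Proposition \ref{proposition: equivalent definition of piecewise interpretable hilbert space} to the family of inclusion maps $S_\alpha \hookrightarrow H(M)$, yielding a new interpretable Hilbert space $H'(M)$ whose pieces have the form $M^\eta_{(i_n)}$. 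By the uniqueness statement in Lemma \ref{lemma: isomorphic hilbert spaces}, $H(M) \cong H'(M)$ as piecewise interpretable Hilbert spaces.

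The key observation is that, by construction, each piece $M^\eta_{(i_n)}$ is the metric completion of a product of the form $\prod_n \prod_{k = \eta(n)}^{\eta(n+1)-1} [-\eta(n), \eta(n)] \times S_{i_k}$, and the direct limit map sends a tuple $\overline{x} = (\lambda_k, x_{i_k})_k$ (modulo the forced-limit cutoff in the proof) to the element $\sum_k \lambda_k x_{i_k}$ of $H(M)$. Hence each piece of $H'(M)$ literally \emph{stores} bounded linear combinations of elements from the original pieces. Given two pieces $M^\eta_{(i_n)}$ and $M^\mu_{(j_n)}$ and elements $v, w$ in their images in $H(M)$, one can find, in the partial order $J$ from the proof of Proposition \ref{proposition: equivalent definition of piecewise interpretable hilbert space}, an upper bound $(\nu, (k_n))$ with $\nu(n)$ large enough and $(k_n)$ containing both $(i_n)$ and $(j_n)$ as subsequences, such that $v+w$ is the image of the tuple obtained by concatenating (and padding by zero scalars) the tuples representing $v$ and $w$. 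This concatenation-and-pad operation is given termwise by definable coordinate functions, so the map $(x,y) \mapsto v+w$ is definable between the appropriate pieces. Similarly, given $\lambda \in [-n,n]$ and $x \in M^\eta_{(i_n)}$, multiplication by $\lambda$ corresponds to multiplying each coefficient of the representing tuple by $\lambda$; passing to an index $(\nu, (i_n))$ with $\nu(k) \geq n\eta(k)$, these scaled coefficients remain within the allowed bounds, and the scalar multiplication map is again given termwise.

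The main obstacle I anticipate is the careful verification that these term-definable operations on representing tuples really do descend through the quotient by the pseudo-metric $\delta$ and extend definably to the metric completion that defines each piece $M^\eta_{(i_n)}$. This amounts to showing that addition $(v,w) \mapsto v+w$ is uniformly continuous on bounded subsets of $H(M)$ (it is Lipschitz with constant $\sqrt{2}$) and that scalar multiplication by $\lambda \in [-n,n]$ is uniformly continuous (Lipschitz with constant $n$). Uniform continuity ensures that the operation descends from the product of pre-quotient spaces to the completed quotient piece and defines a well-defined, uniformly continuous function there; in continuous logic, such a function between type-definable sets in sorts is itself definable, which yields the definability of the concatenation and scaling maps claimed above. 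Once this is in place, clauses (1) and (2) of the lemma follow directly from the description of the pieces.
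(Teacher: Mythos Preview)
Your proposal is correct and follows essentially the same approach as the paper: apply the construction of Proposition~\ref{proposition: equivalent definition of piecewise interpretable hilbert space} to the pieces of $H(M)$, observe that the resulting pieces $M^\eta_{(i_n)}$ store bounded linear combinations so that addition and scalar multiplication are witnessed by concatenation and scaling of tuples, and invoke Lemma~\ref{lemma: transition maps can be assumed isometric} for isometric direct limit maps. The only difference is the order in which you apply the two auxiliary results (you reduce to isometric maps first, the paper does so last), and you spell out in more detail why the operations descend to the quotient and completion; both are harmless variations on the same argument.
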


\begin{proof}
 Apply the construction of Proposition \ref{proposition: equivalent definition of piecewise interpretable hilbert space} to all pieces $(M_j)_J$ of $H(M)$ with the direct limit maps $M_j \to H(M)$ to obtain $H_1(M)$ isomorphic to $H(M)$. Observe that the construction of Proposition \ref{proposition: equivalent definition of piecewise interpretable hilbert space}  implies that the operations on $H_1(M)$ are piecewise bounded. Now apply Lemma \ref{lemma: transition maps can be assumed isometric} to obtain $H'(M)$ with isometric direct limit maps.
 \end{proof}

Finally we show that an isomorphism of interpretable Hilbert spaces in an $\omega$-saturated structure induces an isomorphism at the level of the theory $T$.

\begin{lemma}
Let $T$ be a complete theory and let $\mathcal{H}$, $\mathcal{H}'$ be two interpretable Hilbert spaces in $T$. Suppose that for some $\omega$-saturated $M \models T$, $H(M)$ and $H'(M)$ are isomorphic as interpretable Hilbert spaces. Then for every  $N\models T$, $H(N)$ and $H'(N)$ are isomorphic as interpretable Hilbert spaces. 
\end{lemma}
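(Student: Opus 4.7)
Let $F : H(M) \to H'(M)$ be the given isomorphism, and write $(M_i)_{i \in I}$, $(M'_{j'})_{j' \in J'}$ for pieces generating $\mathcal{H}$, $\mathcal{H}'$ with direct limit maps $h_i$ and $h'_{j'}$. By Lemma \ref{lemma: reformulation of morphism definition}, for each pair $(i,j')$ the function $g_{i,j'}(x,y) := \langle F(h_i x), h'_{j'} y\rangle$ is definable in $M$, hence (by our convention that definable means $\emptyset$-definable and by completeness of $T$) is given by an $\emptyset$-definable function of $T$. My plan is to package $\mathcal{H}$, $\mathcal{H}'$ and the cross-maps $g_{i,j'}$ into a single interpretable Hilbert space $\mathcal{H}''$ in $T$ whose interpretation in every $N$ will witness an isomorphism $H(N) \cong H'(N)$.

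First, I would take as pieces the disjoint union $\{M_i\}_I \sqcup \{M'_{j'}\}_{J'}$ and equip it with the inner product maps already supplied by $\mathcal{H}$ and $\mathcal{H}'$ on the ``diagonal'' blocks together with $g_{i,j'}$ on the cross blocks. Positive semi-definiteness of this symmetric family is a universal first-order condition that holds in $M$, since it is realised inside $H'(M)$ by sending $h_i x \mapsto F(h_i x)$ and $h'_{j'} y \mapsto h'_{j'} y$; by completeness of $T$ it then holds in every $N \models T$. Proposition \ref{proposition: equivalent definition of piecewise interpretable hilbert space} therefore produces a piecewise interpretable Hilbert space $\mathcal{H}''$ in $T$ together with canonical isometric embeddings $\iota : H(N) \hookrightarrow H''(N)$ and $\iota' : H'(N) \hookrightarrow H''(N)$ of interpretable Hilbert spaces.

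The crux is then to show that $\iota$ and $\iota'$ are surjective in every $N$. In $M$ this is true (both images equal $H''(M) \cong H'(M)$ via $F$), but a priori the rate of approximation of an element $h'_{j'} y$ by a linear combination of vectors from the pieces $M_i$ depends on $y$. Here I would invoke $\omega$-saturation of $M$: fix a piece $M'_{j'}$ and $\varepsilon > 0$, and consider the partial $\emptyset$-type $\Sigma(y)$ saying that $y \in M'_{j'}$ and that for \emph{every} finite set $S \subseteq I$ of pieces and \emph{every} integer $n$ there is no tuple $(x_k \in M_{i_k}, |\lambda_k| \leq n)_{i_k \in S}$ with $\|h'_{j'} y - \sum_k \lambda_k h_{i_k} x_k\|_{H''} \leq \varepsilon$; the norm here is a definable function of the $g_{i,j'}$ and the two given inner products. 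Since $\iota(H(M))$ is dense in $H''(M)$, $\Sigma$ is not realised in $M$; by $\omega$-saturation it is inconsistent, which produces (by compactness) a finite piece-set $S$ and a bound $n$ depending only on $j'$ and $\varepsilon$ such that the uniform $\varepsilon$-approximation statement holds in $M$. That uniform statement is $\emptyset$-definable in $T$, so by completeness it holds in every $N \models T$, and the same argument with the roles reversed yields density of $\iota'(H'(N))$. Consequently $\iota$ and $\iota'$ are isomorphisms of interpretable Hilbert spaces in $N$, and $F_N := (\iota')^{-1} \circ \iota$ is the desired isomorphism; the cross inner product $\langle F_N(h_i x), h'_{j'} y\rangle$ is exactly $g_{i,j'}(x,y)$, so Lemma \ref{lemma: reformulation of morphism definition} confirms that $F_N$ is an isomorphism of interpretable Hilbert spaces. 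The main obstacle is the uniform-density step; without the saturation hypothesis on $M$, one extracts from $F$ only pointwise density, and that is not a first-order condition and does not transfer.
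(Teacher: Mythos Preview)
Your argument is correct and takes a genuinely different route from the paper's. The paper constructs $F_N$ \emph{pointwise}: given $a\in N_j$, it uses $\omega$-saturation of $M$ to find $x\in M_j$ with $\tp(x)=\tp(a)$, picks a piece $M'_{j'}$ containing $F_M h_j x$, and then observes that the condition ``there exists $b\in M'_{j'}$ with $\|F_M h_j x - h'_{j'}b\|=0$'' is an $\emptyset$-definable condition on $x$ which therefore transfers to $a$ in $N$; a Cauchy-sequence argument produces the actual $b\in N_{j'}$. Your approach is more global: you package the two spaces into a single interpretable $\mathcal{H}''$ via the $\emptyset$-definable cross maps $g_{i,j'}$, and then extract from $\omega$-saturation a \emph{uniform} density statement (for each piece $M'_{j'}$ and each $\varepsilon$, a fixed finite $S,n$ suffices) which is a first-order sentence and hence transfers to every model. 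Both proofs rest on the same two ingredients---$\emptyset$-definability of the cross inner products (Lemma~\ref{lemma: reformulation of morphism definition}) and the saturation hypothesis---but your packaging makes the role of saturation completely explicit, and it exhibits the isomorphism as the composite of two $\emptyset$-definable embeddings into a common interpretable object, which is conceptually clean and fits naturally with the categorical viewpoint of Lemma~\ref{lemma: category theoretic reformulation of GNS}.

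One small technical adjustment: as written, your $\Sigma(y)$ asserts ``no tuple with $\|\cdot\|\leq\varepsilon$'', i.e.\ $d_{S,n}(y)>\varepsilon$, which is an \emph{open} condition and so not literally a partial type. Replace it by $d_{S,n}(y)\geq\varepsilon$ (equivalently, ``no tuple with $\|\cdot\|<\varepsilon$''); this is closed, the non-realisation in $M$ still follows from surjectivity of $\iota$, and the compactness step goes through verbatim.
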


\begin{proof}
Suppose that  $H(M)$ and $H'(M)$ are the direct limits of $(M_j)_J$ and $(M_{j'})_{J'}$ respectively. We write $h_j$ and $h_{j'}'$ for the direct limit  maps $  M_j \to H(M)$, $  M_{j'} \to H'(M)$ respectively. Write $F_M : H(M) \to H'(M)$ for the  isomorphism of interpretable Hilbert spaces. Fix $N \models T$, $j \in J$,  and take $a \in N_j$. Find $x \in M_j$ with $\tp(x) = \tp(a)$ and fix $j' \in J'$ such that $F_M h_j x \in h'_{j'} M_{j'}$.

 $F_M h_j x$ is uniquely determined by  the value $\lambda = \langle F_M h_j x, F_M h_j x\rangle$  and   the $x$-definable function $f_x: M_{j'}\to \Rr,  y\mapsto \langle F_M h_j x, h'_{j'}y\rangle$. By elementarity, for every $n$, there is 
 $b_n \in N_{j'}$ such $|\langle h'_{j'} b_n, h'_{j'} b_n\rangle - \lambda| < 2^{-n}$ and for all $y \in N_{j'}$ $|f_a(y) - \langle h'_{j'} b_n, h'_{j'} y\rangle | < 2^{-n}$. Then $(h'_{j'} b_n)$ is Cauchy and there is  $b \in N_{j'}$ such that $h'_{j'} b$ is the limit of $(h'_{j'} b_n)$. Define $F_Nh_j a = h'_{j'} b$. It is straightforward to check that $F_N$ is well-defined, definable, and  is an isomorphism of interpretable Hilbert spaces. 
\end{proof}

Finally, we give a category-theoretic reformulation of Proposition \ref{proposition: equivalent definition of piecewise interpretable hilbert space}:

\begin{lemma}\label{lemma: category theoretic reformulation of GNS}
Let $M \models T$. The categories $\mathcal{A}, \mathcal{B}, \mathcal{C}$ defined below are all equivalent: 

 Let $\mathcal{A}$ be the category of   interpretable Hilbert spaces in $M$ with  embeddings of    interpretable Hilbert spaces. 

Let $\mathcal{B}$ be the category of pairs $(H, (h_i)_{I})$ where $H$ is a Hilbert space and for every $i\in I$, $h_i : M_i \to H$ is a map on an imaginary sort of $M$ such that
 \begin{enumerate}
\item the set $\bigcup_I h_i(M_i)$ has dense span in $H$
\item for all $i_1, i_2 \in I$, the map $M_{i_1} \times M_{i_2} \to \Rr, (x,y) \mapsto \langle h_{i_1} x , h_{i_2} y\rangle$ is definable.
\end{enumerate}
The morphisms between objects $(H, (h_i)_I) $ and $(H', (h_{i'})_{I'})$ of $\mathcal{B}$ are unitary maps $F : H \to H'$ such that for all $i \in I$, $i' \in I'$ the map $M_i \times M_{i'} \to \Rr, (x, y) \mapsto \langle F(h_ix), h_{i'} y \rangle$ is definable.  

Let $\mathcal{C}$ be the category  of pairs $((M_i)_{I}, (f_{ij})_{i,j \in I})$ where $(M_i)_I$ is a set of imaginary sorts of $M$ and the $f_{ij} : M_i \times M_j \to \Rr$ are definable functions such that their concatenation $f : \bigcup_I M_i \times \bigcup_I M_i \to \Rr$ is positive semidefinite. The morphisms between objects $((M _i)_I, (f_{ij}))$ and $((M_{i'})_{I'}, f_{i'j'})$ of $\mathcal{C}$ are piecewise  definable functions $G : \bigcup_{k \in I\cup I'} M_k \times\bigcup_{k \in I\cup I'} M_k \to \Rr$   such that 
\begin{enumerate}
\item $G$ is positive semidefinite and $G$ extends each function $f_{ij}$ where $i,j \in I$ or $i,j \in I'$  

\item writing $H_I$ and $H_{I \cup I'}$ for the Hilbert spaces induced   by $\bigcup_I M_i$ and $\bigcup_{k \in I \cup I'} M_k$ respectively as in the GNS theorem, the resulting Hilbert space embedding $F: H_{I'} \to H_{I \cup I'}$ induced from $G$ is surjective
\footnote{\label{footnote: induced embedding from GNS theorem} The GNS theorem gives Hilbert spaces $H_I$, $H_{I'}$ and $H_{I \cup I'}$  and maps $a : \bigcup_I M_i \to H_I$, $b : \bigcup_{I'} M_{i'} \to H_{I'}$,  $c: \bigcup_{k \in I \cup I'} M_k \to H_{I \cup I'}$ and Hilbert space embeddings $\phi : H_I \to H_{I \cup I'}$ and $\phi' : H_{I'} \to H_{I\cup I'}$ such that the corresponding diagram commutes. So we have $G(x, y) = \langle \phi\circ a(x), \phi'\circ b(y)\rangle$ when $x \in \bigcup_I M_i$ and $y \in \bigcup_{I'} M_{i'}$}
\end{enumerate}
Composition of morphisms in $\mathcal{C}$ is induced by the   GNS theorem\footnote{\label{footnote: category C} It is possible to give a definition of the morphisms of $\mathcal{C}$ and their composition which does not rely explicitly on the GNS theorem by using the Gram-Schmidt orthogonalisation process and Bessel's inequality. The details are left to the   reader}.

\end{lemma}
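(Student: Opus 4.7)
The plan is to establish the equivalence by constructing an explicit cycle of functors $\Phi : \mathcal{A} \to \mathcal{B}$, $\Psi : \mathcal{B} \to \mathcal{C}$, $\Theta : \mathcal{C} \to \mathcal{A}$, and then verifying they are mutually inverse equivalences by appealing to the tools already developed in the section — principally Proposition \ref{proposition: equivalent definition of piecewise interpretable hilbert space}, Lemma \ref{lemma: reformulation of morphism definition}, and Lemma \ref{lemma: isomorphic hilbert spaces}. The functor $\Phi$ is tautological: a piecewise interpretable Hilbert space, realised as the direct limit of sorts $(M_j)_{j \in J}$ with direct limit maps $h_j$, goes to the pair $(H(M), (h_j)_{j \in J})$, which lies in $\mathcal{B}$ by Definition \ref{definition: piecewise interpretable hilbert space}. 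An embedding in $\mathcal{A}$ (Definition \ref{definition: interpretable Hilbert space morphism}) is recognised as a morphism in $\mathcal{B}$ by Lemma \ref{lemma: reformulation of morphism definition}, which is precisely the equivalence between definability of $\|F(h_j x) - h'_{j'} y\|$ and definability of $\langle F(h_j x), h'_{j'} y\rangle$.

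Next, $\Psi$ sends $(H, (h_i)_I)$ to $((M_i)_I, (f_{ij})_{i,j\in I})$ with $f_{ij}(x, y) = \langle h_i x, h_j y\rangle_H$; the concatenation is positive semidefinite since it computes inner products in $H$. For a morphism $F : (H, (h_i)_I) \to (H', (h'_{i'})_{I'})$, one defines $G$ piecewise on $\bigcup_{k \in I \cup I'} M_k$ by $G(x, y) = \langle F(h_i x), h'_{j'} y\rangle_{H'}$ on the mixed pieces and by the original $f_{ij}, f_{i'j'}$ on pure pieces. Definability of the mixed piece is exactly the requirement on morphisms in $\mathcal{B}$; positive semidefiniteness of $G$ follows because it is the inner product matrix of the vectors $F(h_i(\cdot)) \cup h'_{i'}(\cdot)$ inside $H'$; surjectivity of the induced $H_{I'} \to H_{I \cup I'}$ holds because those vectors have dense span in $H'$, which is canonically $H_{I \cup I'}$ via Lemma \ref{lemma: isomorphic hilbert spaces}.

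The most substantive functor is $\Theta : \mathcal{C} \to \mathcal{A}$, which on objects is literally the construction of Proposition \ref{proposition: equivalent definition of piecewise interpretable hilbert space}. On morphisms, given $G$ in $\mathcal{C}$ from $((M_i)_I, (f_{ij}))$ to $((M_{i'})_{I'}, (f_{i'j'}))$, apply Proposition \ref{proposition: equivalent definition of piecewise interpretable hilbert space} to the enlarged data on $\bigcup_{k \in I \cup I'} M_k$ with inner product $G$; this produces a piecewise interpretable Hilbert space $H_{I \cup I'}(M)$ that simultaneously contains isometric images of $\Theta((M_i)_I)$ and $\Theta((M_{i'})_{I'})$, by Lemma \ref{lemma: isomorphic hilbert spaces}. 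Composing the inclusion of $\Theta((M_i)_I)$ with the Hilbert space isomorphism $H_{I \cup I'}(M) \cong \Theta((M_{i'})_{I'})$ guaranteed by the surjectivity condition in the definition of morphism in $\mathcal{C}$ yields an embedding $F : \Theta((M_i)_I) \to \Theta((M_{i'})_{I'})$; Lemma \ref{lemma: reformulation of morphism definition} confirms $F$ is a morphism in $\mathcal{A}$ from the definability of the various inner product maps encoded in $G$.

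It remains to verify that these three functors are mutually inverse equivalences. Essential surjectivity on objects is direct from the definitions, and the natural isomorphisms $\Theta \circ \Psi \circ \Phi \simeq \mathrm{id}_{\mathcal{A}}$ etc.\ are provided by the uniqueness part of Lemma \ref{lemma: isomorphic hilbert spaces}. Full faithfulness on morphisms reduces to Lemma \ref{lemma: reformulation of morphism definition} together with the fact that a morphism in $\mathcal{B}$ or $\mathcal{C}$ is determined by its restriction to the generating pieces. The main obstacle will be morphism composition in $\mathcal{C}$: given $G_1 : A \to B$ and $G_2 : B \to C$ one must show that the composition, forced to be defined through the GNS construction as footnoted in the statement, is again a morphism in $\mathcal{C}$ and compatible with composition in $\mathcal{A}$ under $\Theta$. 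This reduces to the observation that two positive semidefinite kernels agreeing on a generating set induce the same isometry between their GNS Hilbert spaces, which is exactly the uniqueness clause of Theorem \ref{theorem: classical GNS theorem}, so that all three categories have genuinely the same morphism data.
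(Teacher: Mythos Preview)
The paper states this lemma without proof; it is presented as ``a category-theoretic reformulation of Proposition \ref{proposition: equivalent definition of piecewise interpretable hilbert space}'' and left to the reader, with the footnotes indicating that the verification of the category axioms for $\mathcal{C}$ is routine. Your approach --- building a cycle of functors $\mathcal{A} \to \mathcal{B} \to \mathcal{C} \to \mathcal{A}$ and invoking Proposition \ref{proposition: equivalent definition of piecewise interpretable hilbert space}, Lemma \ref{lemma: reformulation of morphism definition}, Lemma \ref{lemma: isomorphic hilbert spaces}, and the uniqueness in Theorem \ref{theorem: classical GNS theorem} --- is exactly the intended one, and the argument you sketch is correct.
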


\bibliographystyle{alpha}
\bibliography{Hilbertspaces-arxivv1}

\end{document}